\documentclass[12pt]{article}
\pdfoutput=1
\usepackage[pdftex]{graphicx}
\usepackage{bm}
\usepackage{amsmath,amsthm,amssymb,url}
\usepackage{multirow,bigdelim}
\usepackage{amscd}
\usepackage{enumerate}
\usepackage{cellspace}
\setlength\cellspacetoplimit{5pt}
\setlength\cellspacebottomlimit{5pt}

\newtheorem{thm}{Theorem}[section]
\newtheorem{theorem}[thm]{Theorem}
\newtheorem{proposition}[thm]{Proposition}
\newtheorem{lemma}[thm]{Lemma}

\newtheorem{corollary}[thm]{Corollary}

\newtheorem{remark}[thm]{Remark}

\setlength{\itemsep}{-10cm}
\evensidemargin 0.2in 
\oddsidemargin 0.2in 
\topmargin -0.445in
\textwidth 5.9in
\textheight 8.8in

\renewcommand{\tilde}{\widetilde}

\usepackage{tikz}
\usetikzlibrary{intersections}
\usetikzlibrary{arrows}
\usetikzlibrary{cd}
\usetikzlibrary{math}
\usetikzlibrary{calc}
\usetikzlibrary{patterns}
\usetikzlibrary{decorations.markings}
\tikzset{
    segment/.style={line width=1.3pt},
    vertex/.style={
        inner sep=0.5mm,circle,draw,line width=1.5pt,fill=white
    },
    dsegment/.style={segment, dash pattern = on 5pt off 2pt},
    edge/.style={
        line width=1.3pt,
        decoration={
            markings,
            mark=at position 0.6 with {\arrow{>}}
        }
    },
    cylinder/.style = {line width=0.7pt,dash pattern = on 5pt off 5pt},
    graphfill/.style = {gray!50}
}

\title{Symmetric contact systems of segments, pseudotriangulations and inductive constructions for corresponding surface graphs}
\author{James Cruickshank \thanks{
Department of Mathematics, Statistics and Applied Mathematics, National University of Ireland, Galway,
Ireland. \texttt{james.cruickshank@nuigalway.ie}
}
\and
Bernd Schulze \thanks{
Department of Mathematics and Statistics, Lancaster University, 
Lancaster, UK. \texttt{b.schulze@lancaster.ac.uk}}
}
\date{}

\begin{document}

\footnotetext{2010 {\em Mathematics Subject Classification} 05C10,05C62,52C25}

\maketitle


\begin{abstract}
We characterise the quotient surface graphs arising from symmetric contact systems of line segments in the plane and also from symmetric pointed pseudotriangulations in the case where the group of symmetries is generated by a translation or a rotation of finite order.  These results generalise well known results of Thomassen, in the case of line segments, and of Streinu and Haas et al.,  in the case of pseudotriangulations.  Our main tool is a new inductive characterisation of the appropriate classes of surface graphs. We also discuss some consequences of our results in the area of geometric rigidity theory.
\end{abstract}

\section{Introduction}

There has been much interest recently in adapting  
results of combinatorial geometry in areas such as geometric
 rigidity theory, polyhedral scene analysis, or the theory of packings,
 to a symmetric setting (see \cite[Chapters 2, 61, 62]{HB18}, for example,
 for a summary of recent results). Since symmetry is 
ubiquitous in both natural and artificial structures, much of this
work is motivated by applications in materials science,
biophysics and engineering. The purpose of this paper is to provide
symmetric generalisations of two significant results in 
combinatorial geometry, which we now describe.

The first result is concerned with an analogue of the well-known planar circle packing theorem
of Koebe-Andreev-Thurston, where circles are replaced with 
line segments. A \( 2 \)-contact system of line segments in the plane is a finite 
collection of segments such that any point belongs to at most two segments
and belongs to the interior of at most one segment.
Thomassen \cite{Thomassen93} 
has shown that a graph is the intersection graph of
such a contact system of line segments if and only if it is a subgraph
of a planar Laman graph.

The second result is concerned with pointed pseudotriangulations,
which are plane graphs
 with straight line edges
such that every bounded region is a polygon with exactly three 
convex angles in its interior, 
the boundary of the unbounded region is a convex polygon, and such that 
every vertex has exactly one non-convex incident angle. 
Such objects
have been extensively studied and have found wide-ranging applications,
for example in the solution of the carpenter's rule problem \cite{CDR},
 the art gallery problem \cite{SpTo}, and even in the description of 
unusual structural phenomena such as auxeticity in meta-materials (see \cite{BSaux2,BS_pseudo,BSaux1}, for example). 
 A survey of results can be found in \cite{RFS}.
Streinu \cite{Str05} and Haas et al. \cite{Haasetal} have shown the fundamental
result  that a graph can be realised as a pointed pseudotriangulation if and only if it is a planar Laman graph. 

We will prove symmetric versions of the above results in the 
case where the symmetry group is a cyclic group that is 
generated either by a rotation or a translation in the plane.

In the case of contact systems of segments, we must take care to 
specify carefully the appropriate combinatorial object that is 
analogous to the intersection graph. In the symmetric 
case, orbits of segments can have multiple intersections and can
 self-intersect so the graph that arises is naturally a multigraph.
Also, we must be careful about non-degeneracy conditions and so we require
a very slight modification of the definition of a \( 2 \)-contact system.
We explain the change and its relationship to the one used by Thomassen
in more detail below.

Furthermore it is not immediately obvious which classes of graphs
are analogous to the plane Laman graphs in the symmetric contexts. Once 
we identify the relevant classes, which are surface graphs satisfying certain
gain-sparsity counts, the main technical difficulty 
is to provide appropriate inductive characterisations of these
classes. These inductive characterisations are, we believe, 
of independent interest. They are analogous to a widely used result of 
Fekete, Jord\'an and Whiteley \cite{FJW} which gives an inductive characterisation
of plane Laman graphs.
 However, in our setting the proofs 
require some significant new ideas due to the 
more complicated topological setting.

In the case of pointed pseudotriangulations, we provide a natural
extension of the standard definition in the symmetric setting, and
 apply our inductive characterisations to establish symmetric
  versions of the result mentioned above of Streinu and Haas et. al.
  In particular, this allows us to gain new insights into the rigidity
  and flexibility properties of  bar-joint frameworks with rotational or
  translational symmetry in the plane.



We summarise the main results of the paper as follows:
\begin{enumerate}
    \item We characterise, in terms of gain sparsity properties, 
        the intersection graphs of 
        generic symmetric contact systems of line segments 
        in the case where the symmetry 
        group is generated by 
        a rotation of finite order or by a translation (Theorems \ref{thm_maintranslation}
        and \ref{thm_mainrotation}).
    \item We give an analogous combinatorial characterisation of the 
        graphs of symmetric pointed pseudotriangulations in the 
        case where the symmetry 
        group is generated by 
        a rotation of finite order or by a translation (Theorem~\ref{ppt_covering}).
    \item We show that the relevant gain-sparse surface graphs satisfy a 
    topological extension property, in the sense that they
     can always be completed to gain-tight surface graphs by adding appropriate edges
     (Proposition \ref{prop_232complete}).
    \item We give inductive characterisations based on 
        topological vertex splitting moves of the relevant 
        classes of gain-tight surface graphs (Theorems \ref{thm_cylinder_inductive232}
        and \ref{thm_cylinder_inductive231}).
    \item We show that a realisation of a planar graph as a bar-joint framework
     in the plane that is generic with $k$-fold rotational symmetry, $k\geq 3$,
   is minimally `forced-symmetric' rigid (i.e. 
  has no symmetry-preserving deformation)
   if and only if it can be realised as a pointed pseudotriangulation with this symmetry (Corollary~\ref{cor:rigidity}).
   \end{enumerate}

These results open up a number of obvious further research directions, such as
 possible extensions to other discrete subgroups of the Euclidean group, and we hope that this paper serves as an invitation for the reader to join in these explorations.
   
 \subsection{Comments on the presentation}
We have aimed for a relatively self-contained 
exposition, so some of the minor lemmas presented 
here with proofs are 
variations of known results. 
We have attempted to point out the relevant
literature in these cases.

Also because the paper draws
on concepts from several different 
parts of combinatorics, geometry and topology we find it 
expedient to briefly remind the reader of some elementary concepts 
and fix notation
in Section \ref{sec_terms}.

Finally, the proofs of the results from points 3 and 4 in the list 
above are 
quite long and technical. For that reason we have given 
precise statements of the results in Section \ref{sec_statements}
but deferred the proofs til later in order to present the main 
geometric applications first.

%

\section{Terminology and notation}
\label{sec_terms}

Here we fix some terminology and conventions regarding 
some standard notions of topological graph theory.

\subsection{Graphs}

A {\em graph} is a quadruple \( D = (V,E,s,t) \) where \( V,E \) are
sets (of vertices and edges respectively) and \( s,t \) 
are functions \( E \rightarrow V \). In the literature
such objects are sometimes referred to as multi-digraphs 
or quivers. We shall use graph instead and use adjectives
such as simple or loopless as appropriate. We note that 
graphs can be infinite but all graphs that arise in this paper
will be locally finite in the sense that any vertex will be incident to finitely many edges. If the graph \( D \) is not clear from the 
context we will write \( V(D) \), respectively \( E(D) \), for the
sets of vertices, respectively edges, of \( D \).
For \( V' \subset V \),  we write \( E(V') \) for 
the subset of \( E \) spanned by \( V' \) and 
\( D(V') = (V',E(V')) \) for the subgraph of \( D \) induced
by \( V' \).
Similarly for \( E'\subset E \)
we have \( D(E') = (V(E'),E')\)
where \( V(E') \) is the subset of \( V \) spanned by \( E' \).

The {\em geometric realisation} of \( D \) is 
\[ |D| = (E \times [0,1]) \sqcup V/\sim, \] where 
\( (e,0) \sim s(e) \) and \( (e,1) \sim t(e) \).
Throughout the paper we will often conflate vertices or
edges of \( D \) 
with the corresponding points or subsets of \( |D| \).
Connectivity properties of graphs will play an important 
role later so we specify our particular definitions  here 
carefully. Given a topological space \( X \) and a subset 
\( A \subset X \) we say that \( A \) separates points \( u,v \)
if \( u \) and \( v \) lie in the same path component of \( X \), 
\( u,v \not\in A \) and any continuous path joining \( u \) and \( v \) 
must  pass through \( A \). We will use this topological 
notion of separation both in the context of surfaces and graphs. 
For example a cutvertex of \( D \) 
will mean a vertex that separates any 
pair of points in \( |D| \). In particular, any vertex incident to a loop
edge is automatically a cutvertex.

%

\subsection{Surfaces and surface graphs}\label{sec:surgr}

A {\em surface} \( \Theta \) 
is a real two-dimensional manifold without boundary. 
We will be particularly concerned in later sections with the
open annulus \( \mathbb A = \mathbb R^2-\{(0,0)\} \).
We emphasise here that \( \mathbb A \) is to be thought of purely 
as a topological manifold. We will use different notation for the 
various geometric structures that have \( \mathbb A \) as the 
underlying topological manifold. We note that \( \mathbb A \) has 
two topological ends, one at zero and 
one at infinity. The location of these ends relative 
to various embedded graphs will be of importance later.

A {\em \( \Theta \)-graph} \( G \) is a pair \( (D,\Phi) \)
where \( \Phi:|D| \rightarrow \Theta \) is a continuous 
function that is a homeomorphism onto its image.
We will abuse terminology and refer to a subgraph \( H \) of \( G \) 
rather than a sub-\( \Theta \)-graph. In further abusive behaviour 
we will often conflate vertices and edges of \( D \) with 
their images under \( \Phi \).
We say that \( \Theta \)-graphs \( (D_i,\Phi_i), i = 1,2 \),
are isomorphic if there is a homeomorphism \( h:
\Theta \rightarrow \Theta\) and a graph isomorphism 
\( k: D_1\rightarrow D_2 \) such that \( h\circ \Phi_1 = 
\Phi_2 \circ |k|\) where \( |k|:|D_1|\rightarrow |D_2| \) is the
map induced by \( k \).

A {\em face} \( F \) 
of \( G \) is a component of \( \Theta - \Phi(|D|) \). 
In particular \( F \) is a connected open subset of \( \Theta \).
We say that \( F \) is {\em cellular} if it is homeomorphic to 
\( \mathbb R^2 \).
The {\em boundary \( \partial F \)} is the subgraph of \( G \)
comprising those vertices and edges that are contained within
the topological boundary of \( F \).
The face \( F \) has an associated family of closed {\em boundary 
walks}, one for each topological end of the face. {For a formal description of these walks in the cellular case (which can be readily adapted to the non-cellular setting), see Chapter 3 of \cite{MR1844449}.} 
We say that
\( F \) is {\em degenerate} if
there is either a repeated vertex or a repeated edge among 
all the boundary walks of \( F \) and is {\em non-degenerate}
otherwise. If \( F \) is cellular, the 
{\em degree} of \( F \), denoted \( |F| \),  
is the edge length of its unique boundary 
walk. In general \( |F| \geq |E(\partial F)|\), 
\( |F| \geq |V(\partial F)| \)
and one or both of these inequalities may be strict.
A cellular face of degree \( 3 \), respectively degree \( 4 \), is
called a {\em triangle}, respectively a {\em quadrilateral}.

\section{Contact systems of line segments}
\label{sec_config_line_seg}

A {\em contact system} of line segments in the plane 
is a collection of line segments
such that no point is an interior point of more than
one segment (see \cite{MR2177584} and \cite{MR1644051}). A 
\( k \)-contact system is a contact system such that any point belongs
to at most \( k \) segments. In this scheme the \( 2 \)-contact 
systems are in some sense the `least degenerate' and are 
thus a natural starting point for investigation. For our purposes
we introduce a slightly more restrictive definition as follows.
A collection of line segments in the plane is a 
{\em generic contact system} if no 
point is an interior point of more than one segment and 
no point is an endpoint of more than one segment. Observe 
that a generic contact system is necessarily a \( 2 \)-contact system.
On the other hand, we have the following.

\begin{lemma}
    Let \( \mathcal L \) be a \( 2 \)-contact system.
    There is a generic contact system 
    \( \mathcal L' \) (which can be chosen 
    to be arbitrarily close to \( \mathcal L \)) 
    and a bijection \( \mathcal L \rightarrow 
    \mathcal L', l \mapsto l' \) such that 
    \( l \cap m \neq \emptyset \Leftrightarrow l'\cap m'
    \neq \emptyset \) 
    \label{lem_perturbation}
\end{lemma}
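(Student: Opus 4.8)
The plan is to show that a $2$-contact system can be perturbed into a generic one while preserving exactly the pattern of which pairs of segments meet. The only obstruction to genericity in a $2$-contact system is that a point may be an endpoint of more than one segment: such a point is either (i) a common endpoint of several segments, or (ii) an endpoint of one or more segments that also lies on the interior of (at most) one other segment (a ``T-junction''). We must perturb the segments so that these coincidences are broken, but no new intersections are created and no existing intersection is destroyed.

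First I would set up the local picture. Fix a point $p$ that is an endpoint of at least one segment of $\mathcal L$, and let $l_1,\dots,l_k$ be the segments having $p$ as an endpoint, together with at most one further segment $m$ whose interior contains $p$ (if $p$ lies in the interior of $m$). Because $\mathcal L$ is a $2$-contact system, no point other than $p$ lies on two of the segments $l_1,\dots,l_k,m$ within a small ball $B(p,\varepsilon)$, and $\varepsilon$ can be chosen small enough that $B(p,\varepsilon)$ contains no endpoint of $\mathcal L$ other than $p$ and meets no segment of $\mathcal L$ other than $l_1,\dots,l_k$ and $m$. I would then slide the endpoint $p$ of each $l_i$ a tiny amount along (the line containing) $l_i$ — either inward, or outward if $l_i$ extends past $p$ — choosing the displacements so that the new endpoints are pairwise distinct and, in case (ii), none of them lands on $m$, and in case (ii) also the direction of the slide is chosen so that $l_i$ still meets $m$ if and only if it did before. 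Moving along the supporting line guarantees that the set of other segments met by $l_i$ is unchanged. Doing this for each $l_i$ around $p$ breaks all the endpoint coincidences at $p$.

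Next I would globalise this. Since $\mathcal L$ is finite, there are finitely many ``bad'' points $p$ (endpoints lying on more than one segment), and by choosing the balls $B(p,\varepsilon_p)$ pairwise disjoint the local perturbations above can be carried out simultaneously and independently. Each segment $l$ has at most two endpoints, each sitting in at most one bad ball, so $l$ is replaced by a segment $l'$ obtained by at most two small endpoint slides along the supporting line of $l$. By construction $l'$ is arbitrarily close to $l$ (take all $\varepsilon_p$ small), the map $l\mapsto l'$ is a bijection, and $l\cap m\neq\emptyset \Leftrightarrow l'\cap m'\neq\emptyset$: inside a bad ball this was arranged explicitly, and outside the bad balls nothing moved. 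Finally I would check that $\mathcal L'$ is genuinely a generic contact system, i.e. that the slides did not accidentally create a new point lying on three segments or a new coincident pair of endpoints; this follows because each slide stays within a small ball in which the only relevant segments are the ones through the bad point, and the displacements there were chosen in general position.

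The main obstacle is the bookkeeping in case (ii), the T-junctions: when an endpoint $p$ of $l_i$ lies in the interior of $m$, sliding $p$ along the line of $l_i$ moves it off $m$, but one must slide in the direction that keeps $l_i$ touching $m$ (toward $m$, not away from it) so as not to destroy the contact $l_i\cap m$, while simultaneously ensuring the perturbed endpoint does not coincide with another perturbed endpoint at $p$ nor land on $m$ again; a careful but elementary case analysis on the relative position of $l_i$ and $m$ handles this, and this is the only place where the argument is not completely routine.
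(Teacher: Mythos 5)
There is a genuine gap: you never treat the case of two \emph{collinear} segments sharing an endpoint, and your basic move cannot handle it. Your only perturbation is to slide an endpoint of $l_i$ along the supporting line of $l_i$. If $l_1$ and $l_2$ are collinear with common endpoint $p$ (e.g.\ $[0,1]\times\{0\}$ and $[1,2]\times\{0\}$), then any two intersecting segments lying on a common line whose interiors are disjoint must meet in a common endpoint; so no choice of slides along that line can make the configuration generic while keeping $l_1'\cap l_2'\neq\emptyset$ (extending one past $p$ creates an overlap of interiors, which is forbidden even in a contact system; shrinking destroys the contact). One segment must be moved transversally off the line, and that in turn forces you to extend or truncate every \emph{other} segment that has an endpoint in the interior of the moved segment — contacts that may lie far from $p$. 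This breaks your globalisation scheme of independent perturbations inside pairwise disjoint small balls around the bad points. The paper's proof devotes a separate case to exactly this situation (perturbing the common endpoint to destroy collinearity first, while adjusting all segments touching $l$ or $m$), and then reduces to the transversal case.

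Two smaller points. First, in a $2$-contact system your case (ii) is vacuous: a bad point is an endpoint of more than one segment, and if it were also interior to a further segment it would lie on at least three segments. So the ``main obstacle'' you identify (T-junction bookkeeping) is not where the difficulty lies; a point that is an endpoint of one segment and interior to another is already generic. Second, in case (i) your prescription ``slide the endpoint of each $l_i$, inward or outward'' needs to be pinned down: with two non-collinear segments meeting at a common endpoint, sliding both outward puts $p$ in the interior of both (violating the contact-system condition), and sliding both inward, or one outward and one inward, destroys the intersection. The correct move — extend exactly one segment past $p$ and leave the other fixed — is the paper's first case, and your argument should say this explicitly rather than leave the directions to an unspecified ``general position'' choice.
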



{
\begin{proof}
    Suppose that $l, m \in \mathcal L$ are distinct and have a common endpoint $v$. 
    If they are not parallel, then we can extend the segment $l$ by an arbitrarily small 
    length to create a new segment $l'$ such that $l' \cap m$ is not an endpoint of $l'$.

    If $l$ and $m$ are parallel then we can perturb the common endpoint $v$ by a small amount 
    to create $l',m'$ that share a common endpoint but are not parallel. Note that when we do this we also
    must extend or truncate any other segments $k$ that have an endpoint in $l$ or $m$ to maintain that contact.
    As long as the perturbation of $v$ is sufficiently small this will be possible without creating 
    any new contacts between segments. Now we are in the situation of the previous paragraph and we extend $l'$
    as described there.
    
    By repeated applications of the perturbations described above we can find the desired generic contact system 
    $\mathcal L'$.
\end{proof}

}

Thus, if we are interested in the intersection graphs of such systems,
the slightly more restrictive definition of a generic contact system
versus that of a \( 2 \)-contact system is of no consequence. 
We state the aforementioned result of Thomassen in those terms.
Recall that a graph \( D = (V,E) \) is 
\( (2,3) \)-sparse if and only if for 
every non-empty \( E' \subset E \), 
\( |E'| \leq 2|V(E')|-3 \). 

\begin{thm}[Thomassen, \cite{Thomassen93}]
    \label{thm_thomassen1}
    A graph is the intersection graph of a 
    finite generic contact system of 
    line segments in the plane if and only if 
    it is \( (2,3) \)-sparse.
\end{thm}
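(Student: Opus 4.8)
The plan is to prove the two implications separately; throughout we may perturb freely, so by Lemma~\ref{lem_perturbation} there is no harm in working with generic rather than arbitrary $2$-contact systems.

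\emph{Necessity.} Let $\mathcal{L}$ be a finite generic contact system with intersection graph $G = (V,E)$. By genericity each contact point is an endpoint of exactly one of the two segments meeting there and an interior point of the other, so orienting every edge from the vertex contributing the endpoint to the vertex contributing the interior point gives an orientation of $G$ in which every vertex has out-degree at most $2$, since a segment has two endpoints. Counting out-edges spanned by $V' = V(E')$ yields $|E'| \le 2|V'|$ for every non-empty $E' \subseteq E$. To improve this to $|E'| \le 2|V'| - 3$ one uses the geometry: parametrising the segments indexed by an inclusion-minimal violating set $V'$ by their supporting lines together with the positions of their two endpoints along those lines, each contact becomes one independent linear equation on the $4|V'|$ parameters, and a count shows that $|E'| \ge 2|V'| - 2$ would force the space of contact-preserving nearby configurations to be strictly larger than what the $4$-dimensional group of orientation-preserving similarities together with the at most two unpinned endpoints can explain, contradicting genericity of $\mathcal{L}$. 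Alternatively, invoke the argument of \cite{Thomassen93} directly.

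\emph{Sufficiency.} Suppose $G$ is $(2,3)$-sparse. By the matroidal structure of $(2,3)$-sparsity, $G$ is a spanning subgraph of a $(2,3)$-tight graph $\tilde{G}$; so it suffices to realise $\tilde{G}$ as a generic contact system and then nudge the segments to destroy precisely the contacts corresponding to $E(\tilde{G}) \setminus E(G)$, which is harmless since a small generic perturbation neither revives a broken contact nor creates a new one. To realise $\tilde{G}$ I would use an inductive construction of the $(2,3)$-tight graphs --- either the classical Henneberg $0$- and $1$-extensions or, following Thomassen and in the spirit of the Fekete--Jord\'an--Whiteley construction \cite{FJW}, a sequence of vertex splits that also respects a planar embedding --- and realise each move geometrically. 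The base case (a triangle) is three segments each having one endpoint on the interior of the next; a $0$-extension is realised by adding a segment with its two endpoints on the two relevant existing segments; and a $1$-extension (equivalently, a vertex split) is realised by introducing a new segment meeting two of its neighbours at its endpoints and the remaining neighbour along its interior, which forces one of the existing segments to supply a currently free endpoint.

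The genuinely non-routine step is this last realisation: the new segment must pick up all of the required contacts --- the old ones plus the new edge --- while breaking none of the existing contacts and creating no spurious ones. Since straight segments cannot be rerouted, and a new contact can be created only by an endpoint that is not already pinned, this cannot be done greedily; the realisation choices at earlier stages must anticipate the later moves. The delicate subcase is the one in which the segments that the new edge has to join have already had both of their endpoints pinned, so that the new contact cannot be formed without revisiting part of the construction --- controlling this is precisely where the planar structure and the linear order of contacts along each segment are used, and it is the reason an explicit inductive characterisation of the relevant graph class, rather than an ad hoc construction, is the natural tool.
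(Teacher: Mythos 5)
Your necessity argument has a genuine gap. The out-degree bound only yields $|E'|\leq 2|V'|$, and the proposed improvement to $2|V'|-3$ by a rigidity-style dimension count does not work: ``generic'' here is a purely combinatorial non-degeneracy condition (no point is an endpoint of two segments or an interior point of two segments), not algebraic independence of coordinates, so there is no genericity hypothesis for an over-large deformation space to contradict --- and even for algebraically generic configurations, a contact-preserving deformation space of unexpectedly high dimension is not by itself a contradiction about the given configuration. The paper's proof (Theorem \ref{thm:necplane}) is elementary and direct: $2|V'|-|E'|$ equals the number of free endpoints of the sub-system, and one exhibits three distinct free endpoints by walking along segments monotonically with respect to a suitable linear functional (downwards, then upwards, and finally away from the line through the first two free endpoints found). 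You should replace your dimension count with an argument of this kind.

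The sufficiency direction has two further gaps. First, completing $G$ to a $(2,3)$-tight graph via the sparsity matroid ignores the plane embedding; the point of Proposition \ref{prop_r2complete} (and of the $K_5$-minus-an-edge-on-the-torus example discussed in Section \ref{sec_statements}) is that matroidal augmentation need not respect a topological embedding, so the completing edges must be added inside faces of the given plane graph. (Note also that the intersection graph of a contact system is always planar --- the paper constructs the embedding explicitly --- so planarity has to enter your sufficiency argument somewhere before you can invoke a Fekete--Jord\'an--Whiteley-type construction.) Second, and more seriously, you explicitly leave unresolved the realisation of a Henneberg $1$-extension, and that is exactly where this route breaks: the new degree-$3$ segment can supply only two contacts from its own endpoints, so the third contact requires an existing segment to donate a free endpoint, which may already be pinned. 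The paper avoids this entirely by using the vertex splitting move of Theorem \ref{thm_FJW}: there the new pair of segments replaces a single existing segment $\overline l$ inside an arbitrarily small neighbourhood of it, with one new segment contacting the other at an endpoint, so every required contact is supplied locally and no previously pinned endpoint needs to be revisited (see Figure \ref{fig_triangle_split_realisations}). Identifying this difficulty, as you do in your final paragraph, is not the same as overcoming it; as written, the key step of your construction is missing.
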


{
Note that Lemma \ref{lem_perturbation} will need some modification in the symmetric case, which will be our main concern. Details will be given in Section \ref{sec_symm_config}.
}

\subsection{Embedding the intersection graph}

Let \( \mathcal L \) be a generic contact system.
The 
 intersection graph of \( \mathcal L \), which we denote
by \( I_\mathcal L \), has vertex set \( \mathcal L \) 
and directed edges corresponding to pairs \( (l,m) \) where the 
endpoint of \( l \) lies in the interior of \( m \).
We observe that there is some natural extra structure associated 
to \( I_\mathcal L \): it comes equipped with a plane embedding
as follows.
For each \( l \in \mathcal L \) 
we choose a subset \( c_l \subset l\) such that 
\begin{itemize}
    \item \( c_l \) is a closed sub-segment of \( l \) that does not 
        contain either of the endpoints of \( l \); 
    \item for every \( m \) that touches \( l \), \( c_l \)
        contains the point of contact (i.e the endpoint of \( m \)).
\end{itemize}
By construction, \( c_l \cap c_m = 
\emptyset\) for \( l \neq m \). Thus, if \( X \) is the quotient space of
\( \mathbb R^2 \) obtained by collapsing each \( c_l \) to a point 
\( v_l \), it follows that \( X \) is homeomorphic to \( \mathbb R^2 \).
The map \( l \mapsto v_l \) provides an embedding of the vertex set 
of \( I_\mathcal L \) in \( X \). 
If \( m \) touches \( l \) then the 
component of \( m - c_m \) that contains the point of 
contact maps to a path in \( X \) from \( v_m \) to \( v_l \). 
Thus we have an embedding 
\( |I_\mathcal L| \rightarrow X \) which we compose with the
homeomorphism \( X\rightarrow \mathbb R^2 \) to construct 
the desired plane embedding \( \psi: |I_\mathcal L|\rightarrow
\mathbb R^2\). Let \( G_\mathcal L \) be the plane 
graph \( (I_\mathcal L, \psi) \). See Figure \ref{fig_embedding}
for an illustration of this construction.
Of course 
\( G_\mathcal L \) depends on the particular choices of 
\( c_l \) for each \( l \) and on the choice of homeomorphism
\( X \rightarrow \mathbb R^2 \). However, it is not 
hard to see that the combinatorial embedding (see \cite{MR1844449}, Chapter 4 for definitions)
defined by this construction is uniquely characterised by the description
above. 

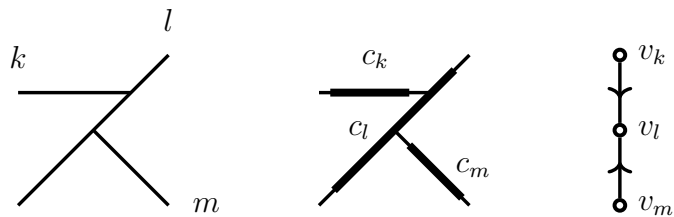
\begin{figure}[htp]
    \centering
  
\begin{tikzpicture}
    \begin{scope}
        \coordinate (A) at (0,0);
        \coordinate (B) at (2,2);
        \coordinate (C) at (2,0);
        \coordinate (D) at (0,2);
        \coordinate (E) at (0,1.5);
        \coordinate (F) at (2,1.5);

        \draw [segment, name path=ps] (A) -- (B) node[label=above:$l$]{};
        \path [name path=n] (C) -- (D);
        \path [name path=h] (E) -- (F);
        \draw [segment,name path=ns, name intersections={of=ps and n, by=x}] (x) --(C) node[label=right:$m$]{};
        \draw [segment,name path=hs, name intersections={of=ps and h, by=y}] (y) --(E)node[label=above:$k$]{};
    ;
    \end{scope}
    \begin{scope}[shift={(4,0)}]
        \coordinate (A) at (0,0);
        \coordinate (B) at (2,2);
        \coordinate (C) at (2,0);
        \coordinate (D) at (0,2);
        \coordinate (E) at (0,1.5);
        \coordinate (F) at (2,1.5);

        \draw [segment, name path=ps] (A) -- (B) node[midway,label=left:$c_l$]{};
        \path [name path=n] (C) -- (D);
        \path [name path=h] (E) -- (F);
        \draw [segment,name path=ns, name intersections={of=ps and n, by=x}] (x) --(C) node[midway,label=right:$c_m$]{};
        \draw [segment,name path=hs, name intersections={of=ps and h, by=y}] (y) --(E)node[midway,label=above:$c_k$]{};
    ;
        \draw [line width = 3pt] ($0.1*(B)+0.9*(A)$) -- ($0.1*(A)+0.9*(B)$);
        \draw [line width = 3pt] ($0.1*(x)+0.9*(C)$) -- ($0.2*(C)+0.8*(x)$);
        \draw [line width = 3pt] ($0.1*(y)+0.9*(E)$) -- ($0.2*(E)+0.8*(y)$);
    \end{scope}
    \begin{scope}[shift={(8,0)}]
        \node [vertex,label=right:{$v_m$}] (M) at (0,0){};
        \node [vertex,label=right:{$v_l$}] (L) at (0,1){};
        \node [vertex,label=right:{$v_k$}] (K) at (0,2){};
        \draw [edge,postaction={decorate}](M) -- (L);
        \draw [edge,postaction={decorate}](K) -- (L);
    \end{scope}

\end{tikzpicture}

\caption{The embedding of $I_\mathcal L$. On the left we have a contact system with segments $l,m,k$. In the centre we have indicated the 
subsegments $c_l,c_m,c_k$ in bold and on the right we have the embedding of the (directed) graph obtained by collapsing each of $c_l,c_m,c_k $ to a point.}
    \label{fig_embedding}
\end{figure}

\section{Symmetric contact systems}
\label{sec_symm_config}

{
    The main objects of interest in this paper are symmetric contact systems. These are contact
    systems that admit a group action induced by some group of symmetries of the plane.
    
Let \( \Gamma \) be a discrete subgroup of the 
Euclidean group of isometries of \( \mathbb R^2 \).
See \cite{Schatt,MR2410150} for a 
discussion of the classification of 
such groups. For $g \in \Gamma$ and $X \subset \mathbb R^2$ we write $g.X$ for the image of $X$ under $g$.
We consider  a contact system of line segments $\mathcal L$ such that
    \begin{itemize}
        \item [(S1)] $g.l \in \mathcal L$ for all $ l \in \mathcal L, g \in \Gamma$
    \end{itemize}
    In general we will seek analogues of Theorem \ref{thm_thomassen1} for various different symmetry groups.
    In order to make the problem more tractable we will impose some conditions relating to finiteness and genericity. In particular we assume that 
    \begin{itemize}
        \item[(S2)] 
        $\mathcal L$ has finitely 
            many $\Gamma$-orbits
    \end{itemize}

    Furthermore, we will assume that 
    \begin{itemize}
        \item[(S3)] $\mathcal L$ is generic.
    \end{itemize}

    Finally we consider an extra condition which is relevant only in the case that $\Gamma$
    does not act freely on $\mathbb R^2$.
    \begin{itemize}
        \item[(S4)] For all $l \in \mathcal L$ and $x \in l$ the stabiliser of $x$ in $\Gamma$ is trivial.
    \end{itemize}

    Given $\mathcal L$ and $\Gamma$ satisfying (S1-4) we say that $\mathcal L$ is a {\em 
        generic $\Gamma$-symmetric 
    contact system}. 
    
    Since the notion of 2-contact system is standard in much of the
    literature we wish to explore the relationship between this notion and 
    that of a generic contact system in the symmetric setting.
    It is clear that a generic $\Gamma$-symmetric contact system is, in particular, a 2-contact system. In the non-symmetric setting 
    Lemma \ref{lem_perturbation} provides a partial converse. In the symmetric
    setting, things are not quite so straightforward.
    
    The following lemma shows that in several cases of interest (S4) is redundant. Recall that $g \in \Gamma$ is primitive if $g = h^m \Rightarrow m = \pm 1$.
    \begin{lemma}
    \label{lem_13imply4}
        Suppose that $\mathcal L, \Gamma$ satisfy (S1), (S2). Furthermore 
        suppose that $\mathcal L$ is a 2-contact system and that $\Gamma$ does not contain a reflection or a primitive rotation of order 2. Then (S4) is also true.
    \end{lemma}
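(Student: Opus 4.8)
The plan is to argue by contradiction: suppose $x \in l$ for some $l \in \mathcal L$ has a non-trivial stabiliser, so there is a non-identity $g \in \Gamma$ fixing $x$. Since $\Gamma$ is a discrete group of Euclidean isometries, any element fixing a point is either a reflection or a rotation about that point; by hypothesis $\Gamma$ contains no reflection, so $g$ is a rotation of some finite order $n \geq 2$ about $x$. Writing $g = h^m$ with $h$ primitive, $h$ is also a rotation about $x$ (its axis is determined by the fixed point $x$), and $h$ has order at least $3$ by the assumption that $\Gamma$ contains no primitive rotation of order $2$ — hence $g$ generates, or is generated inside, a cyclic rotation group of order at least $3$ fixing $x$. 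In particular there is a rotation $\rho \in \Gamma$ fixing $x$ with angle $\theta$ satisfying $0 < \theta < \pi$ (take $\rho = h$ if $\mathrm{ord}(h) \geq 3$; its smallest rotation angle is $2\pi/\mathrm{ord}(h) \le 2\pi/3 < \pi$).

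Next I would derive a contradiction with the $2$-contact condition at the point $x$. There are two cases according to whether $x$ is an interior point of $l$ or an endpoint of $l$. If $x$ is an interior point of $l$, then for each $k$ in the orbit $\{ \rho^j . l : j \}$ the point $x$ is an interior point of $k$, and since $\rho$ has rotation angle strictly between $0$ and $\pi$, the segments $l$, $\rho.l$, $\rho^2.l$ are pairwise distinct lines through $x$ (a rotation by an angle in $(0,\pi)$ about $x$ does not fix the line through $x$ in direction $l$, nor does its square unless the angle is exactly $\pi$). Thus $x$ lies in the interior of at least three distinct segments, violating (S2)'s companion hypothesis that $\mathcal L$ is a $2$-contact system — indeed it violates even the $2$-contact bound. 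If instead $x$ is an endpoint of $l$ but not an interior point of any segment, the same orbit argument shows $x$ is an endpoint of the three distinct segments $l, \rho.l, \rho^2.l$, so $x$ belongs to at least three segments of $\mathcal L$, again contradicting the $2$-contact property. The remaining possibility, that $x$ is an endpoint of $l$ and an interior point of some $m \in \mathcal L$, is handled the same way using whichever of $l$ or $m$ is not fixed as a line by $\rho$ (at least one of the two directions at $x$ is moved by $\rho$, since $\rho^2 \ne \mathrm{id}$), producing three segments through $x$.

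The main obstacle I anticipate is the bookkeeping in showing that the rotated copies are genuinely \emph{distinct} segments of $\mathcal L$ and that $x$ sits in the required position (interior versus endpoint) in each of them — this is where the precise hypotheses "no reflection" and "no primitive rotation of order $2$" are used, to guarantee $\rho$ has a rotation angle in the open interval $(0,\pi)$ so that neither $\rho$ nor $\rho^2$ fixes a given line through $x$. One should also note that finiteness of orbits (S2) is not strictly needed for the contradiction itself, only discreteness of $\Gamma$ (to conclude any point-stabiliser is finite cyclic and generated by rotations), so the argument is essentially local at $x$. Once these elementary facts about rotations in $(0,\pi)$ are recorded, the contradiction with the $2$-contact property is immediate, and (S4) follows.
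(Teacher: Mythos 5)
Your proposal is correct and follows essentially the same route as the paper: replace the stabilising element by a primitive rotation, use the hypotheses to force its order to be at least $3$, and then observe that the orbit of $l$ under this rotation produces too many segments meeting at $x$, contradicting the $2$-contact property. One small slip: your parenthetical claim that the square of a rotation by an angle in $(0,\pi)$ never fixes the line through $x$ fails when the angle is $\pi/2$ (order $4$), so $l,\rho.l,\rho^2.l$ need not be pairwise distinct in the interior case; this is harmless, since $x$ being an interior point of the two distinct segments $l$ and $\rho.l$ already violates the contact-system condition.
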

    \begin{proof}
        Suppose that $g.x = x$ for some non-identity element $ g \in \Gamma$,
        $x \in l$, $l \in \mathcal L$. Since $g$ is not a reflection, it must 
        be a rotation and we can assume without loss of generality that 
        $g$ is primitive. Thus $g$ has order at least 3 and it follows that 
        $l,g.l$ and $g^2.l$ are distinct elements of $\mathcal L$ that 
        all contain $x$. This contradicts the assumption that $\mathcal L$
        is a 2-contact system.
    \end{proof}
    Now we prove a symmetric analogue of Lemma 
    \ref{lem_perturbation}.
    \begin{lemma}
        Suppose that $\mathcal L, \Gamma$ satisfy (S1), (S2) and (S4) where $\mathcal L$ is a
        2-contact system. Furthermore suppose that 
        \begin{itemize}
            \item[(S5)] if $g.l \cap l \neq \emptyset$ for any $l \in \mathcal L $ and nonidentity element $g \in \Gamma$, then $g$ is not a translation. 
            \end{itemize}
        Then there is a generic $\Gamma$-symmetric contact system $\mathcal L'$ arbitrarily close to 
        $\mathcal L$ and a bijection $\mathcal L \rightarrow \mathcal L', l \mapsto l'$ such that 
        $l \cap m \neq \emptyset \Leftrightarrow l' \cap m' \neq \emptyset$.
        \label{lem_symm_perturbation}
    \end{lemma}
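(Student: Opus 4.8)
The plan is to mimic the proof of Lemma \ref{lem_perturbation}, but to perform all perturbations in a $\Gamma$-equivariant way so that property (S1) is preserved at every step. The only obstructions to genericity in a $2$-contact system are pairs of distinct segments sharing a common endpoint, and in the symmetric setting such a pair can arise in two ways: either the two segments lie in distinct $\Gamma$-orbits (or in the same orbit but are not related by a stabiliser element), or a single segment $l$ meets a translate $g.l$ of itself at a common endpoint. Condition (S5) is precisely what rules out the genuinely problematic version of the second case: it guarantees that whenever $g.l \cap l \neq \emptyset$, the element $g$ is a rotation, and then (using (S4), or Lemma \ref{lem_13imply4}) the shared point cannot be fixed by $g$, so the local picture near any such contact point still involves only finitely many segments and can be repaired by a local move. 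First I would set up the bookkeeping: choose one segment from each of the finitely many $\Gamma$-orbits, and observe that it suffices to describe, for each bad contact, a perturbation of a chosen orbit representative which then propagates to the whole orbit via the group action; since $\Gamma$ is discrete and $\mathcal L$ has finitely many orbits, there is a uniform $\varepsilon > 0$ such that any perturbation of size $< \varepsilon$, applied equivariantly, creates no new contacts anywhere in the plane.

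Next I would handle the two types of bad contact. For a contact between segments $l, m$ in different orbits (or not stabiliser-related), I extend $l$ slightly past $m$, exactly as in Lemma \ref{lem_perturbation}: if $l,m$ are not parallel this immediately makes the contact point an interior point of the (lengthened) $l'$; if they are parallel I first perturb the common endpoint $v$ of $l$ (and correspondingly truncate/extend the finitely many other segments with an endpoint on $l$ or $m$) to destroy parallelism, then lengthen. Crucially, I apply the chosen perturbation simultaneously to $g.l, g.m$ for all $g \in \Gamma$; since $l$ and $m$ are in different orbits (or not related by the stabiliser of the relevant point) this is consistent and well-defined. For a self-contact $l \cap g.l \neq \emptyset$ at a common endpoint, (S5) tells us $g$ is a rotation, and by (S4) the contact point $x$ is not fixed by $g$, so $x$, $g^{-1}.x$, and the orbit of the endpoint configuration near $x$ consist of finitely many segments in the orbit of $l$; I then lengthen $l$ at the endpoint sitting at $x$ by a small amount, propagate by $\Gamma$, and this turns the contact into an interior contact without introducing parallelism issues that aren't handled as above. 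In all cases one repeats finitely many times, each time strictly decreasing the number of $\Gamma$-orbits of bad (common-endpoint) contacts, until none remain; the bijection $l \mapsto l'$ and the equivalence $l\cap m \neq\emptyset \Leftrightarrow l'\cap m'\neq\emptyset$ are preserved throughout by the choice of $\varepsilon$.

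The hard part will be the equivariant version of the "make parallel segments non-parallel" step. In the non-symmetric case one freely perturbs a single endpoint, but here perturbing the endpoint $v$ of the orbit representative $l$ forces the corresponding perturbation at every point $g.v$, and one must check two things: first, that the induced simultaneous perturbation is internally consistent — this needs a brief argument that the endpoint in question is not simultaneously constrained by two different group elements, which is where (S4) (trivial stabilisers along segments) and the finiteness (S2) are used; and second, that the forced truncations/extensions of all the ambient segments touching $l$, $m$ and their translates can be made compatibly without cascading — this follows because each such segment is only lengthened or shortened near one endpoint, and choosing the global scale $\varepsilon$ small enough relative to the (positive, since $\mathcal L$ is a finite union of orbits under a discrete group) minimum gap between non-touching segment pairs keeps everything under control. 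Condition (S5) is exactly the hypothesis that prevents a pathological version of this cascade coming from translational self-overlaps, where lengthening $l$ would feed back into lengthening $g.l$ indefinitely; with (S5) in force every self-contact is rotational and the orbit of the local picture is finite, so the induction terminates.
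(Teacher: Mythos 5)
Your overall strategy (equivariant perturbation of orbit representatives, a uniform $\varepsilon$ chosen using (S2) and discreteness, induction on the number of orbits of bad contacts) is the same as the paper's, and your treatment of Case 1 (segments in distinct orbits) and of rotational self-contacts matches the paper's argument. However, there is a genuine gap in your handling of self-contacts $l \cap g.l \neq \emptyset$: you assert that (S5) ``guarantees that $g$ is a rotation,'' but (S5) only excludes translations. The lemma is stated for an arbitrary discrete $\Gamma$ satisfying (S1), (S2), (S4), so $g$ could a priori be a reflection or a glide reflection. Reflections (and order-$2$ rotations) can indeed be excluded, but by a separate argument: if $g^2=1$ and $v\neq g.v$ are the two endpoints of $m=g.l$, then $l$ and $m$ would have the same endpoint set and hence coincide. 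Glide reflections, on the other hand, are \emph{not} excluded by any of the hypotheses and must be handled; the paper devotes the second half of its Case 2 to exactly this.

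The glide reflection case is not merely a bookkeeping omission: when $l$ and $m=g.l$ are parallel they are forced to be collinear (both lying in the axis of $g$), and then your proposed repair --- lengthen $l$ past the shared endpoint and propagate by $\Gamma$ --- fails outright, because the extension of $l$ runs along the line containing $m$ and produces points interior to two segments, which is forbidden in a contact system. The paper's fix is to first perturb the endpoints of $l$ \emph{symmetrically with respect to $g$} (so that equivariance is preserved while destroying collinearity), adjusting the ambient segments to maintain their contacts, and only then to apply the lengthening move. You would need to add both the exclusion argument for involutions and this symmetric-perturbation step for the collinear glide reflection sub-case to complete the proof.
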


    \begin{proof}
        We will show how to adapt the argument for Lemma \ref{lem_perturbation}.
        Suppose that $l$ and $m$  are distinct segments in $\mathcal L$ that have a common endpoint. 

        Case 1: $l$ and $m$ lie in distinct 
        $\Gamma$ orbits. Then the perturbation argument of Lemma \ref{lem_perturbation} carries over to this 
        situation with the understanding that the perturbation is carried out for every element of the orbit of 
        a line segment, respecting the $\Gamma$-action. 

        Case 2: $m = g.l$ for some $g \in \Gamma$. Let $l \cap m = \{v\}$. Using (S4) we see that $v, g.v$ are 
        distinct endpoints of $m$. Using (S4) and (S5)
        we infer that $g$ must be either a rotation of order at least 3 or a glide reflection. If $g$ is a rotation of order at least 3 it follows that $m$ and $l$ are not parallel 
        and that the first perturbation described in 
        the proof of Lemma \ref{lem_perturbation} can be also applied in this situation.
        Finally if $g$ is a glide reflection, and $m$ is not parallel to $l$ then we we can extend one end of $l$ by an arbitrarily small length, and make corresponding extensions to all segments in $\Gamma.l$, so that $l$ and $m$ do not share an endpoint. If $l$ and $m$ are parallel then they must both be contained in the axis of the glide reflection $g$. Now, we can perturb the endpoints of $l$ symmetrically with respect to $g$, and make corresponding perturbations to all segments in $\Gamma.l$ and extending or truncating other segments to maintain all other contacts, so that $l$ and $m$ are not parallel and then proceed as before. 
    \end{proof}
    
    Assumption (S5) might seem a little awkward. However we have the following lemma.
    
    \begin{lemma}
    \label{lem_atleast3}
        Suppose that $\mathcal L, \Gamma$ satisfy (S1), $\mathcal L$ is a 2-contact system and $\Gamma$ contains a rotation of order at least 3. Then $\mathcal L, \Gamma$ satisfy (S5).
    \end{lemma}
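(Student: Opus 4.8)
The plan is to prove the contrapositive: assume $\mathcal L,\Gamma$ violate (S5), so there exists $l\in\mathcal L$ and a nonidentity translation $\tau\in\Gamma$ with $\tau.l\cap l\neq\emptyset$, and derive that $\mathcal L$ cannot be a 2-contact system. Fix a point $x\in\tau.l\cap l$. The key geometric observation is that since $\tau$ is a nontrivial translation, the iterates $\{\tau^k.l : k\in\mathbb Z\}$ all have the same direction vector as $l$, so they are all parallel segments. Moreover $x\in l$ and $x\in\tau.l$ forces (applying $\tau$) that $\tau.x\in\tau.l\cap\tau^2.l$, etc.; combining these one sees that $l$ and $\tau.l$ overlap in a common sub-segment, hence all the $\tau^k.l$ lie along a single line $\ell$ and consecutive ones overlap.

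First I would make precise the overlap claim: writing $l=[a,b]$ along the line $\ell$ with $\tau$ acting as translation by a vector $t$ parallel to $\ell$, the hypothesis $\tau.l\cap l\neq\emptyset$ says $[a+t,b+t]\cap[a,b]\neq\emptyset$, i.e. $|t|\le |b-a|$ (after orienting $\ell$ so that $t$ points in the positive direction and noting $t\neq 0$). Then for any point $p$ in the interior of the overlap $[a,b]\cap[a+t,b+t]$, one has $p\in l$ and $p\in\tau.l$; I then want to bring in a third segment of $\mathcal L$ through $p$. Here is where the rotation hypothesis enters.

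Now bring in the order-$\ge 3$ rotation. Let $\rho\in\Gamma$ be a rotation of order $n\ge 3$, with some center $c$. The plan is to use $\rho$ to produce a segment of $\mathcal L$ through $p$ that is \emph{not} parallel to $\ell$, giving three segments through a suitable common point and contradicting the 2-contact condition. Concretely: consider the segment $\rho.l\in\mathcal L$. Its direction is the rotation of $\ell$'s direction by the angle $2\pi/n\notin\{0,\pi\}$, so $\rho.l$ is not parallel to $\ell$, hence not parallel to $l$ or to $\tau.l$. If $\rho.l$ happens to pass through some point of the overlap segment $[a,b]\cap[a+t,b+t]$, we are done: that point lies on $l$, on $\tau.l$, and on $\rho.l$, three distinct elements of $\mathcal L$ (distinct because they are pairwise non-identical: $l\neq\tau.l$ since $t\neq 0$, and $\rho.l$ is non-parallel to the other two), contradicting the 2-contact assumption. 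In general $\rho.l$ need not meet that particular overlap segment, so the real argument is to translate the whole configuration by a suitable power of $\tau$ first. Since $\Gamma$ is a group containing both $\tau$ and $\rho$, the conjugate $\tau':=\rho\tau\rho^{-1}\in\Gamma$ is a translation by $\rho(t)$, which is non-parallel to $t$; and $\rho.l\in\mathcal L$ satisfies $\tau'.(\rho.l)\cap\rho.l\neq\emptyset$ since this is just the $\rho$-image of $\tau.l\cap l\neq\emptyset$. So $\rho.l$ lies along a line $\ell'$ in the direction $\rho(\ell)$, with consecutive $\tau'$-iterates overlapping. The two lines $\ell$ and $\ell'$ are non-parallel, hence meet in a single point $z$. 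I would then argue that some $\tau$-iterate of $l$ and some $\tau'$-iterate of $\rho.l$ both contain $z$ (because each family of iterates covers an entire line, $\ell$ respectively $\ell'$, since consecutive segments overlap), and then pick a third segment from $\Gamma$ through $z$ — or more simply, once $z\in \tau^j.l$ and $z\in(\tau')^k.\rho.l$ for suitable $j,k$, apply yet another rotation or translation fixing nothing to get a third. Let me sharpen this last move.

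The clean finish: once we know each $\tau^i.l$ overlaps its neighbour, the union $\bigcup_{i\in\mathbb Z}\tau^i.l$ is the entire line $\ell$ (it is a connected unbounded subset of $\ell$, unbounded in both directions because $\tau$ and $\tau^{-1}$ push it both ways). Likewise $\bigcup_k (\tau')^k.\rho.l=\ell'$. Hence the intersection point $z=\ell\cap\ell'$ lies on some $\tau^{j}.l$ and on some $(\tau')^{k}.\rho.l$, and these are two non-parallel segments of $\mathcal L$ through $z$. For a third, apply $\tau$: the point $\tau.z$ lies on $\tau^{j+1}.l$ and on $\tau\tau'^{k}\rho.l$; these are non-parallel too, but that only gives two through $\tau.z$, not three. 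So instead I take the third segment through $z$ itself: note that since $z\in\ell$ lies in the interior overlap of $\tau^{j}.l$ and $\tau^{j+1}.l$ (any point of $\ell$ lies in the overlap of two consecutive iterates, provided $|t|<|b-a|$; if $|t|=|b-a|$ the iterates only share endpoints and one must instead use $|t|\le|b-a|$ more carefully — but genericity/2-contact already rules out boundary-only overlaps being problematic, so I would just note $z$ can be chosen in an open overlap), we get $z$ on $\tau^{j}.l$, on $\tau^{j+1}.l$, and on $(\tau')^{k}.\rho.l$ — three pairwise distinct elements of $\mathcal L$ all containing $z$, contradicting the 2-contact property. This completes the contrapositive and hence the lemma.

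The main obstacle I anticipate is the bookkeeping around boundary cases of the overlap (when $|t|=|b-a|$, consecutive translates meet only at a point, so the union of iterates is a discrete-ish set of touching segments rather than a full line, and a point of $\ell$ need not lie in two translates simultaneously). In that situation I would argue directly: the shared endpoint $b=a+t$ lies on $l=\tau^0.l$ and on $\tau.l$, so it is an endpoint of both — but $\mathcal L$ being a 2-contact system with $\Gamma$ acting, one checks this still forces a third incidence using $\rho$, because $\rho$ maps this touching pair to another touching pair through $\rho(b)$, and chasing the group relations between $\tau$, $\rho$ and their conjugates produces a point of triple incidence. Handling this degenerate sub-case cleanly, rather than hand-wavingly invoking genericity, is where the proof needs the most care; everything else is elementary affine geometry combined with the fact that $\Gamma$ is closed under the relevant conjugations.
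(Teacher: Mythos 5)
Your overall strategy is exactly the paper's: from $\tau.l\cap l\neq\emptyset$ conclude that $M=\bigcup_{i\in\mathbb Z}\tau^i.l$ is an entire line, observe that a rotation $\rho$ of order at least $3$ carries $M$ to a non-parallel line $\rho.M=\bigcup_k(\rho\tau^k\rho^{-1}).(\rho.l)=\bigcup_k\rho\tau^k.l$, and derive a contradiction with the $2$-contact property at the single point $z=M\cap\rho.M$. (The paper's proof is precisely this, stated in three lines with the final contradiction left as ``clear''.) However, the step you use to produce a third segment through $z$ is false as stated: it is not true that every point of $\ell$ lies in the overlap of two consecutive translates whenever $|t|<|b-a|$. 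The overlaps $\tau^{i}.l\cap\tau^{i+1}.l$ have length $|b-a|-|t|$ and are spaced $|t|$ apart, so they cover $\ell$ only when $|t|\le|b-a|/2$; for instance with $l=[0,1]$ and $t=0.7$ the point $1.2$ lies in the single translate $[0.7,1.7]$ and in no overlap. So the difficulty is not confined to the boundary case $|t|=|b-a|$ that you flag (and in that case the union of the translates is still the whole line, contrary to your ``discrete-ish set of touching segments'' worry).

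The correct finish is a short case analysis that uses \emph{both} clauses of the definition of a $2$-contact system. The point $z$ lies on some $\tau^j.l$ and on some $\rho\tau^k.l$, two distinct non-parallel segments of $\mathcal L$. If $z$ is interior to both, the contact-system condition (no point is interior to more than one segment) is already violated and you are done with only two segments. Otherwise $z$ is an endpoint of at least one of them, say of $\tau^j.l$; since $\tau.l\cap l\neq\emptyset$ forces $|t|\le|b-a|$, that endpoint also belongs to the adjacent translate $\tau^{j+1}.l$ or $\tau^{j-1}.l$, so $z$ lies on three pairwise distinct segments of $\mathcal L$, violating the ``at most two segments through any point'' clause. (The case where $z$ is an endpoint of $\rho\tau^k.l$ is symmetric, using the adjacent member of the family $\rho\tau^i.l$ on the line $\rho.M$.) With this repair your argument closes and coincides with the paper's.
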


    
    \begin{proof}
        Suppose that $g.l \cap l$ for some segment $l \in \mathcal L $ and translation $g \in \Gamma$. Then $M = \cup_{i \in \mathbb Z}g^i.l$ is a line in the plane. Now if 
        $h \in \Gamma$ is a rotation of order at least $3$ then $h.M \cap M$ is a single point 
        and it is clear that this contradicts the fact that $\mathcal L$ is a 2-contact system.
    \end{proof}
    
    Later we will focus on the cases where $\Gamma$ is a cyclic group, generated by either a translation or a rotation. 
    We note that in the case where $\Gamma$ is generated by a rotation of (finite) order at least 3,  Lemmas \ref{lem_13imply4},  \ref{lem_symm_perturbation} and \ref{lem_atleast3} allow us to interpret our characterisations of generic $\Gamma$-symmetric contact systems as characterisations of $\Gamma$-symmetric 
    2-contact systems. In the cases where $\Gamma$ is generated by a rotation of order 2 or a translation, things are complicated by the possibility that a symmetric 2-contact system could have a pair of segments that meet at a common endpoint. We defer discussion of such singular orbits to later work as it would significantly add to the length of the present paper and we also believe that our definition of a generic symmetric contact system is reasonably natural and worthy of investigation in its own right.
}



\subsection{The surface graph of a symmetric contact system}

The orbifold \( \mathbb R^2/\Gamma \) is a natural 
geometric object associated to the group \( \Gamma \).
Let \( \mathbf O \) be the 
set of non-singular points of  \( \mathbb R^2/\Gamma \). 
Explicitly 
\( \mathbf O \) is the image of the set of points 
with trivial stabiliser under the quotient map 
\( p: \mathbb R^2 
\rightarrow \mathbb R^2/\Gamma \). 
We observe that, geometrically, \( \mathbf O\) is a
flat surface (i.e. with constant curvature zero).
Let \( \Sigma \) be the 
underlying topological space of \( \mathbf O \). 
Later we will be particularly interested in the cases 
where \( \Gamma  \) is  generated by a translation or 
by a rotation and we note that in both of these cases
\( \Sigma \) is homeomorphic to \( \mathbb A \).

Now suppose that $\mathcal L$ is a generic \( \Gamma \)-symmetric contact system  in the plane. Using property (S4)
of $\mathcal L$
we see that 
each \( \Gamma \)-orbit of \( \mathcal L \) defines a local geodesic in 
\( \mathbf O \). For \( l \in \mathcal L \), 
let \( \overline l :[0,1]\rightarrow \mathbf O\) 
be a constant speed geodesic 
such that \(\overline l([0,1]) = p(l) \) and
let \( \overline{\mathcal L}  = \{ \overline l : l \in \mathcal L\}\).
We refer to \( \overline{\mathcal L} \) as the
contact system in \( \mathbf O \) corresponding to \(\mathcal L\). 

More generally, let \( \alpha:[0,1]\rightarrow \mathbf O \)
be a constant speed local geodesic. We say that \( x \) is 
a point of self intersection of \( \alpha \) if there 
exist \( t_1 \neq t_2 \) such that \( \alpha(t_1) = \alpha(t_2) \). 
Let \( \mathcal N \) be a finite set of constant speed geodesics in 
\( \mathbf O \). We say that \( \mathcal N \) is a 
generic contact system in \( \mathbf O \) if 
\begin{itemize}
    \item 
any point of intersection of 
\( \alpha \neq \beta \in \mathcal N \)
is an endpoint of precisely 
one of \( \alpha,\beta \) and is an interior point of 
precisely one of \( \alpha,\beta \), and
\item
any point of self intersection of 
\( \alpha \in \mathcal N \) occurs precisely once as an endpoint
of \( \alpha \) and precisely once as an interior point of 
\( \alpha \).
\end{itemize}

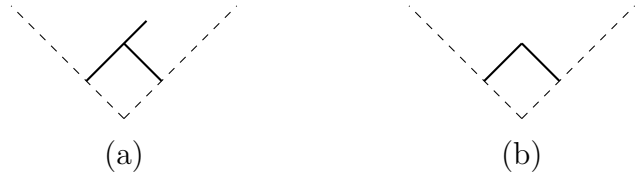
\begin{figure}[htp]
\begin{center}
  \begin{tikzpicture}[very thick,scale=1]
\tikzstyle{every node}=[vertex]
   \draw[dashed,thin] (0,0)  --  (1.5,1.5);
      \draw[dashed,thin] (0,0)  --  (-1.5,1.5);
        \draw[thick] (0.5,0.5)  --  (0,1);
      \draw[thick] (-0.5,0.5)  --  (0.3,1.3);
         \node [rectangle, draw=white, fill=white] (a) at (0,-0.5) {(a)}; 
       
              \end{tikzpicture}
        \hspace{2cm}
         \begin{tikzpicture}[very thick,scale=1]
 \tikzstyle{every node}=[vertex]
   \draw[dashed,thin] (0,0)  --  (1.5,1.5);
      \draw[dashed,thin] (0,0)  --  (-1.5,1.5);
        \draw[thick] (0.5,0.5)  --  (0,1);
      \draw[thick] (-0.5,0.5)  --  (0,1);
         \node [rectangle, draw=white, fill=white] (b) at (0,-0.5) {(b)}; 
        \end{tikzpicture} 
                \caption{{Two contact systems  in \( \mathbf O \) where \( \mathbf O \) is a flat cone. Each of the contact systems consists of a single segment which self-intersects. The contact system in (a) is generic, whereas the one in (b) is not.}}
 \label{fig:gen_ln_seg}               
\end{center}
\end{figure}

{See Figure~\ref{fig:gen_ln_seg} for some examples.}

Given \( \alpha \in \mathcal N \)
there is a \( \Gamma \)-invariant collection of 
line segments in the plane, \( \mathcal L_\alpha \), such that 
\( p^{-1}(\alpha([0,1])) = \cup_{l \in \mathcal L_\alpha}l\)
and \( p(l) = \alpha([0,1]) \) for each \( l \in \mathcal L_\alpha \).
Let \( \tilde{\mathcal N} = 
\cup_{\alpha \in \mathcal N} \mathcal L_\alpha\).
In the case \( \mathcal N = \overline{\mathcal L} \)
it is clear that \( \mathcal L = \tilde{\mathcal N} \).
Indeed the  following lemma is a straightforward observation 
concerning the definitions of a generic \( \Gamma \)-symmetric 
contact system and a contact system in \( \mathbf O \).

{
\begin{lemma}
    The mappings \( \mathcal L \mapsto \overline{\mathcal L}  \)
    and \( \mathcal N \mapsto \tilde{\mathcal N}\) are mutually
    inverse bijections between the set of generic \( \Gamma \)-symmetric
    contact systems in the plane and the set of generic contact
    systems in \( \mathbf O \).
    \label{lem_lifting}
\end{lemma}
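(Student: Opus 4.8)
The plan is to verify that the two maps are well-defined and then check they invert one another. First I would check well-definedness of $\mathcal L \mapsto \overline{\mathcal L}$: given a generic $\Gamma$-symmetric contact system $\mathcal L$, property (S4) guarantees that each orbit $\Gamma.l$ projects to the image of a local geodesic in $\mathbf O$ with no degenerate behaviour (a point of $l$ with trivial stabiliser maps homeomorphically onto a neighbourhood in $\mathbf O$), so $\overline l$ is a genuine constant speed local geodesic. Then I would argue that the two defining bullet points for ``generic contact system in $\mathbf O$'' are exactly the $p$-images of conditions (S3) (no point in two interiors, no point an endpoint of two segments) together with (S1): an intersection point of $\overline\alpha \neq \overline\beta$ in $\mathbf O$ lifts, after choosing representatives, to an intersection of two segments in $\mathcal L$ lying in distinct orbits (or of $l$ and $g.l$), and genericity of $\mathcal L$ forces the required ``endpoint of precisely one, interior point of precisely one'' condition upstairs, which descends. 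The self-intersection clause for a single $\overline\alpha$ similarly lifts to a contact between $l$ and some $g.l$ in the same orbit, again controlled by (S3). Finiteness of $\overline{\mathcal L}$ as a set of geodesics follows from (S2).

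Next I would check well-definedness of $\mathcal N \mapsto \tilde{\mathcal N}$. Given a generic contact system $\mathcal N$ in $\mathbf O$, for each $\alpha$ the preimage $p^{-1}(\alpha([0,1]))$ is a $\Gamma$-invariant union of line segments $\mathcal L_\alpha$ (each a connected component of the preimage, which is a line segment since $p$ is a local isometry and $\alpha$ has no self-intersection issues that would close it up — here I use that $\alpha$ is a geodesic \emph{segment}, i.e.\ has two distinct endpoints, together with the cone/cylinder geometry of $\mathbf O$). Setting $\tilde{\mathcal N} = \bigcup_\alpha \mathcal L_\alpha$, property (S1) is immediate from $\Gamma$-invariance, (S2) from finiteness of $\mathcal N$, (S4) holds because $\mathbf O$ consists precisely of the images of points with trivial stabiliser so no segment in $\mathcal L_\alpha$ can contain a point with nontrivial stabiliser, and (S3) follows by pushing the two bullet-point conditions on $\mathcal N$ down through $p$: an interior intersection point of two segments in $\tilde{\mathcal N}$ would project to a violation of one of the two clauses in $\mathbf O$ (distinguishing the case where the two segments lie over distinct $\alpha$'s from the case where they lie over the same $\alpha$, which is governed by the self-intersection clause).

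Finally I would check the two composites are the identity. For $\mathcal L \mapsto \overline{\mathcal L} \mapsto \widetilde{\overline{\mathcal L}}$: by construction $p(l) = \overline l([0,1])$, so $l \subseteq p^{-1}(\overline l([0,1]))$, hence $l$ is contained in one of the segments of $\mathcal L_{\overline l}$; since $p$ restricted to that segment is injective (again by (S4) and the covering geometry) and $\mathcal L$ is $\Gamma$-invariant, the components of $p^{-1}(\overline l([0,1]))$ are exactly $\{g.l : g\in\Gamma\}$, giving $\widetilde{\overline{\mathcal L}} = \mathcal L$. For $\mathcal N \mapsto \tilde{\mathcal N} \mapsto \overline{\tilde{\mathcal N}}$: each $\alpha \in \mathcal N$ equals $p(l)$ for $l \in \mathcal L_\alpha$ by definition of $\mathcal L_\alpha$, and a constant speed geodesic in $\mathbf O$ is determined by its image, so $\overline l = \alpha$, whence $\overline{\tilde{\mathcal N}} = \mathcal N$. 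The one subtlety I expect to be the main obstacle is the claim that each connected component of $p^{-1}(\alpha([0,1]))$ is a \emph{single line segment} rather than, say, a longer line or a point — i.e.\ that $p$ restricted to such a component is injective onto $\alpha([0,1])$. This requires using that $\alpha$ is a geodesic segment (two endpoints) and that in the relevant cases $\mathbf O$ has no closed geodesics of the right type, or more precisely that a lift of $\alpha$ cannot wrap around; I would handle this by working in the universal-cover picture $p:\mathbb R^2 \to \mathbb R^2/\Gamma$ directly, noting that any lift of the geodesic arc $\alpha$ is a geodesic arc in $\mathbb R^2$, hence a line segment, and that distinct lifts are $\Gamma$-translates of each other since $\Gamma$ acts transitively on the fibres of $p$ over $\mathbf O$.
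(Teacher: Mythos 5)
Your proposal is correct and follows essentially the same route as the paper: both directions of well-definedness are checked by translating (S1)--(S4) back and forth through the quotient map $p$, exactly as in the paper's proof. The only difference is that you spell out the mutual-inverse verification (lifting the geodesic arc $\alpha$ to line segments in the plane), which the paper dismisses as clear in the sentence preceding the lemma; your attention to the fact that lifts of the arc, rather than connected components of $p^{-1}(\alpha([0,1]))$, give the individual segments is a sensible precaution but does not change the argument.
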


\begin{proof}
Suppose that $\mathcal L$ is a generic $\Gamma$-symmetric contact system the plane. Suppose that $\alpha, \beta \in \overline{\mathcal L}$, $\alpha \neq \beta$ and $x$ is a point of intersection of $\alpha$ and $\beta$. Using property (S3) of $\mathcal L$ it follows that $x$ is an endpoint of precisely one of $\alpha, \beta$ and an interior point of the other. 
If $x$ is a point of self-intersection of $\alpha$
then for some $l \in \mathcal L$, $g \neq e \in \Gamma$ we must have $\alpha([0,1]) = p(l) = p(g.l)$. Since $g \neq e$, we have $g.l \neq l$ 
and again using (S3) we see 
$l$ and $g.l$ must intersect at an point that is 
an endpoint of precisely one of $l, g.l$. Thus
$x$ is both an endpoint and an interior point of 
$\alpha$ as required. The finiteness of $\overline{\mathcal L}$ follows from 
(S2). Thus $\overline{\mathcal L}$ is a generic contact system $\mathbf O$.

On the other hand suppose that $\mathcal N$ is a generic contact system in 
$\mathbf O$. It is clear from the construction of $\tilde{\mathcal N}$ that 
it satisfies (S1), (S2) and (S4). Suppose that $l,m \in \tilde{\mathcal N}$
have a point in common and that $\alpha([0,1]) = p(l)$ and $\beta([0,1]) = p(m)$ for $\alpha,\beta \in \mathcal N$. Then $\alpha, \beta$ have a point of 
intersection which is an endpoint of precisely one of $\alpha, \beta$. It follows that the
point of intersection of $l,m$ is an endpoint of precisely one of $l,m$. 
Thus $\tilde{\mathcal N}$ satisfies (S3). The fact that $\mathcal N$
is finite implies that $\tilde{N}$ satisfies (S2). 
Therefore $\tilde{\mathcal N}$ is a generic $\Gamma$-symmetric contact 
system as required. 
\end{proof}
}

Now given a contact system \( \mathcal N \) in \( \mathbf O \)
we can define a graph \( I_\mathcal N \) with vertex set \( \mathcal N \) and edges corresponding to quadruples  \( (\alpha,\beta,x,y) \) 
where \( \alpha,\beta \in \mathcal N \), \( x \in \{0,1\} \), 
\( y \in (0,1) \) and \( \alpha(x) = \beta(y) \). Here
we allow \( \alpha = \beta \) and moreover it is possible that 
we could have distinct edges \( (\alpha,\beta,x_1,y_1) \)
and \( (\alpha,\beta,x_2,y_2) \). (Note 
that for some \( \mathbf O \) this can happen 
even if \( \alpha = \beta \).)

Now
we define a \( \Sigma \)-graph 
whose underlying graph is \( I_\mathcal N \) as follows.
For each \( \alpha \in \mathcal N \) choose a non-empty closed interval 
\( c_\alpha \subset (0,1) \) 
such that \( y \in c_\alpha \) 
for every edge 
\( (\beta,\alpha,x,y) \). 
Now collapse each \(\alpha(c_\alpha)  \) to a single point
\( p_\alpha \). The resulting
quotient space of \( \mathbf O \) is homeomorphic to
\( \Sigma \). The map \( \alpha \mapsto p_\alpha\)
gives an embedding of the vertex set of \( I_\mathcal N \)
and we use the restriction of \( \alpha \) to the appropriate
component of \( [0,1]-c_\alpha \) to construct embeddings 
of the  edges of \( I_\mathcal N \). 
Let \( G_\mathcal N \) denote the resulting \( \Sigma \)-graph.

So if \( \mathcal L \) is a generic \( \Gamma \)-symmetric
contact system in the plane, then \( \Gamma \) acts 
by directed graph automorphisms on \( I_\mathcal L \) and 
it is easy to see that \( I_{\mathcal L}/\Gamma \) is 
canonically isomorphic to \( I_{\overline {\mathcal L}} \).
Now let 
\( \tilde \Sigma = 
\{ x \in \mathbb R^2: \text{Stab}_\Gamma(x) = 1_\Gamma\}\).
It is well known that the restriction \( p:\tilde\Sigma
\rightarrow \Sigma\) is a regular covering projection.
Using standard results of covering space theory it follows
that we can choose the embedding \( \psi: |I_\mathcal L|
\rightarrow \tilde \Sigma\) so that the following diagram
commutes
\begin{equation}\label{diag_covering}
        \begin{tikzcd}
            {|I_\mathcal L|} \arrow[r,"\psi"] \arrow[d]& \tilde \Sigma \arrow[d] \arrow[r,hook] & \mathbb R^2 \arrow[d,"p"]\\
            {|I_{\overline{\mathcal L}}|} \arrow[r,"\overline\psi"]& \Sigma \arrow[r,hook]& \mathbb R^2/\Gamma
        \end{tikzcd}
\end{equation}
where \( \overline \psi:|I_{\overline {\mathcal{L}}} |\rightarrow \Sigma \)
is the embedding constructed above.
We note that the left and middle vertical arrows in 
(\ref{diag_covering}) represent regular covering projections.

In summary a generic \( \Gamma \)-symmetric contact
system, \( \mathcal L \), gives rise to a surface graph, denoted \(
G_{\overline{\mathcal L}}\), which describes the combinatorial 
structure of the contact system. 
See Figure \ref{fig_examples} for some examples of generic symmetric contact 
systems and their corresponding surface graphs.

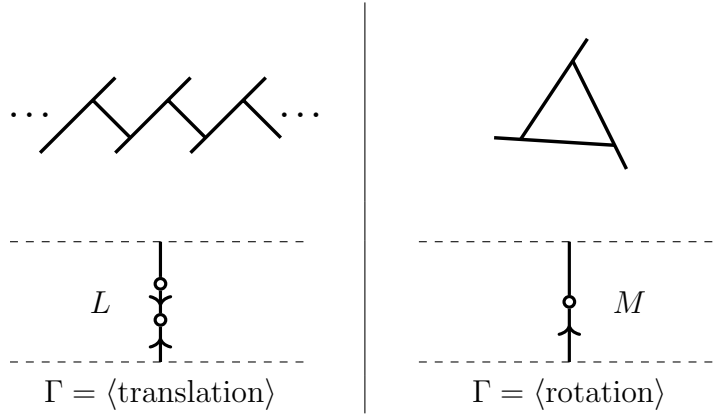
\begin{figure}[htp]
    \centering

    \begin{tabular}{c|c}

\begin{tikzpicture}
    \coordinate (BL) at (-2.5,-1);
    \coordinate (TR) at (2.5,1.5);

    \clip (BL) rectangle (TR);
    \node at (-1.7,0){\bf \dots};
    \foreach \x in {-1.6,-.6,0.4}{
        \draw [segment](\x,-0.5) -- +(1,1) ++(0.7,0.7) -- +(0.5,-0.5) ;
    }
    \node at (1.9,0){\bf \dots};
\end{tikzpicture}
&

\begin{tikzpicture}
    \clip (BL) rectangle (TR);
        \foreach \x in {0,120,240}{
            \draw [segment,rotate=\x] (-1,-0.3) -- (0.6,-0.4);
        }
\end{tikzpicture}

\\





\\

\begin{tikzpicture} [scale=0.8]
    \tikzstyle{every node}=[circle, draw=black,  fill=white, inner sep=0pt, minimum width=4pt];
    \begin{scope}[very thick,decoration={
    markings,
    mark=at position 0.7 with {\arrow{>}}}
    ] 
    \draw[dashed,thin] (-2.5,-0.5)-- (2.5,-0.5);
    \draw[dashed,thin] (-2.5,1.5)-- (2.5,1.5);
        \path (0,0.2) node (p1) {} ;
        \path (0,0.8) node (p2) {} ;
      \draw[postaction={decorate}] (p2)--(0,0.5)--(p1);
      \draw[postaction={decorate}] (0,-0.5)--(p1);
      \draw(0,1.5)--(p2);
      \end{scope}
                              \node [draw=white, fill=white] (a) at (-1,0.5) {$L$};
                              
      \end{tikzpicture}

&

\begin{tikzpicture} [scale=0.8]
    \tikzstyle{every node}=[circle, draw=black,  fill=white, inner sep=0pt, minimum width=4pt];
    \begin{scope}[very thick,decoration={
    markings,
    mark=at position 0.7 with {\arrow{>}}}
    ] 
    \draw[dashed,thin] (-2.5,-0.5)-- (2.5,-0.5);
    \draw[dashed,thin] (-2.5,1.5)-- (2.5,1.5);
        \path (0,0.5) node (p1) {} ;
          \draw[postaction={decorate}] (0,-0.5)--(p1);
      \draw(0,1.5)--(p1);
      \end{scope}
                              \node [draw=white, fill=white] (a) at (1,0.5) {$M$};
                              
      \end{tikzpicture}
\\ 

$\Gamma = \langle \text{translation} \rangle$ 
&
$\Gamma = \langle \text{rotation} \rangle$ 
        
    \end{tabular}

    \caption{{Two examples of generic symmetric contact systems (top row) and their corresponding surface graphs (second row). In both cases the surface \( \Sigma \) is homeomorphic to \( \mathbb A \) which we represent topologically by a horizontal strip with top and bottom edges (the dotted lines) identified. The \( \mathbb A \)-graphs \( L \), respectively $M$, on the left, respectively right, arise as base graphs in the inductive characterisations described in Theorems \ref{thm_cylinder_inductive232} and \ref{thm_cylinder_inductive231}.}}
    \label{fig_examples}
    \label{fig_basecases}
\end{figure}

\section{Gain sparsity counts}
\label{sec_nec}

For the remainder of the paper we will specialise to the 
case where the symmetry group \( \Gamma \) is cyclic and 
orientation preserving. So \( \Gamma \) is either 
generated by a rotation or a translation. In either of these 
cases \( \Sigma \) is homeomorphic to 
the punctured plane \( \mathbb A = \mathbb R^2 -\{(0,0)
\}\) so from now on we will primarily be concerned 
with properties of an \( \mathbb A \)-graph, that is to say 
a graph together with an embedding of its geometric realisation in 
\( \mathbb A \).

For a graph \( D = (V,E) \) we define \[ f(D) = 2|V| - |E|. \] 
Thus \( D \) is \( (2,3) \)-sparse
if and only if \( f(C) \geq 3 \) for every subgraph \( C \) of \( D \) that 
contains at least one edge. If, in addition, \( f(D) = 3 \) or \( D \) is 
an isolated vertex, we say that \( G \) is \( (2,3) \)-tight or is a Laman graph.

Now suppose that \( G \) is an \( \mathbb A \)-graph. 
We say that 
\( G \) is {\em balanced} if some face of \( G \) contains 
both ends of \( \mathbb A \), and {\em unbalanced} otherwise. 
If \( F \subset E(G) \) then we say that \( F \) is 
balanced, respectively unbalanced, if \( G(F) \) is 
balanced, respectively unbalanced.

Suppose that \( l \in \{1,2\} \). We say that 
\( G \) is {\em \( (2,3,l) \)-sparse} if 
\( f(H) \geq l \) for every subgraph \( H \) of \( G \),
and \( f(K) \geq 3 \) for every balanced subgraph \( K \) 
of \( G \) with at least one edge. 
If in addition, either \( f(G) = l \), or 
\( G \) is balanced and \( f(G) =3\), or \( G \) is an 
isolated vertex, then we say that \( G \) is
{\em \( (2,3,l) \)-tight}. 
On the other hand if \( H \) is a subgraph of \( G \) 
such that either \( f(H) <l \), or, \( H \) is balanced, has an
edge and \( f(H) <3 \), then we say that \( H \) 
{\em violates the \( (2,3,l) \)-sparsity count}.
Since any subgraph of a balanced graph is also balanced 
it is 
clear that  a balanced \( \mathbb A \)-graph is \( (2,3,l) \)-tight
if and only if it is a Laman graph.
\label{rem_gain1}

\begin{figure}[htp]
    \centering

\begin{tabular}{ccc}
\begin{tikzpicture}
\tikzstyle{every node}=[circle, draw=black,  fill=white, inner sep=0pt, minimum width=4pt];
    \clip (-0.2,-0.2) rectangle (3.2,2.2);
    \draw[cylinder] (0,0) -- (3,0) (0,2) -- (3,2);
    \coordinate [vertex] (v1) at (1.5,1.7);
    \coordinate [vertex] (v2) at (2,1);
    \coordinate [vertex] (v3) at (1.5,0.3);
    \coordinate [vertex] (v4) at (1,1);
    \draw[edge] (v1) -- (v2) -- (v3) -- (v4) -- (v1) (v2) -- (v4); 
      
\end{tikzpicture}
&
\begin{tikzpicture}
\tikzstyle{every node}=[circle, draw=black,  fill=white, inner sep=0pt, minimum width=4pt];
    \clip (-0.2,-0.2) rectangle (3.2,2.2);
    \draw[cylinder] (0,0) -- (3,0) (0,2) -- (3,2);
    \coordinate [vertex] (v1) at (1.5,0);
    \coordinate [vertex] (v2) at (2,1);
    \coordinate [vertex] (v4) at (1,1);
    \coordinate [vertex] (v3) at (1.5,1.5);
    \draw[edge] (v1) -- (v2);
    \draw[edge] (1.5,1.5) -- (1.25,2);
    \draw[edge] (1.25,0) -- (v4);
    \draw[edge] (v3) -- (v2);
    \draw[edge] (v3) -- (v4);
    \draw[edge] (v4) -- (v1);
    \draw[edge] (v2) -- (1.5,2);
     
\end{tikzpicture}
&
\begin{tikzpicture}
\tikzstyle{every node}=[circle, draw=black,  fill=white, inner sep=0pt, minimum width=4pt];
    \clip (-0.2,-0.2) rectangle (3.2,2.2);
    \draw[cylinder] (0,0) -- (3,0) (0,2) -- (3,2);
    \coordinate [vertex] (v1) at (1.5,0);
    \coordinate [vertex] (v2) at (2,1);
    \coordinate [vertex] (v4) at (1,1);
    \coordinate [vertex] (v3) at (1.5,2);
    \draw[edge] (v1) -- (v2);
    \draw[edge] (v4) -- (1,2);
    \draw[edge] (v4) -- (1,0);
    \draw[edge] (v3) -- (v2);
    \draw[edge] (v3) -- (v4);
    \draw[edge] (v4) -- (v1);
      
   \end{tikzpicture}
   \\(a)  & (b) & (c)
\end{tabular}

\caption{{Examples of \( (2,3,l) \)-tight \( \mathbb A \)-graphs (with directions of edges omitted). (a) is   \( (2,3,l) \)-tight for $l=1,2$ since it is balanced and \( (2,3)\)-tight. (b) and (c) are unbalanced and \( (2,3,2) \)- and  \( (2,3,1) \)-tight, respectively.}}
    \label{fig_gain_sparsity_ex}
    \end{figure}
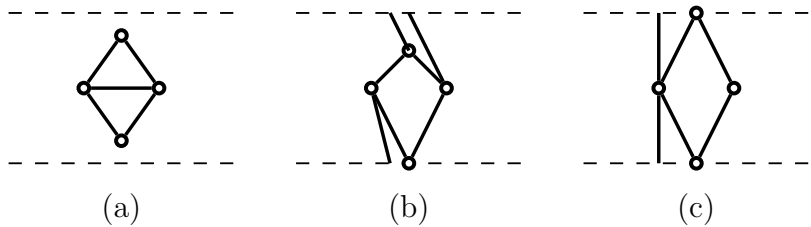

\begin{remark}
    Those familiar with gain graphs
    will  observe that our definition of a balanced 
    \( \mathbb A \)-graph and subsequent definition of 
    \( (2,3,l) \)-sparsity 
    are particular cases of more general
    notions. See, for example, \cite{Zas89},  \cite{ST1},
    \cite{JKT} and \cite{BS10}.
    \label{rem_gain2}
\end{remark}

{For some examples, see Figure~\ref{fig_gain_sparsity_ex}. We can also consider the \( \mathbb A\)-graphs
shown in Figures \ref{fig_basecases}, \ref{fig_degenquads} and 
\ref{fig_231examples}.} In these diagrams, and elsewhere in the paper,  
we represent \( \mathbb A \) 
by a horizontal strip with top and bottom edges identified. {Moreover, the directions of the edges of the \( \mathbb A\)-graphs are removed in the diagrams, as they are irrelevant to the sparsity properties of the \( \mathbb A\)-graphs.}
Specifically the three \(\mathbb A \)-graphs shown in Figure \ref{fig_231examples} are all \( (2,3,1) \)-tight.
Of the \( \mathbb A \)-graphs in Figure \ref{fig_degenquads}, (a) is \( (2,3,2) \)-tight, (b) and (d) are \( (2,3,1) \)-tight and (c) is \( (2,3,1) \)-sparse but not tight.
Finally we note that the \( \mathbb A \)-graph \( L \), respectively \( M \), from Figure \ref{fig_basecases} is \( (2,3,2) \)-tight, respectively \( (2,3,1) \)-tight.

Now we give the statements of our main results for 
generic symmetric contact systems.

\begin{theorem}
    \label{thm_maintranslation}
    Let \( \Gamma \) be the subgroup of the Euclidean group
    generated by a translation or by a rotation of order \( 2 \).
    An \( \mathbb A \)-graph \( G \) is the graph of
    a generic \( \Gamma \)-symmetric contact system of line segments
    in the plane
    if and only if \( G \) is \( (2,3,2) \)-sparse.
\end{theorem}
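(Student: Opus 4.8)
The plan is to prove the two implications separately, relying on the inductive characterisation (Theorem \ref{thm_cylinder_inductive232}) for the harder ``sufficiency'' direction.

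\emph{Necessity.} Suppose $G = G_{\overline{\mathcal L}}$ for a generic $\Gamma$-symmetric contact system $\mathcal L$ in the plane. First I would argue that the lifted plane graph $G_{\mathcal L}$ on $\tilde\Sigma \hookrightarrow \mathbb R^2$ (as constructed via the commuting diagram \eqref{diag_covering}) is a plane graph whose underlying abstract graph is $(2,3)$-sparse: this follows from Thomassen's theorem (Theorem \ref{thm_thomassen1}) since $\mathcal L$, being generic, is a finite generic contact system and $I_{\mathcal L}$ is its intersection graph. Then I would transfer sparsity from the cover $I_{\mathcal L}$ to the quotient $I_{\overline{\mathcal L}} = I_{\mathcal L}/\Gamma$. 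The key point is the standard gain-graph observation (see Remark \ref{rem_gain2} and the references therein): if $H$ is a subgraph of $G$, lift $H$ to the cover; if $H$ is \emph{balanced} — i.e. some face of $H$ in $\mathbb A$ contains both ends — then one connected component of the lift is itself a finite graph isomorphic to $H$ (the covering restricted to that component is trivial), so $(2,3)$-sparsity of the cover gives $f(H) \geq 3$; if $H$ is \emph{unbalanced}, then I only need $f(H) \geq 2$, and here I would either invoke a counting argument on a large but finite portion of the cover, or (cleaner) use the fact that an unbalanced subgraph of an $\mathbb A$-graph must contain a non-contractible cycle, together with the observation that such a cycle together with the $(2,3)$-sparsity of its (connected, infinite) lift forces $f \geq 2$ on the quotient. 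I would also need to rule out $f(H) \le 1$ on the whole of $G$ directly from the combinatorics of segment contacts (each segment contributes exactly two ``free ends'' in the orbifold, giving the global count $f(G) \geq 2$).

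\emph{Sufficiency.} Now suppose $G$ is a $(2,3,2)$-sparse $\mathbb A$-graph. First I would reduce to the $(2,3,2)$-tight case: by Proposition \ref{prop_232complete} we can add edges to $G$ to obtain a $(2,3,2)$-tight $\mathbb A$-graph $G^+$; it suffices to realise $G^+$ as the graph of a generic symmetric contact system and then delete the extra segments (deleting a segment from a generic symmetric contact system, together with its whole $\Gamma$-orbit, yields another generic symmetric contact system, and this corresponds exactly to deleting the corresponding orbit of edges). For the tight case I would induct using Theorem \ref{thm_cylinder_inductive232}: $G^+$ is obtained from one of the explicitly described base graphs (e.g. the graph $L$ of Figure \ref{fig_basecases}) by a sequence of topological vertex splitting moves. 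The base case is handled by exhibiting an explicit generic symmetric contact system realising $L$ — in the translation case, essentially a single ``zig-zag'' orbit of two segments meeting end-to-interior as in the top-left of Figure \ref{fig_examples}. The inductive step is the heart of the argument: I must show that a topological vertex split of a surface graph corresponds geometrically to splitting one segment of the contact system into two collinear (or suitably placed) segments, redistributing the contacts according to the combinatorial split, in such a way that genericity (S3)--(S4) is preserved and no spurious new contacts appear. This is the direct analogue of the proof that vertex splits preserve realisability of plane Laman graphs by contact systems, carried out orbit-by-orbit in the orbifold $\mathbf O$.

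\emph{Main obstacle.} I expect the geometric realisation of the vertex-split move to be the main difficulty, for two reasons specific to the symmetric setting. First, an orbit of segments can self-intersect and a split of the corresponding orbifold geodesic must be performed $\Gamma$-equivariantly while keeping the two pieces generic with respect to \emph{all} their $\Gamma$-translates — in the translation case a careless split can create a segment collinear with one of its own translates, violating genericity or changing the contact structure. Second, the move is \emph{topological} vertex splitting on the surface graph, so I must show the combinatorial data (which edges go to which side of the split, and in particular how the rotation system around the split vertex is divided) can always be realised by an actual geometric configuration of short collinear segments in $\mathbf O$; this requires a careful case analysis of whether the split vertex is incident to a non-contractible cycle and of the cyclic order of the edges being separated. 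I would handle this by first establishing the realisation locally in a small disc (or annular) neighbourhood in $\mathbf O$ around the image of the vertex, where the orbifold is flat and the $\Gamma$-action is free by (S4), and then checking that the local modification extends to a global generic symmetric contact system.
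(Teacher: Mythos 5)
Your sufficiency argument follows the paper's route essentially verbatim: complete $G$ to a $(2,3,2)$-tight graph via Proposition \ref{prop_232complete} (noting that deleting an edge orbit corresponds to shortening a segment orbit), then induct along the construction sequence of Theorem \ref{thm_cylinder_inductive232}, realising the base graph $L$ and each triangular or quadrilateral vertex split by a local modification of one geodesic of the contact system in $\mathbf O$. Likewise your necessity argument for \emph{balanced} subgraphs — trivial monodromy gives a section of the covering, hence a finite transversal contact system to which Thomassen's count applies — is exactly Proposition \ref{prop_balanced} and Corollary \ref{cor_balanced}.

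The gap is in the unbalanced case of necessity. Neither of your two proposed routes can yield $f(H)\geq 2$, because that bound is simply not a consequence of the $(2,3)$-sparsity of the lift. Concretely, the $\mathbb A$-graph $M$ consisting of one vertex and one non-contractible loop has $f(M)=1$, yet its lift under a translation action is an infinite path, which is $(2,3)$-sparse (indeed a tree); and the ``large finite portion of the cover'' count gives $f\approx n\,f(H)+O(1)$ over $n$ fundamental domains, which is eventually positive whenever $f(H)\geq 1$, so it cannot distinguish $f(H)=1$ from $f(H)=2$. The graph $M$ \emph{is} realisable by a contact system with rotational symmetry of order at least $3$ — that is precisely why the count drops to $(2,3,1)$ there — so any proof that $f(H)\geq 2$ in the translation and half-turn cases must use the specific geometry of those isometries, not only the combinatorics of the cover. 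The paper's Theorem \ref{thm:neccyl} supplies this: each $\Gamma$-orbit of segments lies in a bounded horizontal strip, so following segments with non-decreasing (respectively non-increasing) $y$-coordinate terminates at two free endpoints with distinct $y$-coordinates, hence in distinct $\Gamma$-orbits; the order-$2$ rotation case (Theorem \ref{thm_rotation}, part 2) needs a separate but analogous argument using the line through a free endpoint and the rotation centre. Your closing remark that ``each segment contributes exactly two free ends'' is also incorrect — endpoints of segments are generally not free, and $f$ counts only the orbits of free ones — and in any case the inequality is required for every unbalanced subgraph, not merely for $G$ itself.
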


\begin{theorem}
    \label{thm_mainrotation}
    Let \( \Gamma \) be a subgroup of the Euclidean group
    generated by a rotation of order at least \( 3 \).
    An \( \mathbb A \)-graph \( G \) is the graph of
    a generic \( \Gamma \)-symmetric contact system of line segments
    in the plane
    if and only if \( G \) is \( (2,3,1) \)-sparse.
\end{theorem}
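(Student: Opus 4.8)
I would reduce to Thomassen's theorem by lifting to the plane. Suppose $G=G_{\overline{\mathcal L}}$ for a generic $\Gamma$-symmetric contact system $\mathcal L$, with $\Gamma=\langle\rho\rangle$ and $\rho$ a rotation of finite order $k\geq 3$; by (S4) no segment meets the rotation centre, so $\Gamma$ acts freely on the set of segments and every $\Gamma$-orbit of segments has size exactly $k$. Let $H$ be a subgraph of $G$ with at least one edge. Its vertex set $\mathcal N'$ is a subset of $\overline{\mathcal L}$, hence again a generic contact system in $\mathbf O$, and $H$ is a subgraph of $I_{\mathcal N'}$. By Lemma \ref{lem_lifting}, $\widetilde{\mathcal N'}$ is a generic $\Gamma$-symmetric---in particular a generic---contact system in the plane, so by Theorem \ref{thm_thomassen1} the graph $I_{\widetilde{\mathcal N'}}$ is $(2,3)$-sparse; moreover the covering $p$ exhibits $I_{\widetilde{\mathcal N'}}$ as a $k$-fold cover of $I_{\mathcal N'}$, on both vertices and edges. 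Let $\widetilde H$ be the preimage of $H$. If $H$ is unbalanced then $\widetilde H$ is a subgraph of $I_{\widetilde{\mathcal N'}}$ with an edge, $k|V(H)|$ vertices and $k|E(H)|$ edges, so $k|E(H)|\leq 2k|V(H)|-3$, whence $f(H)\geq 3/k$ and therefore $f(H)\geq 1$ since $f(H)\in\mathbb Z$ and $k\geq 3$. If $H$ is balanced then all its cycles have trivial gain (cf.\ Remark \ref{rem_gain2}), so $\widetilde H$ is a disjoint union of $k$ copies of $H$, each $(2,3)$-sparse, giving $f(H)\geq 3$. Since $f(H)\geq 1$ is clear for edgeless $H$, this proves $G$ is $(2,3,1)$-sparse.

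\textbf{Sufficiency: reduction to the tight case.} Conversely, let $G$ be a $(2,3,1)$-sparse $\mathbb A$-graph. First I would observe that the class of $\mathbb A$-graphs realised by generic $\Gamma$-symmetric contact systems is closed under taking subgraphs: deleting a vertex deletes the corresponding $\Gamma$-orbit of segments, and an edge $(l,m)$ is deleted by retracting $l$ slightly away from the endpoint that lies in $\mathrm{int}(m)$, performing the matching retraction on every segment in $\Gamma.l$. Since the system is generic that endpoint is incident to no other contact, so for a sufficiently small retraction exactly the contact $(l,m)$ disappears and no new contact or coincidence of endpoints is created; and since $\Gamma$ acts freely on the segments this can be done $\Gamma$-equivariantly. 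Using Proposition \ref{prop_232complete} to extend $G$ to a $(2,3,1)$-tight $\mathbb A$-graph $G^{+}$ containing $G$ as a subgraph, it therefore suffices to realise every $(2,3,1)$-tight $\mathbb A$-graph.

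\textbf{Sufficiency: the inductive construction.} By Theorem \ref{thm_cylinder_inductive231}, every $(2,3,1)$-tight $\mathbb A$-graph is obtained from the base graph $M$ of Figure \ref{fig_basecases} by a finite sequence of topological vertex-splitting moves, so it is enough to check two things. First, that $M$ is realisable: here one takes the explicit contact system of Figure \ref{fig_examples} (a cyclic chain of $k$ segments permuted by $\rho$ and surrounding the rotation centre), which in $\mathbf O$ is a single self-intersecting geodesic whose intersection graph, with its embedding in $\Sigma\cong\mathbb A$, is exactly $M$. Second, that each move preserves realisability. For the latter I would work entirely inside $\mathbf O$, the flat cone of cone angle $2\pi/k$, and transfer back via Lemma \ref{lem_lifting}. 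Given a realising contact system $\mathcal N$ for the smaller graph, a vertex split at the vertex corresponding to a geodesic $\alpha\in\mathcal N$ is carried out by replacing $\alpha$ by two geodesics $\alpha_1,\alpha_2$ that are in contact---obtained by slightly bending $\alpha$ near an interior point, or by extending it past one of its endpoints, according to the type of the split---and distributing the old contacts of $\alpha$ among $\alpha_1,\alpha_2$ as prescribed by the local rotation at the split vertex, leaving the remaining geodesics fixed up to the small perturbations needed to preserve the other contacts and genericity.

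\textbf{Main obstacle.} I expect the realisation of the vertex splits, i.e.\ the last step, to be the crux. In Thomassen's plane setting the analogous step is easy because $\mathbb R^{2}$ is simply connected, so any split consistent with the local rotation system can be carried out; here the nontrivial topology of $\mathbb A$ forces one to keep track, for each move, of the positions of the two ends of $\mathbb A$ relative to the faces being modified and of the gains of the affected subgraphs (balanced versus unbalanced), and a naive split may place an end in the wrong face or alter a gain. One must show that, for each move in the list of Theorem \ref{thm_cylinder_inductive231}, a geometric realisation of the target $\mathbb A$-graph can be achieved---if necessary after first re-routing some of the geodesics by an ambient isotopy of $\mathbf O$---and the self-intersecting orbits of segments, which have no counterpart in Thomassen's theorem, add further cases to this analysis. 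Carrying this out move by move, using the precise description of the moves, is the technical heart of the argument.
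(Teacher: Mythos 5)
Your sufficiency argument is essentially the paper's: deletion-closedness of the realisable class, completion to a $(2,3,1)$-tight graph via Proposition~\ref{prop_232complete}, then induction along Theorem~\ref{thm_cylinder_inductive231}, realising the base graph $M$ by a rotationally symmetric cycle of $k$ touching segments and each vertex split by replacing one geodesic with two nearby ones. The step you flag as the crux is exactly the one the paper disposes of by the case analysis of Figures~\ref{fig_triangle_split_realisations} and \ref{fig_quad_split_realisations}; your worry about having to re-route geodesics by an ambient isotopy does not materialise, because the two new segments can always be taken in an arbitrarily small neighbourhood of the old one with the point of contact placed so as to distribute the old neighbours correctly, so the embedding of the split graph is forced. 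Your necessity argument, on the other hand, takes a genuinely different route. The paper proves $f(K)\geq 1$ for an unbalanced subgraph by a direct geometric walk in the plane: starting at an interior point of a segment and always moving so as to increase the distance to the rotation centre, one must reach a free endpoint. You instead lift a subgraph $H$ to its $k$-fold preimage, apply the planar count of Theorem~\ref{thm:necplane} there, and divide by $k$, using integrality of $f(H)$ to upgrade $f(H)\geq 3/k$ to $f(H)\geq 1$; the balanced case is handled by trivialising the covering exactly as in Proposition~\ref{prop_balanced} and Corollary~\ref{cor_balanced}. Your computation is correct --- by (S4) a nontrivial rotation cannot fix a segment (it would have to fix its midpoint) or a contact, so the covering really is $k$-to-one on vertices and edges --- and it has the virtue of explaining where the constant $1$ comes from: $\lceil 3/k\rceil=1$ precisely because $k\geq 3$, and the same computation with $k=2$ recovers the $(2,3,2)$ count of Theorem~\ref{thm_rotation}. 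Its limitation is that it does not transfer to the translation case, where orbits are infinite and a walking (or truncation) argument is still needed.
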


\subsection{Necessity of the gain sparsity counts}

In the remainder of this section we show that the contact 
systems in Theorems \ref{thm_maintranslation} 
and \ref{thm_mainrotation} have graphs with the required sparsity 
properties. 
First, since we need it later and to make our presentation more self-contained,  we give a proof of the corresponding part of
Thomassen's result in the non-symmetric case 
(Theorem \ref{thm_thomassen1}). 

Let \( \mathcal L \) be a generic contact system
of line segments in the plane and 
let \( deg^+(l) \) denote the outdegree 
of a vertex \( l  \in V(I_\mathcal L) = \mathcal L\). 
We say that an endpoint of 
\( l \) is \emph{free} if it does not lie in the interior of 
any other segment. 
Thus \( 2-deg^+(l) \) is the number of free endpoints of 
\( l \). So \( f(I_\mathcal L) = 2|V| - |E| = \sum_{v \in V} (2-deg^{+}(v))\) is the total number of free endpoints in \( \mathcal L \).

\begin{thm} \label{thm:necplane}
    If \( \mathcal L \) is a non-empty collection of line segments in the
    plane, then \( I_\mathcal L \) is \( (2,3) \)-sparse.
\end{thm}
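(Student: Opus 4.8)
The plan is to reduce the claim to a short convexity argument, in the spirit of Thomassen's original proof. Recall that $I_{\mathcal L}$ is $(2,3)$-sparse exactly when $|E'|\le 2|V(E')|-3$ for every non-empty $E'\subseteq E(I_{\mathcal L})$. Fix such an $E'$ and let $W=V(E')\subseteq\mathcal L$ be the set of segments meeting some edge of $E'$. Since every edge of $E'$ has both ends in $W$ we have $E'\subseteq E(W)$ and $V(E')=W$, so it suffices to prove $|E(W)|\le 2|W|-3$, that is, $f(I_{\mathcal L}(W))=2|W|-|E(W)|\ge 3$, where $I_{\mathcal L}(W)$ is the subgraph induced by $W$.

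First I would localise the identity $f(I_{\mathcal L})=\sum_{v}(2-deg^{+}(v))$ recorded above. Call an endpoint of a segment $l\in W$ \emph{$W$-free} if it lies in the interior of no segment of $W$. Because $\mathcal L$ is a generic contact system, distinct segments have distinct endpoints and no point lies in the interior of two segments; hence the map sending an edge $(l,m)\in E(W)$ to the endpoint of $l$ lying in the interior of $m$ is a well-defined bijection onto the set of endpoints of segments of $W$ that are \emph{not} $W$-free (in particular $I_{\mathcal L}(W)$ has no loops and no repeated contacts of this type). Therefore $f(I_{\mathcal L}(W))=2|W|-|E(W)|$ is exactly the total number of $W$-free endpoints, and it remains to exhibit at least three of them. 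For the geometric core: since $E(W)\ne\emptyset$, some endpoint of a segment $l\in W$ lies in the interior of a segment $m\in W$, and then $l,m$ cannot be collinear, for otherwise they would overlap in a nondegenerate subsegment whose interior points are interior to both $l$ and $m$, contradicting the contact-system property. Thus $K:=\bigcup_{l\in W}l$ is not contained in a line, so $P:=\mathrm{conv}(K)$ is a two-dimensional compact convex set; as $K$ is a \emph{finite} union of segments, $P$ is the convex hull of the finitely many endpoints of segments of $W$, hence a convex polygon, hence has at least three vertices. Any vertex $v$ of $P$ is one of these endpoints, and $v$ cannot lie in the interior of any $m\in W$, since $m\subseteq K\subseteq P$ would force an interior point of $m$ to be a vertex of $P$; so $v$ is a $W$-free endpoint. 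Distinct vertices being distinct points, $W$ has at least three $W$-free endpoints, completing the proof.

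The argument is elementary; the only points needing a little care are the exclusion of collinear touching pairs and the verification that the edge--endpoint correspondence is genuinely bijective (equivalently, that $I_{\mathcal L}(W)$ carries no loops or doubled contacts), and both follow immediately from the genericity hypotheses on $\mathcal L$. The remaining step --- that the extreme points of a two-dimensional convex polygon are endpoints of the generating segments and number at least three --- is standard, so I do not expect any serious obstacle here. This is, in essence, the necessity half of Theorem~\ref{thm_thomassen1}.
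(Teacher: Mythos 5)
Your proof is correct. The bookkeeping half --- reducing to induced subgraphs $I_{\mathcal L}(W)$ and identifying $f$ with the number of $W$-free endpoints via the edge/non-free-endpoint correspondence --- is exactly what the paper does (and, as you note, both steps genuinely use the contact-system and genericity hypotheses; the theorem as literally stated omits them, but the paper's own proof invokes them too, so your reading of the statement matches the intended one). Where you genuinely diverge is in producing the three free endpoints. The paper uses a monotone walk: sweep downward and then upward along touching segments to find two free endpoints with distinct $y$-coordinates, then sweep away from the line $M$ through them to find a third off $M$. You instead observe that the union of the segments of $W$ is not contained in a line (your overlap argument for collinear touching segments is the right justification), so its convex hull is a two-dimensional polygon with at least three vertices, each of which is an endpoint that cannot lie in the relative interior of any segment contained in the hull. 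Your route is shorter and sidesteps the termination and well-definedness issues that the paper's walk glosses over (e.g.\ chains of segments along which the $y$-coordinate is merely non-increasing). What the paper's sweep buys, and presumably why it is presented in that form, is that it transfers directly to the symmetric settings: the proofs of Theorems \ref{thm:neccyl} and \ref{thm_rotation} reuse precisely this directional-sweep idea (sweeping in the $y$-direction for the translation group, radially and then perpendicular to a line through the fixed point for rotations), where a global convex-hull argument is not available because the relevant configurations are unbounded or one must distinguish $\Gamma$-orbits of free endpoints. So your argument is a clean self-contained alternative for the planar statement, but it does not supply the template on which the later necessity proofs are modelled.
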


\begin{proof}
    Note that it suffices to check the sparsity condition for all induced
    subgraphs $(V,E)$ of \(I_\mathcal L\) (i.e. subgraphs corresponding to 
    subsets \(\mathcal L'\) of \(\mathcal L\)) with at least one edge. 
    Thus we must show that there are at least three free endpoints in 
    \( \mathcal L' \). 

    Choose a segment \( l \in \mathcal L'\).
    Without loss of 
    generality we may assume that \( l \) is parallel to the \( y \)-axis:
    just rotate the entire configuration until that is true.
    Now pick some interior point of \( l \) as a starting point 
    and move downwards along \( 
    l\) until we come to the endpoint. If it is free then we have found 
    a free endpoint. If it is not free then it belongs to the 
    interior of some 
    other segment \( m \) since \( \mathcal L' \) is a 
    generic contact system. Move along \( m \) in a direction that 
    does not increase the \( y \)-coordinate and continue in this way. 
    Eventually we must arrive at our first free endpoint \( p_1 \). 
    By applying the same argument but moving upward from our starting point
    we find another free endpoint \( p_2 \). Now \( p_1 \neq p_2 \) since
    the \( y \)-coordinate of \( p_1 \), respectively \( p_2 \),
    is strictly less, respectively greater, 
    than that of the starting point.

    Now let \( M \) be the line containing \( p_1 \) and 
    \( p_2 \). Since \( E \) is not empty, not every line segment in 
    \( \mathcal L' \) is contained in \( M \). Thus there is a point
    in some segment of \( \mathcal L' \) that is not in \( M \). Start
    at this point and move along segments always in a direction that
    does not decrease the perpendicular distance to \( M \). Eventually
    we must arrive at a free endpoint \( p_3 \) that does not lie in 
    \( M \) and therefore is not \( p_1 \) or \( p_2 \).
\end{proof}

\subsection{The symmetric cases}

Now suppose that \( \Gamma \) is generated by a 
single rotation or a single translation and let 
\( \mathcal L \) be a \( \Gamma \)-symmetric contact 
system of line segments. As noted above 
\( \Sigma \equiv \mathbb A \).

\begin{proposition}
    \label{prop_balanced}
    Suppose that \( G_{\overline{\mathcal L}} \) is a balanced
    \( \mathbb A \)-graph. Then there is a transversal 
    \( \mathcal F \)
    of the \( \Gamma \)-orbits of \( \mathcal L \)
    such that \( (l,m) \in E(I_\mathcal L) \) if and
    only if there is some \( g \in \Gamma \) such that 
    \( (g.l,g.m) \in E(I_\mathcal F) \). In particular
    \( I_{\overline{\mathcal L}} \cong I_\mathcal F \).
    \label{prop_section}
\end{proposition}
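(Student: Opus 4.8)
The plan is to exploit the hypothesis that $G_{\overline{\mathcal L}}$ is balanced, which by definition means some face of the embedded quotient graph contains both ends of $\mathbb A = \Sigma$. Working upstairs in the covering $p:\tilde\Sigma \to \Sigma$, I would first lift a suitable connected ``fundamental'' piece of the configuration. Concretely, a balanced face $F$ of $G_{\overline{\mathcal L}}$ that contains both ends of $\mathbb A$ has the property that $\Sigma \setminus F$ (equivalently the graph together with $F$ removed) is contractible, and its preimage $p^{-1}(F)$ is connected. Using the commuting diagram (\ref{diag_covering}) and standard covering space theory, a connected complement of a balanced face lifts homeomorphically to each sheet: more precisely, if $Y = |I_{\overline{\mathcal L}}| \setminus (\text{an appropriate spanning forest of arcs through }F)$ is simply connected, then $p^{-1}$ restricted to the corresponding subset of $|I_\mathcal L|$ splits into disjoint copies indexed by $\Gamma$. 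Choosing one such copy picks out a transversal $\mathcal F \subset \mathcal L$ of the $\Gamma$-orbits.

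The second step is to verify the claimed equivalence of edge sets. By construction $\mathcal F$ meets each $\Gamma$-orbit of $\mathcal L$ exactly once, and since $\Gamma$ acts on $I_\mathcal L$ by directed graph automorphisms with $I_{\mathcal L}/\Gamma \cong I_{\overline{\mathcal L}}$, for every edge $(l,m) \in E(I_\mathcal L)$ there are unique $l_0, m_0 \in \mathcal F$ and $g_l, g_m \in \Gamma$ with $l = g_l.l_0$, $m = g_m.m_0$; translating by $g_l^{-1}$ gives an edge $(l_0, g_l^{-1}g_m.m_0)$, so $(g.l, g.m) \in E(I_\mathcal F)$ with $g = g_l^{-1}$ exactly when $g_l^{-1}g_m.m_0 \in \mathcal F$, i.e. when $g_l^{-1}g_m = e$. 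The content of balancedness is precisely that this always happens: because the face $F$ containing both ends lifts to disjoint copies, an edge of $I_\mathcal L$ incident to a vertex $v_{l_0}$ in the chosen copy (corresponding to a contact occurring along the segment $l_0$) has its other endpoint also in that copy, so the gain of every edge, measured against $\mathcal F$, is trivial. This shows $\mathcal F$ induces a full set of orbit representatives of the edges and hence $I_{\overline{\mathcal L}} \cong I_\mathcal F$ via the quotient map restricted to $\mathcal F$.

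The final bookkeeping step is to phrase this without explicit reference to the covering: one shows directly that balancedness of $G_{\overline{\mathcal L}}$ forces the existence of a ``lift'' of the quotient graph, i.e. a $\Gamma$-equivariant section in the appropriate sense, by a van Kampen / path-lifting argument using that the face containing both ends of $\mathbb A$ retracts $\Sigma$ onto a tree-like neighbourhood of $\Phi(|I_{\overline{\mathcal L}}|)$. I expect the main obstacle to be making precise the assertion that a balanced face gives a connected, simply connected complement whose preimage is a disjoint union of copies — one has to rule out the possibility that the graph wraps around $\mathbb A$ while still having some face touching both ends, and argue that ``touching both ends'' is strong enough to trivialise the monodromy. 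Once that topological fact is nailed down, the rest is the routine orbit-counting above, and the isomorphism $I_{\overline{\mathcal L}}\cong I_\mathcal F$ follows immediately since the quotient map $\mathcal L \to \overline{\mathcal L}$ restricts to a bijection $\mathcal F \to \overline{\mathcal L}$ that is compatible with the edge correspondence just established.
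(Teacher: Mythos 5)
Your proposal is correct and follows essentially the same route as the paper: balancedness forces the embedding $\overline\psi$ to be trivial on fundamental groups, so by the commutativity of the left square of (\ref{diag_covering}) the covering $|I_{\mathcal L}|\to|I_{\overline{\mathcal L}}|$ has trivial monodromy and admits a global section, whose vertex image is the transversal $\mathcal F$; the edge correspondence is then the routine orbit bookkeeping you describe. The only slip is your claim that the complement of the balanced face is contractible --- it need not be; what is actually needed (and what both arguments use) is that no cycle of the embedded graph separates the two ends of $\mathbb A$, so the image of $\pi_1(|I_{\overline{\mathcal L}}|)$ in $\pi_1(\mathbb A)$ is trivial.
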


\begin{proof}
    Since \( G_{\overline{\mathcal L}}  = (I_{\overline{\mathcal L}},
    \overline \psi)\) is a balanced \( \mathbb A \)-graph,
    it is clear that \( \overline\psi: |I_{\overline{\mathcal L}}|
    \rightarrow \mathbb A\) 
    induces a trivial homomorphism
    of fundamental groups. 
    Using the commutativity of the left hand square of 
    (\ref{diag_covering}) and some 
    standard results of 
    covering space theory it follows that the covering 
    projection \( |I_\mathcal L| \rightarrow |I_{\overline{\mathcal L}}| \)
    has a global 
    section \( \sigma: | I_{\overline{\mathcal L}}| 
    \rightarrow |I_\mathcal L| \). Now \( \mathcal F = \sigma(V(
    I_{\overline{\mathcal L}}))\) yields the required transversal of the 
    orbits of \( \mathcal L \).
\end{proof}

\begin{corollary}
    \label{cor_balanced}
    Suppose that \( \Gamma  = \langle z \rangle\) where \( 
    z\) is either a translation or a rotation and that 
    \( \mathcal L \) is a generic \( \Gamma \)-symmetric
    contact system of line segments in the plane. If \( H \) is a balanced
    subgraph of \( G_{\overline{\mathcal L}} \) that 
    contains at least one edge then \( f(H) \geq 3 \).
\end{corollary}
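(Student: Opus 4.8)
The plan is to mirror the proof of Proposition~\ref{prop_section}, but in a local form: rather than using that the whole of $G_{\overline{\mathcal L}}$ is balanced to obtain a global section of the covering $|I_\mathcal L|\to|I_{\overline{\mathcal L}}|$, I will only use that the subgraph $H$ is balanced and produce a section over $|H|$ alone. Lifting $H$ in this way realises it as a subgraph of $I_\mathcal L$, which is the intersection graph of a contact system of line segments in the \emph{plane}, and the bound $f(H)\geq 3$ then follows from the non-symmetric result, Theorem~\ref{thm:necplane}.

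In detail, I would argue as follows. Since $H$ is balanced, some face of $H$ in $\Sigma\cong\mathbb A$ contains both ends of $\mathbb A$; choosing disjoint neighbourhoods of the two ends inside that face and an embedded arc of the face joining them, one sees that $|H|$ is contained in an open disk in $\Sigma$, so the restriction $\overline\psi\big|_{|H|}$ induces the trivial homomorphism of fundamental groups on each path-component of $|H|$. By the lifting criterion for the regular covering $\tilde\Sigma\to\Sigma$ together with the commutativity of the left-hand square of~(\ref{diag_covering}) --- exactly as in the proof of Proposition~\ref{prop_section} --- the covering projection $|I_\mathcal L|\to|I_{\overline{\mathcal L}}|$ admits a section $\sigma_H$ defined over $|H|$. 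This $\sigma_H$ is injective, since it is a section on each component of $|H|$ and images of distinct components project onto disjoint (indeed distinct) components of $|H|$; hence $\sigma_H(|H|)$ is the geometric realisation of a subgraph $\tilde H$ of $I_\mathcal L$ with $\tilde H\cong H$, and in particular $f(H)=f(\tilde H)$. Finally, $H$ contains an edge, so $\mathcal L\neq\emptyset$ and Theorem~\ref{thm:necplane} shows $I_\mathcal L$ is $(2,3)$-sparse; applying this to the subgraph $\tilde H$, which contains an edge, gives $f(\tilde H)\geq 3$, and therefore $f(H)\geq 3$.

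The only substantial point is the topological one: using that a face of $H$ contains both ends to place $|H|$ inside a disk and extract the section $\sigma_H$. This is the same phenomenon already handled in Proposition~\ref{prop_section}, the sole new feature being that we work with a balanced subgraph rather than the whole of $G_{\overline{\mathcal L}}$, and no idea beyond~(\ref{diag_covering}) and the standard covering-space lifting criterion is required. (I would also note in passing that this argument explains why a balanced subgraph can never contain a loop, since $I_\mathcal L$ is loopless.)
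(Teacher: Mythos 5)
Your proof is correct and is in essence the paper's own argument: both transfer the balanced subgraph $H$ to the plane via a section of the covering in diagram~(\ref{diag_covering}) and then invoke Theorem~\ref{thm:necplane}. The only difference is in the packaging: the paper first realises $H$ as the full quotient graph $G_{\overline{\mathcal M}}$ of a $\Gamma$-invariant sub-contact-system (shortening segments when $H$ is not induced) so that Proposition~\ref{prop_balanced} applies verbatim and yields a \emph{finite} planar transversal, whereas you construct the section locally over $|H|$ inside $I_{\mathcal L}$ itself --- which handles non-induced subgraphs without the shortening step, at the small cost that you should, strictly speaking, apply Theorem~\ref{thm:necplane} to the finite subcollection of $\mathcal L$ spanned by $V(\tilde H)$ rather than to the possibly infinite $\mathcal L$.
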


\begin{proof}
    If \( H \) is an induced 
    subgraph then it is clear 
    that \( H = G_{\overline{\mathcal L'}} \)
    where \( \mathcal L' \) is some \( \Gamma \) invariant
    subset of \( \mathcal L \). If \( H \) is not induced 
    then it can be obtained from some \( G_{\overline{\mathcal L'}} \)
    by shortening some of the segments in \( \mathcal L' \)
    so as to remove the necessary edges. In either 
    case \( H = G_{\overline{\mathcal M}} \) 
    for some generic \( \Gamma \)-symmetric
    contact system \( \mathcal M \). 
    Using Proposition \ref{prop_balanced}, since \( H \) is balanced, 
    we see that 
    \( H \cong G_\mathcal F \) for some finite contact 
    system 
    \( \mathcal F \). By Theorem \ref{thm:necplane} it follows that 
    \( f(H) \geq 3 \).
\end{proof}

To identify the necessary sparsity condition for unbalanced subgraphs, we first observe that
\( f(G_{\overline{\mathcal L}}) \) is equal to the
number of \( \Gamma \)-orbits of free ends in \( \mathcal L \).

\begin{thm}
    \label{thm:neccyl}
    Suppose that \( \Gamma \) is generated by a translation and 
    let \( \mathcal L \) be a  
    generic \( \Gamma \)-symmetric contact
    system of line segments in the plane. Then the graph 
    \( G_{\overline{\mathcal L}} \) is \( (2,3,2) \)-sparse.
\end{thm}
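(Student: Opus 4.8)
The plan is to reduce the $(2,3,2)$-sparsity of $G_{\overline{\mathcal L}}$ to two separate claims: first, that every balanced subgraph with an edge satisfies $f \geq 3$, and second, that every subgraph $H$ satisfies $f(H) \geq 2$. The first claim is exactly Corollary~\ref{cor_balanced}, so it only remains to establish the lower bound of $2$ for arbitrary (possibly unbalanced) subgraphs. As in the proof of Corollary~\ref{cor_balanced}, I would first reduce to the case where $H = G_{\overline{\mathcal M}}$ for some generic $\Gamma$-symmetric contact system $\mathcal M$: an induced subgraph corresponds to a $\Gamma$-invariant subset of $\mathcal L$, and a non-induced subgraph is obtained from an induced one by truncating some segments to destroy the unwanted contacts. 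Since $f(G_{\overline{\mathcal M}})$ counts the number of $\Gamma$-orbits of free ends in $\mathcal M$ (as observed just before the statement), the task becomes: show that any non-empty generic $\Gamma$-symmetric contact system, with $\Gamma = \langle z \rangle$ generated by a translation, has at least two $\Gamma$-orbits of free endpoints. If $H$ is balanced we already know $f(H)\geq 3 \geq 2$, so we may assume $H$, equivalently $\mathcal M$, is unbalanced.

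The key step is therefore to adapt the ``monotone walk'' argument from the proof of Theorem~\ref{thm:necplane} to the translational-symmetric setting. Let $v$ be the translation vector of $z$ and, after rotating the whole plane (which is harmless), assume $v$ is horizontal. Pick any segment $l$ in $\mathcal M$ that is \emph{not} parallel to $v$ --- such a segment must exist, since if every segment of the $\Gamma$-invariant system $\mathcal M$ were parallel (hence, being moved by the translation, collinear with their whole orbit) the union of each orbit would be a line parallel to $v$, and one checks directly that the corresponding $\mathbb A$-graph would be balanced, contradicting our assumption. Now run the monotone-descent procedure of Theorem~\ref{thm:necplane} using the \emph{vertical} coordinate (the direction perpendicular to $v$): starting from an interior point of $l$ and moving so as never to increase the $y$-coordinate, we reach a free endpoint $p_1$; moving so as never to decrease it, we reach a free endpoint $p_2$ with $y(p_2) > y(p_1)$. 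The crucial observation is that the $y$-coordinate is $\Gamma$-invariant (since $z$ is a horizontal translation), so $p_1$ and $p_2$ lie in \emph{different} $\Gamma$-orbits: every point in the orbit of $p_1$ has $y$-coordinate $y(p_1) < y(p_2)$. Hence $\mathcal M$ has at least two distinct $\Gamma$-orbits of free endpoints, giving $f(H) = f(G_{\overline{\mathcal M}}) \geq 2$.

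I expect the main obstacle to be the care needed in the reduction from subgraphs of $G_{\overline{\mathcal L}}$ to honest contact systems $\mathcal M$, and in particular making sure the truncation operation used for non-induced subgraphs does not accidentally create new contacts or violate genericity --- this is the same subtlety flagged in Lemmas~\ref{lem_perturbation} and~\ref{lem_symm_perturbation}, and it should be handled by a small perturbation as there. A secondary point requiring attention is the claim that ``every segment parallel to $v$'' forces balancedness: one must argue that in the quotient surface $\mathbf O$ (a flat cylinder, with $\Sigma \cong \mathbb A$) a family of geodesics each of which wraps around the cylinder in the translation direction bounds a face containing both ends of $\mathbb A$, so that $G_{\overline{\mathcal M}}$ is balanced. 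Once these two points are dispatched, the argument is a direct transcription of Theorem~\ref{thm:necplane} with ``$y$-coordinate'' playing the role of the distinguished $\Gamma$-invariant linear functional, combined with Corollary~\ref{cor_balanced} for the balanced case.
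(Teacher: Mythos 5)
Your proposal is correct and follows essentially the same route as the paper's proof: reduce via Corollary~\ref{cor_balanced} to showing \( f\geq 2 \) for subgraphs corresponding to \( \Gamma \)-invariant subsystems \( \mathcal M \), and then run the monotone walk in the \( \Gamma \)-invariant \( y \)-coordinate to produce two free endpoints in distinct orbits. The one soft spot is your justification of the all-parallel case: the paper's simpler observation is that in a generic system two segments parallel to the translation cannot touch at all, so that subgraph is edgeless and \( f=2|V|\geq 2 \) directly, whereas a geodesic that genuinely ``wraps around'' the cylinder would be an \emph{unbalanced} separating loop, so your sketch of why that case yields a balanced graph needs rephrasing even though the conclusion you want is right (and, relatedly, the truncation issue you flag as the main obstacle is harmless, since shortening segments can only delete contacts).
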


\begin{proof}
    By Corollary \ref{cor_balanced} it suffices to show that 
    \( f(K) \geq 2 \) for any non-empty 
     subgraph \( K \) of \( 
    G_{\overline{\mathcal L}}\).
    Clearly we can assume that 
    \( K \) is an induced subgraph.
    Also we can assume that 
    the generator of \( \Gamma \) is the translation 
    \( (x,y) \mapsto (x+1,y) \). 
    
    Let \( \mathcal M \) be the \( \Gamma \)-invariant
    subset of \( \mathcal L \) that corresponds to \( V(K) \).
    If all line segments in 
    \( \mathcal M \) are horizontal then, since we assume that 
    \( \mathcal L \) is generic, it is clear that 
    \( K \) has no edges and since it has 
    at least one vertex, it follows that \( f(K)
    \geq 2\).

    So we may assume that some segment \( l \in \mathcal M \)
    is not horizontal. Now starting at an interior point 
    of \( l \) we can move along segments so that the 
    \( y \)-coordinate is non-decreasing. 
    Since \( \mathcal L \)
    must be contained within some horizontal strip, as 
    it has finitely many \( \Gamma \)-orbits and each
    orbit is bounded in the \( y \)-direction, we must
    eventually arrive at a free endpoint.
    Similarly there is another free endpoint obtained by moving away
    from the starting point along line segments so that 
    the \( y \)-coordinate is non-increasing. These 
    two free endpoints do not lie in the same orbit of 
    \( \Gamma \) since they have different \( y \)-coordinates.
    Thus \( f(K) \geq 2 \) as required.
\end{proof}

Next we consider the case where \( \Gamma \) is generated
by a rotation. 

\begin{thm}
    \label{thm_rotation}
    Suppose that \( \Gamma \) is generated by a rotation of 
    order \( n \) and that \( \mathcal L \) is a 
    generic \( \Gamma \)-symmetric contact system of 
    line segments in the plane. Then 
    \begin{enumerate}
        \item \(  G_{\overline{\mathcal L}} \) is \( (2,3,1) \)-sparse.
        \item if \( n = 2 \) then \( G_{\overline{\mathcal L}}\) is \( (2,3,2) \)-sparse.
    \end{enumerate}
\end{thm}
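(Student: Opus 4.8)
The plan is to reduce both claims to a geometric statement about finitely many segments and then argue with extremal points, extending the technique used in Theorems~\ref{thm:necplane} and~\ref{thm:neccyl}. By Corollary~\ref{cor_balanced} the balanced part of the $(2,3,l)$-count is already in hand, so for part~(1) it remains to show $f(K)\ge 1$ for every non-empty subgraph $K$ of $G_{\overline{\mathcal L}}$, and for part~(2) that $f(K)\ge 2$. Since deleting edges only increases $f$, we may assume $K$ is induced, hence $K=G_{\overline{\mathcal M}}$ for some $\Gamma$-invariant $\mathcal M\subseteq\mathcal L$; applying to $\mathcal M$ the observation preceding Theorem~\ref{thm:neccyl}, $f(G_{\overline{\mathcal M}})$ equals the number of $\Gamma$-orbits of free ends of $\mathcal M$. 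Let $O$ be the centre of the rotation generating $\Gamma$; by (S4) no segment of $\mathcal L$ contains $O$. Since $\Gamma$ is finite and $\mathcal L$ has finitely many orbits, $\mathcal L$, and a fortiori $\mathcal M$, is a finite collection of segments, so $\bigcup_{l\in\mathcal M} l$ is compact.

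For part~(1) I would take $p\in\bigcup_{l\in\mathcal M} l$ at maximal distance from $O$. On any nondegenerate segment $l$ the function $x\mapsto |x-O|^2$ is a strictly convex quadratic in the arc-length parameter, so its maximum over $l$ is attained only at an endpoint of $l$ and no interior point of $l$ is a local maximum. Hence $p$ may be chosen to be an endpoint of some segment, and $p$ cannot be interior to any segment of $\mathcal M$, since moving along such a segment away from $p$ in a suitable direction would strictly increase the distance to $O$. Thus $p$ is a free end of $\mathcal M$, so $f(G_{\overline{\mathcal M}})\ge 1$. Note that this argument is uniform in $n$, so it also underlies Theorem~\ref{thm_mainrotation}.

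For part~(2) assume $n=2$, so the generator $r$ of $\Gamma$ is the point reflection $x\mapsto 2O-x$. If $I_{\overline{\mathcal M}}$ has no edges then $f(G_{\overline{\mathcal M}})=2|V(I_{\overline{\mathcal M}})|\ge 2$. Otherwise some endpoint of a segment $l\in\mathcal M$ lies in the interior of some $m\in\mathcal M$; two collinear segments cannot meet in this way (they would share an interior point, contradicting the contact condition), so $\mathcal M$ is not contained in the line $M$ through $O$ and the point $p$ from part~(1). Put $D=\max_{x\in\bigcup\mathcal M}\mathrm{dist}(x,M)>0$, fix a side of $M$ realising this maximum, let $S$ be the compact nonempty set of points of $\bigcup\mathcal M$ on that side at distance exactly $D$ from $M$, and let $q\in S$ be at maximal distance from $O$. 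If $q$ were interior to some segment $l'$, then, since the signed distance to $M$ is affine along $l'$ and $q$ attains the maximal value $D$ of this signed distance over $\bigcup\mathcal M$, the segment $l'$ would be parallel to $M$ with $l'\subseteq S$; but then the endpoint of $l'$ farthest from $O$ would lie in $S$ and be strictly farther from $O$ than $q$, a contradiction. So $q$ is a free end of $\mathcal M$, and $q\notin M$ because $\mathrm{dist}(q,M)=D>0$. Since $M$ contains both $p$ and $r(p)$, the orbit $\Gamma.q$ differs from $\Gamma.p$, and therefore $f(G_{\overline{\mathcal M}})\ge 2$.

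The routine parts are the reduction to induced subgraphs, the dictionary between $f$ and counts of free-end orbits, and the elementary convexity facts about $|x-O|^2$ and signed distance to a line. The one place that needs genuine care is verifying that the extremal points produced are \emph{free} ends rather than merely endpoints: in part~(1) this rests on strict convexity of the distance-to-$O$ function on a segment, and in part~(2) on the fact that the only obstruction — a segment parallel to $M$ lying inside $S$ — is removed by maximising the distance to $O$ as a secondary criterion. I expect this to be the main, though modest, obstacle.
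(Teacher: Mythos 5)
Your proof is correct and follows essentially the same route as the paper's: both arguments produce one free end by maximising distance to the rotation centre $O$, and (for $n=2$) a second free end in a different $\Gamma$-orbit by maximising distance to the line $M$ through $O$ and the first free end, using that $\Gamma.p\subset M$ precisely because $n=2$. The only difference is one of implementation: the paper reaches its extremal free endpoints by a monotone walk along segments, whereas you take a global maximiser directly and use strict convexity of $|x-O|^2$ (together with a secondary maximisation to dispose of segments parallel to $M$ at the extreme distance) to verify that it is free.
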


\begin{proof}
    As in the proof of Theorem \ref{thm:neccyl} 
    let \( K \) be a non-empty induced subgraph of \( 
    G_{\overline{\mathcal L}}\) and let \( \mathcal M \) be the 
    \( \Gamma \)-invariant subset of \( \mathcal L \) 
    that corresponds to \( V(K) \). We must show that 
    \( \mathcal M \) has at least two free endpoints that 
    lie in different \( \Gamma \)-orbits.
    
    Let \( o \) be the fixed point of \( \Gamma \). 
    Choose \( l \in \mathcal M \). Starting at an interior point
    of \( l \), move along segments in \( \mathcal M \) 
    so that the distance to \( o \) is always increasing. 
    Since \( \mathcal M \) lies inside some bounded region 
    of the plane 
    we must eventually arrive at a free endpoint, \( p \). 
    Thus \( f(K) \geq 1 \). This proves the first statement.

    Now suppose that \( n =2 \) and let \( L \) be 
    the line containing the points \( p \) and \( o \). 
    If \( K \) has no edges then, since it has 
    at least one vertex, we have \( f(K) \geq 2 \). On the other hand,
    if \( K \) has an edge then not every segment in \( \mathcal M
    \) is contained in \( L \). Starting at some point in a 
    segment of \( \mathcal M \) that is not in \( L \), 
    move along segments in such a way that the perpendicular distance
    to \( L \) is non-decreasing. Again we must eventually arrive
    at some free endpoint \( q \). Now since \( q \not\in L \) and 
    \( \Gamma.p \subset L \) (here is where we use the 
    \( n=2 \) hypothesis) we have found two free 
    endpoints that lie in different \( \Gamma \) orbits. 
    Thus \( f(K) \geq 2 \) in this case. 
\end{proof}

\section{Properties of \( (2,3,l) \)-tight \( \mathbb A \)-graphs: statements}
\label{sec_statements}

In the previous section we established one direction of 
Theorems \ref{thm_maintranslation} and \ref{thm_mainrotation}.
In order to establish the other direction we need to investigate
various properties of \( (2,3,l) \)-sparse graphs. In this section 
we present the statements of the necessary results.
Since the proofs are quite long and are
essentially independent of the geometric applications, we defer those til later.

Let \( \Theta \) be a surface and
suppose that \( G \) is a \( \Theta \)-graph with a triangular face \( T \) and 
an edge \( e = uv \) that belongs to the boundary walk of \( T \). We also suppose that 
\( u \neq v \). Let \( G_{e,T} \) be the \( \Theta \)-graph obtained by collapsing 
\( |e| \) (the image of \( e \) in \( \Theta \)) to a point and deleting one of the other
edges of the facial walk of \( T \). 
We say that \( G_{e,T} \) is obtained from \( G \) by 
a topological contraction of \( T \) along \( e \). 
On the other hand we say that \( G \) 
is obtained from \( G_{e,T} \) by a triangular vertex split.
It is to be emphasised that, apart from the case explicitly 
specified in the definition, parallel edges or loop edges that 
are created by the edge contraction are retained in \( G_{ e,T} \).
In \cite{FJW} Fekete, Jord\'an and Whiteley prove 
the following inductive characterisation of plane Laman graphs.

\begin{theorem}[Fekete, Jord\'an and Whiteley]
    Suppose that \( G \) is a plane Laman graph with at least \( 3 \)
    vertices. 
    Then \( G \) can be constructed from a single edge 
    by a sequence of triangular vertex splits.
    \label{thm_FJW}
\end{theorem}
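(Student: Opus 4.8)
The plan is to prove, by induction on $n = |V(G)|$, that every plane Laman graph on $n \geq 2$ vertices can be built from a single edge by triangular vertex splits (for $n = 2$ the graph is a single edge, since a $(2,3)$-tight graph on two vertices has $2\cdot 2 - 3 = 1$ edge). For the inductive step ($n \geq 3$) it is enough to find a triangular face $T$ of $G$ and an edge $e$ on its boundary walk for which the topological contraction $G_{e,T}$ is again plane Laman: then $G_{e,T}$ has $n-1$ vertices, the induction hypothesis applies, and appending the triangular vertex split that recovers $G$ from $G_{e,T}$ finishes the step. Note that $G_{e,T}$ automatically has the right size, namely $n-1$ vertices and $(2n-3) - 2 = 2(n-1) - 3$ edges, and is automatically a plane graph (we contract an edge on a face and then delete an edge), so the substance is to arrange that $G_{e,T}$ is simple and $(2,3)$-sparse.

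Next I would record two consequences of the sparsity count. A $(2,3)$-tight graph on at least three vertices is $2$-connected: a short count shows it is neither disconnected nor has a cutvertex, since a cutvertex would split it into two $(2,3)$-sparse pieces with at most $2n-4 < 2n-3$ edges in total. Hence every face of $G$ is bounded by a cycle, so has degree at least $3$. Euler's formula with $|E(G)| = 2n-3$ gives exactly $n-1$ faces, whose degrees sum to $2|E(G)| = 4n-6 < 4(n-1)$, so $G$ has a face of degree $3$, i.e. a triangular face.

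The heart of the matter is a contractibility criterion. Fix a triangular face $T = uvw$ and consider $G_{uv,T}$, obtained by collapsing $uv$ and deleting one of $uw, vw$. I claim $G_{uv,T}$ is plane Laman if and only if there is no \emph{blocker}: a subgraph $H$ of $G$ with $u, v \in V(H)$, $w \notin V(H)$, at least three vertices, and $f(H) = 3$. Indeed, any subgraph $K$ of $G_{uv,T}$ with an edge and $f(K) \leq 2$ must involve the vertex obtained by identifying $u$ and $v$; pulling $K$ back to $G$ and adjoining the edge $uv$ yields a subgraph $\hat K$ with $f(\hat K) = f(K) + 1 \leq 3$, hence $f(\hat K) = 3$ by sparsity of $G$, with at least two edges, and with $w \notin V(\hat K)$ (otherwise $\hat K + uw$ would have $f = 2$), that is, a blocker; conversely a blocker maps forward to a subgraph of $G_{uv,T}$ with an edge and $f = 2$. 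A common neighbour of $u$ and $v$ other than $w$ --- the only other obstruction to simplicity --- already yields a blocker, namely the triangle it spans, so ``no blocker'' also delivers simplicity; and one checks that which of $uw, vw$ is deleted is immaterial here. It therefore remains to prove the Main Lemma: some triangular face of $G$ has a contractible edge.

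For the Main Lemma I would argue by contradiction: assume every triangular face of $G$ is blocked on each of its three edges, and choose, over all triangular faces and all their blockers, a blocker $H$ with $|V(H)|$ minimum. Then $H$ is $2$-connected, $(2,3)$-tight, a proper subgraph of $G$ (the third vertex of the relevant face is outside it), and carries the inherited plane embedding; the Euler-and-counting argument applied to $H$ produces a triangular face $\Phi = abc$ of $H$. If $\Phi$ is also a face of $G$, then $abc$ is a triangular face of $G$, so by assumption $ab$ is blocked by some $H'$; since $H$ and $H'$ share the edge $ab$, submodularity of $f$ forces $f(H \cap H') = f(H \cup H') = 3$, so $H \cap H'$ is critical, contains $a, b$ but not $c$, and is strictly smaller than $H$ (as $c \in V(H) \setminus V(H')$), contradicting minimality --- at least when $H \cap H'$ has at least three vertices. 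I expect the main obstacle to be precisely the two loose ends here: a triangular face of the subgraph $H$ need not be a face of the ambient $G$, so one must first dispose of the $G$-material drawn inside $\Phi$ (finding a smaller blocker there) or reduce to $\Phi$ being a face of $G$; and the degenerate overlap $V(H \cap H') = \{a,b\}$ --- where $H$ and $H'$ meet only in the edge $ab$ --- needs a genuinely topological argument pinning down the position of $H'$ relative to the embedding of $H$ (it must lie on the side of $ab$ away from $c$). Once these are handled, the rest is bookkeeping with Euler's formula and the submodularity of $f$.
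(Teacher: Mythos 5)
The paper does not actually prove Theorem~\ref{thm_FJW}; it is quoted from \cite{FJW}, and the closest thing to a proof in the paper is the parallel machinery developed for the unbalanced case (Lemmas~\ref{lem_23ltriangleblocker} and \ref{lem_maxblocker23l}, Proposition~\ref{prop_23ltriangle}). Judged on its own, your proposal gets the framework right --- the induction, the $2$-connectivity and Euler count producing a triangular face, and the blocker characterisation of contractibility are all correct and standard --- but the Main Lemma, which is the entire substance of the theorem, is left with two gaps that you yourself flag, and these are not loose ends that ``bookkeeping'' will close.

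The more serious one is (a). Your strategy hinges on choosing a blocker $H$ that is \emph{minimal}, finding a triangular face $\Phi$ of $H$, and intersecting blockers; but minimality gives you no control whatsoever over the relationship between the faces of $H$ and the faces of $G$, so $\Phi$ may be stuffed with vertices and edges of $G$, and ``finding a smaller blocker there'' does not obviously work (a subgraph of $G$ drawn inside $\Phi$ need not contain two vertices of any blocked triangle, so it has no reason to be a blocker for anything). The standard resolution runs in exactly the opposite direction: one takes a \emph{maximal} blocker $B$, proves that every face of $B$ other than the one containing $T$ is a face of $G$ (this is Lemma~9 of \cite{FJW}, reproved in the present paper as the balanced case of Lemma~\ref{lem_maxblocker23l}, and the count $f(B\cup H)\le f(B)$ there is where maximality earns its keep), and then applies the theorem --- in the strengthened form of Theorem~\ref{thm_strongFJW}, where one may prescribe a vertex to avoid --- inductively to the smaller Laman graph $B$, transferring the contractible edge of $B$ back to $G$ by a union/intersection argument against any putative blocker $C$ for $G_{e,T'}$. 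The strengthening to ``two contractible edges avoiding a given vertex'' is not decoration: it is what makes the induction close, since the edge found in $B$ must avoid the vertex where $B$ is attached to the original triangle. Your minimal-blocker scheme has no analogue of either ingredient. The degenerate overlap (b), where $H\cap H'$ is a single edge, likewise needs a genuine planarity argument (both $H$ and $H'$ must be placed relative to the face $\Phi$ and to the original triangle $T$), and until it is supplied the contradiction with minimality is not established. In short: correct skeleton, but the heart of the proof is missing, and the minimality heuristic points away from the argument that actually works.
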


We would like to prove results analogous to Theorem 
\ref{thm_FJW} for \( (2,3,l) \)-tight \( \mathbb A \)-graphs
for \( l\in \{1,2\} \). 
However 
it is easy to see that there are 
infinitely many pairwise non-isomorphic \( (2,3,l) \)-tight
\( \mathbb A \)-graphs that have no triangular faces. Thus we will need to consider an 
additional contraction move to deal with quadrilateral faces. 

Suppose that \( Q \) is a quadrilateral face of 
\( G \) with boundary walk 
\( v_1\), \(e_1\), \(v_2\), \(e_2\), \(v_3\), \(e_3\), \(v_4\), \(e_4\), \(v_1 \) such that \( 
v_1 \neq v_3\). Let \( \delta \) be a Jordan arc joining 
\( v_1 \) and \( v_3 \) whose relative interior lies in 
\( Q \). We can view \( \delta \) as a (topological) edge 
that is not in \( G \). We will refer to \( \delta \) 
as the diagonal of \( Q \) joining \( v_1 \) and \( v_3 \).
Let \( G_{v_1,v_3,Q} \) be the 
\( \Theta \)-graph obtained 
from \( G \cup \delta \) by contracting \( \delta \) 
to a point and then 
deleting one of the edges \( e_1,e_2 \) and one of the 
edges \( e_3,e_4 \). 
We call \( G_{v_1,v_3,Q} \) a quadrilateral contraction of 
\( G \). 
On the other hand we say that \( G \) is obtained from 
\( G_{v_1,v_3,Q} \) be a quadrilateral vertex split.
Again we emphasise that,  apart from the cases explicitly 
specified in the definition, parallel edges or loop edges that 
are created by the contraction of \( \delta \)
are retained in \( G_{ v_1,v_3,Q} \).
See Figure \ref{fig_contractions} for illustrations of triangle
and quadrilateral contractions.

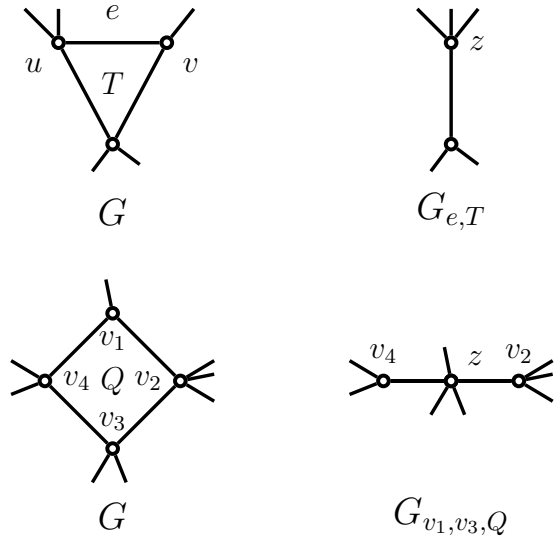
\begin{figure}[htp]
    \centering
    
\begin{tikzpicture}[scale=0.9]
    \begin{scope}
        \coordinate [vertex,label=below left:$u$] (u) at (-0.8,1.5);
        \coordinate [vertex,label=below right:$v$] (v) at (0.8,1.5);
        \coordinate [vertex] (w) at (0,0);
        \draw [edge] (u) --  (v)node [midway,label=above:$e$]{} -- (w) -- (u);
        \draw [edge] (u) -- +(-0.5,0.5);
        \draw [edge] (u) -- +(0.0,0.5);
        \draw [edge] (v) -- +(0.4,0.5);
        \draw [edge] (w) -- +(0.4,-0.3);
        \draw [edge] (w) -- +(-0.3,-0.4);
        \node at (0,0.9) {$T$};
        \node at (0,-1) {\large $G$};
    \end{scope}
    \begin{scope}[shift={(5,0)}]
        \coordinate [vertex,label=right:$z$] (z) at (0,1.5);
        \coordinate [vertex] (w) at (0,0);
        \draw [edge] (z) --  (w);
        \draw [edge] (z) -- +(-0.5,0.5);
        \draw [edge] (z) -- +(0.0,0.5);
        \draw [edge] (z) -- +(0.4,0.5);
        \draw [edge] (w) -- +(0.4,-0.3);
        \draw [edge] (w) -- +(-0.3,-0.4);
        \node at (0,-1) {\large $G_{e,T}$};
    \end{scope}
    \begin{scope}[shift={(0,-3.5)}]
        \coordinate [vertex,label=right:$v_4$] (v4) at (-1,0);
        \coordinate [vertex,label=below:$v_1$] (v1) at (0,1);
        \coordinate [vertex,label=left:$v_2$] (v2) at (1,0);
        \coordinate [vertex,label=above:$v_3$] (v3) at (0,-1);
        \node at (0,0){$Q$};
        \draw[edge] (v1) -- (v2) -- (v3) -- (v4) -- (v1);
        \draw[edge] (v4) -- +(-0.5,0.3);
        \draw[edge] (v4) -- +(-0.5,-0.2);
        \draw[edge] (v1) -- +(-0.1,0.5);
        \draw[edge] (v2) -- +(0.5,0.3);
        \draw[edge] (v2) -- +(0.5,0.1);
        \draw[edge] (v2) -- +(0.5,-0.3);
        \draw[edge] (v3) -- +(-0.3,-0.5);
        \draw[edge] (v3) -- +(0.2,-0.5);
        \node at (0,-2) {\large $G$};
    \end{scope}
    \begin{scope}[shift={(5,-3.5)}]
        \coordinate [vertex,label=above:$v_4$] (v4) at (-1,0);
        \coordinate [vertex,label=above right:$z$] (v1) at (0,0);
        \coordinate [vertex,label=above:$v_2$] (v2) at (1,0);
        \coordinate [vertex] (v3) at (0,0);
        \draw[edge] (v1) -- (v2) -- (v3) -- (v4) -- (v1);
        \draw[edge] (v4) -- +(-0.5,0.3);
        \draw[edge] (v4) -- +(-0.5,-0.2);
        \draw[edge] (v1) -- +(-0.1,0.5);
        \draw[edge] (v2) -- +(0.5,0.3);
        \draw[edge] (v2) -- +(0.5,0.1);
        \draw[edge] (v2) -- +(0.5,-0.3);
        \draw[edge] (v3) -- +(-0.3,-0.5);
        \draw[edge] (v3) -- +(0.2,-0.5);
        \node at (0,-2) {\large $G_{v_1,v_3,Q}$};
    \end{scope}
\end{tikzpicture}
    \caption{Triangle and quadrilateral contractions of a surface graph}
    \label{fig_contractions}
\end{figure}

We note here that in all the cases that arise in our later discussion the quadrilateral \( Q \) 
will have at least three distinct edges. So we will always assume (tacitly) that 
 the deleted edge from the set \(\{e_1,e_2\}\) is distinct from the deleted edge 
from the set \( \{e_3,e_4\} \). This assumption is not necessarily 
vacuous in the case where \( Q \) is a degenerate quadrilateral.

Let \( K \), respectively \( L \), be the unique balanced, respectively unbalanced, 
\( (2,3,2) \)-tight \( \mathbb A \)-graph with two vertices.
Let \( M \) be the unique unbalanced \( (2,3,1) \)-tight \( \mathbb A \)-graph with one vertex.
See Figure \ref{fig_basecases}.

\begin{theorem}
    Suppose that \( G \) is a \( (2,3,2) \)-tight \( \mathbb A \)-graph
    with at least one edge. Then there is 
    a sequence of \( (2,3,2) \)-tight 
    \( \mathbb A \)-graphs \( G_0,G_1,\dots,G_n \cong G \) where \( |V(G_0)| = 2 \) and 
    for \( i =1,\dots,n \), \( G_i \) is obtained from \( G_{i-1} \) by either
    a triangular vertex split or a quadrilateral vertex split. 
    Moreover, if \( G \) is unbalanced then \( G_0 \cong L \),
    whereas if \( G \) is balanced then \( G_0 \cong K \) and 
    only triangular vertex splits are required.
    \label{thm_cylinder_inductive232}
\end{theorem}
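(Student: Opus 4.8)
The plan is to argue by induction on $|E(G)|$, showing that a $(2,3,2)$-tight $\mathbb A$-graph with at least one edge admits either a triangular or quadrilateral \emph{contraction} to a strictly smaller $(2,3,2)$-tight $\mathbb A$-graph, terminating at the two-vertex base cases $K$ or $L$. The balanced case is essentially a corollary of Theorem~\ref{thm_FJW}: a balanced $(2,3,2)$-tight $\mathbb A$-graph is a Laman graph (Remark~\ref{rem_gain1}), and since it is balanced some face $F_0$ contains both ends of $\mathbb A$; one can then realise the plane embedding guaranteed by Theorem~\ref{thm_FJW} inside $\Sigma - F_0 \cong \mathbb R^2$, so the Fekete--Jord\'an--Whiteley sequence of triangular vertex splits can be performed inside $\mathbb A$ keeping $F_0$ as a face throughout. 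This gives the ``balanced'' clause with $G_0 \cong K$ and only triangular splits. So the real content is the unbalanced case.

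For the unbalanced case I would first do a Euler-characteristic / face-counting argument to locate a small face to contract. Write $f(G)=2$ (tightness) and use Euler's formula for $\mathbb A$ (which has Euler characteristic $0$): $|V| - |E| + |F^*| = 0$ where $F^*$ counts cellular faces and one has to account for the non-cellular faces and the two ends of $\mathbb A$. Since $|E| = 2|V|-2$, a counting argument over $\sum_{F}(|F|-4)$ forces many faces to be triangles or quadrilaterals; in particular (after dealing with the non-cellular/degenerate faces, which is where care is needed) there must exist a triangular face or a quadrilateral face that is a genuine candidate for contraction, i.e.\ with $u \neq v$ (resp.\ $v_1 \neq v_3$). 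The standard obstruction is that the contraction might destroy $(2,3,2)$-sparsity because there is a ``blocker'': a subgraph $H$ with $f(H)$ small (equal to $2$, or $3$ if balanced) whose presence would make the contracted vertex over-dense, or a balanced subgraph through which the contraction creates a violating set. One shows, by the usual submodularity argument for the count $f(\cdot)$ (here adapted to the pair of counts, sparse $\geq 2$ globally and $\geq 3$ on balanced subgraphs — cf.\ the gain-matroid references \cite{ST1,JKT,BS10}), that if \emph{every} triangular and quadrilateral face is blocked, then the blockers can be combined to contradict $f(G)=2$ or to contradict unbalancedness. This submodular ``no simultaneous blocker'' lemma is the technical heart.

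The genuinely new difficulty, compared to \cite{FJW}, is topological: when we contract a face of an $\mathbb A$-graph, the resulting graph must still be an $\mathbb A$-graph (not a graph embedded in the disc or the torus), and we must ensure the contraction does not accidentally make it balanced when $G$ was unbalanced, nor change which end of $\mathbb A$ lies where. Concretely, a loop or parallel class created by a quadrilateral contraction could bound a disc containing an end of $\mathbb A$, changing the topological type of the embedding. So the argument must track, for each candidate face, how its boundary walk sits relative to the two ends, and the ``blocker'' analysis has to rule out not just combinatorial over-density but also these topological degeneracies. I expect \emph{this} step — proving that among all triangular/quadrilateral faces there is one whose contraction is simultaneously combinatorially admissible and topologically admissible (keeps the $\mathbb A$-structure and the balance type) — to be the main obstacle, and it is presumably where the deferred long proof spends most of its effort. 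Once such a face is found, contracting it reduces $|E|$ by at least one while preserving $(2,3,2)$-tightness and the balance type, the unbalanced graph cannot shrink below two vertices (an unbalanced $(2,3,2)$-tight graph with one vertex would need $f=2$, impossible for a single vertex with loops since a loop is balanced only if... — one checks the minimal unbalanced example is exactly $L$), and the induction closes.
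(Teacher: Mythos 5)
Your overall skeleton matches the paper's: reduce the balanced case to Fekete--Jord\'an--Whiteley, use an Euler count to show an unbalanced $(2,3,2)$-tight $\mathbb A$-graph has a triangular or quadrilateral face, and then argue via ``blockers'' that some face contraction preserves tightness. The quadrilateral step is also in the spirit of the paper: there one classifies the possible blockers for the two diagonal contractions of a \emph{single} quadrilateral face and shows, by a case analysis on blocker types using the union/intersection identities, that both contractions cannot be blocked simultaneously.

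However, there is a genuine gap in your treatment of the triangle step. You assert that if every candidate contraction is blocked, ``the blockers can be combined to contradict $f(G)=2$ or to contradict unbalancedness.'' For a triangular face this is false as a local statement: all three contractions of a given triangular face $S$ can be blocked without any contradiction (one can only rule out the case where two of the blockers are unbalanced, since then their union would be a tight unbalanced, hence induced, subgraph containing $V(\partial S)$ but not $E(\partial S)$). The remaining case -- some edge $f\in\partial S$ admits only \emph{balanced} blockers -- cannot be dispatched by submodularity alone. The paper's resolution is a different idea: take a \emph{maximal} balanced blocker $B$ for $G_{f,S}$, prove (Lemma \ref{lem_maxblocker23l}) that every cellular face of $B$ is a face of $G$, and then invoke the strengthened FJW theorem (Theorem \ref{thm_strongFJW}: \emph{two} contractible triangle edges avoiding any prescribed vertex) inside the Laman graph $B$ to produce a \emph{different} triangular face $T$ of $G$ and an edge $e$ avoiding the endpoint of $f$, for which one can then show $G_{e,T}$ is sparse by intersecting any putative blocker with $B$ and exploiting maximality. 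Your sketch contains no mechanism for switching to a different face, and without it the induction does not close. Separately, the issue you flag as the ``main obstacle'' -- that a contraction might silently convert an unbalanced graph into a balanced one -- is not an independent difficulty: since balance enters the definition of $(2,3,2)$-sparsity, a contraction that became balanced while keeping $f=2$ would itself violate the balanced count and hence already be excluded by the blocker analysis; no extra topological bookkeeping is needed beyond what the sparsity condition records.
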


\begin{proof}
    See Section \ref{sec_inductive}.
\end{proof}

\begin{theorem}
    Suppose that \( G \) is a \( (2,3,1) \)-tight \( \mathbb A \)-graph
    with at least one edge.
    Then there is 
    a sequence of \( (2,3,1) \)-tight 
        \( \mathbb A \)-graphs \( G_0,G_1,
    \dots,G_n \cong G \) where \( |V(G_0)| \leq 2  \),  
    and  
    for \( i =1,\dots,n \), \( G_i \) is obtained from \( G_{i-1} \) by either
    a triangular vertex split or a quadrilateral vertex split.
    Moreover, if \( G \) is unbalanced then \( G_0 \cong M \), 
    whereas if \( G \) is balanced then \( G_0 \cong K\) and 
    only triangular vertex splits are required. 
    \label{thm_cylinder_inductive231}
\end{theorem}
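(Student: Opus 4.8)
The plan is to prove Theorem~\ref{thm_cylinder_inductive231} by induction on $|V(G)|$, mirroring the strategy for Theorem~\ref{thm_cylinder_inductive232} but tracking the more lenient count $f(H)\ge 1$ for general subgraphs. The base case is $|V(G)|\le 2$: if $G$ is balanced then it is a Laman graph on at most two vertices, which forces $G\cong K$ (a single edge between two vertices), and if $G$ is unbalanced with one vertex then $f(G)=1$ forces a single loop, i.e. $G\cong M$; the case of an unbalanced $(2,3,1)$-tight $\mathbb{A}$-graph on two vertices can be reduced to one of these by a single contraction, or handled directly as a short base case. For the inductive step, assume $|V(G)|\ge 3$ (and $\ge 2$ in the unbalanced two-vertex situation). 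I would first verify an Euler-type counting argument on the $\mathbb{A}$-graph: since $\Sigma\equiv\mathbb{A}$ has Euler characteristic $0$, for a connected $(2,3,1)$-tight $\mathbb{A}$-graph with all faces cellular we get $|V|-|E|+|F|=0$, and combined with $f(G)=2|V|-|E|\in\{1,3\}$ together with $\sum_F |F|=2|E|$ and $|F|\ge 3$ for every cellular face, a short averaging argument shows that $G$ must have a triangular or quadrilateral face (one has to also deal with non-cellular faces, but in the $\mathbb{A}$ case there is at most one non-cellular face and it can only help).

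The heart of the argument is then to show that \emph{some} triangle or quadrilateral face admits an admissible contraction, i.e. a triangular vertex split or quadrilateral vertex split whose reverse lands back inside the class of $(2,3,1)$-tight $\mathbb{A}$-graphs. For a triangular face $T$ with edge $e=uv$, $u\ne v$, the contraction $G_{e,T}$ automatically preserves $f$ (we remove one vertex and two edges, so $f$ is unchanged), stays an $\mathbb{A}$-graph, and preserves the balanced/unbalanced type; the only thing that can go wrong is that contracting $e$ merges two vertices lying in a common subgraph $H$ with $f(H)$ already small, pushing $f(H_{\mathrm{contracted}})$ below $1$ (or below $3$ if $H$ is balanced) — this is the classical ``blocker'' phenomenon from \cite{FJW}. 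I would set up the notion of a blocking subgraph for a given contractible face and edge, and argue that if every triangle and every quadrilateral face is blocked, then one can extract a proper subgraph of $G$ that is itself $(2,3,1)$-tight and strictly smaller, on which one applies the induction hypothesis, and then shows how to reinsert the contracted piece — or, more in the spirit of \cite{FJW}, that the union/intersection of two blockers is again a blocker (submodularity of $f$: $f(H_1\cap H_2)+f(H_1\cup H_2)\le f(H_1)+f(H_2)$, with the usual care about the balanced clause and about whether $H_1\cap H_2$ contains an edge), so one can take a minimal or a maximal blocker and derive a contradiction with tightness of $G$. For quadrilateral faces the same bookkeeping applies: inserting the diagonal $\delta$ and contracting removes one vertex and (net) two edges, so $f$ is preserved, and the blocker analysis is analogous, the extra subtlety being that a quadrilateral face may be degenerate (repeated vertex or edge on its boundary walk), which is where the running assumption that $Q$ has at least three distinct edges is used.

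The main obstacle I expect is the topological/combinatorial interaction between the blocker analysis and the global structure of $\mathbb{A}$: unlike in the planar Laman case of \cite{FJW}, here a contraction can change \emph{which} face carries the two ends of $\mathbb{A}$, and a subgraph that is unbalanced in $G$ can become balanced after a contraction (or conversely a contraction can be obstructed precisely because it would make an unbalanced subgraph balanced, triggering the stronger $f\ge 3$ requirement). Handling this requires carefully analysing how the two ends of $\mathbb{A}$ sit relative to the faces and relative to candidate blocking subgraphs — in particular ruling out the possibility that every available face is ``wrapped around'' the annulus in a way that blocks every contraction. I would expect to need a separate lemma classifying the small unbalanced $(2,3,1)$-tight $\mathbb{A}$-graphs (to anchor the induction when the blocker argument reduces $G$ to something of size $\le 2$), and I anticipate that the hardest single case is an unbalanced $G$ all of whose triangular/quadrilateral faces are blocked by subgraphs that are themselves unbalanced and non-cellular — this is presumably exactly the case that forces the somewhat delicate topological arguments deferred to Section~\ref{sec_inductive}.
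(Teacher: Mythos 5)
Your overall strategy --- induction on $|V(G)|$, an Euler count to produce a triangular or quadrilateral face, and a blocker analysis to find an admissible contraction --- is exactly the paper's strategy, and your identification of the balanced/unbalanced interaction as the main difficulty is accurate. However, as written the plan has a genuine gap at its core step. For triangles, your proposed resolution (``the union/intersection of two blockers is again a blocker, so take a minimal or maximal blocker and derive a contradiction with tightness of $G$'') only works when the blockers involved are unbalanced: two unbalanced blockers for distinct edges of the same triangle $S$ have a tight unbalanced union containing all of $V(\partial S)$ but not all of $E(\partial S)$, contradicting inducedness. When some edge of $S$ admits only \emph{balanced} blockers no contradiction arises, and the paper (Proposition~\ref{prop_23ltriangle}) must instead pass to a maximal balanced blocker $B$, show via Lemma~\ref{lem_maxblocker23l} that the cellular faces of $B$ are faces of $G$, and use Theorem~\ref{thm_strongFJW} to locate a \emph{different} triangle inside $B$ whose contraction survives in $G$. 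That relocation step is the missing idea; without it the argument does not close.

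For quadrilaterals the gap is similar but sharper: the statement you actually need is that if the contraction along one diagonal of $Q$ is blocked then the contraction along the \emph{other} diagonal is admissible (Proposition~\ref{prop_quad2}). ``The blocker analysis is analogous'' conceals a classification of blockers into three types and a six-case analysis of pairs of types, several cases of which require topological separation arguments in $\mathbb A$ (a cycle through the diagonal separating the other two boundary vertices). Moreover, specific to $l=1$, balanced $(2,3,1)$-tight subgraphs need not be induced and type~3 blockers can be disconnected (Figure~\ref{fig_231examples}), so you cannot identify a blocker with its vertex set as in the $(2,3,2)$ case; this is precisely why the paper gives the $l=1$ quadrilateral case a separate treatment rather than reusing the $l=2$ argument. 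Your base case and Euler count are fine, and your worry about degenerate faces is addressed by Lemmas~\ref{lem_231degentri} and \ref{lem_quaddegen231}, but the two items above are where the real work lies and where your sketch, as it stands, would fail.
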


\begin{proof}
    See Section \ref{sec_inductive}.
\end{proof}

The final piece of the puzzle, at least on the combinatorial 
side of things, is to clarify the relationship between
sparse and tight graphs.
It is well known that any $(2,3)$-sparse graph $D$ can be completed 
to a $(2,3)$-tight graph by adding appropriate edges to $D$. 
This follows from the fact that the edge sets of $(2,3)$-sparse 
subgraphs of the complete graph $K_{|V|}$ form an independence 
structure of a matroid, called the generic $2$-rigidity matroid 
(see, e.g., \cite{WW}). 
Similarly, it is known that the $(2,3,\ell)$-sparsity count induces a matroid for $\ell\in\{1,2\}$ (see \cite{ST1,NS}, for example). 

However, in the context of surface graphs it is not always clear that
these matroidal augmentation properties respect the topological 
embedding.  For example it is known that 
for any simple graph the \( (2,0) \)-sparse edge sets form 
the independent 
sets of a matroid.
Now consider the complete graph \( K_5 \), 
which is \( (2,0) \)-tight and can be embedded in the torus. 
However if
\( e \) is an edge of \( K_5 \) we observe that there is an
embedding of the \( (2,0) \)-sparse graph \( K_5-e \) in the torus 
that cannot be extended to an embedding of \( K_5 \).
By way of analogy we draw the reader's attention to 
the fact, as observed for example by Diestel (see \cite{MR3822066}, Chapter 4), 
that it is not immediately
obvious that a maximal plane graph is maximally planar. 
Diestel provides a careful proof that this is indeed the case in 
loc. cit.
One might view Proposition \ref{prop_232complete} as an analogue of 
that classical fact for certain classes of surface graphs.

\begin{proposition}
    Let \( l \in \{1,2\} \) and 
    let \( G \) be a \( (2,3,l) \)-sparse \( \mathbb A \)-graph.
    There 
    exists a \( (2,3,l) \)-tight \( \mathbb A \)-graph \( G' \)
    such that \( G \) is a spanning subgraph of \( G' \).
    \label{prop_232complete}
\end{proposition}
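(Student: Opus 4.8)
The plan is to build $G'$ by adding edges to $G$ one at a time, at each stage maintaining $(2,3,l)$-sparsity together with a planar (i.e. $\mathbb{A}$-graph) embedding, and to show that this process can only terminate at a $(2,3,l)$-tight graph. Since the $(2,3,l)$-sparsity count is matroidal, the purely combinatorial augmentation is automatic; the real content, as the discussion preceding the statement stresses, is that the new edge can always be drawn in $\mathbb{A}$. So the key claim to establish is the following: if $G$ is a $(2,3,l)$-sparse $\mathbb{A}$-graph that is \emph{not} $(2,3,l)$-tight, then there is a face $F$ of $G$ and two vertices $u,v$ on the boundary walk of $F$ such that adding a new edge from $u$ to $v$ drawn through the interior of $F$ yields a $(2,3,l)$-sparse $\mathbb{A}$-graph on the same vertex set.

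First I would reduce to a normal form. If $G$ is disconnected, or more generally if some face of $G$ is non-cellular, then that face has a boundary walk touching more than one component (or wrapping around an end of $\mathbb{A}$); one can then add an edge inside that face joining vertices from two different components, or joining two vertices whose addition makes a previously unbalanced piece stay unbalanced — crucially such an edge cannot create any new \emph{balanced} subgraph with a violated count, because it connects parts that were not jointly constrained, and it obviously cannot decrease $f$ of the whole graph below $l$ since we only ever add edges while $f(G) > l$. The point of working face by face is that an edge drawn in the interior of a single face $F$ of an $\mathbb{A}$-graph automatically produces another $\mathbb{A}$-graph: this is just the standard fact that one may insert a Jordan arc joining two boundary points of an open disc (cellular face) or of an annular/planar region (non-cellular face) without crossings.

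The main work is the cellular case where $G$ is connected and $f(G)=l$ fails either because $f(G)>l$, or because $G$ is balanced with $f(G)>3$, or — the subtle case — $G$ is unbalanced, $f(G)=l$, but $G$ is not tight because it is not an isolated vertex and some structural obstruction remains; but in fact if $G$ is unbalanced and $f(G)=l$ then by definition $G$ \emph{is} tight, so the only genuine cases are $f(G)>l$ (and in the balanced subcase, $f(G)>3$). So suppose $f(G) > l$, or $G$ balanced with $f(G) > 3$. Pick any cellular face $F$ with boundary walk $w_1 e_1 w_2 e_2 \cdots$. I claim some pair of distinct, non-adjacent-on-some-edge vertices on this walk, say $w_i$ and $w_j$, can be joined by an arc in $F$ keeping $(2,3,l)$-sparsity. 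The danger is only that \emph{every} admissible diagonal of \emph{every} face lies inside a subgraph $H$ with $f(H) = l$ (then adding the edge would push $f(H)$ below $l$), or inside a balanced $H$ with $f(H) = 3$. A standard submodularity argument for the count $f$ (the function $H \mapsto 2|V(H)| - |E(H)|$ is submodular on subgraphs, and the "balanced" condition is closed under the relevant unions because subgraphs of balanced graphs are balanced) shows that the tight subgraphs — those with $f(H) = l$, plus the balanced ones with $f(H)=3$ — form a sublattice, hence there is a well-defined maximal such subgraph $H_0$. If $H_0 \neq G$, then since $G$ is connected there is an edge $e$ with exactly one endpoint in $H_0$; following the face on one side of $e$, one reaches a diagonal with at least one endpoint outside $H_0$, and adding it cannot violate any count. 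If $H_0 = G$ then $G$ itself is already $(2,3,l)$-tight, contradicting our assumption.

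I expect the main obstacle to be precisely the verification that the chosen diagonal $\delta$ joining boundary vertices of a face does not create a \emph{balanced} subgraph $K \cup \delta$ with $f(K \cup \delta) < 3$ while $f(K) \ge 3$: adding $\delta$ can in principle turn an unbalanced $(2,3,l)$-tight subgraph into a balanced one (if $\delta$ "closes a loop" around an end of $\mathbb{A}$), and then the required bound jumps from $l$ to $3$. Handling this forces a careful choice of which face and which diagonal: one should prefer diagonals whose addition keeps the containing subgraphs unbalanced, which is possible by choosing $\delta$ inside a face that does not separate the two ends of $\mathbb{A}$, unless $G$ is balanced to begin with — and in the balanced case every subgraph is balanced, so the count is uniformly $3$ and the classical Fekete–Jordán–Whiteley / plane-Laman augmentation applies verbatim. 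Making this dichotomy precise, and checking that in the unbalanced case one can always find a face and diagonal that stays unbalanced, is the crux; once that is in place, iterating until $f$ reaches its minimum permissible value produces the desired $(2,3,l)$-tight spanning supergraph $G'$.
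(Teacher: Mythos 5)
There is a genuine gap, in two places. First, your reduction to a ``well-defined maximal tight subgraph $H_0$'' is not available: the $(2,3,l)$-tight subgraphs of a $(2,3,l)$-sparse $\mathbb{A}$-graph do \emph{not} form a sublattice. Two tight subgraphs that are vertex-disjoint (or meet in a single vertex, or in two isolated vertices) do not in general union to a tight subgraph --- this is exactly why Lemmas \ref{lem_232tightunion} and \ref{lem_231tightunion} carry hypotheses on the size of the intersection and still admit exceptional outcomes. Consequently there is no unique maximum tight subgraph containing all others, only maximal-by-inclusion ones, and your conclusion ``every tight subgraph lies in $H_0$, so a diagonal with one endpoint outside $H_0$ cannot be blocked'' does not follow. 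The paper instead fixes a maximal (not maximum) tight subgraph $B$, chooses the new arc $\delta$ very specifically --- as the third side of a triangle $u,e,v,\delta,w,f,u$ where $e\in E(B)$ and $f\notin E(B)$ are consecutive in a face boundary walk --- and then shows that any tight subgraph $C$ blocking $\delta$ would force $B\cup C\cup f$ (or $B\cup C\cup\{e,f,u\}$) to be tight, contradicting maximality of $B$. That contradiction argument is the actual content of the proof and is absent from your proposal.

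Second, you correctly identify the crux --- that adding $\delta$ may turn an unbalanced subgraph into a balanced one, raising the required count from $l$ to $3$ --- but you explicitly leave it unresolved (``making this dichotomy precise \dots is the crux; once that is in place \dots''), and the fix you sketch (``choose $\delta$ inside a face that does not separate the two ends'') is not a well-posed condition: whether $C\cup\delta$ becomes balanced depends on $C$, not just on the face. The paper's resolution is that because $\kappa=u,e,v,\delta,w,f,u$ bounds a cellular triangle, adding the remaining edges of $\kappa$ to any subgraph containing the others cannot separate the ends of $\mathbb{A}$; this pins down the balancedness of $C\cup\{\delta,e,f\}$ in terms of that of $C$ and makes the maximality contradiction go through, with a separate, more delicate case analysis for $l=1$ where balanced tight subgraphs need not be induced. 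Without both of these ingredients the proposal does not constitute a proof.
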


\begin{proof}
    See Section \ref{sec_completion}.
\end{proof}

\section{Sufficiency of the counts}
\label{sec_suff}

In this section we complete the proofs of Theorems \ref{thm_maintranslation} 
and \ref{thm_mainrotation}. We have already shown in Section 
\ref{sec_nec} the necessity of the sparsity conditions in each of these 
theorems. So this section is devoted to proving the sufficiency. 
Suppose that \( G \) is an \( \mathbb A \)-graph 
associated to a generic \( \Gamma \)-symmetric contact system and that 
\( e \in E(G) \). Suppose that \( e = \Gamma.(l,m) \) where 
\( l \) and \( m \) are segments in the contact system. By shortening
all the segments \( \Gamma.l \) a little bit we obtain a 
generic symmetric contact system whose \( \mathbb A \)-graph is \( G-e \). 
Thus, in light of Proposition \ref{prop_232complete} it suffices to 
prove the following result.

\begin{proposition}
    Let \( G \) be a \( (2,3,l) \)-tight \( \mathbb A \)-graph
    where \( l \in \{1,2\} \). For \( l = 1 \), respectively 
    \( l=2 \), let \( \Gamma \) be a discrete subgroup of 
    the Euclidean group generated by a rotation of order 
    at least 3, respectively a translation or a rotation of order 2.
    Then there is some generic \( \Gamma \)-symmetric
    contact system \( \mathcal L \)
    such that \( G_{\overline{\mathcal L}} \cong G \). 
    \label{prop_suff}
\end{proposition}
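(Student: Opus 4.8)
The plan is to induct on the number of vertices of $G$ using the inductive characterisations in Theorems \ref{thm_cylinder_inductive232} and \ref{thm_cylinder_inductive231}. The base cases are the $\mathbb A$-graphs $K$, $L$ (for $l=2$) and $K$, $M$ (for $l=1$) of Figure \ref{fig_basecases}; for each of these I would exhibit explicitly a generic $\Gamma$-symmetric contact system realising it. For $K$ (balanced), Proposition \ref{prop_balanced} reduces this to finding a plane contact system realising the appropriate $2$-vertex Laman graph, which is trivial; one then places a $\Gamma$-orbit of translates or rotates of that configuration far from the singular set (and, in the rotation case, away from the fixed point) so that no unwanted contacts or self-intersections are introduced, which is possible since $\Gamma$ is discrete. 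For $L$ and $M$ one reads off the required configuration directly from the surface-graph pictures: a single $\Gamma$-orbit of segments whose images in $\mathbf O$ form the self-intersecting local geodesic pictured. Concretely, for $M$ (rotation of order $n\ge 3$) take a single segment $l$ positioned so that one endpoint of $l$ lies in the interior of $g.l$ for the generator $g$, and for $L$ (translation) take a segment whose endpoint lands in the interior of a translate of itself; one checks genericity (S1)--(S4) directly, using that the orbit lies in a bounded strip (translation case) or bounded disc (rotation case).

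For the inductive step, suppose $G$ is $(2,3,l)$-tight with more than two vertices. By Theorem \ref{thm_cylinder_inductive232} or \ref{thm_cylinder_inductive231}, $G$ is obtained from a $(2,3,l)$-tight $\mathbb A$-graph $G'$ with one fewer vertex by a triangular vertex split or a quadrilateral vertex split. By the inductive hypothesis there is a generic $\Gamma$-symmetric contact system $\mathcal L'$ with $G_{\overline{\mathcal L'}}\cong G'$. Lifting via the covering projection of diagram (\ref{diag_covering}), it suffices to perform the inverse geometric operation equivariantly on the plane configuration $\mathcal L'$: realise the vertex split as the insertion of a new $\Gamma$-orbit of segments. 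For a triangular vertex split at a triangle $T$ with boundary edge $e=uv$: the triangle $T$ of the surface graph corresponds (in the plane, on a fundamental domain) to a local picture in which segments $\overline u$ and $\overline v$ touch a third segment $w$, forming a "triangular" contact pattern; I would un-collapse the vertex $z$ (the image of the contracted edge) back into two segments meeting the appropriate neighbours, exactly as in the plane realisation step of Thomassen's theorem / the Streinu--Haas argument, taking care to do this simultaneously on every member of the $\Gamma$-orbit and to keep all segments short enough that no new contacts arise. The quadrilateral vertex split is handled analogously, but now the diagonal $\delta$ of $Q$ corresponds to a choice of which two segments on the boundary of the quadrilateral face the newly-split segment should touch; one routes the split so that the resulting contact pattern matches $\delta$.

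The main obstacle I expect is the equivariant realisability of a single vertex-split move when the face $T$ or $Q$ being split is topologically non-trivial in $\mathbb A$ — for instance when its boundary walk wraps around the annulus, or when the face is degenerate (a vertex or an edge repeats on the boundary walk, as is allowed in our definitions). In such cases one cannot simply work inside a single fundamental domain: the local picture of the face in the plane involves several $\Gamma$-translates of segments, and one must check that the segments produced by the split, together with all their $\Gamma$-images, still satisfy (S1)--(S4), i.e. that the only intersections created are precisely the ones prescribed by the new edges of $G$ and that no new self-intersections of orbits appear. The key quantitative input controlling this is that each split can be performed using arbitrarily short new segments, so that any spurious contact would have to occur within a small neighbourhood of the point being split; combined with the finiteness of the number of orbits (S2) and discreteness of $\Gamma$, this localises the problem and lets one argue that a sufficiently small split works. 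I would also need to verify that the $\Gamma$-action on the resulting intersection graph is again free on the surface (so that $\Sigma$ is unchanged and the quotient surface graph is genuinely $G$), which follows from (S4) for $\mathcal L'$ plus the fact that the new segments are inserted away from the singular set. Assembling these local checks with the inductive hypothesis completes the proof, and together with Proposition \ref{prop_232complete} and the discussion preceding this statement, finishes the sufficiency direction of Theorems \ref{thm_maintranslation} and \ref{thm_mainrotation}.
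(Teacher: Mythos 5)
Your proposal is correct and follows essentially the same route as the paper: induction on $|V(G)|$ using Theorems \ref{thm_cylinder_inductive232} and \ref{thm_cylinder_inductive231}, with explicit realisations of the base graphs and a geometric realisation of each triangle or quadrilateral split by splitting one segment into two nearby short segments. The one simplification the paper makes, which would dissolve the ``main obstacle'' you flag, is to first invoke Lemma \ref{lem_lifting} and work entirely with contact systems in the quotient $\mathbf O$ rather than equivariantly in the plane, so that each vertex split is a purely local modification of a single geodesic $\overline{l}$ and the wrapping of faces around the annulus, degenerate boundary walks, and loop edges require no separate treatment beyond a finite case check of how the neighbours and contact orientations are distributed between $\overline{l}'$ and $\overline{l}''$.
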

\begin{proof}
    First observe that by Lemma \ref{lem_lifting} it suffices to 
    show that there is some contact system \( \mathcal N \)
    in \( \mathbf O \) such that \( G_{\mathcal N} \cong G \). 
    The proof is by induction on \( |V(G)| \). For \( |V(G)| \leq 2 \)
    see Figure \ref{fig_basecases} for illustrations of the required
    contact systems. Now suppose that \( |V(G)| \geq 3 \). By Theorems
    \ref{thm_cylinder_inductive232} and \ref{thm_cylinder_inductive231}
    we can find some \( (2,3,l) \)-tight \( \mathbb A \)-graph \( G' \)
    such that \( G' \) is either a triangle contraction of \( G \)
    or a quadrilateral contraction of \( G \). By induction the $\mathbb A$-graph
    \( G' \) has a representation by a contact system in \( \mathbf O \), 
    say \( 
    \mathcal N'\). So it suffices
    to show that the corresponding triangle splitting or 
    quadrilateral splitting moves are representable by
    contact systems in \( \mathbf O \). In both cases we must replace a 
    single geodesic \( \overline l \in \mathcal N'\)
    by a pair of segments so that the contacts of the new segments 
    correspond to the appropriate subsets of the neighbours of \( 
    \overline l\). 
    
{
    In Figure \ref{fig_triangle_split_realisations} we illustrate the possibilities for triangle splits. In each case the solid segment $\overline{l}$ is replaced by 
    two segments $\overline{l}'$ and $\overline{l}''$ which 
    both contact the segment $w$. We observe that 
    \begin{itemize}
        \item The segments $\overline{l}', \overline{l}''$ can be chosen to lie in an arbitrarily small neighbourhood of the original segment $\overline{l}$. Thus if $m$ is a segment that is adjacent to $\overline{l}$ (considered as vertices in $G'$) then we can ensure that there is a corresponding intersection point between $m$ and one of $\overline{l}',\overline{l}''$
        \item The point of contact between $\overline{l}'$ and $\overline{l}''$ can be chosen to ensure that the set of segments intersecting with $\overline{l}'$, respectively $\overline{l}''$ correspond
        to the neighbour sets of 
        $\overline{l}'$, respectively $\overline{l}''$ in $G_{\mathcal{N}}$.
        
    \end{itemize}
    
    In Figure \ref{fig_quad_split_realisations} we see the corresponding diagrams for quadrilateral splits. In this case we observe that the line segment $\overline{l}$ can be replaced by two parallel segments $\overline{l}', \overline{l}''$ that are arbitrarily close to the original segment $\overline{l}$ and that realise the given quadrilateral splitting move.
    
    Finally, we observe that these geometric constructions, both for triangle splits and quadrilateral splits, apply even in the case where one or more of the split edges involving $\overline{l}$ is a loop edge. 
}

{
    \begin{figure}
        \centering
        
\begin{tabular}{|c|c|c|c|c|c|c|}

\hline

Surface graph 
&
\multicolumn{6}{Sc|}{Contact systems}

\\
\hline

\begin{tikzpicture}
    \coordinate [vertex,label=above:$\overline{l}$] (z) at (0,0);
    \coordinate [vertex,label=below:$w$] (w) at (0,-1);
    \draw [edge] (z) -- (w);
\end{tikzpicture}
&

\multicolumn{4}{Sc|}{
\begin{tikzpicture}
    \draw[segment] (-0.7,0.1) coordinate (z1) -- (0.7,0) coordinate (z2) coordinate[pos=0.5] (w1);
    \draw [dsegment](0,-1.5) coordinate (w2) -- (w1);
\end{tikzpicture}

}
&
\multicolumn{2}{Sc|}{
\begin{tikzpicture}
    \draw [dsegment](-0.6,-1.1) coordinate (z1) -- (0.6,-1) coordinate (z2) coordinate[pos=0.5] (w1);
    \draw[segment] (0,0.5) coordinate (w2) -- (w1);
    
\end{tikzpicture}
}
\\
\hline

\begin{tikzpicture}
    \coordinate [vertex,label=above:$\overline{l}'$] (u) at (-0.2,0);
    \coordinate [vertex,label=above:$\overline{l}''$] (v) at (0.2,0);
    \coordinate [vertex,label=below:$w$] (w) at (0,-1);
    \draw [edge] (u) -- (v) -- (w) -- (u);
\end{tikzpicture}

& 

\begin{tikzpicture}
    \draw [segment] (-0.7,0.1) coordinate (u1) -- (0.7,0) coordinate (u2) coordinate[pos=0.4] (w1) coordinate[pos=0.85] (v1) ;
    \draw [dsegment](-0.2,-1.5) coordinate (w2) -- (w1) coordinate[pos=0.9] (v2);
    \draw [segment] (v1)--(v2);
\end{tikzpicture}

& 
\begin{tikzpicture}
    \draw [dsegment](0,-1.5) coordinate (w1) -- (0,0) coordinate (w2) coordinate[pos=0.9] (v1);
    \draw[segment] (-0.6,0) coordinate (u1) -- (w2) 
    coordinate[pos=1.7] (u2); 
    \draw [segment] (u2) -- (w2);
    \draw [segment] (v1) -- (u2) coordinate[pos=1.8] (v2);
    \draw [segment] (u2) -- (v2);
\end{tikzpicture}

&

\begin{tikzpicture}
    \draw [dsegment](0,-1.5) coordinate (w1) -- (0,0) coordinate (w2) coordinate[pos=0.9] (u2);
    \draw [segment] (-0.8,0.1) coordinate (u1) -- (u2) coordinate[pos=0.5] (v1);
    \draw [segment] (v1) -- (w2) coordinate[pos=2.0] (v2);
    \draw [segment] (w2) -- (v2);
\end{tikzpicture}

&

\begin{tikzpicture}
    \draw [segment] (-0.7,0.1) coordinate (u1) -- (0.7,0) coordinate (u2) coordinate[pos=0.6] (w1) coordinate[pos=0.15] (v1) ;
    \draw [dsegment](0.2,-1.5) coordinate (w2) -- (w1) coordinate[pos=0.9] (v2);
    \draw [segment] (v1)--(v2);
\end{tikzpicture}

&

\begin{tikzpicture}
    \draw  [dsegment](-0.6,-1.1) coordinate (z1) -- (0.6,-1) coordinate (z2) coordinate[pos=0.5] (w1) coordinate[pos=0.3] (u1);
    \draw [segment] (0,0.5) coordinate (w2) -- (w1) coordinate[pos=0.3] (u2);
    \draw[segment] (u1)--(u2);
    
\end{tikzpicture}

&

\begin{tikzpicture}
    \draw [dsegment](-0.6,-1.1) coordinate (z1) -- (0.6,-1) coordinate (z2) coordinate[pos=0.5] (w1) coordinate[pos=0.7] (u1);
    \draw [segment] (0,0.5) coordinate (w2) -- (w1) coordinate[pos=0.3] (u2);
    \draw[segment] (u1)--(u2);
    
\end{tikzpicture}
\\ \hline

\end{tabular}

        \caption{Realising triangle splits with contact systems. In the diagram above the solid segments correspond to the vertices $\overline{l}$, $\overline{l}'$ or $\overline{l}''$, There are essentially six different ways in which we can realise a triangle split along the edge $\overline{l}w$, depending on which orientation of the edge is induced by the contact of the segment $\overline{l}$ and $w$ and also depending on how the neighbours of $\overline{l}$ are split among the vertices $\overline{l}'$ and $\overline{l}''$.}
        \label{fig_triangle_split_realisations}
    \end{figure}

\begin{figure}
    \centering
    
\begin{tabular}{|c|c|c|c|c|}
\hline

Surface graph 
&
\multicolumn{4}{Sc|}{Contact systems} 
\\
\hline

\begin{tikzpicture}
    \coordinate [vertex] (u) at (0,1);
    \coordinate [vertex,label=left:$\overline{l}$] (z) at (0,0);
    \coordinate [vertex] (v) at (0,-1);
    \draw [edge] (u) -- (z)--(v);
\end{tikzpicture}
&

\begin{tikzpicture}
    \draw [dsegment](-0.8,0.7) coordinate (v1) -- (0.8,0.7) coordinate[pos=0.5] (z1);
    \draw [dsegment](-0.8,-0.7) coordinate (w1) -- (0.8,-0.7) coordinate[pos=0.5] (z2);
    \draw [segment] (z1) -- (z2); 
\end{tikzpicture}
    
&

\begin{tikzpicture}
    \draw [dsegment](-0.8,0.7) coordinate (v1) -- (0,0.7);
    \draw [dsegment](-0.8,-0.7) coordinate (w1) -- (0.8,-0.7);
    \draw [segment] (0,-0.7) -- (0,1.5); 
\end{tikzpicture}

&

\begin{tikzpicture}
    \draw [dsegment](-0.8,0.7) coordinate (v1) -- (0,0.7);
    \draw [dsegment](-0.8,-0.3) coordinate (w1) -- (0,-0.3);
    \draw [segment] (0,-0.7) -- (0,1.5); 
\end{tikzpicture}

&

\begin{tikzpicture}
    \draw [dsegment](-0.8,0.7) coordinate (v1) -- (0,0.7);
    \draw [dsegment](0.8,-0.3) coordinate (w1) -- (0,-0.3);
    \draw [segment] (0,-0.7) -- (0,1.5); 
\end{tikzpicture}
    
\\ 
\hline

\begin{tikzpicture}
    \coordinate [vertex] (u) at (0,1);
    \coordinate [vertex,label=left:$\overline{l}'$] (z1) at (-0.3,0);
    \coordinate [vertex,label=right:$\overline{l}''$] (z2) at (0.3,0);
    \coordinate [vertex] (v) at (0,-1);
    \draw [edge] (u) -- (z1)--(v)--(z2)--(u);
\end{tikzpicture}
&

\begin{tikzpicture}
    \draw [dsegment](-0.8,0.7) coordinate (v1) -- (0.8,0.7)
    coordinate[pos=0.4] (zz1)
    coordinate[pos=0.5] (z1);
    \draw [dsegment](-0.8,-0.7) coordinate (w1) -- (0.8,-0.7) 
    coordinate[pos=0.4] (zz2)
    coordinate[pos=0.5] (z2);
    \draw [segment] (z1) -- (z2); 
    \draw [segment] (zz1) -- (zz2); 
\end{tikzpicture}
    
&

\begin{tikzpicture}
    \draw [dsegment](-0.8,0.7) coordinate (v1) -- (0,0.7);
    \draw [dsegment](-0.8,-0.7) coordinate (w1) -- (0.8,-0.7);
    \draw [segment] (0,-0.7) -- (0,1.5); 
    \draw [segment] (-0.2,0.7) -- (-0.2,-0.7);
\end{tikzpicture}

&

\begin{tikzpicture}
    \draw [dsegment](-0.8,0.7) coordinate (v1) -- (0,0.7);
    \draw [dsegment](-0.8,-0.3) coordinate (w1) -- (0,-0.3);
    \draw [segment] (0,-0.7) -- (0,1.5); 
    \draw [segment] (-0.2,0.7) -- (-0.2,-0.3);
\end{tikzpicture}

&

\begin{tikzpicture}
    \draw [dsegment](-0.8,0.7) coordinate (v1) -- (0,0.7);
    \draw [dsegment] (0.8,-0.3) coordinate (w1) -- (-0.2,-0.3);
    \draw [segment] (0,-0.3) -- (0,1.5); 
    \draw [segment] (-0.2,0.7) -- (-0.2,-0.7);
\end{tikzpicture}
\\
\hline

\end{tabular}
    
    \caption{Realising quadrilateral splits with contact systems. There are essentially four different possibilities for the realisation of a quadrilateral split depending on the orientation of the two edges that are split.}
    \label{fig_quad_split_realisations}
\end{figure}

%

}

    So in all cases the required spitting moves are realisable 
    by generic contact systems and the result follows by induction.
\end{proof}

\section{Pseudotriangulations} \label{sec:pseudo}

In this section we give another application of our 
combinatorial results to pseudotriangulations on flat surfaces, which
naturally arise from symmetric pseudotriangulations in the plane.
 For a comprehensive survey on pseudotriangulations and their applications we
refer  the reader to \cite{RFS}. See also the recent work by Borcea and Streinu on \emph{periodic} pseudotriangulations (see \cite{BS_pseudo}, for example). 

Note that while in other sections of the paper a graph is understood to 
be directed, throughout 
Section~\ref{sec:pseudo} exceptionally we understand graphs as {\em undirected}.

\subsection{Pseudotriangulations on flat surfaces} \label{subsec:pseudo}

A simple undirected   graph with straight line edges is called a \emph{geometric graph}. For a discrete subgroup $\Gamma$ of the Euclidean group, we say that a geometric  graph $G=(V,E)$ is \emph{$\Gamma$-symmetric} if for all $e\in E$ and all $g\in \Gamma$, we have $g.e\in E$, where the edge $e$ is considered as a line segment in the plane and $g.e$ denotes the image of the line segment $e$ under the linear transformation defined by $g$. {Throughout this section, we assume that $G$ is a \emph{plane} geometric graph (i.e., $G$ has no crossing edges) and that $\Gamma$ acts freely on the vertices and edges of $G$.}

Recall from Section~\ref{sec_symm_config} that the flat surface consisting of the non-singular points of 
$\mathbb{R}^2/\Gamma$
is denoted by $\mathbf{O}$. We assume throughout this section that $\Gamma$ is either the trivial group, or is generated by a translation or rotation, and hence $\mathbf{O}$ is the plane, a 
flat cylinder, or a flat cone (with the cone point removed) with cone angle $2\pi/k$, $k\geq 2$. Note that under this quotient map each $\Gamma$-orbit of edges of a $\Gamma$-symmetric geometric graph is mapped to a locally geodesic line segment in $\mathbf{O}$. Thus, a $\Gamma$-symmetric geometric graph $G$ naturally gives rise to an embedding of the quotient graph of $G$ with locally geodesic edges in $\mathbf{O}$, which we call a  {\emph{geometric $\mathbf{O}$-graph}.} Note that if $\Gamma$ is non-trivial then the underlying
 surface graph is an $\mathbb{A}$-graph.


For a (possibly degenerate or non-cellular)  face $F$ of a {geometric $\mathbf{O}$-graph}, we say that a vertex in the boundary of $F$ is \emph{convex} if the internal  angle (with respect to $F$) of the boundary at this vertex is convex, that is, strictly smaller than $\pi$. A \emph{pseudotriangle} is a  cellular face of $G$ with exactly three convex vertices. A vertex $v$ of $G$ is called \emph{pointed} if there are two consecutive edges incident with $v$ which form an angle that is strictly larger than $\pi$.

A {geometric $\mathbf{O}$-graph} is called a \emph{pointed pseudotriangulation} in $\mathbf{O}$ if
$G$ is connected, every vertex of $G$ is pointed, and every face of $G$ has a minimum number of convex vertices in its boundary. Note that this implies that every cellular face of a pointed pseudotriangulation $G$ in $\mathbf{O}$ is a pseudotriangle. Moreover, if $\mathbf{O}$ is the plane, then the unbounded face of $G$ has no convex vertices. Similarly, if $\mathbf{O}$ is a flat cylinder, then each unbounded face of $G$ has exactly one convex vertex, and if 
$\mathbf{O}$ is a cone with cone angle $2\pi/k$, $k\geq 2$, then the unbounded face of $G$ has no convex vertices, whereas the face of $G$ containing the cone point has exactly two convex vertices if the cone angle is $\pi$, and exactly one  convex vertex otherwise. Finally, if $\mathbf{O}$ is a flat cylinder or cone and the $\mathbb{A}$-graph of $G$ is balanced, then the non-cellular face must have no convex angles. {See Figure~\ref{fig:pseudo_ex} for some examples.}
 
 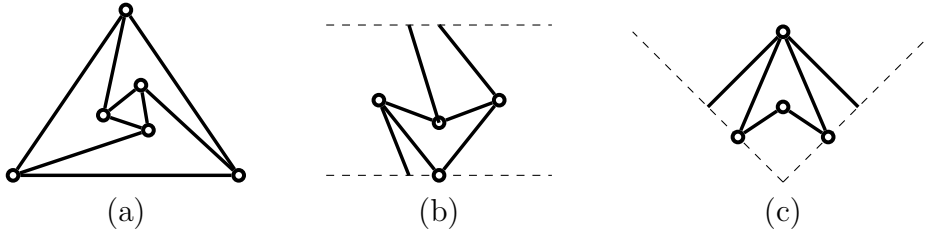
\begin{figure}[htp]
    \centering

\begin{tabular}{ccc}
\begin{tikzpicture}
\tikzstyle{every node}=[circle, draw=black,  fill=white, inner sep=0pt, minimum width=4pt];
    \coordinate [vertex] (v1) at (1.5,2);
    \coordinate [vertex] (v2) at (0,-0.2);
    \coordinate [vertex] (v3) at (3,-0.2);
    \coordinate [vertex] (v4) at (1.7,1);
     \coordinate [vertex] (v5) at (1.2,0.6);
      \coordinate [vertex] (v6) at (1.8,0.4);
    \draw[edge] (v1) -- (v2) -- (v3) -- (v1); 
      \draw[edge] (v4) -- (v5) -- (v6)--(v4);
      \draw[edge] (v1) -- (v5);
     \draw[edge] (v2) -- (v6); 
      \draw[edge] (v3) -- (v4);
\end{tikzpicture}
& \hspace{0.5cm}
\begin{tikzpicture}
\tikzstyle{every node}=[circle, draw=black,  fill=white, inner sep=0pt, minimum width=4pt];
   \draw[edge, dashed, thin] (0,0)--(3,0);
   \draw[edge, dashed, thin] (0,2)--(3,2);
    \coordinate [vertex] (v1) at (1.5,0);
    \coordinate [vertex] (v2) at (2.3,1);
    \coordinate [vertex] (v4) at (0.7,1);
    \coordinate [vertex] (v3) at (1.5,0.7);
    \draw[edge] (v1) -- (v2);
    \draw[edge] (1.5,0.7) -- (1.1,2);
    \draw[edge] (1.1,0) -- (v4);
    \draw[edge] (v3) -- (v2);
    \draw[edge] (v3) -- (v4);
    \draw[edge] (v4) -- (v1);
    \draw[edge] (v2) -- (1.5,2);
     
\end{tikzpicture}
&\hspace{0.5cm}
\begin{tikzpicture}
\tikzstyle{every node}=[circle, draw=black,  fill=white, inner sep=0pt, minimum width=4pt];
    \draw[edge, dashed, thin] (1.5,0) -- (3.5,2);
    \draw[edge, dashed, thin] (1.5,0) -- (-0.5,2);
    \coordinate [vertex] (v1) at (1.5,1);
    \coordinate [vertex] (v2) at (2.1,0.6);
    \coordinate [vertex] (v4) at (0.9,0.6);
    \coordinate [vertex] (v3) at (1.5,2);
    \draw[edge] (v1) -- (v2);
    \draw[edge] (v1) -- (v4);
     \draw[edge] (v3) -- (v2);
    \draw[edge] (v3) -- (v4);
     \draw[edge] (v3) -- (0.5,1);
    \draw[edge] (v3) -- (2.5,1);
   \end{tikzpicture}
   \\(a)  &\hspace{0.5cm} (b) &\hspace{0.5cm} (c)
\end{tabular}

\caption{{Examples of pointed pseudotriangulations in $\mathbf{O}$, where $\mathbf{O}$ is the plane (a), the flat cylinder (b) and the flat cone with angle $\frac{\pi}{2}$ (c).}}
    \label{fig:pseudo_ex}
    \end{figure}

 It was shown in \cite{Str05} that the graph of any pointed pseudotriangulation in the plane is $(2,3)$-tight. Conversely, it was shown in \cite{Haasetal} that every planar $(2,3)$-tight graph can be embedded as a pointed pseudotriangulation in the plane (see also \cite{FJW}). Using Theorems~\ref{thm_cylinder_inductive232} and \ref{thm_cylinder_inductive231} we can extend these results to pointed pseudotriangulations on other flat surfaces.


We have the following necessary condition for a  geometric graph to be a  pointed pseudotriangulation in $\mathbf{O}$.  The proof of this result adapts a counting argument in \cite{Str05}.
 
\begin{proposition}\label{prop:pt_nec} Let $\mathbf{O}$ be a flat 
cylinder or flat cone with cone angle $2\pi/k$, $k\geq 2$. Then the $\mathbb{A}$-graph of a pointed pseudotriangulation in $\mathbf{O}$ is $(2,3,2)$-tight if $\mathbf{O}$ is a cylinder or a cone with cone angle $\pi$, and $(2,3,1)$-tight otherwise. 
\end{proposition}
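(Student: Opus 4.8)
The plan is to run Streinu's Euler--characteristic and angle--counting argument on the flat surface $\mathbf{O}$, distinguishing the balanced and unbalanced cases; throughout, write $\ell=2$ if $\mathbf{O}$ is a cylinder or a cone of cone angle $\pi$, and $\ell=1$ otherwise, and set $f(G)=2|V(G)|-|E(G)|$. First I would dispose of the case where the $\mathbb{A}$-graph of $G$ is balanced: then $G$ lies inside a closed topological disk in $\mathbf{O}$, and by the structural facts recalled above (every cellular face is a pseudotriangle and the non-cellular face has no convex angles) $G$ is, up to homeomorphism, a pointed pseudotriangulation in the plane, hence $(2,3)$-tight by \cite{Str05}; since a balanced $\mathbb{A}$-graph is $(2,3,\ell)$-tight exactly when it is a Laman graph, this case is done for either value of $\ell$.

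So suppose $G$ is unbalanced, and write $n=|V(G)|$, $m=|E(G)|$. As $G$ is connected and $\mathbf{O}$ is a two-sphere with two points deleted, reinstating those points yields a cellular embedding of $G$ in $S^2$ with the same faces; the two faces absorbing the reinstated points are precisely the two non-cellular faces of $G$ in $\mathbf{O}$, and they are distinct because $G$ is unbalanced. Euler's formula on $S^2$ thus gives $n-m+F=2$, where $F$ is the number of faces of $G$ in $\mathbf{O}$. Now count convex corners in two ways. Since $\mathbf{O}$ is flat and no vertex of $G$ is a cone point, the face-angles at any vertex $v$ sum to $2\pi$, so at most one is reflex; pointedness forces exactly one, so $v$ has $\deg(v)-1$ convex corners, and summing gives $\sum_F c(F)=2m-n$, where $c(F)$ is the number of convex corners of $F$. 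On the other hand each of the $F-2$ cellular faces is a pseudotriangle, contributing $3$, while the two non-cellular faces contribute a total of $C_0$, where by the stated properties of the unbounded and cone-point faces $C_0=\ell$ in every case. Hence $3(F-2)+C_0=2m-n$; eliminating $F$ yields $m=2n-\ell$, i.e.\ $f(G)=\ell$, as required.

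It remains to verify the sparsity counts. For an arbitrary subgraph $H$ of $G$, deleting isolated vertices only raises $f$ and preserves both having an edge and being balanced, and $f$ is additive over connected components (each of which inherits balancedness), so it suffices to treat connected $H$ with an edge, reaching $f(H)\geq\ell$ in general and $f(H)\geq 3$ when $H$ is balanced. Every vertex of $H$ is still pointed, its $G$-reflex angle only enlarging when edges are removed. Repeating the two-way count inside the faces of $H$ in $\mathbf{O}$: each cellular face of $H$ is a disk whose boundary walk has total turning $2\pi$ and hence at least three convex corners; if $H$ is balanced there is a single non-cellular face, contributing $\geq 0$, and the count gives $f(H)\geq 3$; if $H$ is unbalanced there are two non-cellular faces $\Phi_1,\Phi_2$ and the count collapses to $f(H)\geq c(\Phi_1)+c(\Phi_2)$, so it remains to show $c(\Phi_1)+c(\Phi_2)\geq\ell$. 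For this I would note that the restriction of $G$ to the closure of each $\Phi_i$ is itself a pointed pseudotriangulation of the once-punctured flat region $\Phi_i$ (its interior vertices are untouched, hence pointed, and its faces are the pseudotriangles of $G$ inside $\Phi_i$ together with the non-cellular $G$-face surrounding the puncture), and then re-run the Gauss--Bonnet turning argument behind the structural facts: flatness fixes the total turning along $\partial\Phi_i$, and since no vertex of a pointed pseudotriangulation has two collinear edges the walk $\partial\Phi_i$ is never geodesic, so a short analysis of the possible punctures (cylinder end, cone apex of angle $2\pi/k$) yields $c(\Phi_i)\geq 1$, with $c(\Phi_i)=2$ forced for a cone apex of angle $\pi$, summing to $\ell$ in every case.

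I expect the sparsity bound for unbalanced subgraphs to be the main obstacle. One must check carefully that $G$ restricted to a face of $H$ really is a pointed pseudotriangulation of that region---in particular that the boundary vertices, whose $G$-reflex angle may point out of the region, cause no trouble---and one must push the turning-number bookkeeping to produce exactly the constant $\ell$, which in the cone-angle-$\pi$ case uses pointedness of the interior vertices and not merely Gauss--Bonnet. The whole-graph count, by contrast, follows routinely once the structural facts about the faces of $G$ recorded above are in hand.
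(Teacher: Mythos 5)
Your proposal is correct and follows essentially the same route as the paper's proof: the balanced case is reduced to Streinu's planar result, and in the unbalanced case convex angles are double-counted against Euler's formula, with heredity of pointedness and per-face lower bounds on convex corners handling arbitrary subgraphs. The only real difference is that you sketch a Gauss--Bonnet/turning-number justification for the lower bound on convex corners of the non-cellular faces of an unbalanced subgraph, a point the paper simply asserts (``the non-cellular faces must have at least two convex angles in total'').
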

\begin{proof}   Let $G$ be a pointed pseudotriangulation in $\mathbf{O}$, and let $n$, $m$ and $f$ be the number of  vertices, edges and faces of $G$, respectively. If the $\mathbb{A}$-graph of $G$ is balanced, the result follows from \cite[Theorem 2.3]{Str05}, because in this case $G$ is isometric to a pointed pseudotriangulation in the plane (as we may cut $\mathbf O$ along a path joining the ends of $\mathbf O$ that does not meet $G$). So we may assume that the $\mathbb{A}$-graph of $G$ is unbalanced. We count the number $c$ of   convex angles of $G$  in two different ways.

Suppose first that $\mathbf{O}$ is a cylinder or a cone with cone angle $\pi$. Then, by definition of a pointed pseudotriangulation in $\mathbf{O}$, we have $c=3(f-2)+2$. On the other hand, since every vertex is pointed, we have $c=\sum_{v\in V(G)} (deg(v)-1)=2m-n$. Since $\mathbf{O}$ has genus zero, Euler's formula gives $n-m+f=2$, and we obtain $3f-4=3(m-n+2)-4=2m-n$. Thus, we have $m=2n-2$. 

For the sparsity counts, 
let $G'$ be a subgraph of $G$ and let $m'$, $n'$ and $f'$ be the number of vertices, edges and faces of $G'$. Suppose first that $G'$ is unbalanced. 
Since pointedness is a hereditary property, and since each cellular face of $G'$ must have at least three convex angles and the non-cellular faces must have at least two convex angles in total, we have $2m'-n'\geq 3f'-4$. This implies that $m'\leq 2n'-2$. 
If $G'$ is balanced, then as above it is isometric to a {geometric plane-graph} 
 and we have \( 2m'-n' \geq 3(f'-1) \). Hence \( m' \leq 2n'
-3\).


Note that if $\mathbf{O}$ is a cone with cone angle  $2\pi/k$, where $k\geq 3$, then  $c=3(f-2)+1$. By the same argument as above, it then follows that $G$ is $(2,3,1)$-tight.
\end{proof}

We will now show that the converse of Proposition \ref{prop:pt_nec} holds.

\begin{proposition} \label{prop_pseudo}
If $\mathbf{O}$ is a flat cylinder or flat cone with cone angle $\pi$, 
 then for any  $(2,3,2)$-tight $\mathbb{A}$-graph $G$ there exists a pointed pseudotriangulation in $\mathbf{O}$ whose $\mathbb{A}$-graph is $G$. 
Similarly, if $\mathbf{O}$ is a flat cone with cone angle $2\pi/k$, $k\geq 3$,
 then for any  $(2,3,1)$-tight $\mathbb{A}$-graph $G$ there exists a pointed pseudotriangulation in $\mathbf{O}$ whose $\mathbb{A}$-graph is $G$. 
\end{proposition}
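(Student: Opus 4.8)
The plan is to prove the converse by induction on $|V(G)|$, following the same strategy as the proof of Proposition~\ref{prop_suff}. By Proposition~\ref{prop:pt_nec} the stated sparsity count is necessary, so it suffices to build the pointed pseudotriangulation, and the inductive characterisations of Theorems~\ref{thm_cylinder_inductive232} and~\ref{thm_cylinder_inductive231} reduce this to (i) realising the base graphs and (ii) realising a single triangular or quadrilateral vertex split as a geometric move on a pointed pseudotriangulation in $\mathbf{O}$.

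For (i), the base graphs $K$, $L$, $M$ (and any $(2,3,l)$-tight $\mathbb{A}$-graph on at most two vertices) are realised by small, possibly degenerate, pointed pseudotriangulations, which are easily exhibited directly (cf.\ Figures~\ref{fig_basecases} and~\ref{fig:pseudo_ex}). More generally, whenever $G$ is balanced the inductive characterisations tell us that only triangular splits, starting from $K$, are needed, so the underlying graph of $G$ is a planar Laman graph carrying a prescribed plane embedding; we then invoke the realisation of Haas et al.\ \cite{Haasetal} (equivalently, \cite{FJW} together with the geometric realisability of vertex splits), which respects that embedding, and place a scaled copy of the resulting plane pointed pseudotriangulation inside a small flat open disk of $\mathbf{O}$. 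Such a disk is isometric to a planar disk, so all edges remain locally geodesic, and the unique non-cellular face of the resulting $\mathbf{O}$-graph --- which contains both ends of $\mathbb{A}$ and whose boundary walk is the outer walk of the plane pointed pseudotriangulation --- has no convex vertices, as demanded by the definition in the balanced case.

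For (ii), suppose $|V(G)|\geq 3$ and $G$ is unbalanced. Theorem~\ref{thm_cylinder_inductive232} (if $l=2$) or Theorem~\ref{thm_cylinder_inductive231} (if $l=1$) provides a $(2,3,l)$-tight $\mathbb{A}$-graph $G'$ on one fewer vertex such that $G$ is a triangular or quadrilateral vertex split of $G'$; by the inductive hypothesis (or the balanced case above) there is a pointed pseudotriangulation $P'$ in $\mathbf{O}$ whose $\mathbb{A}$-graph is $G'$. The split is supported in a neighbourhood of a single vertex $z$ of $G'$, the image of the contracted edge, respectively diagonal. Choosing this neighbourhood to be a small flat disk --- hence isometric to a planar disk --- we carry out the corresponding geometric vertex split exactly as in the planar constructions of Fekete, Jord\'an and Whiteley \cite{FJW} and Haas et al.\ \cite{Haasetal}: replace $z$ by two nearby vertices, joined in the triangular case by a short edge, and orient the new edges so that the reflex angle at $z$ is inherited appropriately, so that the unique new face (the triangle $uvw$, respectively the quadrilateral $Q$) has exactly three convex vertices, and so that every new or modified vertex stays pointed. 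Nothing outside the disk changes, so all other faces remain pseudotriangles or non-cellular faces with the correct number of convex vertices, and the $\mathbb{A}$-graph of the new pointed pseudotriangulation is the combinatorial split of $G'$, that is, $G$.

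The main obstacle is the bookkeeping of convex angles when $z$ lies on the boundary of one of the distinguished non-cellular faces of $P'$ (the two unbounded faces when $\mathbf{O}$ is a cylinder, or the unbounded face together with the cone-point face when $\mathbf{O}$ is a cone), since the definition of a pointed pseudotriangulation in $\mathbf{O}$ prescribes exactly how many convex vertices --- zero, one or two, depending on $\mathbf{O}$ --- each such face must carry. One must check that each of the essentially six geometric realisations of a triangular split, and each of the essentially four realisations of a quadrilateral split, analogous to those enumerated in Figures~\ref{fig_triangle_split_realisations} and~\ref{fig_quad_split_realisations} for contact systems, can be performed so as to preserve the convex-vertex count of every incident distinguished face; in particular the new short edge must be chosen short enough and suitably oriented that no spurious convex angle appears along such a face, and in the quadrilateral case the two angles that replace the angle of $z$ inside $Q$ must be given the correct convexities. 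Secondary technical points --- that $z$ or one of its incident faces may meet a distinguished face more than once, or that $G'$ may be balanced so that a cone-point face only appears after the split --- are dealt with by inspecting these finitely many configurations individually, just as the corresponding degeneracies (including loop edges at the split vertex) are handled for contact systems.
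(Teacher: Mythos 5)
Your proposal is correct and follows essentially the same route as the paper: induction along the construction sequences of Theorems \ref{thm_cylinder_inductive232} and \ref{thm_cylinder_inductive231}, direct realisation of the base graphs, reduction of the balanced case to the planar result of Haas et al.\ via an isometric embedding, and geometric realisation of each triangular or quadrilateral vertex split by placing the new vertex sufficiently close to the split vertex. The only reservation is that the verification you defer --- that in each configuration the new vertex can be placed so that both vertices remain pointed, every incident face keeps its prescribed number of convex angles, and the degenerate quadrilaterals of Figure \ref{fig_degenquads}(b),(c) are handled --- is precisely where the paper's proof does its work, organising the cases by the cyclic position of the split edge(s) relative to the reflex angle at the split vertex rather than by the contact-system realisation types of Figures \ref{fig_triangle_split_realisations} and \ref{fig_quad_split_realisations}.
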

\begin{proof}
 Let $G$ be a $(2,3,2)$-tight ($(2,3,1)$-tight, respectively) $\mathbb{A}$-graph. If $G$ is balanced, then 
 it follows from \cite[Theorem 1]{Haasetal} that $G$ can be realised as a pointed pseudotriangulation in the  plane, and hence (via an isometric embedding
 of the corresponding subset of the plane) also as a pointed pseudotriangulation   in $\mathbf{O}$.
 Let $G_0,  \ldots,G_n=G$ be the construction sequence for $G$ from Theorem~\ref{thm_cylinder_inductive232} (if $G$ is $(2,3,2)$-tight) or  Theorem~\ref{thm_cylinder_inductive231} (if $G$ is $(2,3,1)$-tight). In each case we may clearly construct  a  pointed pseudotriangulation in $\mathbf{O}$ whose  $\mathbb{A}$-graph is $G_0$. See Figure~\ref{fig:pseudo_basecases} for an illustration.

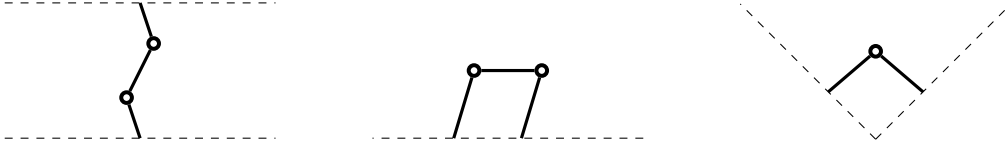
\begin{figure}[htp]
\begin{center}
  \begin{tikzpicture}[very thick,scale=0.9]
\tikzstyle{every node}=[circle, draw=black, fill=white, inner sep=0pt, minimum width=3pt];
    
   \draw[dashed,thin] (-2,0)  --  (2,0);
      \draw[dashed,thin] (-2,-2)  --  (2,-2);
      \path (0.2,-0.6) node[vertex] (v)  {} ;
    \path (-0.2,-1.4) node[vertex] (w)  {} ;
                    
\draw (v)  --  (w);
        \draw (v)  --  (0,0);
    \draw (w)  --  (0,-2);
       
              \end{tikzpicture}
        \hspace{1cm}
         \begin{tikzpicture}[very thick,scale=0.9,rotate=180]
\tikzstyle{every node}=[circle, draw=black, fill=white, inner sep=0pt, minimum width=3pt];
  \draw[dashed,thin] (-2,0)  --  (2,0);
        \path (-0.5,-1) node[vertex] (v)  {} ;
    \path (0.5,-1) node[vertex] (w)  {} ;
    \draw (v)  --  (w);
        \draw (v)  --  (-0.2,0);
    \draw (w)  --  (0.8,0);
     \node [rectangle, draw=white, fill=white] (a) at (0,-1.5) {$\quad$};   
               \end{tikzpicture}
         \hspace{1cm}
         \begin{tikzpicture}[very thick,scale=0.9,rotate=180]
\tikzstyle{every node}=[circle, draw=black, fill=white, inner sep=0pt, minimum width=3pt];
  \draw[dashed,thin] (0,0)  --  (-2,-2);
    \draw[dashed,thin] (0,0)  --  (2,-2);
        \path (0,-1.3) node[vertex] (v)  {} ;
           \draw (v)  --  (-0.7,-0.7);
    \draw (v)  --  (0.7,-0.7);
                  \end{tikzpicture}          
                \caption{Pointed pseudotriangulations in $\mathbf{O}$, where $\mathbf{O}$ is a cylinder, a cone with cone angle $\pi$, and a cone with cone angle $\pi/2$, respectively.}
                \label{fig:pseudo_basecases}
\end{center}
\end{figure}

In each step of the construction sequences, $G_{i}$ is obtained from $G_{i-1}$ by a triangular or quadrilateral vertex split.
We suppose that $G_{i-1}$ is embedded as a pointed pseudotriangulation in $\mathbf{O}$, and  show that the position of the new vertex can be chosen in such a way that the resulting  {geometric $\mathbf{O}$-graph} is again a  pointed pseudotriangulation in $\mathbf{O}$ whose $\mathbb{A}$-graph is $G_i$.

Suppose first that $G_i$ is obtained from $G_{i-1}$ by a triangular vertex split applied to the vertex $v$ along the edge $e=vw$. More precisely, if we write the edges of $G_{i-1}$ that are incident with $v$ in counterclockwise order as  $(e,f_1,\ldots, f_t)$, then  without loss of generality $G_i$ is obtained from $G_{i-1}$ by adding a new vertex $v'$ so that $v'$ is adjacent to $v$ and $w$ via two new edges, the edges   $f_1,\ldots, f_s$   for $1\leq s\leq t$ (or none of the edges $f_i$) are replaced with the edges $f'_i$, where $f_i'$ is obtained from $f_i$ by changing the end vertex $v$ to $v'$, and the remaining edges $f_{s+1},\ldots, f_t,e$ remain incident with $v$.
Note that any loop appears twice in the list $(e,f_1,\ldots, f_t)$. In particular, if $e$ is a loop, then it appears again as an edge $f_{s+\ell}$ for some $\ell\geq 1$ since $e$ remains unchanged by the triangular vertex split. If $f_i$ is a loop, however, which appears again in the list as $f_{i'}$ with $i<i'$, then we may have $i\leq s$ and $i'>s$, i.e.  the triangular vertex split may change the loop edge $f_i$ to a non-loop edge.

Since $G_{i-1}$ has been embedded as a pointed pseudotriangulation in $\mathbf{O}$, there exists a line segment $L$ containing  $v$ so that all edges incident with $v$ emanate from $v$ on the same side of $L$. We consider three distinct cases depending on the position of the edges incident with $v$ in their counterclockwise order from $L$.

Case 1: the order is: $f_{s+1},\ldots, f_t, e, f_1, \ldots, f_s$. In this case we choose the position of the new vertex $v'$ so that it lies sufficiently close to $v$ within the open conical region bounded by $L$ and the edge $f_s$ (or $e$ if there are no edges $f_1,\ldots, f_s$).

Case 2: the order is: $f_{s+1+\ell},\ldots, f_t, e, f_1, \ldots, f_s, f_{s+1}, \ldots, f_{s+\ell}$ for some $\ell\geq 1$.  In this case we choose the position of the new vertex $v'$ so that it lies sufficiently close to $v$ within the open conical region bounded by the edges $f_s$ and  $f_{s+1}$.
If there are no edges $f_1,\ldots, f_s$, then we choose $e$  instead of $f_s$ (where $e=f_{s+i}$ for $1\leq i \leq \ell$ if $e$ is a loop), and if there is no $f_{s+1}$ then we are back in Case 1.

Case 3: the order is: $f_{s+1-\ell},\ldots,f_s, f_{s+1},\ldots, f_t, e, f_1, \ldots, f_{s-\ell}$ for some $\ell\geq 1$.  In this case we choose the position of the new vertex $v'$ so that it lies sufficiently close to $v$ within the open conical region bounded (in counterclockwise order) by $L$ and the line segment obtained by inverting the edge $f_{s+1}$ in $v$. (If there is no $f_{s+1}$, then we choose $e$ instead.)

 In each case it is straightforward to see that the  resulting {geometric $\mathbf{O}$-graph} is a  pointed pseudotriangulation in $\mathbf{O}$. See also Figure~\ref{fig:pseudo_trian}.

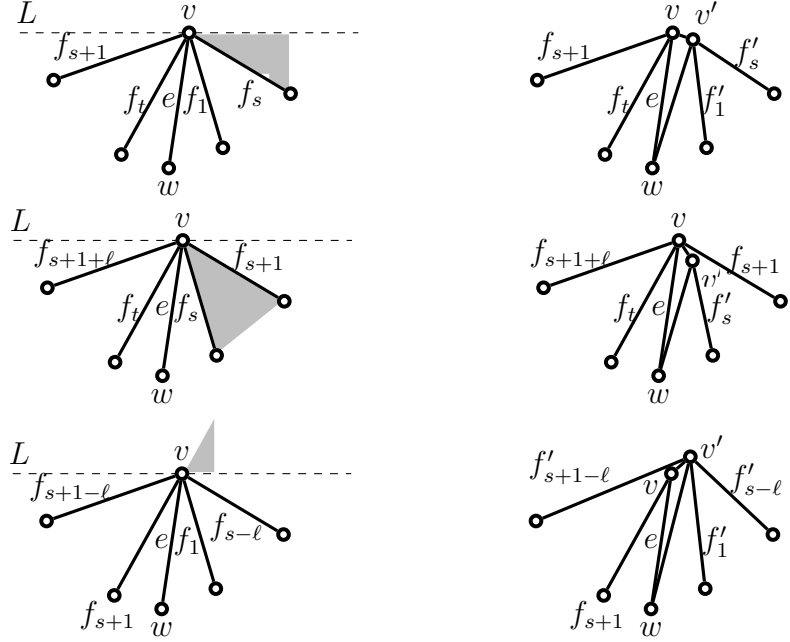
\begin{figure}[htp]
\begin{center}
  \begin{tikzpicture}[very thick,scale=0.9]
      \tikzstyle{every node}=[vertex];

\draw[fill=gray!50!white,draw=white](0,0)--(1.5,-0.9)--(1.5,0)--(0,0);

     \node [rectangle, draw=white, fill=white] (a) at (-2.4,0.3) {$L$};  
  \node [rectangle, draw=white, fill=white] (a) at (0,0.3) {$v$};    
      \node [rectangle, draw=white, fill=white] (a) at (-0.3,-2.3) {$w$}; 
       \node [rectangle, draw=white, fill=white] (a) at (-0.3,-1) {$e$}; 
       \node [rectangle, draw=white, fill=white] (a) at (0.1,-1) {$f_1$}; 
       \node [rectangle, draw=white, fill=white] (a) at (0.9,-0.9) {$f_s$}; 
         \node [rectangle, draw=white, fill=white] (a) at (-1.6,-0.2) {$f_{s+1}$}; 
            \node [rectangle, draw=white, fill=white] (a) at (-0.8,-1) {$f_{t}$};   
      
   \draw[dashed,thin] (-2.5,0)  --  (2.5,0);
        \path (0,0) node (v)  {} ;
    \path (-2,-0.7) node (p1)  {} ;
         \path (-1,-1.8) node (p3)  {} ;
     \path (-0.3,-2) node (p4)  {} ;
     \path (0.5,-1.7) node (p5)  {} ;
     \path (1.5,-0.9) node (p6)  {} ;
           
\draw (v)  --  (p1);
        \draw (v)  --  (p3);
       \draw (v)  --  (p4);
        \draw (v)  --  (p5);
       \draw (v)  --  (p6);

              \end{tikzpicture}
        \hspace{2cm}
         \begin{tikzpicture}[very thick,scale=0.9]
      \tikzstyle{every node}=[vertex];

  \node [rectangle, draw=white, fill=white] (a) at (0,0.3) {$v$};   
    \node [rectangle, draw=white, fill=white] (a) at (0.5,0.3) {$v'$}; 
      \node [rectangle, draw=white, fill=white] (a) at (-0.3,-2.3) {$w$}; 
       \node [rectangle, draw=white, fill=white] (a) at (-0.3,-1) {$e$}; 
       \node [rectangle, draw=white, fill=white] (a) at (0.6,-1) {$f'_1$}; 
       \node [rectangle, draw=white, fill=white] (a) at (1.1,-0.3) {$f'_s$}; 
         \node [rectangle, draw=white, fill=white] (a) at (-1.6,-0.2) {$f_{s+1}$}; 
            \node [rectangle, draw=white, fill=white] (a) at (-0.8,-1) {$f_{t}$};   
 
   \path (0.3,-0.1) node (v')  {} ;
        \path (0,0) node (v)  {} ;
    \path (-2,-0.7) node (p1)  {} ;
         \path (-1,-1.8) node (p3)  {} ;
     \path (-0.3,-2) node (p4)  {} ;
     \path (0.5,-1.7) node (p5)  {} ;
     \path (1.5,-0.9) node (p6)  {} ;
           
         \draw (v)  --  (v');  
            \draw (p4)  --  (v');  
\draw (v)  --  (p1);
        \draw (v)  --  (p3);
       \draw (v)  --  (p4);
        \draw (v')  --  (p5);
       \draw (v')  --  (p6);

        \end{tikzpicture}
\hspace{1cm}
  \begin{tikzpicture}[very thick,scale=0.9]
      \tikzstyle{every node}=[vertex];
       \node [rectangle, draw=white, fill=white] (a) at (0.05,-1) {$f_s$}; 
       \node [rectangle, draw=white, fill=white] (a) at (1.1,-0.25) {$f_{s+1}$}; 

\draw[fill=gray!50!white,draw=white](0,0)--(0.5,-1.7)--(1.5,-0.9)--(0,0);

     \node [rectangle, draw=white, fill=white] (a) at (-2.4,0.3) {$L$};  
  \node [rectangle, draw=white, fill=white] (a) at (0,0.3) {$v$};    
      \node [rectangle, draw=white, fill=white] (a) at (-0.3,-2.3) {$w$}; 
       \node [rectangle, draw=white, fill=white] (a) at (-0.3,-1) {$e$}; 
         \node [rectangle, draw=white, fill=white] (a) at (-1.6,-0.2) {$f_{s+1+\ell}$}; 
            \node [rectangle, draw=white, fill=white] (a) at (-0.8,-1) {$f_{t}$};   
      
   \draw[dashed,thin] (-2.5,0)  --  (2.5,0);
        \path (0,0) node (v)  {} ;
    \path (-2,-0.7) node (p1)  {} ;
         \path (-1,-1.8) node (p3)  {} ;
     \path (-0.3,-2) node (p4)  {} ;
     \path (0.5,-1.7) node (p5)  {} ;
     \path (1.5,-0.9) node (p6)  {} ;
           
\draw (v)  --  (p1);
        \draw (v)  --  (p3);
       \draw (v)  --  (p4);
        \draw (v)  --  (p5);
       \draw (v)  --  (p6);

              \end{tikzpicture}
        \hspace{2cm} 
         \begin{tikzpicture}[very thick,scale=0.9]
      \tikzstyle{every node}=[vertex];

  \node [rectangle, draw=white, fill=white] (a) at (0,0.3) {$v$};   
    \node [rectangle, draw=white, fill=white] (a) at (0.5,-0.55) {$v'$}; 
      \node [rectangle, draw=white, fill=white] (a) at (-0.3,-2.3) {$w$}; 
       \node [rectangle, draw=white, fill=white] (a) at (-0.3,-1) {$e$}; 
       \node [rectangle, draw=white, fill=white] (a) at (0.6,-1) {$f'_s$}; 
       \node [rectangle, draw=white, fill=white] (a) at (1.1,-0.3) {$f_{s+1}$}; 
         \node [rectangle, draw=white, fill=white] (a) at (-1.6,-0.2) {$f_{s+1+\ell}$}; 
            \node [rectangle, draw=white, fill=white] (a) at (-0.8,-1) {$f_{t}$};   
 
   \path (0.2,-0.3) node (v')  {} ;
        \path (0,0) node (v)  {} ;
    \path (-2,-0.7) node (p1)  {} ;
         \path (-1,-1.8) node (p3)  {} ;
     \path (-0.3,-2) node (p4)  {} ;
     \path (0.5,-1.7) node (p5)  {} ;
     \path (1.5,-0.9) node (p6)  {} ;
           
         \draw (v)  --  (v');  
            \draw (p4)  --  (v');  
\draw (v)  --  (p1);
        \draw (v)  --  (p3);
       \draw (v)  --  (p4);
        \draw (v')  --  (p5);
       \draw (v)  --  (p6);

        \end{tikzpicture}
        \hspace{1cm}
 \begin{tikzpicture}[very thick,scale=0.9]
      \tikzstyle{every node}=[vertex];
  \node [rectangle, draw=white, fill=white] (a) at (0,0.3) {$v$};    

\draw[fill=gray!50!white,draw=white](0,0)--(0.5,0)--(0.5,0.9)--(0,0);

     \node [rectangle, draw=white, fill=white] (a) at (-2.4,0.3) {$L$};  
      \node [rectangle, draw=white, fill=white] (a) at (-0.3,-2.3) {$w$}; 
       \node [rectangle, draw=white, fill=white] (a) at (-0.3,-1) {$e$}; 
       \node [rectangle, draw=white, fill=white] (a) at (0.08,-1) {$f_1$}; 
       \node [rectangle, draw=white, fill=white] (a) at (0.8,-0.8) {$f_{s-\ell}$}; 
         \node [rectangle, draw=white, fill=white] (a) at (-1.65,-0.2) {$f_{s+1-\ell}$}; 
            \node [rectangle, draw=white, fill=white] (a) at (-1.1,-2.1) {$f_{s+1}$};   
      
   \draw[dashed,thin] (-2.5,0)  --  (2.5,0);
        \path (0,0) node (v)  {} ;
    \path (-2,-0.7) node (p1)  {} ;
         \path (-1,-1.8) node (p3)  {} ;
     \path (-0.3,-2) node (p4)  {} ;
     \path (0.5,-1.7) node (p5)  {} ;
     \path (1.5,-0.9) node (p6)  {} ;
           
\draw (v)  --  (p1);
        \draw (v)  --  (p3);
       \draw (v)  --  (p4);
        \draw (v)  --  (p5);
       \draw (v)  --  (p6);

              \end{tikzpicture}
        \hspace{2cm} 
         \begin{tikzpicture}[very thick,scale=0.9]
      \tikzstyle{every node}=[vertex];
 
  \node [rectangle, draw=white, fill=white] (a) at (0.6,0.4) {$v'$};    
  \node [rectangle, draw=white, fill=white] (a) at (-0.3,-0.2) {$v$}; 
      \node [rectangle, draw=white, fill=white] (a) at (-0.3,-2.3) {$w$}; 
       \node [rectangle, draw=white, fill=white] (a) at (-0.3,-1) {$e$}; 
       \node [rectangle, draw=white, fill=white] (a) at (0.65,-1) {$f'_1$}; 
       \node [rectangle, draw=white, fill=white] (a) at (1.25,-0.1) {$f'_{s-\ell}$}; 
         \node [rectangle, draw=white, fill=white] (a) at (-1.5,0.1) {$f'_{s+1-\ell}$}; 
            \node [rectangle, draw=white, fill=white] (a) at (-1.1,-2.1) {$f_{s+1}$};   
      
        \path (0,0) node (v)  {} ;
             \path (0.28,0.25) node (v')  {} ;
    \path (-2,-0.7) node (p1)  {} ;
         \path (-1,-1.8) node (p3)  {} ;
     \path (-0.3,-2) node (p4)  {} ;
     \path (0.5,-1.7) node (p5)  {} ;
     \path (1.5,-0.9) node (p6)  {} ;
           
\draw (v')  --  (p1);
\draw (v)  --  (v');
\draw (v')  --  (p4);
        \draw (v)  --  (p3);
       \draw (v)  --  (p4);
        \draw (v')  --  (p5);
       \draw (v')  --  (p6);

        \end{tikzpicture}
     \caption{For $x=1,2,3$, row $x$ illustrates Case $x$ for placing the new vertex $v'$ in a triangular vertex split of $G_{i-1}$ to obtain another pointed pseudotriangulation in $\mathbf{O}$.}
 \label{fig:pseudo_trian}    
\end{center}
\end{figure}

Suppose next that $G_i$ is obtained from $G_{i-1}$ by a quadrilateral vertex split of $v$ along the edges $e_1=vw$ and $e_2=vx$. More precisely, if we write the edges of $G_{i-1}$ that are incident with $v$ in counterclockwise order as  $(e_1,f_1,\ldots, f_s,e_2,f_{s+1},\ldots f_t)$, then  without loss of generality $G_i$ is obtained from $G_{i-1}$ by adding a new vertex $v'$ so that $v'$ is adjacent to $w$ and $x$ via two new edges, the edges   $f_1,\ldots, f_s$  (which may not exist) are replaced with the edges $f'_i$, where $f_i'$ is obtained from $f_i$ by changing the end vertex $v$ to $v'$, and the remaining edges  $e_1,e_2,f_{s+1},\ldots, f_t$ remain incident to $v$.

Since $G_{i-1}$ has been embedded as a pointed pseudotriangulation in $\mathbf{O}$, there  exists a line segment $L$ containing  $v$ so that all edges incident to $v$ emanate from $v$ on the same side of $L$. 

Suppose first that the quadrilateral created by the quadrilateral vertex splitting move is non-degenerate, that is, the vertices $v, v', w$ and $x$ are all pairwise distinct. Then we consider two distinct cases depending on the position of the edges incident with $v$ in their counterclockwise order from $L$. 

Case 1: the order is $f_{s+1+\ell},\ldots, f_t, e_1, f_1, \ldots, f_s, e_2, f_{s+1}, \ldots, f_{s+\ell}$ for some $\ell\geq 1$.  In this case, we choose the position of the new vertex $v'$ so that it lies sufficiently close to $v$ within the open conical region $U$ bounded by the edges $e_1$ and $e_2$.

Case 2: the order is $f_{\ell+1},\ldots, f_s, e_2, f_{s+1}, \ldots, f_t, e_1, f_{1}, \ldots, f_{\ell}$ for some $\ell\geq 1$.
In this  case we choose the position of $v'$ so that it lies sufficiently close to $v$ within the open conical region obtained by inverting the region $U$ from Case 1 in $v$. 

In each case it is straightforward to check that the resulting  {geometric $\mathbf{O}$-graph} is  a  pointed pseudotriangulation in $\mathbf{O}$. See also Figure~\ref{fig:pseudo_quad}.

\begin{figure}[htp]
\begin{center}
  \begin{tikzpicture}[very thick,scale=0.9]
\tikzstyle{every node}=[vertex]

\draw[fill=gray!50!white,draw=white](0,0)--(-1,-0.75)--(1,-0.75)--(0,0);

     \node [rectangle, draw=white, fill=white] (a) at (-2.4,0.3) {$L$};  
  \node [rectangle, draw=white, fill=white] (a) at (0,0.3) {$v$};    
      \node [rectangle, draw=white, fill=white] (a) at (-2,-1.2) {$w$}; 
       \node [rectangle, draw=white, fill=white] (a) at (2,-1.2) {$x$}; 
         \node [rectangle, draw=white, fill=white] (a) at (-1.1,-0.6) {$e_1$}; 
       \node [rectangle, draw=white, fill=white] (a) at (1.1,-0.6) {$e_2$};      
                   \node [rectangle, draw=white, fill=white] (a) at (-0.7,-1.3) {$f_1$}; 
       \node [rectangle, draw=white, fill=white] (a) at (0.7,-1.3) {$f_s$};    
                   \node [rectangle, draw=white, fill=white] (a) at (-2,-0.2) {$f_{s+1+\ell}$}; 
       \node [rectangle, draw=white, fill=white] (a) at (2,-0.2) {$f_{s+1}$}; 
      
   \draw[dashed,thin] (-2.5,0)  --  (2.5,0);
        \path (0,0) node (v)  {} ;
    \path (-2,-1.5) node (w)  {} ;
         \path (2,-1.5) node (x)  {} ;
     \path (-2.3,-0.6) node (p1)  {} ;
     \path (-0.5,-1.7) node (p2)  {} ;
     \path (0.5,-1.7) node (p3)  {} ;
        \path (2.3,-0.6) node (p4)  {} ;

\draw (v)  --  (w);
        \draw (v)  --  (x);
       \draw (v)  --  (p1);
        \draw (v)  --  (p2);
       \draw (v)  --  (p3);
        \draw (v)  --  (p4);

              \end{tikzpicture}
        \hspace{2cm}
         \begin{tikzpicture}[very thick,scale=0.9]
\tikzstyle{every node}=[vertex]

  \node [rectangle, draw=white, fill=white] (a) at (0,0.3) {$v$};   
   \node [rectangle, draw=white, fill=white] (a) at (0,-0.8) {$v'$};
      \node [rectangle, draw=white, fill=white] (a) at (-2,-1.2) {$w$}; 
       \node [rectangle, draw=white, fill=white] (a) at (2,-1.2) {$x$}; 
         \node [rectangle, draw=white, fill=white] (a) at (-1.1,-0.6) {$e_1$}; 
       \node [rectangle, draw=white, fill=white] (a) at (1.1,-0.6) {$e_2$};      
                   \node [rectangle, draw=white, fill=white] (a) at (-0.7,-1.3) {$f'_1$}; 
       \node [rectangle, draw=white, fill=white] (a) at (0.7,-1.3) {$f'_s$};    
                   \node [rectangle, draw=white, fill=white] (a) at (-2,-0.2) {$f_{s+1+\ell}$}; 
       \node [rectangle, draw=white, fill=white] (a) at (2,-0.2) {$f_{s+1}$}; 
      
        \path (0,0) node (v)  {} ;
         \path (0,-0.3) node (v')  {} ;
    \path (-2,-1.5) node (w)  {} ;
         \path (2,-1.5) node (x)  {} ;
     \path (-2.3,-0.6) node (p1)  {} ;
     \path (-0.5,-1.7) node (p2)  {} ;
     \path (0.5,-1.7) node (p3)  {} ;
        \path (2.3,-0.6) node (p4)  {} ;

\draw (v)  --  (w);
\draw (x)  --  (v');
\draw (v')  --  (w);
        \draw (v)  --  (x);
       \draw (v)  --  (p1);
        \draw (v')  --  (p2);
       \draw (v')  --  (p3);
        \draw (v)  --  (p4);
 
        \end{tikzpicture} \hspace{1cm}
         \begin{tikzpicture}[very thick,scale=0.9]
\tikzstyle{every node}=[vertex]

\draw[fill=gray!50!white,draw=white](0,0)--(1,0.75)--(-1,0.75)--(0,0);

     \node [rectangle, draw=white, fill=white] (a) at (-2.4,0.3) {$L$};  
  \node [rectangle, draw=white, fill=white] (a) at (0,-0.5) {$v$};    
      \node [rectangle, draw=white, fill=white] (a) at (-2,-1.2) {$w$}; 
       \node [rectangle, draw=white, fill=white] (a) at (2,-1.2) {$x$}; 
         \node [rectangle, draw=white, fill=white] (a) at (-1.1,-0.6) {$e_2$}; 
       \node [rectangle, draw=white, fill=white] (a) at (1.1,-0.6) {$e_1$};      
                   \node [rectangle, draw=white, fill=white] (a) at (-0.8,-1.3) {$f_{s+1}$}; 
       \node [rectangle, draw=white, fill=white] (a) at (0.7,-1.3) {$f_t$};    
                   \node [rectangle, draw=white, fill=white] (a) at (-2,-0.2) {$f_{\ell+1}$}; 
       \node [rectangle, draw=white, fill=white] (a) at (2,-0.2) {$f_{1}$}; 
      
   \draw[dashed,thin] (-2.5,0)  --  (2.5,0);
        \path (0,0) node (v)  {} ;
    \path (-2,-1.5) node (w)  {} ;
         \path (2,-1.5) node (x)  {} ;
     \path (-2.3,-0.6) node (p1)  {} ;
     \path (-0.5,-1.7) node (p2)  {} ;
     \path (0.5,-1.7) node (p3)  {} ;
        \path (2.3,-0.6) node (p4)  {} ;

\draw (v)  --  (w);
        \draw (v)  --  (x);
       \draw (v)  --  (p1);
        \draw (v)  --  (p2);
       \draw (v)  --  (p3);
        \draw (v)  --  (p4);
       
           \end{tikzpicture}
        \hspace{2cm}
         \begin{tikzpicture}[very thick,scale=0.9]
\tikzstyle{every node}=[vertex]

  \node [rectangle, draw=white, fill=white] (a) at (0,-0.5) {$v$};  
     \node [rectangle, draw=white, fill=white] (a) at (0,0.6) {$v'$}; 
      \node [rectangle, draw=white, fill=white] (a) at (-2,-1.2) {$w$}; 
       \node [rectangle, draw=white, fill=white] (a) at (2,-1.2) {$x$}; 
         \node [rectangle, draw=white, fill=white] (a) at (-0.45,-0.6) {$e_2$}; 
       \node [rectangle, draw=white, fill=white] (a) at (0.45,-0.6) {$e_1$};      
                   \node [rectangle, draw=white, fill=white] (a) at (-0.8,-1.3) {$f_{s+1}$}; 
       \node [rectangle, draw=white, fill=white] (a) at (0.7,-1.3) {$f_t$};    
                   \node [rectangle, draw=white, fill=white] (a) at (-2,-0.2) {$f'_{\ell+1}$}; 
       \node [rectangle, draw=white, fill=white] (a) at (2,-0.2) {$f'_{1}$}; 
      
        \path (0,0) node (v)  {} ;
         \path (0,0.3) node (v')  {} ;
    \path (-2,-1.5) node (w)  {} ;
         \path (2,-1.5) node (x)  {} ;
     \path (-2.3,-0.6) node (p1)  {} ;
     \path (-0.5,-1.7) node (p2)  {} ;
     \path (0.5,-1.7) node (p3)  {} ;
        \path (2.3,-0.6) node (p4)  {} ;

\draw (v)  --  (w);
        \draw (v)  --  (x);
        \draw (v')  --  (w);
        \draw (v')  --  (x);
       \draw (v')  --  (p1);
        \draw (v)  --  (p2);
       \draw (v)  --  (p3);
        \draw (v')  --  (p4);
       
           \end{tikzpicture}
                \caption{For $x=1,2$, row $x$ illustrates Case $x$ for placing the new vertex $v'$ in a quadrilateral vertex split of $G_{i-1}$ to obtain another pointed pseudotriangulation in $\mathbf{O}$.}
 \label{fig:pseudo_quad}               
\end{center}
\end{figure}
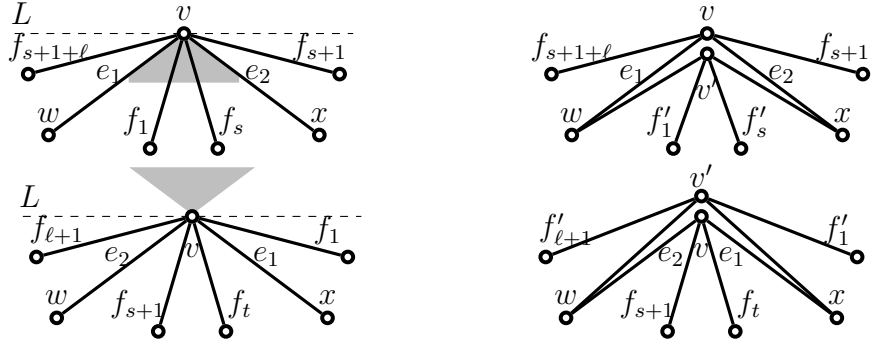

Suppose next that the quadrilateral created by the quadrilateral vertex split is degenerate. Then the $\mathbb{A}$-graph of the quadrilateral is one of the three graphs depicted in Figure~\ref{fig_degenquads}(a),(b),(c). In case (a), we have $w=x$ and the proof above applies. In cases (b) and (c), $\mathbf{O}$ is a flat cone with cone angle $2\pi/k$, $k\geq 3$, and the degenerate quadrilateral is obtained from a loop $e$ at vertex $v$, or a loop $e$ at vertex $v$ with an additional edge $f=vx$, respectively. 

In case (b), there are two cases for the counterclockwise order (from $L$) of the edges incident with $v$ in the pointed pseudotriangulation $G_{i-1}$ in $\mathbf{O}$, as shown on the left hand side of the first and second row in Figure~\ref{fig:deg_quad_ppt}. Similar to the non-degenerate case, it is straightforward to see that if the position of $v'$ is chosen sufficiently close to $v$ in the open conical regions depicted in Figure~\ref{fig:deg_quad_ppt}, and $v'$ is joined to $v$ with a `geometric twist', then the resulting {geometric $\mathbf{O}$-graph} is a pointed pseudotriangulation in $\mathbf{O}$.

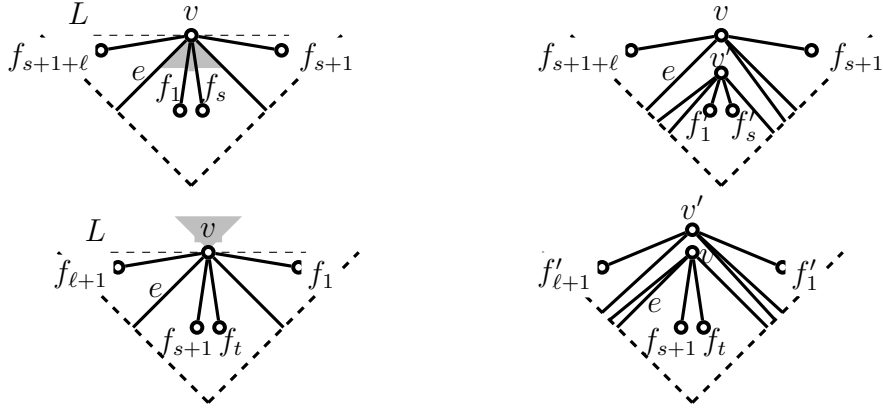
\begin{figure}[htp]
\begin{center}
  \begin{tikzpicture}[very thick,scale=1]
\tikzstyle{every node}=[vertex]

\draw[fill=gray!50!white,draw=white](0,2)--(-0.5,1.5)--(0.5,1.5)--(0,2);

     \node [rectangle, draw=white, fill=white] (a) at (-1.5,2.3) {$L$};  
         \node [rectangle, draw=white, fill=white] (a) at (-0.3,1.3) {$f_1$}; 
       \node [rectangle, draw=white, fill=white] (a) at (0.3,1.3) {$f_s$};    
          \node [rectangle, draw=white, fill=white] (a) at (-0.7,1.5) {$e$};      
            \node [rectangle, draw=white, fill=white] (a) at (0,2.3) {$v$};

   \draw[dashed,thin] (-1.3,2)  --  (1.3,2);  
       
        \path (0,2) node (v)  {} ;
             \path (-0.15,1) node (p1)  {} ;
     \path (0.15,1) node (p2)  {} ;
                 \path (-1.2,1.8) node (s1)  {} ;
     \path (1.2,1.8) node (s2)  {} ;

        \draw[dashed] (0,0)  --  (-2,2);
       \draw[dashed] (0,0)  --  (2,2);
         \node [rectangle, draw=white, fill=white] (a) at (1.8,1.7) {$f_{s+1}$}; 
       \node [rectangle, draw=white, fill=white] (a) at (-1.9,1.7) {$f_{s+1+\ell}$};  
       
        \draw (v)  --  (-1,1);
         \draw (v)  --  (1,1);
            \draw (v)  --  (p1);
       \draw (v)  --  (p2);    
          \draw (v)  --  (s1);
       \draw (v)  --  (s2); 
              \end{tikzpicture}
        \hspace{2cm}
         \begin{tikzpicture}[very thick,scale=1]
\tikzstyle{every node}=[vertex]
 
         \node [rectangle, draw=white, fill=white] (a) at (-0.3,0.8) {$f'_1$}; 
       \node [rectangle, draw=white, fill=white] (a) at (0.3,0.8) {$f'_s$};    
          \node [rectangle, draw=white, fill=white] (a) at (-0.7,1.5) {$e$};      
            \node [rectangle, draw=white, fill=white] (a) at (0,2.3) {$v$};       
                  \node [rectangle, draw=white, fill=white] (a) at (0,1.73) {$v'$};      
                 
       
        \path (0,2) node (v)  {} ;
         \path (0,1.5) node (vp)  {} ;
             \path (-0.15,1) node (p1)  {} ;
     \path (0.15,1) node (p2)  {} ;
                 \path (-1.2,1.8) node (s1)  {} ;
     \path (1.2,1.8) node (s2)  {} ;

        \draw[dashed] (0,0)  --  (-2,2);
       \draw[dashed] (0,0)  --  (2,2);
         \node [rectangle, draw=white, fill=white] (a) at (1.8,1.7) {$f_{s+1}$}; 
       \node [rectangle, draw=white, fill=white] (a) at (-1.9,1.7) {$f_{s+1+\ell}$};  
       
        \draw (v)  --  (-1,1);
         \draw (v)  --  (1,1);
            \draw (vp)  --  (p1);
       \draw (vp)  --  (p2);    
          \draw (v)  --  (s1);
       \draw (v)  --  (s2); 
      \draw (vp)  --  (-0.7,0.7);
 \draw (vp)  --  (0.7,0.7);
 \draw (v)  --  (0.85,0.85);
 \draw (vp)  --  (-0.85,0.85);
        \end{tikzpicture} 
          \begin{tikzpicture}[very thick,scale=1]
\tikzstyle{every node}=[vertex]

\draw[fill=gray!50!white,draw=white](0,2)--(-0.5,2.5)--(0.5,2.5)--(0,2);

     \node [rectangle, draw=white, fill=white] (a) at (-1.5,2.3) {$L$};  
         \node [rectangle, draw=white, fill=white] (a) at (-0.3,0.8) {$f_{s+1}$}; 
       \node [rectangle, draw=white, fill=white] (a) at (0.3,0.8) {$f_t$};    
          \node [rectangle, draw=white, fill=white] (a) at (-0.7,1.5) {$e$};      
            \node [rectangle, draw=gray!50!white, fill=gray!50!white] (a) at (0,2.3) {$v$};

   \draw[dashed,thin] (-1.3,2)  --  (1.3,2);  
       
        \path (0,2) node (v)  {} ;
             \path (-0.15,1) node (p1)  {} ;
     \path (0.15,1) node (p2)  {} ;
                 \path (-1.2,1.8) node (s1)  {} ;
     \path (1.2,1.8) node (s2)  {} ;

        \draw[dashed] (0,0)  --  (-2,2);
       \draw[dashed] (0,0)  --  (2,2);
         \node [rectangle, draw=white, fill=white] (a) at (1.5,1.7) {$f_{1}$}; 
       \node [rectangle, draw=white, fill=white] (a) at (-1.7,1.7) {$f_{\ell+1}$};  
       
        \draw (v)  --  (-1,1);
         \draw (v)  --  (1,1);
            \draw (v)  --  (p1);
       \draw (v)  --  (p2);    
          \draw (v)  --  (s1);
       \draw (v)  --  (s2); 
              \end{tikzpicture}
        \hspace{2cm}
         \begin{tikzpicture}[very thick,scale=1]
\tikzstyle{every node}=[vertex]
 
         \node [rectangle, draw=white, fill=white] (a) at (-0.3,0.8) {$f_{s+1}$}; 
       \node [rectangle, draw=white, fill=white] (a) at (0.3,0.8) {$f_t$};    
          \node [rectangle, draw=white, fill=white] (a) at (-0.5,1.3) {$e$};      
            \node [rectangle, draw=white, fill=white] (a) at (0.2,1.95) {$v$};       
                   \node [rectangle, draw=white, fill=white] (a) at (0,2.6) {$v'$};      
                 
       
        \path (0,2) node (v)  {} ;
             \path (-0.15,1) node (p1)  {} ;
     \path (0.15,1) node (p2)  {} ;
                 \path (-1.2,1.8) node (s1)  {} ;
     \path (1.2,1.8) node (s2)  {} ;     
                 \path (0,2.3) node (vp)  {} ;
                 
        \draw[dashed] (0,0)  --  (-2,2);
       \draw[dashed] (0,0)  --  (2,2);
         \node [rectangle, draw=white, fill=white] (a) at (1.5,1.7) {$f'_{1}$}; 
       \node [rectangle, draw=white, fill=white] (a) at (-1.7,1.7) {$f'_{\ell+1}$};  
       
        \draw (v)  --  (-1,1);
         \draw (v)  --  (1,1);
            \draw (v)  --  (p1);
       \draw (v)  --  (p2);

             \draw (vp)  --  (s2);    
          \draw (vp)  --  (s1);
            \draw (vp)  --  (1.2,1.2);
 \draw (vp)  --  (-1.2,1.2);
 \draw (v)  --  (-1.1,1.1);
 \draw (vp)  --  (1.1,1.1);
        \end{tikzpicture} 
                \caption{The two rows illustrate how to place the new vertex $v'$ in a quadrilateral vertex split of $G_{i-1}$ to obtain another pointed pseudotriangulation in $\mathbf{O}$, where the new quadrilateral has the $\mathbb{A}$-graph shown in Figure~\ref{fig_degenquads}(b). }
 \label{fig:deg_quad_ppt}               
\end{center}
\end{figure}

In case (c), we again consider the list of edges that are incident to $v$ in the pointed pseudotriangulation $G_{i-1}$ in $\mathbf{O}$ in counterclockwise order from $L$. In this list, the edge $f=vx$ may either lie between the two copies of the loop $e$, or between a copy of $e$ and $L$. In the first case, we illustrate how to place the new vertex $v'$ to obtain another pointed pseudotriangulation in $\mathbf{O}$ in Figure~\ref{fig:deg_quad_ppt_2}. The other case is similar. \end{proof}

\begin{figure}[htp]
\begin{center}
  \begin{tikzpicture}[very thick,scale=1]
\tikzstyle{every node}=[vertex]

\draw[fill=gray!50!white,draw=white](0,2)--(0,1.5)--(0.5,1.5)--(0,2);

     \node [rectangle, draw=white, fill=white] (a) at (-1.5,2.3) {$L$};  
         \node [rectangle, draw=white, fill=white] (a) at (-0.5,1.2) {$f_t$}; 
       \node [rectangle, draw=white, fill=white] (a) at (0.5,1.2) {$f_s$};    
          \node [rectangle, draw=white, fill=white] (a) at (-0.7,1.5) {$e$};      
            \node [rectangle, draw=white, fill=white] (a) at (0,2.3) {$v$};       
                      \node [rectangle, draw=white, fill=white] (a) at (-0.15,1) {$f$}; 
                  \node [rectangle, draw=white, fill=white] (a) at (0,0.48) {$x$}; 
                 
   \draw[dashed,thin] (-1.3,2)  --  (1.3,2);  
       
        \path (0,2) node (v)  {} ;
             \path (-0.35,0.8) node (p1)  {} ;
     \path (0.35,0.8) node (p2)  {} ;
                 \path (-1.2,1.8) node (s1)  {} ;
     \path (1.2,1.8) node (s2)  {} ;     
                 \path (0,0.7) node (x)  {} ;
                 
        \draw[dashed] (0,0)  --  (-2,2);
       \draw[dashed] (0,0)  --  (2,2);
         \node [rectangle, draw=white, fill=white] (a) at (1.8,1.7) {$f_{s+1}$}; 
       \node [rectangle, draw=white, fill=white] (a) at (-1.9,1.7) {$f_{s+1+\ell}$};  
       
        \draw (v)  --  (-1,1);
         \draw (v)  --  (1,1);
            \draw (v)  --  (p1);
            \draw(v)--(x);
       \draw (v)  --  (p2);    
          \draw (v)  --  (s1);
       \draw (v)  --  (s2); 
              \end{tikzpicture}
        \hspace{2cm}
         \begin{tikzpicture}[very thick,scale=1]
\tikzstyle{every node}=[vertex]
 
     \node [rectangle, draw=white, fill=white] (a) at (-1.5,2.3) {$L$};  
         \node [rectangle, draw=white, fill=white] (a) at (-0.6,0.85) {$f_t$}; 
       \node [rectangle, draw=white, fill=white] (a) at (0.55,0.8) {$f'_s$};    
          \node [rectangle, draw=white, fill=white] (a) at (-0.7,1.5) {$e$};      
            \node [rectangle, draw=white, fill=white] (a) at (0,2.3) {$v$};       
                      \node [rectangle, draw=white, fill=white] (a) at (-0.15,1) {$f$}; 
                  \node [rectangle, draw=white, fill=white] (a) at (0,0.48) {$x$}; 
                 
   \draw[dashed,thin] (-1.3,2)  --  (1.3,2);  
       
        \path (0,2) node (v)  {} ;
             \path (-0.35,0.8) node (p1)  {} ;
     \path (0.35,0.8) node (p2)  {} ;
                 \path (-1.2,1.8) node (s1)  {} ;
     \path (1.2,1.8) node (s2)  {} ;     
                 \path (0,0.7) node (x)  {} ;
                 \path (0.2,1.6) node (vp)  {} ;

        \draw[dashed] (0,0)  --  (-2,2);
       \draw[dashed] (0,0)  --  (2,2);
         \node [rectangle, draw=white, fill=white] (a) at (1.8,1.7) {$f_{s+1}$}; 
       \node [rectangle, draw=white, fill=white] (a) at (-1.9,1.7) {$f_{s+1+\ell}$};  
       
        \draw (v)  --  (-1,1);
         \draw (v)  --  (1,1);
            \draw (v)  --  (p1);
            \draw(v)--(x);
       \draw (vp)  --  (p2);    
        \draw (vp)  --  (x); 
          \draw (v)  --  (s1);
       \draw (v)  --  (s2); 
        \draw (vp)  --  (0.9,0.9); 
 \draw (v)  --  (-0.9,0.9); 
        \end{tikzpicture} 
          \begin{tikzpicture}[very thick,scale=1]

        \end{tikzpicture} 
                \caption{Illustration of the placement of the new vertex $v'$ in a quadrilateral vertex split of $G_{i-1}$ to obtain another pointed pseudotriangulation in $\mathbf{O}$, where the new quadrilateral has the $\mathbb{A}$-graph shown in Figure~\ref{fig_degenquads}(c). }
 \label{fig:deg_quad_ppt_2}               
\end{center}
\end{figure}
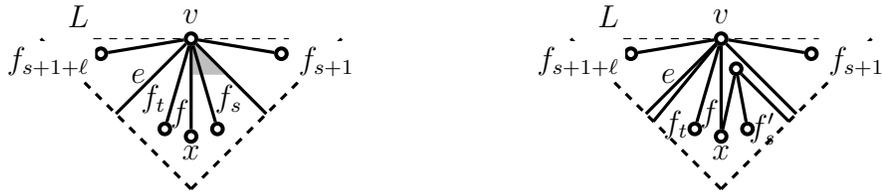

We may reformulate Proposition~\ref{prop:pt_nec} and Proposition~\ref{prop_pseudo} in terms of 
coverings of pointed pseudotriangulations in $\mathbf{O}$ as follows. We say that a $\Gamma$-symmetric geometric graph $G$ is a \emph{$\Gamma$-symmetric pointed pseudotriangulation in the plane} if its quotient graph $G/\Gamma$ is a pointed pseudotriangulation in the flat surface $\mathbf{O}$ of non-singular points of $\mathbb{R}^2/\Gamma$.

\begin{theorem}\label{ppt_covering} Let $\Gamma$ be generated by a translation or a $2$-fold rotation ($n$-fold rotation with $n\geq 3$) in the plane. Then the quotient $\mathbb{A}$-graph of a $\Gamma$-symmetric pointed pseudotriangulation in the plane is $(2,3,2)$-tight ($(2,3,1)$-tight, respectively). Conversely, for any  $(2,3,2)$-tight ($(2,3,1)$-tight, respectively) $\mathbb{A}$-graph $\overline{G}$ there exists a $\Gamma$-symmetric pointed pseudotriangulation $G$ in the plane whose quotient $\mathbb{A}$-graph is $\overline{G}$.
\end{theorem}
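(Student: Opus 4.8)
The plan is to obtain Theorem~\ref{ppt_covering} as a reformulation of Propositions~\ref{prop:pt_nec} and~\ref{prop_pseudo}, using the covering space correspondence between the plane and the flat surface $\mathbf{O}$. The first step is the bookkeeping of identifying $\mathbf{O}$: when $\Gamma$ is generated by a translation, $\mathbf{O}$ is a flat cylinder; when $\Gamma$ is generated by a $2$-fold rotation, $\mathbf{O}$ is a flat cone with cone angle $\pi$; and when $\Gamma$ is generated by an $n$-fold rotation with $n\geq 3$, $\mathbf{O}$ is a flat cone with cone angle $2\pi/n$. By the definition preceding the statement, a $\Gamma$-symmetric geometric graph $G$ in the plane is a $\Gamma$-symmetric pointed pseudotriangulation precisely when the geometric $\mathbf{O}$-graph $G/\Gamma$ is a pointed pseudotriangulation in $\mathbf{O}$, and in that case the quotient $\mathbb{A}$-graph of $G$ is, by construction, the $\mathbb{A}$-graph underlying $G/\Gamma$.

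For the necessity direction I would take a $\Gamma$-symmetric pointed pseudotriangulation $G$ in the plane and apply Proposition~\ref{prop:pt_nec} to $G/\Gamma$. If $\Gamma$ is a translation or a $2$-fold rotation then $\mathbf{O}$ is a flat cylinder or a flat cone of cone angle $\pi$, so the proposition gives that the $\mathbb{A}$-graph of $G/\Gamma$ --- that is, the quotient $\mathbb{A}$-graph of $G$ --- is $(2,3,2)$-tight; if $\Gamma$ is an $n$-fold rotation with $n\geq 3$ then $\mathbf{O}$ is a cone of cone angle $2\pi/n$, and the proposition gives that it is $(2,3,1)$-tight. This is exactly the claimed count in each case, so this direction is immediate once the dictionary of the previous paragraph is in place.

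For the converse I would start from a $(2,3,2)$-tight (respectively, $(2,3,1)$-tight) $\mathbb{A}$-graph $\overline{G}$, use Proposition~\ref{prop_pseudo} to realise it as a pointed pseudotriangulation $P$ in the corresponding $\mathbf{O}$ with $\mathbb{A}$-graph isomorphic to $\overline{G}$, and then lift $P$ to the plane. Here I would use that the restriction $p:\tilde\Sigma\to\Sigma$ of the quotient map to the locus of trivial stabiliser is a regular covering projection and, for the flat metrics, a local isometry; pulling back the embedding of $P$ along $p$ (as in the construction behind Lemma~\ref{lem_lifting} and diagram~(\ref{diag_covering})) then produces a $\Gamma$-invariant embedded graph $G$ in $\tilde\Sigma\subseteq\mathbb{R}^2$. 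One checks that $G$ is a $\Gamma$-symmetric plane geometric graph: the locally geodesic edges of $P$ lift to straight line segments because $p$ is a local isometry, $G$ has no crossings because $p$ is a local homeomorphism and $P$ is embedded, and $\Gamma$ acts freely on the vertices and edges of $G$ because $\tilde\Sigma$ consists of points with trivial stabiliser. Since the quotient of $G$ by $\Gamma$ recovers $P$, which is a pointed pseudotriangulation in $\mathbf{O}$, the graph $G$ is a $\Gamma$-symmetric pointed pseudotriangulation in the plane, and its quotient $\mathbb{A}$-graph is the $\mathbb{A}$-graph of $P$, namely $\overline{G}$.

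The substantive content of the theorem has already been expended in Propositions~\ref{prop:pt_nec} and~\ref{prop_pseudo} --- and, through the latter, in the inductive characterisations of Theorems~\ref{thm_cylinder_inductive232} and~\ref{thm_cylinder_inductive231} --- so I do not expect a serious obstacle here. The one point that needs care is the lifting step in the converse: one must verify that passing between a pointed pseudotriangulation in $\mathbf{O}$ and a $\Gamma$-symmetric geometric graph in the plane preserves straightness of edges, planarity, freeness of the $\Gamma$-action, and the embedding data encoding the $\mathbb{A}$-graph. This is the pseudotriangulation analogue of the correspondence recorded in Lemma~\ref{lem_lifting} and the commuting square~(\ref{diag_covering}) for contact systems, and I would simply point to that machinery rather than redevelop it.
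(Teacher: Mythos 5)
Your proposal is correct and takes essentially the same route as the paper, which states Theorem~\ref{ppt_covering} as a direct reformulation of Propositions~\ref{prop:pt_nec} and~\ref{prop_pseudo} via the definition of a $\Gamma$-symmetric pointed pseudotriangulation and the covering correspondence between the plane and $\mathbf{O}$. Your explicit verification of the lifting step (straightness, planarity, freeness of the action) is detail the paper leaves implicit, but it is the intended argument.
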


\begin{remark} Since the orbifolds considered in this section arise from a discrete subgroup of the Euclidean group acting on the plane,  the cone angle of the flat cone $\mathbf{O}$ in Propositions ~\ref{prop:pt_nec} and \ref{prop_pseudo} is assumed be of the form $2\pi/k$ for $k\geq 2$.  
However, the proofs can easily be adapted to extend these results to flat cones with any cone angle $\alpha$, $0<\alpha \leq 2\pi$. Observe that for the number $c$ of convex angles in a pointed pseudotriangulation with $f$ faces   in  the cone $\mathbf{O}$ with cone angle $\alpha$ we have
$$c=\begin{cases}
   3(f-2)+1& \text{if } 0< \alpha <\pi\\
   3(f-2)+2& \text{if } \pi\leq \alpha <2\pi\\
   3(f-2)+3& \text{if } \alpha =2\pi
\end{cases},$$
so the corresponding sparsity counts change accordingly. 
In fact, this pattern for the counts continues in this fashion for cone angles $\alpha>2\pi$, with $c=3(f-2)+a$ if $(a-1)\pi\leq \alpha <a\pi$.
\end{remark}

\subsection{Applications in geometric rigidity theory} \label{subsec:app}

We now discuss some applications of the results in Section~\ref{subsec:pseudo} to the rigidity and flexibility analysis of symmetric bar-joint frameworks. {We refer the reader to the Handbook chapter on Rigidity and Scene Analysis by Schulze and Whiteley \cite[Chapter 61]{HB18} for a detailed summary of definitions and results in geometric rigidity theory.}

A \emph{(bar-joint) framework} in the plane is a pair $(G,p)$, where $G=(V,E)$ is a  simple undirected graph and $p:V\to \mathbb{R}^2$ is an embedding. We  think of $(G,p)$ as a collection of  fixed-length bars (corresponding to the edges of $G$) which are connected at their ends by pin joints (corresponding to the vertices of $G$). Note that a  framework in which no edges cross each other may be considered as a geometric graph in the plane. Loosely speaking, a  framework $(G,p)$ is \emph{rigid} if all edge-length preserving, continuous motions of  $(G,p)$ are trivial, i.e. rigid body motions in the plane, and \emph{flexible} otherwise. A framework $(G,p)$  is \emph{generic} if the coordinates of the points $p(v)$, $v\in V$, are algebraically independent over $\mathbb{Q}$.

It is well known that a graph $G$ is  $(2,3)$-tight if and only if $G$ is  minimally $2$-rigid, that is, any generic realisation of $G$ as a bar-joint framework in the plane is minimally rigid (in the sense that removing any edge yields a flexible framework) \cite{PG,Laman}.  Pointed pseudotriangulations allow us to give a geometric certificate for a planar graph to be  minimally $2$-rigid, since a planar graph is $(2,3)$-tight if and only if it can be embedded as a pointed pseudotriangulation in the plane.

Using the results in Section~\ref{subsec:pseudo} we may deduce symmetric analogues of this result. We need the following definitions. For an abstract group $C$, we say that a graph $G$ is \emph{$C$-symmetric} if there exists a group action $\theta:C\to \textrm{Aut}(G)$, where $\textrm{Aut}(G)$ denotes the group of automorphisms of $G$. We will assume throughout this section that $\theta$ is \emph{free}, i.e. it acts freely on the vertex and edge set of $G$. Let $G$ be a $C$-symmetric graph and suppose that $C$ also acts on $\mathbb{R}^d$ via a homomorphism $\tau:C\to O(\mathbb{R}^d)$. Then a framework $(G,p)$ is called  \emph{$C$-symmetric} (with respect to $\theta$ and $\tau$) if $$\tau(g)(p(v))=p(\theta(g)(v)) \textrm{ for all } g\in C \textrm{ and } v\in V.$$
 A $C$-symmetric framework $(G,p)$ is called \emph{(forced) $C$-rigid} if there are no non-trivial motions of $(G,p)$ that preserve the full symmetry of $(G,p)$, in the sense that all frameworks along the path are also $C$-symmetric (see \cite{BSflex} for more details). Further, $(G,p)$  is called \emph{$C$-generic} if the representatives for the $C$-orbits of vertices of $G$ are in generic position. A graph $G$ is called \emph{minimally $C$-rigid} (with respect to $\theta$ and $\tau$) if some (or equivalently, every) $C$-generic realisation of $G$ (with respect to $\theta$ and $\tau$) as a bar-joint framework is minimally $C$-rigid in the plane. $C$-rigidity has been studied extensively in various different contexts in recent years; see for example \cite{JKT,MT15,NS}. A detailed summary of results can be found in \cite[Chapter 62]{HB18}.


\begin{corollary} \label{cor:rigidity}
Let $G$ be a $C$-symmetric planar graph with respect to the free action $\theta:C\to \textrm{Aut}(G)$, where $C$ is a cyclic group of order $k\geq 1$, $k\neq 2$.
Then $G$ is  minimally $C$-rigid with respect to $\theta$ and $\tau$, where $\tau(C)$ is generated by a rotation of order $k$ in the plane, if and only if $G$ can be embedded as a
$\tau(C)$-symmetric pointed pseudotriangulation  in the plane (with respect to $\theta$).
\end{corollary}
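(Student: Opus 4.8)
The plan is to combine two results that are now available: the pseudotriangulation machinery culminating in Theorem~\ref{ppt_covering} (via Propositions~\ref{prop:pt_nec} and \ref{prop_pseudo}), and the known combinatorial characterisation of minimal forced-symmetric rigidity in the plane for cyclic rotation groups. For $C$ of order $k\geq 3$ the latter says that $G$ is minimally $C$-rigid with respect to $\theta$ and a rotation $\tau(C)$ of order $k$ precisely when the quotient $C$-gain graph $\overline{G}=G/C$ (with gains induced by $\theta$, well defined up to switching) is $(2,3,1)$-tight in the gain-graph sense of Remark~\ref{rem_gain2} (see \cite{JKT}; compare \cite{MT15,NS}); for $k=1$ it reduces to the classical statement that $G$ is minimally rigid if and only if $G$ is $(2,3)$-tight \cite{PG,Laman}. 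First I would dispose of $k=1$: there $\tau(C)$ is trivial, a $\tau(C)$-symmetric pointed pseudotriangulation is just a pointed pseudotriangulation in the plane, and the equivalence is the combination of the Pollaczek-Geiringer/Laman theorem with the Streinu--Haas theorem \cite{Str05,Haasetal}. So assume from now on that $k\geq 3$, so that $\mathbf{O}=\mathbb{R}^2/\tau(C)$ is a flat cone of cone angle $2\pi/k$ and $\overline{G}=G/C$ is naturally an $\mathbb{A}$-graph.

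The bridge between the two frameworks is the observation that, for the quotient of a planar graph carrying a free rotation of order $k\geq 3$, the gain-graph notion of ``balanced'' coincides with the $\mathbb{A}$-graph notion of ``balanced''. One inclusion is immediate: the $C$-gain of a closed walk in $\overline{G}$ is the image under $\mathbb{Z}\twoheadrightarrow\mathbb{Z}_k$ of its winding number in $\Sigma\cong\mathbb{A}$, so winding number zero forces trivial gain. For the converse, suppose some cycle $\gamma$ in a subgraph $\overline{H}$ of $\overline{G}$ has winding number $mk$ with $m\neq 0$. Since the cover $\tilde\Sigma\to\Sigma$ corresponds to the index-$k$ subgroup $k\mathbb{Z}$ of $\mathbb{Z}$, the preimage of $\gamma$ in $G\subset\tilde\Sigma\cong\mathbb{A}$ consists of exactly $k$ pairwise disjoint essential simple closed curves, on which $C$ acts freely and transitively. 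But the generator of $C$ is a rotation, hence a homeomorphism of the annulus $\tilde\Sigma$ fixing both ends, so it preserves the natural linear nesting order of a finite family of pairwise disjoint essential curves and therefore fixes the innermost one, contradicting freeness of the $C$-action on these $k$ curves. Hence a subgraph of $\overline{G}$ is balanced as a $C$-gain graph if and only if it is balanced as an $\mathbb{A}$-graph, and consequently $\overline{G}$ is $(2,3,1)$-gain-tight if and only if it is a $(2,3,1)$-tight $\mathbb{A}$-graph.

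With this in place the corollary follows by concatenation. For the forward direction, if $G$ is minimally $C$-rigid then by \cite{JKT} $\overline{G}$ is $(2,3,1)$-gain-tight; using planarity of $G$ to realise the free $C$-action geometrically by a rotation (and so to view $\overline{G}$ as an $\mathbb{A}$-graph), the previous paragraph upgrades this to: $\overline{G}$ is a $(2,3,1)$-tight $\mathbb{A}$-graph. Theorem~\ref{ppt_covering} then produces a $\tau(C)$-symmetric pointed pseudotriangulation in the plane whose quotient $\mathbb{A}$-graph is $\overline{G}$, and since the quotient $\mathbb{A}$-graph determines the cover together with its $C$-action, this pseudotriangulation is a realisation of $G$ compatible with $\theta$. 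Conversely, if $G$ is realised as a $\tau(C)$-symmetric pointed pseudotriangulation in the plane, then by Theorem~\ref{ppt_covering} (equivalently Proposition~\ref{prop:pt_nec}) its quotient $\mathbb{A}$-graph $\overline{G}$ is $(2,3,1)$-tight, hence $(2,3,1)$-gain-tight, and \cite{JKT} gives that $G$ is minimally $C$-rigid.

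I expect the main obstacle to be the topological bookkeeping: reconciling the gain-graph and $\mathbb{A}$-graph meanings of ``balanced'' (the nesting argument above), and, relatedly, justifying that the abstract free $C$-action on the planar graph $G$ can be realised by a rotation of the plane so that $G/C$ genuinely carries the $\mathbb{A}$-graph structure on which Theorem~\ref{ppt_covering} operates and so that the cover recovered from that structure is $G$ with its given $\theta$-action. Once these points are settled the statement is a direct combination of Theorem~\ref{ppt_covering} with the known forced-symmetric rigidity characterisation, and the exclusion of $k=2$ is explained precisely by the fact that there the cone has angle $\pi$, forcing the sparsity count in Proposition~\ref{prop:pt_nec} to be $(2,3,2)$ rather than the $(2,3,1)$ count that governs forced $C_2$-rigidity.
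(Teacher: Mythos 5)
Your proposal is correct and follows essentially the same route as the paper: the published proof is exactly the concatenation of \cite[Theorem 6.3]{JKT} (minimal $C$-rigidity $\Leftrightarrow$ $(2,3,1)$-tightness of the quotient $\mathbb{A}$-graph) with Theorem~\ref{ppt_covering}. The extra material you supply --- the explicit treatment of $k=1$, and the nesting argument identifying gain-graph balance with $\mathbb{A}$-graph balance --- fills in details the paper leaves implicit rather than constituting a different approach.
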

\begin{proof} Given a $C$-generic (with respect to $\theta$ and $\tau$) minimally $C$-rigid framework with no edges crossing each other, we consider the corresponding geometric quotient graph $\overline{G}$ in the flat cone $\mathbf{O}$ of non-singular points in $\mathbb{R}^2/\tau(C)$. 
By \cite[Theorem 6.3]{JKT}, the $\mathbb{A}$-graph $\overline{G}'$ of $\overline{G}$ is $(2,3,1)$-tight.
Thus, by Corollary~\ref{ppt_covering}, there exists a $\tau(C)$-symmetric pointed pseudotriangulation in the plane (with respect to $\theta$) whose quotient $\mathbb{A}$-graph is 
$\overline{G}'$.
Conversely, if $G$ can be embedded as a $\tau(C)$-symmetric pointed pseudotriangulation in the plane (with respect to $\theta$), then, by Corollary~\ref{ppt_covering}, the quotient $\mathbb{A}$-graph of $G$ is  $(2,3,1)$-tight. Thus, by \cite[Theorem 6.3]{JKT}, $G$ is  minimally $C$-rigid with respect to $\theta$ and $\tau$.
\end{proof}

While rigidity always implies $C$-rigidity, the converse is not true in general. However, note that if $C$ is a cyclic group of order $k=1,3$, then $C$-rigidity is in fact equivalent to rigidity for $C$-generic frameworks. This is trivial for $k=1$ and was shown in \cite[Theorem 6.11]{ST} for $k=3$. 

For the case when $\tau(C)$ is generated by a rotation of order $2$, we conjecture that a result analogous to Corollary~\ref{cor:rigidity} may be established by allowing the action $\theta$ to be non-free on the edges of $G$, and hence allowing an edge of $\overline{G}$ to go through the cone point of $\mathbb{R}^2/\tau(\Gamma)$. By the transfer results for $C$-rigidity established in \cite{CNSW}, this result would then immediately also extend to the case of reflection symmetry. 

Rigidity analyses of periodic frameworks in the plane, or equivalently, frameworks on the flat torus, have also received a significant amount of attention in recent years, both under a fixed torus (see \cite{Ross,ST1,KST19}, for example) and a flexible torus (see \cite{BS10,MTadv}, for example). In particular, it was shown in \cite{BS_pseudo} that  a pointed pseudotriangulation on the flat torus has exactly one non-trivial motion under a fully flexible torus, and that this motion is expansive in the sense that it does not decrease the distance between any pair of vertices. We conjecture that this result extends to the case of pointed pseudotriangulations on the flexible flat cylinder. In the case when the cylinder is fixed, it follows from \cite[Theorem 2.4]{KST19} and the results in Section~\ref{subsec:pseudo} that generic realisations of $\overline{G}$ as frameworks on the cylinder (or equivalently, frameworks in the plane that are periodic in one direction) are minimally rigid if and only if $\overline{G}$ can be embedded  as a pointed pseudotriangulation in the flat cylinder. 

\begin{remark}
    We conclude this part of the paper by noting that the results of 
    Sections \ref{sec_nec}, \ref{sec_suff} and \ref{sec:pseudo}
    suggest several obvious lines of future work. In particular 
    it would be interesting to prove analogues 
    of Theorems \ref{thm_maintranslation},
    \ref{thm_mainrotation} and
    \ref{ppt_covering} for all of the 
    discrete subgroups of the Euclidean group. We have 
    conjectures for various cases. However the 
    inductive characterisations of the appropriate surface graphs
    seem to be significantly more challenging in these cases.
    \label{rem_future}
\end{remark}

\section{Unions and intersections}

The remainder of the paper is devoted to proving the results of 
Section \ref{sec_statements}. In this section we set out some elementary 
properties of balanced and unbalanced \( \mathbb A \)-graphs, and the associated 
gain sparsity counts. 
See \cite{Zas89}, \cite{JKT} and \cite{ST1} for more detail on gain graphs and 
associated matroids.

First we record the fundamental observation that for 
subgraphs  \( B,C \) of \( D \),
\begin{equation}
    f(B\cup C) +f(B \cap C) = f(B) +f(C).
    \label{eqn_incexcl}
\end{equation}
Now suppose that \( D \) is a \( (2,3) \)-sparse graph.
It is easy to see using 
Equation (\ref{eqn_incexcl}) that 
if \( B \) is \( (2,3) \)-tight then \( B \) is connected. 
Furthermore if \( B \) and \( C \) are both \( (2,3) \)-tight 
and \( B\cap C \) contains at least one edge then 
both \( B \cup C \) and \( B \cap C \) are \( (2,3) \)-tight.
We will generalise these observations
to \( (2,3,l) \)-sparse \( \mathbb A \)-graphs.
Throughout the section \( G \) is an \( \mathbb A  \)-graph and 
\( H \) and \( K \) are non-empty subgraphs of \( G \).

The following lemma is a special case of \cite[Lemma 2.4]{JKT}.
\begin{lemma}
    Suppose that \( H \) and \( K \) are both balanced and \( H\cap K \)
    is connected. Then \( H \cup K \) is also balanced.
    \label{lem_svk}
\end{lemma}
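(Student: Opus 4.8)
The statement is the $\Gamma=\langle z\rangle\cong\mathbb Z$ case of \cite[Lemma 2.4]{JKT}, so the quickest route is simply to invoke that result. For a self-contained argument I would proceed as follows, using the description of balancedness in terms of fundamental groups that is already implicit in the proof of Proposition~\ref{prop_balanced}: an $\mathbb A$-graph $G=(D,\Phi)$ is balanced precisely when $\Phi_*\colon\pi_1(|D|)\to\pi_1(\mathbb A)$ is trivial on every component, equivalently when $\Phi$ restricted to $|D|$ admits a lift through the universal covering $q\colon\widetilde{\mathbb A}\to\mathbb A$ (here $\widetilde{\mathbb A}\cong\mathbb R^2$ and the deck group is $\mathbb Z$). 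That this agrees with the ``some face contains both ends'' definition is the standard translation into the present language of the notion of a balanced gain graph; I would either cite \cite{Zas89,JKT} for it or include the short separate verification that a face of $G$ meeting both ends of $\mathbb A$ exists exactly when no cycle of $G$ winds around the puncture.

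First I would reduce to the connected case. Since $H\cap K$ is connected and non-empty it lies in one component $H_0$ of $H$, one component $K_0$ of $K$, and one component of $H\cup K$; and any component of $H$ other than $H_0$ (respectively of $K$ other than $K_0$) is disjoint from all of $K$ (respectively of $H$), since otherwise $H\cap K$ would be disconnected. Hence the distinguished component of $H\cup K$ is exactly $H_0\cup K_0$, with $H_0\cap K_0 = H\cap K$, while every other component of $H\cup K$ is a component of $H$ or of $K$ and so has trivial $\pi_1$-image because $H$ and $K$ are balanced. Thus it suffices to treat $H_0\cup K_0$; equivalently I may assume $H$, $K$ and $H\cap K$ are all connected, and I will use the elementary fact that for subgraphs of $G$ one has $|H|\cup|K|=|H\cup K|$ and $|H|\cap|K|=|H\cap K|$, with $|H|,|K|$ closed in $|H\cup K|$.

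The heart of the argument is a gluing of lifts. Balancedness of $H$ gives a lift $\widetilde\Phi_H\colon|H|\to\widetilde{\mathbb A}$ of $\Phi|_{|H|}$, and likewise a lift $\widetilde\Phi_K$ of $\Phi|_{|K|}$ (lifting criterion, using that $|H|$ and $|K|$ are connected and locally path-connected). On the connected set $|H\cap K|$ both $\widetilde\Phi_H$ and $\widetilde\Phi_K$ are lifts of $\Phi|_{|H\cap K|}$; since $q$ is a universal, hence regular, covering, any two lifts of a map on a connected, locally path-connected space differ by a deck transformation, so there is $g$ in the deck group with $g\circ\widetilde\Phi_H=\widetilde\Phi_K$ on $|H\cap K|$. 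Now $g\circ\widetilde\Phi_H$ is still a lift of $\Phi|_{|H|}$, and it agrees with $\widetilde\Phi_K$ on $|H|\cap|K|=|H\cap K|$; since this set is closed in $|H\cup K|$, the pasting lemma produces a continuous $\widetilde\Phi\colon|H\cup K|\to\widetilde{\mathbb A}$ with $q\circ\widetilde\Phi=\Phi|_{|H\cup K|}$. Hence $\Phi|_{|H\cup K|}$ lifts, i.e.\ $H\cup K$ is balanced.

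The only places needing care are bookkeeping: pinning down the $\pi_1$-characterisation of balancedness, and the reduction to connected graphs. The connectedness of $H\cap K$ enters exactly once, and crucially: it guarantees that the ambiguity between the two chosen lifts is a single global deck transformation, so that after adjusting one of them they agree on the whole overlap and can be pasted. Without it one could only match the two lifts componentwise on $H\cap K$, and inconsistent choices on different components could genuinely yield an unbalanced union.
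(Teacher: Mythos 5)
Your argument is correct, but it is genuinely different from the one in the paper. The paper works directly with its face-based definition of balancedness: assuming $H\cup K$ is unbalanced, it takes the face $F$ of $H$ containing both ends of $\mathbb A$, finds a path $p$ in $K$ with relative interior in $F$ that separates the ends within $F$, closes $p$ up by a path $q$ in the connected graph $H\cap K$, and thereby exhibits a loop in $K$ separating the ends, contradicting the balancedness of $K$. You instead pass to the characterisation of balancedness via triviality of the induced map on $\pi_1$, equivalently via the existence of a lift to the universal cover $\widetilde{\mathbb A}\to\mathbb A$, and then glue the two lifts of $H$ and $K$ after adjusting one by a deck transformation; the connectedness of $H\cap K$ enters exactly where it should, namely in forcing the discrepancy between the two lifts to be a \emph{single} deck transformation. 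Your reduction to the connected case is also sound. What your route buys is conceptual clarity and immediate generality: the same gluing argument proves the balanced-union statement for any regular covering (i.e.\ any gain group), which is essentially how \cite[Lemma 2.4]{JKT} is proved, whereas the paper's argument is an ad hoc planar/annular separation argument. What it costs is that the real content is shifted into the deferred equivalence between ``some face contains both ends'' and ``no cycle of $G$ is essential in $\mathbb A$'' (equivalently, the lifting criterion holds); the direction you actually need to conclude that $H\cup K$ is balanced in the paper's sense --- trivial $\pi_1$-image implies some face contains both ends --- is the less obvious one and is roughly of the same difficulty as the paper's entire direct proof, so if you write this up you should include that verification rather than only citing \cite{Zas89,JKT}, which establish the gain-graph formalism but not this topological translation.
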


\begin{proof}
    Suppose that \( H \cup K\) is unbalanced. Let \( F \) be the 
    face of \( H \) that contains the ends of \( \mathbb A \). 
    There must be a path \( p \) in \( K \) joining vertices
    \( u,v \in \partial F \cap K \)
    such that \( \mathring p \subset 
    F \cap K\) and \( \mathring p \) separates the 
    ends of \( \mathbb A \) in \( F \), where \( \mathring p \)
    is the relative interior of \( p \). 
    Let \( q \) be a path in \( H\cap K \) joining \( u \) and \( v \). 
    The concatenation 
    of \( p \) and \( q \) forms a loop in \( K \) that separates the ends of \( \mathbb A \)
    contradicting the hypothesis that \( K \) is balanced.
\end{proof}

\begin{lemma}
    If \( G \) is \( (2,3,l) \)-tight for \( l \in \{1,2\} \) then 
    \( G \) is connected.
    \label{lem_tightconn}
\end{lemma}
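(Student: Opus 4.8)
The plan is to argue by contradiction using the inclusion-exclusion identity (\ref{eqn_incexcl}) together with the observation that a disconnected graph splits as a disjoint union, where the intersection is empty and hence contributes nothing to any count but the union adds vertex counts. Suppose $G$ is $(2,3,l)$-tight but disconnected. Since $G$ is $(2,3,l)$-tight it has at least one edge, or $G$ is a single isolated vertex; the latter is connected, so we may assume $G$ has an edge, and in particular $|V(G)|\geq 2$. Write $G = H \sqcup K$ where $H$ and $K$ are nonempty induced subgraphs with no vertices or edges in common (take $H$ to be one connected component and $K$ the union of the rest). Then $V(G) = V(H)\sqcup V(K)$ and $E(G) = E(H)\sqcup E(K)$, so $f(G) = f(H) + f(K)$.

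The key step is to bound $f(H)$ and $f(K)$ from below using $(2,3,l)$-sparsity of $G$, and to derive a contradiction with the equality case that tightness imposes on $f(G)$. I would split into cases according to which ``tightness'' alternative holds for $G$. If $f(G) = l$: since $H$ and $K$ are both nonempty subgraphs of $G$, we have $f(H)\geq l$ and $f(K)\geq l$ (using $l\geq 1$, so every nonempty subgraph satisfies $f\geq l$ — note even an edgeless subgraph on $n\geq 1$ vertices has $f = 2n \geq 2 \geq l$). Hence $l = f(G) = f(H)+f(K) \geq 2l$, forcing $l \leq 0$, a contradiction. If instead $G$ is balanced with $f(G) = 3$: then $H$ and $K$ are both balanced (subgraphs of a balanced $\mathbb A$-graph are balanced, as noted in Section \ref{sec_nec}). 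If both $H$ and $K$ contain an edge, then $f(H)\geq 3$ and $f(K)\geq 3$ by the balanced-subgraph count, giving $3 = f(H)+f(K)\geq 6$, impossible. If, say, $K$ has no edge, then $K$ is a nonempty edgeless graph, so $f(K) = 2|V(K)|\geq 2$, while $f(H)\geq l\geq 1$ if $H$ has no edge and $f(H)\geq 3$ if $H$ has an edge; in the first sub-case $f(G)\geq 3$ with equality only if $f(H)=1$ and $f(K)=2$, i.e. $l=1$, $|V(H)|$ has $2|V(H)|-|E(H)| = 1$ which is impossible for an edgeless $H$ since then $f(H)=2|V(H)|$ is even; in the second sub-case $f(G) = f(H)+f(K)\geq 3+2 = 5 > 3$, a contradiction.

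The only remaining subtlety is the parity-type observation I used above: an edgeless nonempty graph has $f = 2|V|$, which is even and at least $2$. This handles the awkward corners cleanly, since it rules out an edgeless component being responsible for a value like $1$. The main obstacle — really the only place any care is needed — is keeping track of the several sub-cases in the balanced situation and confirming that in each the strict inequality $f(G) > l$ or $f(G) > 3$ is forced, contradicting tightness. I would organize the write-up so that the generic case ($f(G)=l$) is dispatched in one line and the balanced case is handled by first reducing to ``both components have an edge'' (immediate contradiction $6\leq 3$) and then noting that an edgeless component contributes an even number $\geq 2$ to the total, again overshooting $3$. No new topological input beyond ``a subgraph of a balanced $\mathbb A$-graph is balanced'' is required, so the proof is purely a counting argument via (\ref{eqn_incexcl}) specialised to a disjoint union.
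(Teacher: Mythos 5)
Your argument is correct and is exactly the computation the paper has in mind: the paper's proof of Lemma \ref{lem_tightconn} simply says it is a straightforward consequence of (\ref{eqn_incexcl}), and your case analysis (additivity of \( f \) over a disjoint union, combined with the lower bounds \( f\geq l \) for nonempty subgraphs and \( f\geq 3 \) for balanced subgraphs with an edge) fills in precisely those details. The only minor redundancy is the parity argument for the sub-case where both components are edgeless, which is vacuous once you have assumed \( G \) has an edge.
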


\begin{proof}
    This a straightforward consequence of (\ref{eqn_incexcl}).
\end{proof}

\begin{lemma}
    Suppose that \( G \) is \( (2,3,2) \)-sparse and 
    that \( H,K \) are both \( (2,3,2)  \)-tight. 
    \begin{enumerate}
        \item If \( H \) and \( K \) are both unbalanced and
            \( H \cap K \) is non-empty, then \( H\cap K \) and
            \( H \cup K \) are both \( (2,3,2) \)-tight. Furthermore
            \( H \cap K \) is either unbalanced or consists of a
            single vertex.
        \item If at least one of \( H \) or \( K \) is balanced and 
            \( H \cap K \) has at least two vertices then 
            \( H \cup K \) is \( (2,3,2) \)-tight. Furthermore
            either \( H\cap K \) is \( (2,3,2) \)-tight or, 
            \( H\cap K \) consists of two isolated vertices and 
            \( H\cup K \) is unbalanced.
    \end{enumerate}
    \label{lem_232tightunion}
\end{lemma}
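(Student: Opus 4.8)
The plan is to derive everything from the submodularity identity (\ref{eqn_incexcl}), the fact that every subgraph of a balanced $\mathbb A$-graph is balanced, and Lemma \ref{lem_svk}. First I would record two bookkeeping facts about the value of $f$ on a single $(2,3,2)$-tight $\mathbb A$-graph $X$: if $X$ is unbalanced then $f(X)=2$ (it is not a balanced graph with $f=3$, and an isolated vertex also has $f=2$), while if $X$ is balanced with at least two vertices then $f(X)=3$ (if $f(X)=2$ and $X$ had an edge this would violate balanced sparsity, so $X$ would be edgeless with $2|V(X)|=2$, i.e.\ a single vertex). With these in hand, each part of the lemma becomes a short computation combining (\ref{eqn_incexcl}) with the sparsity bounds $f(\cdot)\geq 2$ and $f(\cdot)\geq 3$ for balanced subgraphs with an edge, both valid for all subgraphs of $G$ since $(2,3,2)$-sparsity is hereditary.

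For part (1): as $H,K$ are unbalanced and tight, $f(H)=f(K)=2$, so $f(H\cup K)+f(H\cap K)=4$ by (\ref{eqn_incexcl}); both terms are at least $2$, hence both equal $2$, and both graphs, being $(2,3,2)$-sparse with $f=2$, are $(2,3,2)$-tight. For the dichotomy, if $H\cap K$ were balanced then $f(H\cap K)=2$ would be incompatible with having an edge, so $H\cap K$ would be edgeless with $2|V(H\cap K)|=2$, i.e.\ a single vertex.

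For part (2) I would take $H$ balanced without loss of generality; since $H\cap K$ has at least two vertices, so does $H$, whence $f(H)=3$. If $K$ is also balanced then $f(K)=3$ and $f(H\cup K)+f(H\cap K)=6$. Here $H\cap K\subseteq H$ is balanced with at least two vertices, so $f(H\cap K)\geq 3$ (edgeless forces $f\geq 4$; an edge forces $f\geq 3$ by balanced sparsity). If $f(H\cap K)=3$, then $H\cap K$ is balanced with $f=3$, hence $(2,3,2)$-tight, hence connected by Lemma \ref{lem_tightconn}, so Lemma \ref{lem_svk} makes $H\cup K$ balanced with $f=3$, and both are tight. If instead $f(H\cap K)\geq 4$, then $f(H\cup K)\geq 2$ forces $f(H\cup K)=2$ and $f(H\cap K)=4$; since $H\cup K$ contains the edge of $H$ but has $f=2$ it is unbalanced, so by the contrapositive of Lemma \ref{lem_svk} the graph $H\cap K$ is disconnected, and since each component is either a single vertex ($f=2$) or contains an edge ($f\geq 3$), summing to $4$ over at least two components leaves exactly two single-vertex components --- which is precisely the stated exceptional case. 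Finally, if $K$ is unbalanced then $f(K)=2$, $H\cup K\supseteq K$ is unbalanced, and $f(H\cup K)+f(H\cap K)=5$; as $f(H\cap K)\geq 3$ and $f(H\cup K)\geq 2$, we get $f(H\cap K)=3$ and $f(H\cup K)=2$, so $H\cap K$ is $(2,3,2)$-tight and $H\cup K$ is $(2,3,2)$-tight, with no exceptional case.

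The only delicate point is the disconnected-intersection analysis in part (2): one has to observe that $H\cup K$ unbalanced together with $H,K$ balanced forces $H\cap K$ disconnected (the contrapositive of Lemma \ref{lem_svk}), and then pin down the number of components using the per-component values of $f$. Everything else is routine manipulation of (\ref{eqn_incexcl}) and the definitions of balance, sparsity and tightness.
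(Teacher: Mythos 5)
Your proof is correct and follows essentially the same route as the paper: apply the inclusion--exclusion identity for $f$, split into cases according to which of $H,K$ are balanced, and invoke Lemma \ref{lem_svk} in the both-balanced case to control the balance of $H\cup K$. The only (harmless) variation is that in the both-balanced case you branch on the value of $f(H\cap K)$ and recover connectivity from tightness via Lemma \ref{lem_tightconn}, where the paper branches directly on whether $H\cap K$ is connected.
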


\begin{proof}
    If \( H \) and \( K \) are both unbalanced 
    then, by (\ref{eqn_incexcl}), \( f(H\cap K) + f(H\cup K) = 4 \)
    and therefore \( f(H\cup K) = f(H\cap K) = 2 \). Conclusion 
    1 follows easily.

    If \( H \) is balanced and \( K \) is unbalanced, we see that 
    \( f(H \cup K) + f(H \cap K) = 5 \). Now \( H \cap K \)
    is balanced and since \( H \cap K  \) has at least two vertices, 
    we necessarily have \( f(H \cap K) =3 \) and \( f(H \cup K ) = 2 \). 

    Finally 
    if \( H \) and \( K \) are both balanced then 
    \( f(H \cup K) +f(H \cap K ) = 6 \). Now if \( H\cap K \) is
    connected, then by Lemma \ref{lem_svk}, \( H \cup K \)
    is balanced and then \( f(H\cup K) = 
    f(H\cap K) = 3\). On the other hand, if \( H \cap K  \) 
    is disconnected,
    then we must have \( f(H\cap K) = 4 \) and \( f(H \cup K) = 2 \). 
    But \( H \cap K \) is a disconnected balanced graph, so it must 
    comprise two isolated vertices and \( H \cup K \) must 
    be unbalanced since \( f(H \cup K) <3 \).
\end{proof}

We have a similar statement for \( (2,3,1) \)-sparse graphs. The 
proof is a routine adaptation of the proof of Lemma 
\ref{lem_232tightunion} and we omit the details.

\begin{lemma}
    Suppose that \( G \) is \( (2,3,1) \)-sparse and that
    \( H,K \) are both \( (2,3,1)  \)-tight. 
    \begin{enumerate}
        \item If \( H \) and \( K \) are both unbalanced and
            \( H \cap K \) is non-empty, then \( H\cap K \) and
            \( H \cup K \) are both unbalanced and 
            \( (2,3,1) \)-tight.
        \item If at least one of \( H \) or \( K \) is balanced and 
            \( H \cap K \) has at least one edge then
            \( H \cup K \) is \( (2,3,1) \)-tight.
    \end{enumerate}
     \label{lem_231tightunion}
\end{lemma}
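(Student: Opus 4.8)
The plan is to imitate the proof of Lemma~\ref{lem_232tightunion}, relying on the inclusion--exclusion identity (\ref{eqn_incexcl}), on Lemma~\ref{lem_svk}, and on three elementary facts: a subgraph of a balanced \( \mathbb A \)-graph is balanced; a graph \( D \) with \( f(D)=1 \) has an edge (since \( 2|V(D)|-|E(D)|=1 \) forces \( |E(D)|\geq 1 \)); and every balanced subgraph of \( G \) containing an edge has \( f\geq 3 \) because \( G \) is \( (2,3,1) \)-sparse. Throughout I would use that an unbalanced \( (2,3,1) \)-tight graph \( H \) must satisfy \( f(H)=1 \) (it is neither balanced nor an isolated vertex), while a balanced \( (2,3,1) \)-tight graph with an edge must have \( f=3 \).

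For part~(1): since \( H \) and \( K \) are unbalanced and \( (2,3,1) \)-tight, \( f(H)=f(K)=1 \), so (\ref{eqn_incexcl}) gives \( f(H\cap K)+f(H\cup K)=2 \). As \( H\cap K\neq\emptyset \) and both \( H\cap K \) and \( H\cup K \) are subgraphs of the \( (2,3,1) \)-sparse graph \( G \), each of \( f(H\cap K), f(H\cup K) \) is at least \( 1 \); hence both equal \( 1 \). Now \( H\cup K \) contains the unbalanced \( H \), so it is unbalanced, and \( f(H\cup K)=1 \) makes it \( (2,3,1) \)-tight. For \( H\cap K \): the value \( f(H\cap K)=1 \) forces it to contain an edge, so if it were balanced it would have \( f\geq 3 \), a contradiction; thus \( H\cap K \) is unbalanced, and again \( f=1 \) yields \( (2,3,1) \)-tightness.

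For part~(2): assume without loss of generality that \( H \) is balanced. Since \( H\cap K \) contains an edge, so do \( H \) and \( K \); hence \( f(H)=3 \) and \( f(K)\in\{1,3\} \), in particular \( f(K)\leq 3 \). Also \( H\cap K \) is balanced (a subgraph of \( H \)) and has an edge, so \( f(H\cap K)\geq 3 \). If \( H\cap K \) is connected, Lemma~\ref{lem_svk} shows \( H\cup K \) is balanced, whence \( f(H\cup K)\geq 3 \); combined with \( f(H\cup K)=f(H)+f(K)-f(H\cap K)\leq 3+3-3=3 \), we get \( f(H\cup K)=3 \), so \( H\cup K \) is \( (2,3,1) \)-tight. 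If \( H\cap K \) is disconnected, then each of its components is a balanced subgraph of \( G \), hence has \( f\geq 3 \) if it carries an edge and \( f=2 \) if it is a single vertex; since some component carries an edge while at least one other contributes \( f\geq 2 \), we obtain \( f(H\cap K)\geq 5 \), so \( f(H\cup K)=f(H)+f(K)-f(H\cap K)\leq 3+3-5=1 \), and sparsity of \( G \) gives \( f(H\cup K)\geq 1 \); thus \( f(H\cup K)=1 \) and \( H\cup K \) is again \( (2,3,1) \)-tight.

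I do not expect a genuine obstacle here: the argument is, as the text indicates, a routine adaptation. The only points needing a little care are the connected/disconnected case split in part~(2), where Lemma~\ref{lem_svk} together with the additivity of \( f \) over components does all the work, and the consistent bookkeeping of the two distinct ways a graph can be \( (2,3,1) \)-tight (unbalanced with \( f=1 \), or balanced with \( f=3 \)).
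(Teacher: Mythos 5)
Part (1) is correct, and your overall strategy (inclusion--exclusion plus the two possible values of \(f\) for a \((2,3,1)\)-tight graph) is exactly the ``routine adaptation'' of Lemma~\ref{lem_232tightunion} that the paper has in mind. However, there is a genuine error in part (2), in the connected-intersection subcase. Having normalised so that \(H\) is balanced, you correctly note that \(f(K)\in\{1,3\}\), i.e.\ \(K\) may be unbalanced --- but you then invoke Lemma~\ref{lem_svk} to conclude that \(H\cup K\) is balanced. Lemma~\ref{lem_svk} requires \emph{both} \(H\) and \(K\) to be balanced, and when \(K\) is unbalanced its conclusion is simply false: \(H\cup K\supseteq K\) is unbalanced. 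In that situation your chain of inequalities would read \(3\le f(H\cup K)=f(H)+f(K)-f(H\cap K)\le 3+1-3=1\), and the ``\(f(H\cup K)=3\), balanced'' you assert is wrong (the truth is \(f(H\cup K)=1\), unbalanced). The lemma's conclusion survives, but not by the argument you give.

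The repair is short and is exactly the case split the paper performs in its proof of Lemma~\ref{lem_232tightunion}: treat ``\(H\) balanced, \(K\) unbalanced'' separately. There \(f(K)=1\) and \(f(H\cap K)\ge 3\) (balanced with an edge), so inclusion--exclusion gives \(f(H\cup K)\le 3+1-3=1\), while \((2,3,1)\)-sparsity gives \(f(H\cup K)\ge 1\); hence \(f(H\cup K)=1\) and \(H\cup K\) is tight, with no appeal to Lemma~\ref{lem_svk} or to connectivity of \(H\cap K\). Your connected/disconnected dichotomy, and the use of Lemma~\ref{lem_svk}, should be reserved for the case in which both \(H\) and \(K\) are balanced; in that case your computation (including the bound \(f(H\cap K)\ge 5\) for a disconnected balanced intersection) is correct.
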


\section{Inductive constructions for tight graphs}
\label{sec_inductive}

Throughout this section \( G \) is a \( (2,3,l) \)-tight
\( \mathbb A \)-graph.
Since a balanced \( \mathbb A \)-graph is equivalent to 
a plane graph with a puncture in the unbounded face 
we can restate 
Theorem \( 4 \) of \cite{FJW}, which we will need later, as follows. 

\begin{theorem}[Fekete, Jord\'an, Whiteley]
    Suppose that \( G \) is a balanced \( (2,3,l) \)-tight
    \( \mathbb A \)-graph with at least 4 vertices.
    Then for each vertex \( v \) of \( G \) there are distinct 
    edges \( e_1, e_2 \), both not incident with \( v \) and
    triangles \( T_i \) containing \( e_i \) such that 
    \( G_{e_i,T_i} \) is also \( (2,3,l) \)-tight for \( i =1,2 \).
    \label{thm_strongFJW}
\end{theorem}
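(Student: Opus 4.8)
The plan is to deduce this from the classical vertex-splitting characterisation of plane Laman graphs (Theorem~\ref{thm_FJW}) together with the incidence–exclusion counting from Equation~(\ref{eqn_incexcl}). Since a balanced $(2,3,l)$-tight $\mathbb A$-graph is (as noted in the excerpt) combinatorially the same as a plane Laman graph with a puncture placed in the unbounded face, the statement is really a statement about plane Laman graphs: for every vertex $v$ one can find two \emph{distinct} edges $e_1,e_2$, neither incident to $v$, each lying in a triangular face $T_i$, so that the topological contraction $G_{e_i,T_i}$ remains Laman. The reason to phrase it this way is that the $\mathbb A$-graph structure is irrelevant once balancedness is known: contracting an edge of a triangular face and deleting a second facial edge cannot create a loop or a balanced-violating subgraph when the whole graph is balanced, because every subgraph of a balanced graph is balanced and the ordinary $(2,3)$-count controls everything.

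First I would recall precisely what ``$G_{e,T}$ is $(2,3,l)$-tight'' means here: $G_{e,T}$ is obtained from $G$ by contracting $|e|$ to a point and deleting one other edge of the facial walk of $T$; one checks that $f(G_{e,T}) = f(G)$ (we lose one vertex and two edges, so $2|V|-|E|$ is unchanged), so tightness of $G_{e,T}$ is equivalent to $(2,3,l)$-sparsity of $G_{e,T}$, which in the balanced case is just $(2,3)$-sparsity of the underlying graph. The only way $(2,3)$-sparsity can fail after contracting $e=xy$ (with the third vertex of $T$ being $w$, and deleting say $xw$) is if there is a subgraph $H$ of $G$ with $x,y \in V(H)$, $xy \notin E(H)$, $xw \notin E(H)$, and $f(H) = 3$; then the contracted copy of $H$ would have $f = 2$. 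So the obstruction to contractibility of a given triangular edge $e=xy$ in face $T$ is exactly the existence of a proper ``blocker'' — a $(2,3)$-tight $H$ containing both $x$ and $y$ but not the edge $xy$ and not the deleted third edge.

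Next, the heart of the argument is Theorem~4 of \cite{FJW} itself, which is precisely the non-punctured (plane) version of the statement: in a plane Laman graph with at least four vertices, for each vertex $v$ there exist two distinct edges $e_1,e_2$, each not incident with $v$, each on a triangular face, with $G_{e_i,T_i}$ Laman. The plan is therefore essentially a translation: invoke Theorem~\ref{thm_FJW} (or the explicitly-cited Theorem~4 of \cite{FJW}) applied to the plane Laman graph underlying the balanced $\mathbb A$-graph $G$, obtain $e_1, e_2, T_1, T_2$, and then observe that the punctured-surface contractions $G_{e_i,T_i}$ are balanced (being subgraphs, in the appropriate sense, of contractions of a balanced graph — more carefully, contracting an edge of a face and deleting a facial edge of a balanced $\mathbb A$-graph yields an $\mathbb A$-graph whose underlying combinatorial embedding is the plane contraction, and the puncture stays in the unbounded face), hence $(2,3,l)$-tight iff Laman, which holds by Theorem~\ref{thm_FJW}. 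One has to confirm that the faces $T_i$ chosen by \cite{FJW} are genuinely \emph{triangular faces} of the $\mathbb A$-graph $G$ and not merely $3$-cycles — but since $G$ is balanced the plane embedding and the $\mathbb A$-embedding have the same faces, so a triangular face of the plane graph is a triangular face of $G$.

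The main obstacle, and the only place requiring genuine care, is the bookkeeping in the previous paragraph: verifying that a topological triangle-contraction of a \emph{balanced} $\mathbb A$-graph coincides, as a combinatorial embedded graph, with the corresponding plane triangle-contraction of the underlying plane Laman graph, with the puncture still in the unbounded face, so that ``$(2,3,l)$-tight'' and ``Laman'' really are interchangeable at every step. Once that identification is made the result is immediate from \cite{FJW}; I do not expect any combinatorial difficulty beyond it, since for balanced $\mathbb A$-graphs the gain-sparsity count degenerates to the ordinary $(2,3)$-count and no new blockers can appear.
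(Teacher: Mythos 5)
Your proposal is correct and matches the paper's treatment exactly: the paper does not prove this result but simply restates Theorem~4 of \cite{FJW}, using the observation that a balanced $(2,3,l)$-tight $\mathbb A$-graph is equivalent to a plane Laman graph with a puncture in the unbounded face. Your additional bookkeeping (checking that the gain-sparsity count degenerates to the ordinary $(2,3)$-count on balanced graphs, that triangular faces and contractions correspond, and that the puncture stays put) is the correct justification for why the restatement is legitimate.
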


\subsection{Euler counts}

Let \( \mathbb S =\{ x \in \mathbb R^3: \| x\| =1\}\) be 
the standard \( 2 \)-sphere 
and suppose 
that \( G \) is a connected finite \( \mathbb S \)-graph
with at least one edge. In 
particular, all faces of \( G \) are cellular with positive degree.
Let \( f_i \)
be the number of faces of degree \( i \). Since we allow loop edges 
and parallel edges, it is possible that \( f_1 \) or \( f_2 \) 
are non-zero. Using Euler's polyhedral formula, 
\( \sum_{i \geq 1}f_i = 2+|E|-|V| \), together with 
\( \sum_{i\geq 1} if_i = 2|E| \) and \( f(G) = 2|V|-|E| \),
we have 
\begin{equation}\label{eqn_euler} 3f_1 +2f_2 +f_3 = 
8-2f(G) + \sum_{i\geq 5} (i-4)f_i\end{equation}
for a connected \( \mathbb S \)-graph with at least one edge.
From this we can deduce the following for \( \mathbb A \)-graphs. (See Figure~\ref{fig_gain_sparsity_ex} for some examples.)

\begin{lemma}
    Suppose that \( l \in \{1,2\} \)
    and that \( G \) is a \( (2,3,l) \)-tight 
    \( \mathbb A \)-graph with at least three vertices. 
    \begin{enumerate}[(1)]
        \item If \( G \) is balanced, it has at least 
            one triangular face.
        \item If \( G \) is unbalanced and has no triangular face
            then every cellular face has degree \( 4 \).
    \end{enumerate}
    \label{lem_23leulercount}
\end{lemma}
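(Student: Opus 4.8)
The plan is to exploit the Euler-count identity (\ref{eqn_euler}) after transferring the $\mathbb{A}$-graph $G$ to a sphere graph. Since $\Sigma \cong \mathbb{A}$ has two ends, I would first compactify: if $G$ is unbalanced, each of the two ends of $\mathbb{A}$ lies in a (possibly distinct) non-cellular face. Cap off each end with a new vertex joined appropriately, or more cleanly, observe that the two non-cellular faces (or one non-cellular face with two boundary walks) can each be ``filled in'' by adding a cone vertex in the missing end, turning $G$ into a connected $\mathbb{S}$-graph $\widehat{G}$. The precise bookkeeping of how $f(G)$ relates to $f(\widehat G)$, and how the faces of $G$ correspond to faces of $\widehat{G}$, is the step that needs the most care. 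In the balanced case, only one end needs to be handled and the resulting sphere graph comes from adding a single vertex in the face that contains both ends.

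Concretely, for the balanced case (part (1)): a balanced $(2,3,l)$-tight $\mathbb{A}$-graph is, by the remark in Section \ref{sec_nec} (labelled \ref{rem_gain1}), a Laman graph together with a puncture in one face. A Laman graph with at least three vertices has $f(G) = 3$. Applying (\ref{eqn_euler}) to the associated sphere graph $\widehat G$ (the plane graph compactified at infinity, which is a genuine $\mathbb{S}$-graph with $f(\widehat G) = 3$) gives $3f_1 + 2f_2 + f_3 = 8 - 6 + \sum_{i \geq 5}(i-4)f_i = 2 + \sum_{i \geq 5}(i-4)f_i \geq 2 > 0$, so at least one of $f_1, f_2, f_3$ is positive. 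Since $G$ is a $(2,3,l)$-tight $\mathbb{A}$-graph with at least three vertices, I would rule out faces of degree $1$ or $2$: a degree-$1$ face is bounded by a loop, contradicting $(2,3,l)$-sparsity (a loop $e$ has $f(G(\{e\})) = 2\cdot 1 - 1 = 1$, and if it is balanced we need $f \geq 3$; if unbalanced a single loop gives a balanced subgraph only if... — this needs a short argument, but essentially loops and the relevant digons violate sparsity once one tracks balancedness carefully). Hence $f_3 > 0$, giving a triangular face.

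For the unbalanced case (part (2)): here $f(G) \in \{1, 2\}$ depending on $l$, but after compactifying \emph{both} ends we add either one vertex (if a single non-cellular face contains both ends — but that would make $G$ balanced, contradiction) or two vertices (if there are two distinct non-cellular faces, each containing one end). So we add two vertices and some edges; I would add exactly enough edges at each new cone vertex to keep things controlled — say, join each new vertex to a single vertex on the corresponding boundary walk, creating no new small faces. Then $\widehat G$ is a connected $\mathbb{S}$-graph with $|V(\widehat G)| = |V(G)| + 2$, $|E(\widehat G)| = |E(G)| + 2$, hence $f(\widehat G) = f(G) + 2$. The faces of $\widehat G$ are: all cellular faces of $G$, unchanged, plus possibly subdivisions of the two former non-cellular faces. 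Assuming $G$ has no triangular face, and assuming we can arrange the capping so that the new faces also have degree $\geq 5$ (or handle them separately), (\ref{eqn_euler}) reads $3f_1 + 2f_2 + f_3 = 8 - 2f(\widehat G) + \sum_{i\geq 5}(i-4)f_i = 8 - 2(f(G)+2) + \sum_{i \geq 5}(i-4)f_i = 4 - 2f(G) + \sum_{i\geq 5}(i-4)f_i$. Since $f(G) \leq 2$ we get $3f_1 + 2f_2 + f_3 \geq \sum_{i \geq 5}(i-4)f_i \geq 0$, which by itself is too weak — equality would be consistent with there being no small faces at all and no large faces, i.e. all faces quadrilaterals. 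That is precisely the desired conclusion once we also know $f_1 = f_2 = f_3 = 0$: $f_1 = f_2 = 0$ again from sparsity (loops and digons), $f_3 = 0$ by the hypothesis that $G$ has no triangular face, and then the identity forces $\sum_{i\geq 5}(i-4)f_i = 2f(G) - 4 \leq 0$, so $f_i = 0$ for all $i \geq 5$, leaving only quadrilaterals.

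The main obstacle I expect is the compactification bookkeeping: making precise what the non-cellular faces of an unbalanced $\mathbb{A}$-graph look like (one face with two boundary walks versus two faces each with one boundary walk — and why the latter is forced by unbalancedness together with $\Sigma \cong \mathbb{A}$), choosing the capping so that no new faces of degree $\leq 3$ are introduced, and verifying $f(\widehat G) = f(G) + 2$ and the face correspondence. A secondary obstacle is the careful verification that $(2,3,l)$-tight $\mathbb{A}$-graphs with at least three vertices have no loops and no faces of degree $1$ or $2$ — this should follow directly from unpacking the sparsity definitions (a loop or a digon bounding a face of degree $\leq 2$ forces a violating subgraph, keeping in mind that every subgraph of a balanced graph is balanced), but the case analysis on balanced versus unbalanced subgraphs needs to be written out. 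Once these topological and sparsity preliminaries are in place, the conclusion is immediate from (\ref{eqn_euler}).
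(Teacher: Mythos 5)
Your overall strategy --- transfer $G$ to a sphere graph and apply the identity (\ref{eqn_euler}) --- is the same as the paper's, and your treatment of part (1) is fine. But part (2) contains a genuine error in the $l=1$ case: the claim that ``$f_1=f_2=0$ again from sparsity (loops and digons)'' is false. An \emph{unbalanced} loop is perfectly legal in a $(2,3,1)$-sparse $\mathbb A$-graph (the base graph $M$ is a single vertex with a loop, and Figure \ref{fig_degenquads}(d) shows a loop in a larger example); only a \emph{balanced} loop or digon violates the count. Indeed, the correct conclusion in the triangle-free $l=1$ case is that $f_1=2$: each end of $\mathbb A$ is enclosed by a face of degree one bounded by a loop. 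Your own arithmetic betrays the problem: with $f_1=f_2=f_3=0$ and $f(G)=1$ your identity forces $\sum_{i\geq5}(i-4)f_i=2f(G)-4=-2<0$, which is impossible, so the argument as written proves a false statement rather than the lemma. The repair is exactly the paper's: faces of degree $\leq 2$ can exist but only as the two ``marked'' faces containing the ends of $\mathbb A$ (any other such face has balanced boundary and violates sparsity; a degenerate degree-$2$ face would give two loops at one vertex, with $f=0$). Then $f_1+f_2\leq 2$ combined with $3f_1+2f_2\geq 6$ pins down $f_1=2$, $f_2=0$ and kills all faces of degree $\geq 5$.

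A secondary, avoidable source of trouble is the capping construction. Adding a cone vertex and an edge at each end changes $f$ by $0$ per end but turns a marked face of degree $k$ into a face of degree $k+2$; in particular capping a loop face produces a degree-$3$ face of $\widehat G$, so your assumption ``$f_3=0$ in $\widehat G$ because $G$ has no triangular face'' also fails in exactly the $l=1$ situation. None of this bookkeeping is needed: since $\mathbb A\subset\mathbb S$, the given embedding $|D|\to\mathbb A$ is already an embedding into the sphere with no added vertices or edges; every face becomes cellular, $f(G)$ is unchanged, and (\ref{eqn_euler}) applies directly with the two end-containing faces simply marked as the only ones permitted to have degree $\leq 2$. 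That is the paper's route, and it makes both cases a two-line computation.
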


\begin{proof}
    Conclusion (1) is a standard fact about plane Laman graphs.
    So assume that \( G \) is unbalanced.
    For the purposes of the proof (as opposed to the statement), 
    think of \( G \) as an \( \mathbb S \)-graph with 
    two marked faces corresponding to the faces 
    that contain the ends of \( \mathbb A \).

    Suppose that \( l=2 \). In this case loop edges are 
    forbidden, so \( f_1 =0 \). Also if \( F \) is a 
    face of degree two then \( F \) must be one of 
    the marked faces, otherwise the boundary \( \partial F \) (recall Section~\ref{sec:surgr}) is 
    balanced and \( f(\partial F)  = 2 \). Conclusion (2)
    now follows easily from Equation (\ref{eqn_euler}).

    Now suppose that \( l=1 \) and \( f_3 = 0 \). 
    Then, using Equation (\ref{eqn_euler}), we have 
    $3f_1+2f_2 \geq 6$ with equality if and only if \( f_i = 0 \)
    for \( i \geq 5 \). If \( F \) is a face of 
    \( G \) with \( |F| = 2 \) then, 
    since \( G \) has at least three vertices, 
    \( F \) must have non-degenerate boundary otherwise we have
    a vertex with two incident loop edges which is forbidden 
    by \( (2,3,1) \)-sparsity. Thus any face of degree at most two must 
    be one of the marked faces. It follows that \( f_1 =2 \), 
    \( f_2 = 0 \), and thus as remarked above, 
    \( f_i = 0 \) for \( i \geq 5 \).
\end{proof}

\subsection{Triangles}

Suppose that \( G \) is a \( (2,3,l) \)-sparse \( \mathbb A \)-graph and
that \( T \) is a triangular face of \( G \). 
If \( l = 2 \) then
the boundary of \( T \) must be 
non-degenerate (i.e. there is no repeated vertex in the
boundary walk). In the case \( l=1 \), either 
\( T \) is non-degenerate or \( \partial T \) is 
isomorphic to Figure \ref{fig_degenquads}(d).

\begin{lemma}
    Suppose that \( G \) is a \( (2,3,1) \)-sparse \( \mathbb A \)-graph and that 
    \( T \) is a degenerate triangular face. If \( e \) is a non-loop 
    edge of \( T \) then \( G_{e,T} \) is also \( (2,3,1) \)-sparse.
    \label{lem_231degentri}
\end{lemma}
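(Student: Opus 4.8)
The plan is to verify directly that $f(H) \geq 1$ for every subgraph $H$ of $G_{e,T}$, and that $f(K) \geq 3$ for every balanced subgraph $K$ of $G_{e,T}$ with at least one edge, by tracking what the topological contraction along $e$ does to subgraphs. First I would set up notation: since $T$ is a degenerate triangular face whose boundary is isomorphic to Figure~\ref{fig_degenquads}(d), write its boundary walk as $u$, $e$, $v$, $e'$, $v$, $e''$, $u$, where $e = uv$ is the given non-loop edge, $e''$ is the other non-loop edge joining $u$ and $v$, and $e'$ is a loop at $v$. Contracting $|e|$ identifies $u$ and $v$ to a single vertex $z$, turns $e''$ into a loop at $z$, and the definition of $G_{e,T}$ then deletes one of the other two edges of the facial walk of $T$; we should delete $e''$ (the new loop coming from a former non-loop edge), leaving the original loop $e'$ at $z$. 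So $G_{e,T}$ has one fewer vertex and two fewer edges than $G$, hence $f(G_{e,T}) = f(G)$, and it is the $\mathbb{A}$-graph obtained by collapsing $|e|$ and deleting the resulting multiple loop.

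Next I would check sparsity of an arbitrary subgraph $H$ of $G_{e,T}$. Let $H'$ be the subgraph of $G$ obtained by pulling $H$ back: if $z \notin V(H)$ then $H$ is already a subgraph of $G$ and there is nothing to do; if $z \in V(H)$, replace $z$ by the pair $u, v$, add the edge $e = uv$, and for each edge of $H$ incident with $z$ choose a lift to an edge incident with $u$ or $v$ (edges of $H$ that are loops at $z$ and that are not $e'$ must have come from non-loop edges of $G$ joining $u$ and $v$, so lift them accordingly; the loop $e'$ lifts to the loop $e'$ at $v$). Then $f(H') = 2(|V(H)|+1) - (|E(H)|+1) = f(H) + 1$. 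Since $G$ is $(2,3,1)$-sparse, $f(H') \geq 1$, giving $f(H) \geq 0$, which is not quite enough; so the key point is to rule out $f(H) = 0$, equivalently $f(H') = 1$. If $f(H') = 1$ then $H'$ is a $(2,3,1)$-tight subgraph of $G$ containing the edge $e$; I would argue that such an $H'$, together with the triangular face structure at $T$, forces a contradiction with the sparsity of $G$: in particular the edge $e''$ (the parallel partner of $e$) together with $H'$ would produce a subgraph violating the count, since $e$ and $e''$ span only two vertices and adding $e''$ to a $(2,3,1)$-tight graph containing both endpoints of $e''$ drops $f$ to $0$ only if those two vertices support an unbalanced $(2,3,1)$-tight subgraph on two vertices — but two parallel non-loop edges on two vertices already have $f = 2$, and adding the loop $e'$ at $v$ which also lies in $G$ gives $f = 1$ on two vertices, still fine, so the real obstruction is more delicate and will need the precise structure of Figure~\ref{fig_degenquads}(d). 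This is the step I expect to be the main obstacle: a careful case analysis showing that no $(2,3,1)$-tight subgraph of $G$ can contain $e$ when $e$ lies on the boundary of the degenerate triangle $T$ whose other edges are $e'$ (loop at $v$) and $e''$ (parallel to $e$), because the subgraph induced on $\{u,v\}$ would then contain $e, e'$ and could be extended by $e''$ to violate $(2,3,1)$-sparsity of $G$.

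Finally I would handle balanced subgraphs $K$ of $G_{e,T}$ with at least one edge. Since $K$ contains an edge and is balanced, it contains no loops, so $z \in V(K)$ can be lifted as above \emph{without} lifting any loop at $z$; the pullback $K'$ (adding $u,v,e$) satisfies $f(K') = f(K) + 1$ and $K'$ is again balanced (adding an edge inside a face cannot make a balanced graph unbalanced, and collapsing a single edge preserves the balanced/unbalanced dichotomy in the appropriate direction — this needs a one-line topological check using that $|e|$ is contractible). Then $(2,3,1)$-sparsity of $G$ applied to the balanced graph $K'$ gives $f(K') \geq 3$, hence $f(K) \geq 2$; and to upgrade to $f(K) \geq 3$ one argues that $f(K) = 2$ would force $f(K') = 3$ with $K'$ balanced and tight containing $e$, which is excluded by the same structural analysis as in the unbalanced case. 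Assembling these, every subgraph of $G_{e,T}$ satisfies the required counts, so $G_{e,T}$ is $(2,3,1)$-sparse.
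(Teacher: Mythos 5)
Your treatment of the general count $f(H)\geq 1$ is essentially workable but overcomplicated: once you have an unbalanced pullback $H'$ with $f(H')=1$ containing $e$ (hence $u,v\in V(H')$) and with $e''\notin E(H')$, the subgraph $H'\cup e''$ of $G$ has $f=0<1$ and already violates the count in $G$ --- no further case analysis is needed, and in particular you should not try to prove that ``no $(2,3,1)$-tight subgraph of $G$ contains $e$'', which is false ($\partial T$ itself is an unbalanced $(2,3,1)$-tight subgraph of $G$ containing $e$). The paper's own proof does all of this in one line by pulling a violating subgraph $H$ back to the subgraph of $G$ \emph{induced} on $(V(H)\setminus\{z\})\cup\{u,v\}$; this restores both $e$ and the deleted edge, so $f$ does not increase and unbalanced violations transfer immediately.

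The genuine gap is in the balanced count, exactly where you flag ``the main obstacle''. You correctly reduce $f(K)\geq 3$ for balanced $K$ with an edge to the assertion that $G$ has no balanced $(2,3,1)$-tight subgraph containing $e$ together with at least one further edge, and then defer this to an unspecified ``structural analysis''. That assertion is false, and the gap cannot be closed. Take $G$ with vertices $u,v,x$ and edges $e,e''$ (parallel edges joining $u$ and $v$ whose union is a non-contractible circle $C$), $e'$ (a non-contractible loop at $u$ lying on one side of $C$), and $a=xu$, $b=xv$ placed on the other side of $C$ so that $a\cup e\cup b$ bounds a disk. A direct check of all edge subsets shows $G$ is $(2,3,1)$-tight; the region between $e'$ and $C$ is a degenerate triangular face $T$ with boundary walk $u,e,v,e'',u,e',u$ as in Figure \ref{fig_degenquads}(d); and $\{u,v,x;\,e,a,b\}$ is a balanced $(2,3,1)$-tight subgraph containing $e$. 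Contracting $e$ and deleting either of $e',e''$ leaves $a,b$ as a pair of parallel edges between $x$ and $z$ whose union bounds a disk (the image of the disk bounded by $a\cup e\cup b$), i.e.\ a \emph{balanced} subgraph with two vertices, two edges and $f=2<3$. So $G_{e,T}$ is not $(2,3,1)$-sparse, and the lemma as stated appears to fail on this example; note that the paper's one-line argument breaks at the same point, since the induced subgraph of $G$ on $\{x,u,v\}$ is unbalanced with $f=1$ and violates nothing. Any correct treatment must therefore either add a hypothesis excluding balanced tight subgraphs of $G$ through $e$, or handle degenerate triangles by a different reduction.
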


\begin{proof}
    Let \( u,v \) be the vertices of \( \partial T \) and let \( z \)
    be the corresponding contracted vertex of \( G_{e,T} \).
    If \( H \) is a (balanced or unbalanced) 
    subgraph of \( G_{e,T} \) that violates the \( (2,3,1) \)-sparsity
    count then it is easy to see that \( V(H)-\{u,v\}\cup \{z\} \)
    spans a subgraph \( G \) that also violates the \( (2,3,1) \)-sparsity
    count.
\end{proof}

So we can assume from now on that all triangular faces are non-degenerate.

\begin{lemma}
    Suppose that \( G \) is a \( (2,3,l) \)-sparse \( \mathbb A \)-graph
    with a non-degenerate triangular face \( T \).
    Then \( G_{e,T} \) is not \( (2,3,l) \)-sparse if and only if 
    there is a \( (2,3,l) \)-tight 
    subgraph \( B \) of \( G \) such that \( E(B) \cap E(\partial T) 
    =\{e\}\) and \( |E(B)| \geq 2 \).
    \label{lem_23ltriangleblocker}
\end{lemma}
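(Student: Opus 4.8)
The plan is to prove both directions by tracking how a subgraph witnessing a sparsity violation in $G_{e,T}$ corresponds to a subgraph of $G$. Write $T$ with boundary walk $u$, $e$, $v$, $e'$, $w$, $e''$, $u$, and suppose $G_{e,T}$ is formed by contracting $|e|$ to a point $z$ and deleting, say, $e''$ (the choice among $\{e', e''\}$ does not matter for the argument, so fix one). Note that since $T$ is non-degenerate, $u, v, w$ are distinct, and contracting $e$ merges $u$ and $v$ into $z$. The key bookkeeping fact is: for any subgraph $H'$ of $G_{e,T}$, there is a corresponding subgraph $H$ of $G$ obtained by ``un-contracting'' — replace $z$ by both $u$ and $v$ and add back $e$ if $H'$ uses $z$ and at least one edge at $z$ that lifts ambiguously, i.e. we must be careful about exactly which edges of $G$ at $u$ or $v$ are present. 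I would set up the standard correspondence: $f$ changes in a controlled way, namely $f(G) = f(G_{e,T})$ since contraction of $e$ removes one vertex and one edge (net change $-2+1 = -1$ in $2|V|-|E|$... wait, $2|V|-|E|$ drops by $2-1 = 1$), and then deleting $e''$ adds $1$ back, so indeed $f(G) = f(G_{e,T})$, consistent with both being $(2,3,l)$-tight.

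**The ``if'' direction.** Suppose such a $(2,3,l)$-tight subgraph $B$ of $G$ exists with $E(B) \cap E(\partial T) = \{e\}$ and $|E(B)| \geq 2$. I would argue that the image $\overline B$ of $B$ in $G_{e,T}$ — which contains the contracted vertex $z$, all edges of $B$ except $e$ (since $e$ is contracted and is the unique edge of $\partial T$ in $B$, it is not among the deleted edges $e', e''$, so $e$ is simply absorbed into $z$) — is a subgraph on $|V(B)| - 1$ vertices with $|E(B)| - 1$ edges. Hence $f(\overline B) = 2(|V(B)|-1) - (|E(B)|-1) = f(B) - 1$. If $B$ is unbalanced $(2,3,l)$-tight with $f(B) = l$, then $f(\overline B) = l - 1 < l$, violating $(2,3,l)$-sparsity of $G_{e,T}$. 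If $B$ is balanced with $f(B) = 3$, then $f(\overline B) = 2$; here I need to check $\overline B$ is still balanced and has an edge (it has $|E(B)| - 1 \geq 1$ edges), so it again violates the sparsity count. In either case $G_{e,T}$ is not $(2,3,l)$-sparse. A small subtlety to address: $\overline B$ could in principle fail to be balanced when $B$ is balanced only if contracting $e$ somehow created a nonplanar obstruction — but contraction along an edge cannot destroy balancedness, so this is fine; I would cite the topological definition of balanced directly.

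**The ``only if'' direction — this is the main obstacle.** Suppose $G_{e,T}$ is not $(2,3,l)$-sparse. Then there is a subgraph $H'$ of $G_{e,T}$ violating the count: either $f(H') < l$, or $H'$ is balanced with an edge and $f(H') < 3$. Since $G$ itself is $(2,3,l)$-sparse, the violation must have been ``created'' by the contraction; I would take $H'$ inclusion-minimal among violators, which (by the standard submodularity argument using \eqref{eqn_incexcl}, as in Lemma \ref{lem_232tightunion}) forces $f(H')$ to be exactly $l-1$ (resp. $f(H') = 2$ in the balanced case) and forces $z \in V(H')$. Let $B$ be the subgraph of $G$ with vertex set $(V(H') \setminus \{z\}) \cup \{u, v\}$ and edge set $E(H') \cup \{e\}$ together with the appropriate lifts of the edges of $H'$ incident to $z$ (edges at $z$ in $G_{e,T}$ correspond bijectively, apart from the deleted edge $e''$, to edges at $u$ or at $v$ in $G$). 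Then $f(B) = f(H') + 1$: we add one vertex ($u$ and $v$ replace $z$, net $+1$) and one edge ($e$). So $f(B) = l$ (resp. $3$). The remaining work is to verify that $B$ is $(2,3,l)$-tight, that $E(B) \cap E(\partial T) = \{e\}$, and that $|E(B)| \geq 2$. The last two are nearly immediate: $E(B)$ contains $e$; it cannot contain $e'$ or $e''$, because $e''$ was deleted and if $e' \in E(H')$ then adding $e$ would create a path $u$–$w$–$v$ plus $e$, but more carefully one uses that $H'$ was a minimal violator and the triangle structure — I would spell out that $e'$ cannot be in $B$ since otherwise $B$ would contain the triangle $T$, and then the subgraph $B - e$ of $G$ would need examination; actually the cleanest route is to observe that since $|E(B)| = |E(H')| + 1$ and $f(B) = f(H') + 1 = l$ or $3$, and since $B$ must be $(2,3,l)$-sparse (it's a subgraph of $G$), $B$ is automatically $(2,3,l)$-tight. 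For $|E(B)| \geq 2$: if $|E(B)| = 1$ then $B$ is just the edge $e$, so $|E(H')| = 0$, contradicting that $H'$ violates the count (a single vertex or edgeless graph with $f \geq$ requisite value is not a violator when $l \leq 2$ — need $f(H') < l$ with $H'$ edgeless meaning $2|V(H')| < l \leq 2$, impossible). Finally, to get $E(B) \cap E(\partial T) = \{e\}$ exactly, I would argue by minimality of $H'$: if $e' \in E(B)$, then $e' \in E(H')$, and since the endpoints of $e'$ in $G$ are $v$ and $w$ while in $G_{e,T}$ they are $z$ and $w$, and $T$ being a face means $e, e', e''$ bound a disc — the subgraph $H'$ would then also contain a shorter path making $H' - e'$ still a violator or allowing a different minimal choice; this is the delicate case-check I'd expect to occupy the most space, handling separately whether $H'$ happens to route through $w$. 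I expect the bulk of the proof's length and care to go into (a) the precise lift of edges incident to the contracted vertex, ensuring no ambiguity about which of the possibly-parallel edges at $u$ vs $v$ we recover, and (b) ruling out $e', e'' \in E(B)$ cleanly.
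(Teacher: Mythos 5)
Your overall strategy --- lift a violating subgraph $A$ of $G_{e,T}$ back to a subgraph $B$ of $G$ by restoring $u$, $v$ and the edge $e$, and track the change $f(B)=f(A)+1$ --- is exactly the paper's, and your ``if'' direction is complete. The genuine gap is in the ``only if'' direction, at the one point where the lemma has real content: you never actually prove that $E(B)\cap E(\partial T)=\{e\}$, i.e.\ that the violator $A$ cannot contain the edge $e'$ of $\partial T$ that survives the contraction. You correctly flag this as the delicate step, but the routes you sketch do not close it: inclusion-minimality of $A$ does not rule out $e'\in E(A)$ (deleting $e'$ raises $f$ by one and may change balancedness, so the smaller graph need not remain a violator), and ``$B$ would contain the triangle $T$'' leads nowhere by itself, since $B$ is the object being constructed rather than something already known to be tight.

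The missing argument is short. Suppose $e'\in E(A)$ and let $e''$ be the deleted edge of $\partial T$. Lift $A$ to $B$ with $V(B)=V(A)\setminus\{z\}\cup\{u,v\}$ and $E(B)=E(A)\cup\{e\}$, so that $f(B)=f(A)+1$ and $B$ is balanced if and only if $A$ is. Since $e,e'\in E(B)$, all three vertices of $\partial T$ lie in $B$, so we may form $B\cup\{e''\}$, which satisfies $f(B\cup\{e''\})=f(A)$; moreover, adding the last remaining edge of the boundary walk of a cellular face cannot separate the ends of $\mathbb A$ (this is the observation recorded just before Proposition \ref{prop_23ltriangle}), so $B\cup\{e''\}$ is balanced if and only if $A$ is. Hence $B\cup\{e''\}$ violates the $(2,3,l)$-sparsity count in $G$, a contradiction. (The same reasoning disposes of the case $z\notin V(A)$: then $A$ is untouched by the contraction and already violates the count in $G$.) With $e'\notin E(A)$ established, your computation $f(B)=f(A)+1$ combined with the sparsity of $G$ forces $f(B)=l$, or $3$ in the balanced case, so $B$ is tight and the rest of your write-up goes through. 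Two minor points: your worry about ``ambiguous lifts'' of edges at $z$ is unfounded, since the contraction only reassigns endpoints and every edge of $G_{e,T}$ other than the contracted and deleted ones is literally an edge of $G$; and taking $A$ inclusion-minimal is unnecessary.
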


\begin{proof}
    The ``if'' direction is straightforward.
    Suppose that \( V(e) = \{u,v\} \).
    Let \( z \) be the vertex of \( G_{e,T} \) corresponding to 
    \( e \) and let \( e' \) be the edge of \( \partial T \) 
    that remains in \( G_{e,T} \). Now let \( A \) 
    be a subgraph of \( G_{e,T} \) 
    that violates the \( (2,3,l) \)-sparsity count. 
    Clearly \( z \in V(A) \) and \( e' \not\in E(A) \), for
    otherwise \( G \) would have a subgraph that 
    violates the \( (2,3,l) \)-sparsity count. 
    Let 
    \( B \) be the subgraph of \( G \) defined as follows.
    Let \( V(B) = V(A) - \{z\} \cup \{u,v\} \) and \( E(B) =
    E(A) \cup \{e\}\) where we identify any edge of 
    \( G_{e,T}\) with the corresponding edge of \( G \). 
    Now it is clear that \( B \) is balanced if and only 
    if \( A \) is balanced. Moreover \( f(A) = f(B)-1 \)
    and since \( A \) violates the \( (2,3,l) \)-sparsity
    count we have \( |E(A)| \geq 1 \). Therefore
    \( |E(B)| \geq 2 \) and \( B \) is \( (2,3,l) \)-tight.
\end{proof}

Note that if \( l=2 \) then we can assume that the subgraphs 
\( A \) and \( B \) from the proof above are induced subgraphs 
of \( G_{e,T} \) and \( G \) respectively. In particular \( B \)
does not contain the vertex of \( T \) that is not incident to 
\( e \) in this case. This is not necessarily 
true when \( l =1 \). See Figure \ref{fig_231examples} for an 
example.

The graph \( B \) whose existence is asserted by Lemma 
\ref{lem_23ltriangleblocker} is called a {\em blocker}
for the contraction \( G_{e,T} \). Observe that 
\( B \) has a face that properly contains the face \( T \) of \( G \).

\begin{lemma}
    Suppose that \( G \) is \( (2,3,l) \)-tight and that \( B \) is a 
    blocker for \( G_{e,T} \) that is maximal with respect to inclusion
    among all such blockers. If \( F \) is a face of \( B \) that does 
    not contain \( T \) then \( F \) is also a face of \( G \).
    \label{lem_maxblocker23l}
\end{lemma}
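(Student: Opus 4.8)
The plan is to argue by contradiction. Suppose $F$ is a face of $B$ that does not contain $T$ but is not a face of $G$. Since $B$ is a subgraph of $G$, every face of $G$ is contained in some face of $B$; the failure of $F$ to be a face of $G$ means that some vertices and edges of $G$ lie in the open region $F$. The idea is to enlarge the blocker $B$ by absorbing a suitable piece of $G$ lying inside $F$, contradicting the maximality of $B$.

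First I would set up notation: write $V(e) = \{u,v\}$ and let $w$ be the third vertex of $\partial T$, so $B$ contains $e$ and $|E(B)| \geq 2$, $B$ is $(2,3,l)$-tight, and $E(B) \cap E(\partial T) = \{e\}$. Since $B$ does not contain $T$, the triangle $T$ lies in some face $F'$ of $B$ with $F' \neq F$. Now consider the face $F$ of $B$ that is not a face of $G$. Let $H$ be the subgraph of $G$ consisting of $\partial F$ (the boundary of $F$ in $B$, which is a subgraph of $B$) together with all vertices and edges of $G$ lying in the closed region $\overline{F}$. Because $F$ is not a face of $G$, $H$ strictly contains $\partial F$. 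The key point is that $B \cup H$ has a face containing $T$ (namely the one coming from $F'$, since adding $H$ only modifies the region $F$, which is disjoint from $F'$), so $B \cup H$ is still a candidate blocker provided it is $(2,3,l)$-tight and still meets $E(\partial T)$ only in $e$; the latter holds because $E(\partial T) \setminus \{e\}$ consists of edges incident to $w$ in the region $F'$, which are untouched.

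The main work is to show $B \cup H$ is $(2,3,l)$-tight, which is where I would invoke the union lemmas. Since $B$ is $(2,3,l)$-tight and $G$ is $(2,3,l)$-tight, by Lemma~\ref{lem_232tightunion} (for $l=2$) or Lemma~\ref{lem_231tightunion} (for $l=1$) it suffices to control $B \cap H$: I would argue that $B \cap H \supseteq \partial F$, and that $\partial F$ is connected with at least one edge (a boundary walk of a face of a $(2,3,l)$-tight $\mathbb{A}$-graph with at least one edge is a closed walk, hence connected, and contains an edge because $B$ has an edge and $F$ is cellular or has a nondegenerate boundary component meeting an edge). Actually more care is needed: $H$ is defined so that $B \cap H = \partial F$ exactly. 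In the balanced versus unbalanced cases I would check the hypotheses: if $B \cap H$ has enough vertices/edges and the relevant balance condition holds, the lemmas give that $B \cup H$ is $(2,3,l)$-tight. Then $B \cup H$ is a blocker strictly larger than $B$, contradicting maximality. Hence no such $F$ exists.

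The hard part will be the bookkeeping around degenerate faces and the balanced/unbalanced case split: one must ensure $\partial F$ is connected and has an edge so that the union lemmas apply (Lemmas~\ref{lem_232tightunion}, \ref{lem_231tightunion} require $H \cap K$ to have at least two vertices, or at least one edge, respectively), and one must verify that passing from $B$ to $B \cup H$ does not accidentally create a face containing $T$ that was not there before or destroy the property $E(B\cup H) \cap E(\partial T) = \{e\}$ — this is why it matters that $T \subset F'$ with $F' \neq F$, so the modification is localized away from $T$. A subtlety is that $H$ could be balanced even if $B$ is unbalanced (e.g. if $F$ is a small cellular region), but Lemma~\ref{lem_232tightunion}(2) and Lemma~\ref{lem_231tightunion}(2) are designed precisely for the case where one of the two graphs is balanced, so the argument goes through; one just needs to record that when $H\cap K$ is two isolated vertices the conclusion still yields a tight union (or that case cannot occur here since $\partial F$ contains an edge).
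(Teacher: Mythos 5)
Your topological setup is the same as the paper's: you take $H$ to be $\partial F$ together with everything of $G$ lying inside $F$, note that $B\cap H=\partial F$, and aim to show that $B\cup H$ is again a blocker, contradicting maximality. The gap is in the step where you claim that Lemma~\ref{lem_232tightunion} or Lemma~\ref{lem_231tightunion} yields tightness of $B\cup H$. Those lemmas require \emph{both} operands to be $(2,3,l)$-tight, and your $H$ is merely a sparse subgraph of $G$: there is no reason for $f(H)$ to equal $l$ (or $3$). Tightness of $G$ as a whole does not transfer to $H$ --- for instance $H$ could be $\partial F$ with a single pendant vertex attached inside $F$, in which case $f(H)=f(\partial F)+1$ and the union lemmas say nothing. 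Conflating ``$G$ is tight'' with ``$H$ is tight'' is where the argument breaks.

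The paper closes exactly this gap with a counting step you never make: it introduces the complementary subgraph $K$ consisting of $\partial F$ together with everything of $G$ outside $F$, so that $G=H\cup K$ and $H\cap K=\partial F$. Equation~(\ref{eqn_incexcl}) then gives $l=f(G)=f(H)+f(K)-f(\partial F)$, and combined with $f(B\cup H)=f(B)+f(H)-f(\partial F)$ this yields $f(B\cup H)=f(B)+l-f(K)\le f(B)$, since $f(K)\ge l$ by sparsity; this is where the global tightness of $G$ actually enters. Even then one must treat the case where $B$ is balanced but $B\cup H$ is unbalanced, since there $f(B\cup H)=f(B)=3$ would not certify an unbalanced blocker. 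In that case $F$ is the face of $B$ containing both ends of $\mathbb A$, hence $K$ is balanced and $f(K)\ge 3$, improving the inequality to $f(B\cup H)\le l$ and producing an unbalanced blocker strictly containing $B$. Your proposal instead worries about $H$ being balanced while $B$ is unbalanced, which is not where the difficulty lies. As written the proof does not go through without the complementary-subgraph count.
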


\begin{proof}
    We will deal with the case in which  \( G \) is unbalanced. 
    The argument for the balanced case is similar and easier.
    Let \( H \), respectively \( K \), 
    be the subgraph of \( G \) consisting of \( \partial F \)
    together with all edges and vertices of \( G \) 
    that are inside, respectively outside, \( F \).
    Observe that \( H\cup K = G \) and \( H \cap K = \partial F \). 
    Now \(
    l = f(G) = f(H\cup K) = f(H) +f(K) -f(\partial F)\). 
    But \( B\cap H = \partial F \) also, so \( f(B\cup H)
     = f(B) + f(H) - f(\partial F)\). 
    Combining these we see that 
    \begin{equation}
        f(B\cup H) = f(B) +l - f(K) \leq f(B)
        \label{eqn_add}
    \end{equation}
    since  \( f(K) \geq l \).
    Suppose that \( B \) and \( B\cup H \) are both balanced or both unbalanced.
    Since \( B \) is \( (2,3,l) \)-tight 
    it must be that \( f(B \cup H) = f(B) \). Now since \( F \) is
    not the face of \( B \) that contains \( T \) it follows that 
    \( E(B \cup H \cap \partial T) = E(B \cap \partial T) = \{e\}  \) and
    so \( B\cup H \) is a blocker for \( G_{e,T} \). Since 
    \( B \) is maximal it follows that \( H \subset B \) as required.

    The only other possibility is that \( B \) is balanced and \( B \cup H \) is
    unbalanced. 
    In this case, \( F \) must be the face of \( B \) that contains
    both ends of \( \mathbb A \).
    Since \( F \) is also a face of \( K \), 
    it follows that \( K \) must also be balanced 
    and  \( f(K) \geq 3\). Now the first equation in (\ref{eqn_add}) yields 
    \( f(B \cup H) \leq l\).
    Therefore \( B \cup H \) is 
    an unbalanced blocker that strictly contains \( B \), contradicting the maximality 
    of \( B \).
\end{proof}

We note that the case of Lemma \ref{lem_maxblocker23l} 
in which \( G \) is balanced is equivalent to Lemma 9 of 
\cite{FJW}.

In the proof of the next proposition and several times in the remainder of the paper we use the following simple observation. 
Suppose that
\( H \) is a balanced subgraph of \( G \)
and \( F \) is a cellular face of \( G \) such that
\( H \) contains all but one of the edges of \( \partial F \).
Then \( H \cup \partial F \)
is also balanced since adding the remaining edge cannot separate the
ends of \( \mathbb A \).

\begin{proposition}
    Suppose that \( G \) is a \( (2,3,l) \)-tight \( \mathbb A \)-graph that 
    has at least one triangular face. Then \( G \) has a triangular 
    face \( T \) and an edge \( e \in \partial T \) such that 
    \( G_{e,T} \) is \( (2,3,l) \)-tight.
    \label{prop_23ltriangle}
\end{proposition}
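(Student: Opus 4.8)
The plan is to take a triangular face $T$ of $G$ together with an edge $e \in \partial T$, and argue that if $G_{e,T}$ fails to be $(2,3,l)$-tight then we can find a ``better'' triangular face to contract. The key idea is to use the blocker machinery: by Lemma \ref{lem_23ltriangleblocker}, $G_{e,T}$ is not $(2,3,l)$-sparse precisely when there is a $(2,3,l)$-tight blocker $B$ with $E(B)\cap E(\partial T) = \{e\}$ and $|E(B)|\geq 2$; and since $f(G_{e,T}) = f(G) = l$ (or the balanced analogue) by construction of a topological contraction, sparsity is the only obstruction to tightness. So it suffices to show that there is \emph{some} triangular face $T'$ and edge $e'\in\partial T'$ with no blocker. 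First I would fix $T$ and $e$, pass to a blocker $B$ that is maximal with respect to inclusion among all blockers for $G_{e,T}$, and invoke Lemma \ref{lem_maxblocker23l}: every face of $B$ other than the one containing $T$ is already a face of $G$.

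Next I would exploit the Euler-count structure of $B$ itself. Note $B$ is a $(2,3,l)$-tight $\mathbb A$-graph, so Lemma \ref{lem_23leulercount} applies to it (after checking $|V(B)|\geq 3$, which follows since $|E(B)|\geq 2$ and $B$ is $(2,3,l)$-tight, hence sparse). If $B$ is balanced, it has a triangular face $T'$; if $B$ is unbalanced, either it has a triangular face or every cellular face of $B$ has degree $4$. In the first case, $T'$ is a triangular face of $B$ distinct from the face of $B$ containing $T$ (because the face containing $T$ strictly contains $T$, so it has more boundary edges than $3$, using that $\partial T$ is non-degenerate with three edges). By Lemma \ref{lem_maxblocker23l}, $T'$ is then a triangular face of $G$. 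Moreover $T'$ ``lives inside'' $B$, so any blocker for a contraction of $T'$ would be a proper subgraph, and an induction on $|E(G)| - |E(B)|$ (or on some similar complexity measure, e.g. the number of edges of a maximal blocker) lets us recurse into $B$ in place of $G$, eventually reaching a triangular face with no blocker. The degree-$4$ case for unbalanced $B$ with no triangular face of its own needs separate handling: here the face of $B$ containing $T$ is a quadrilateral properly containing the triangle $T$ of $G$, so there is exactly one extra edge of $G$ inside this quadrilateral, and a short direct analysis of how that edge subdivides the quadrilateral should produce a triangular face of $G$ with a strictly smaller blocker (or none), again feeding the induction.

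The main obstacle I anticipate is making the induction airtight — in particular, verifying that when we recurse into the maximal blocker $B$, the triangular face $T'$ of $B$ really does have \emph{smaller} blocker complexity inside $B$ than $T$ had inside $G$, so that the process terminates. This requires care because a blocker for $G_{e',T'}$ need not a priori be contained in $B$; one must argue, using maximality of $B$ and the submodularity identity (\ref{eqn_incexcl}) together with Lemma \ref{lem_maxblocker23l}, that any such blocker can be intersected with $B$ (or that $B$ is itself a $(2,3,l)$-tight $\mathbb A$-graph in which Lemma \ref{lem_23ltriangleblocker} may be reapplied) to stay inside $B$. A secondary technical point is the bookkeeping around the balanced/unbalanced dichotomy and degenerate boundaries: whenever a contraction is performed one must re-check which of the three defining conditions for $(2,3,l)$-tightness ($f = l$, or balanced with $f=3$, or an isolated vertex) is inherited, and the simple observation recorded just before the proposition — that adding the last missing edge of a cellular face to a balanced subgraph keeps it balanced — will be used repeatedly to control whether the contracted graph remains balanced.
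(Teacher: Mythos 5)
Your proposal assembles the right ingredients (blockers, a maximal blocker $B$, Lemma \ref{lem_maxblocker23l} to transfer faces of $B$ to faces of $G$), but the core of the argument is missing: you never show that the triangular face you find inside $B$ admits a contraction with no blocker in $G$, and the induction you propose to close this gap is not well-founded as described. You flag the problem yourself --- a blocker for the new contraction need not lie in $B$ --- but you do not resolve it, and the measure $|E(G)|-|E(B)|$ does not obviously decrease from one blocker to the next. The paper closes the argument without any induction. It first shows that at most one edge of a ``bad'' triangle $S$ can have an unbalanced blocker (the union of two unbalanced blockers would be tight and unbalanced, hence induced, yet would contain $V(\partial S)$ without containing $E(\partial S)$), so some edge $f$ of $\partial S$ has only balanced blockers and the maximal blocker $B$ for $G_{f,S}$ is a plane Laman graph. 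It then applies the strong Fekete--Jord\'an--Whiteley theorem (Theorem \ref{thm_strongFJW}) \emph{inside} $B$ to produce a triangle $T$ of $B$ and an edge $e\in\partial T$ avoiding a prescribed endpoint $u$ of $f$ with $B_{e,T}$ tight. That choice is what makes the final step work: any blocker $C$ for $G_{e,T}$ must satisfy $B\cap C=\{e\}$ (else $B\cap C$ would block the sparse contraction $B_{e,T}$), hence $u\notin V(C)$, hence $E(C)\cap E(\partial S)=\emptyset$, and then $B\cup C$ is a blocker for $G_{f,S}$ strictly containing $B$, contradicting maximality. Your Euler-count route yields a triangle of $B$ but gives you no control over which edge of it you contract, so this maximality argument is unavailable to you.

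Two secondary problems. First, your treatment of the case where $B$ is unbalanced with only quadrilateral faces asserts that the face of $B$ containing $T$ is a quadrilateral with ``exactly one extra edge of $G$ inside''; this is false, since that face can be non-cellular and can contain an arbitrarily large portion of $G$. In the paper this case never arises because $B$ is arranged to be balanced from the outset. Second, Lemma \ref{lem_23leulercount} requires at least three vertices, and a $(2,3,2)$-tight blocker with $|E(B)|\geq 2$ can still have only two vertices (the graph $L$), so even the existence of a triangular face of $B$ needs more care than you give it.
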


\begin{proof}
    By Theorem \ref{thm_strongFJW} we can assume that 
    \( G \) is unbalanced.
    Suppose that \( S \) is a triangular face and that 
    \( G_{e,S} \) is not \( (2,3,l) \)-sparse for all \( e \in 
    \partial S\).
    Let \( B_1 \), respectively \( B_2 \), be blockers for 
    two of the possible contractions of \( S \). It is 
    clear that \( B_1 \cap B_2 \) is non-empty and that 
    \( V(\partial S) \subset V(B_1\cup B_2) \) but 
    \( E(\partial S) \not\subset E(B_1\cup B_2) \). If both 
    \( B_1 \) and \( B_2 \) are unbalanced then, by 
    Lemma \ref{lem_232tightunion} or Lemma \ref{lem_231tightunion}, 
    \( B_1 \cup B_2 \) is
    tight and unbalanced and so must be an induced subgraph
    of \( G \) which is a contradiction. Thus we can conclude that 
    there is some edge \( f \in \partial S \) 
    such that 
    any blocker for \( G_{f,S} \) is balanced. 
    Let \( B \) be a maximal blocker for
    \( G_{f,S} \). 
    So \( B \) is a balanced \( (2,3,l) \)-tight graph. 
    Let \( U \) be the face of \( B \) that contains the 
    ends of \( \mathbb A \).
    By Lemma \ref{lem_maxblocker23l}, and since \( G \) is unbalanced,
    we see that \( U \) contains \( S \) and that all the cellular 
    faces of \( B \) are also faces of \( G \). 
    Let \( u \) be a vertex incident to \( f \).
    Using 
    Theorem \ref{thm_strongFJW}, there is a triangular 
    face \( T \) of \( B \) and \( e \in \partial T \) 
    such that \( e \) is not incident to \( u \) and 
    \( B_{e,T} \) is \( (2,3,l) \)-tight. 
    Since \( T \) is a cellular face of \( B \), it is also a
    face of \( G \).
    
    We will show that \( G_{e,T} \) is \( (2,3,l) \)-sparse. 
    Suppose that \( C \) is a blocker for \( G_{e,T} \). 
    By Lemma \ref{lem_232tightunion} or Lemma \ref{lem_231tightunion}
    we see that both \( B \cup C  \) and 
    \( B \cap C \) are \( (2,3,l) \)-tight. Now \( B_{e,T} \)
    is \( (2,3,l) \)-sparse and therefore \( B \cap C \) 
    cannot be a blocker for this contraction.
    It follows that \( B \cap C = \{e\} \) and in particular 
    \( u \not\in V(C) \).

    Suppose, seeking a contradiction,  that \( E(C)\cap E(\partial S) \) 
    is nonempty. Clearly \( f \not\in E(C)\)
    since \( C \) does not contain \( u \). 
    Therefore \( B \cup C  \) contains two of the edges of \( \partial S\)
    and (as observed above) it follows that 
    \( B \cup C \cup \partial S  \) is balanced if and 
    only if \( B \cup C  \) is balanced.
    But \( B \cup C \) is \( (2,3,l) \)-tight, so  
    \( B \cup C \cup \partial S = B \cup C \). Thus \( C \) must 
    contain two of the edges of \( \partial S \) and hence 
    \( u \in V(C) \) contradicting our earlier deduction. 
    Thus \( E(C) \cap E(\partial S) = \emptyset \) and so 
    \( B \cup C \) is a blocker for \( G_{f,S} \). By the 
    maximality of \( B \), we have \( C \subset B \) and 
    hence \( C = B \cap C = \{e\} \) which contradicts our choice 
    of \( C \) as a blocker for \( G_{e,T} \).
\end{proof}

\subsection{Quadrilaterals}

As previously noted, Proposition \ref{prop_23ltriangle} is not sufficient
to give a useful inductive characterisation of \( (2,3,l) \)-tight 
\( \mathbb A \)-graphs since there are infinitely many pairwise non-isomorphic 
examples that have no triangular faces. 

For the rest of this section 
suppose that \( Q \) is a quadrilateral face of 
\( G \) with boundary walk 
\( v_1\), \(e_1\), \(v_2\), \(e_2\), \(v_3\), \(e_3\), \(v_4\), \(e_4\), \(v_1 \). 
Note that we make no assumptions regarding the non-degeneracy of 
\( Q \). Any such assumption will be explicitly stated as needed.

%

In contrast with the case of triangles, 
for quadrilateral contractions there are sufficiently
many differences between the cases \( l=1 \) and \( l=2 \) to 
warrant separate treatments.

\subsubsection{\( l=2 \)}

\begin{lemma}
    \label{lem_232degen}
    Suppose that \( G \) is a \( (2,3,2) \)-sparse 
    \( \mathbb A \)-graph and that \( Q \) is a degenerate quadrilateral 
    face of \( G \). Then \( Q \) is isomorphic to the 
    \( \mathbb A \)-graph shown in Figure \ref{fig_degenquads}(a).
\end{lemma}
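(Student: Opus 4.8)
The plan is to enumerate the possible degeneracies of the boundary walk $v_1,e_1,v_2,e_2,v_3,e_3,v_4,e_4,v_1$ of $Q$ and to eliminate each one except the configuration in Figure~\ref{fig_degenquads}(a). First I would record that a $(2,3,2)$-sparse $\mathbb A$-graph contains no loop edge, since a loop spans a subgraph $H$ with $f(H)=2-1=1<2$; hence in the boundary walk we have $v_i\neq v_{i+1}$ for every $i$ (indices modulo $4$) and each $e_i$ joins two distinct vertices. It follows that any degeneracy forces a pair of \emph{opposite} vertices to coincide: a repeated vertex must be of the form $v_i=v_{i+2}$ (as $v_i=v_{i+1}$ would make $e_i$ a loop), and a coincidence $e_i=e_j$ with $i\neq j$ gives $\{v_i,v_{i+1}\}=\{v_j,v_{j+1}\}$, which, since consecutive vertices differ, again forces $v_i=v_{i+2}$. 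The two opposite pairs cannot both coincide, for then $\partial Q$ has only two vertices, so $|E(\partial Q)|\ge 3$ (a graph on two vertices with at most two edges has no face of degree greater than $2$), whence $f(\partial Q)\le 2\cdot 2-3<2$, contradicting sparsity. So, reading the walk from a suitable vertex and in a suitable direction, I may assume $v_1=v_3$ and $v_2\neq v_4$.

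The crux is to show that $e_1,e_2,e_3,e_4$ are pairwise distinct. With $v_1=v_3\neq v_2,v_4$ and $v_2\neq v_4$, the only edge-coincidences still possible are $e_1=e_2$ and $e_3=e_4$, so suppose $e_1=e_2$. Then the corner of $Q$ at the intermediate occurrence $v_2$ of this subwalk is $(e_1,e_1)$, which, as $e_1$ is not a loop, forces $\deg(v_2)=1$: $v_2$ is a pendant vertex hanging into $Q$ along $e_1$. If moreover $e_3=e_4$, then the two corners of $Q$ at $v_1$ are $(e_3,e_1)$ and $(e_1,e_3)$, and since each of $e_1,e_3$ occurs exactly once in the rotation at $v_1$ this forces that rotation to be $(e_1,e_3)$; hence $\deg(v_1)=2$ and $Q$ would be bounded only by the tree on $\{v_1,v_2,v_4\}$ with edges $e_1,e_3$ --- impossible, since a tree embedded in $\mathbb A$ bounds no cellular face. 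If instead $e_3\neq e_4$, I delete the pendant vertex $v_2$ together with $e_1$; the face $Q$ becomes a face $Q'$ of $G-v_2$ whose boundary walk is the \emph{non-degenerate} bigon $v_1,e_3,v_4,e_4,v_1$, so the closure of $Q'$ is an embedded closed disc bounded by the $2$-cycle $C=e_3\cup e_4$. Thus $C$ is null-homotopic in $\mathbb A$, i.e. $C$ is a balanced subgraph of $G$ with $f(C)=2\cdot 2-2=2<3$, again contradicting $(2,3,2)$-sparsity. Therefore all four edges are distinct.

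This leaves exactly the configuration $v_1=v_3$, $v_2\neq v_4$ with $e_1,e_2,e_3,e_4$ distinct, so that $\partial Q$ consists of two parallel pairs $\{e_1,e_2\}$ and $\{e_3,e_4\}$ sharing the vertex $v_1$; each pair must be unbalanced (a balanced $2$-cycle would again have $f=2<3$), which identifies $\partial Q$, up to $\mathbb A$-graph isomorphism, with the graph of Figure~\ref{fig_degenquads}(a). I expect the main obstacle to be the topological bookkeeping in the second paragraph: making precise that the corner structure of the cellular face $Q$ constrains the rotation at $v_1$, and that deleting a pendant vertex turns $Q$ into a genuine (non-degenerate) bigon face, so that $C$ really bounds a disc.
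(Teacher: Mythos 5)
Your proof is correct and follows essentially the same route as the paper: both eliminate the possible degeneracies of the boundary walk by combining the absence of loops, the fact that a balanced $2$-cycle violates the $(2,3,2)$-count, and the observation that a bigon face must bound a disc, and then identify the surviving configuration with Figure~\ref{fig_degenquads}(a). The only notable differences are that you replace the paper's orientability argument for $e_1\neq e_3$ by reducing that case to the doubly-identified vertex situation (excluded by counting), and that you treat the sub-case $e_1=e_2$, $e_3=e_4$ explicitly, which the paper subsumes into ``it is clear that the walk bounds a cellular region''.
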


\begin{proof}
    First observe that \( v_i \neq v_{i+1} ,i=1,2,3\) and 
    \( v_1 \neq v_4 \) since loop edges are forbidden.
    Now 
    since \( \mathbb A \) is orientable it is clear that if \( e_1 = e_3 \)
    then \( v_2 = v_3 \) and so \( e_2 \) is a loop edge which is 
    forbidden in a \( (2,2) \)-sparse graph. Thus \( e_1 \neq e_3 \). 
    Similarly \( e_2 \neq e_4 \). 
    If \( e_1 = e_2 \) then 
    it is clear 
    that the walk \( v_1, e_ 3,v_4,e_4,v_1 \) bounds a cellular 
    region in \( \mathbb A \). So \( G(\{e_3,e_4\}) \) is
    balanced but \( f(G(\{e_3,e_4\})) \leq 2\) 
    contradicting the balanced sparsity count of \( G \).
    Thus \( e_1 \neq e_2 \) and similarly \( e_2 \neq
    e_3\), \( e_3 \neq e_4 \) and \( e_4 \neq e_1 \). 
    Thus \(| E(\partial Q) | = 4\). Now \( |V(\partial Q)| 
    \geq \frac12( |E(\partial Q)|+2) = 3\).
    The required conclusion follows easily.
\end{proof}

\begin{corollary}
    Suppose that \( G \) is a \( (2,3,2) \)-sparse 
    \( \mathbb A \)-graph and \( Q \) is a degenerate
    quadrilateral
    face of \( G \). Let \( v \) be the repeated vertex on the
    boundary walk of \( Q \). Then \( v \) is a cutvertex of 
    \( G \). 
    \label{cor_repeatcut}
\end{corollary}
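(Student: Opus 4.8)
The plan is to use the structure of the degenerate quadrilateral supplied by Lemma~\ref{lem_232degen} together with an elementary Jordan curve argument inside the surface.

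By Lemma~\ref{lem_232degen}, \( \partial Q \) is isomorphic to Figure~\ref{fig_degenquads}(a): the four edges \( e_1,e_2,e_3,e_4 \) are distinct, \( |V(\partial Q)|=3 \), and since consecutive vertices on the boundary walk are distinct (no loops), the repeated vertex \( v \) is one of the two ``diagonal'' vertices. Relabelling if necessary, I would assume \( v=v_1=v_3 \), so that \( v_2,v_4 \) are the remaining two vertices, both distinct from \( v \) and from each other. Since \( Q \) is a cellular face, there is a continuous map \( \phi\colon \overline D\to\mathbb A \) from a closed square \( \overline D \), mapping the interior of \( \overline D \) homeomorphically onto \( Q \), the four sides of the square onto \( e_1,\dots,e_4 \), and the four corners onto \( v,v_2,v,v_4 \) respectively. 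Let \( \gamma \) be the diagonal of the square joining the two corners that map to \( v \). I would first check that \( J:=\phi(\gamma) \) is a simple closed curve in \( \mathbb A \): \( \phi \) is injective on the interior of \( \overline D \), the relative interior \( \gamma^\circ \) of \( \gamma \) lies in that interior, and \( v \) cannot lie on \( \phi(\gamma^\circ)\subseteq Q \) because \( Q \) is disjoint from \( |G| \). For the same reason \( J\cap|G|=\{v\} \).

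Regarding \( \mathbb A \) as an open subset of \( \mathbb S \), the Jordan curve theorem gives that \( \mathbb S\setminus J \) has exactly two components \( R_1,R_2 \). The diagonal \( \gamma \) splits \( \overline D \) into two triangles; the images of their open parts, \( Q_1 \) and \( Q_2 \), are connected subsets of \( Q \) disjoint from \( J \), so each lies in a single \( R_i \); a local argument near any point of \( J\setminus\{v\} \) (every neighbourhood of which meets both \( R_1 \) and \( R_2 \), while small enough such neighbourhoods lie in \( Q=Q_1\cup Q_2\cup(J\setminus\{v\}) \)) forces \( Q_1 \) and \( Q_2 \) into different components, say \( Q_1\subseteq R_1 \) and \( Q_2\subseteq R_2 \). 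Since \( e_1,e_2 \) are the two non-diagonal sides of the triangle whose open part maps to \( Q_1 \), the relative interiors of \( e_1,e_2 \) lie in \( \overline{Q_1}\subseteq R_1\cup J \), hence in \( R_1 \) (they avoid \( J \)); taking limits and using \( v_2\neq v \) gives \( v_2\in R_1 \), and symmetrically \( v_4\in R_2 \).

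Finally, I would conclude that \( v \) separates \( v_2 \) and \( v_4 \) in \( |G| \). These two vertices lie in the same path component of \( |G| \) (joined by \( e_1 \) followed by \( e_3 \)), and any path between them in \( |G| \) meets \( J \) only possibly at \( v \) since \( J\cap|G|=\{v\} \); but such a path runs from \( R_1 \) to \( R_2 \), so by the Jordan curve theorem it must meet \( J \), hence pass through \( v \). As \( v_2,v_4\neq v \), this is exactly the statement that \( v \) is a cutvertex of \( G \). I expect the only delicate point to be the claim that \( v_2 \) and \( v_4 \) end up on opposite sides of \( J \); this is where the precise behaviour of \( \phi \) on a collar of the boundary of \( \overline D \), together with the local structure of the two sides of the Jordan curve, has to be handled carefully.
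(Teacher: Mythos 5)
Your proof is correct, and it is exactly the argument the paper intends (the corollary is stated without a written proof): after Lemma~\ref{lem_232degen} forces the configuration of Figure~\ref{fig_degenquads}(a) with $v=v_1=v_3$, a diagonal of $Q$ joining the two occurrences of $v$ closes up to a loop through $v$ that, since $\mathbb A$ has genus zero, separates $v_2$ from $v_4$ — the same device the authors use explicitly in Lemma~\ref{lem_blockersep}. Your careful handling of the characteristic map and of which side of the Jordan curve $v_2$ and $v_4$ land on correctly fills in the details that the paper leaves implicit.
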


\begin{figure}[htp]
    \centering

\begin{tabular}{cccc}
\begin{tikzpicture}
    \draw[cylinder] (0,0) -- (3,0) (0,2) -- (3,2);
    \coordinate [vertex] (v1) at (1.5,2);
    \coordinate [vertex] (v2) at (2,1);
    \coordinate [vertex] (v3) at (1.5,0);
    \coordinate [vertex] (v4) at (1,1);
    \draw[edge] (v1) -- (v2) -- (v3) -- (v4) -- (v1);
\end{tikzpicture}
&
\begin{tikzpicture}
    \draw[cylinder] (0,0) -- (3,0) (0,2) -- (3,2);
    \coordinate [vertex] (v1) at (1,2);
    \coordinate [vertex] (v2) at (2,2);
    \coordinate [vertex] (v3) at (2,0);
    \coordinate [vertex] (v4) at (1,0);
    \draw[edge] (v1) -- (v2) -- (v3) -- (v4) -- (v1);
\end{tikzpicture}
&
\begin{tikzpicture}
    \draw[cylinder] (0,0) -- (3,0) (0,2) -- (3,2);
    \coordinate [vertex] (v1) at (1,2);
    \coordinate [vertex] (v2) at (2,1.7);
    \coordinate [vertex] (v3) at (2,0.3);
    \coordinate [vertex] (v4) at (1,0);
    \draw[edge] (v1) -- (v2) -- (v3) -- (v4) -- (v1);
\end{tikzpicture}
&
\begin{tikzpicture}
    \draw[cylinder] (0,0) -- (3,0) (0,2) -- (3,2);
    \coordinate [vertex] (v1) at (1,2);
    \coordinate [vertex] (v2) at (2,1);
    \coordinate [vertex] (v4) at (1,0);
    \draw[edge] (v1) -- (v2) -- (v4) -- (v1);
\end{tikzpicture}
\\
(a) & (b) & (c) & (d)
\end{tabular}

\caption{Up to isomorphism these are the possible embeddings of a degenerate cellular face of degree at most four in a \( (2,3,1) \)-sparse \( \mathbb A \)-graph. In a \( (2,3,2) \)-sparse \( \mathbb A \)-graph (a) is the only possibility.}
    \label{fig_degenquads}
\end{figure}
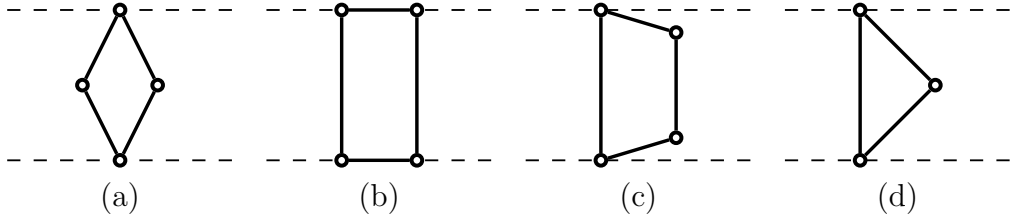

The situation with blockers for 
quadrilateral contractions is a little 
more complicated than for triangle contractions.

\begin{lemma}
    Suppose that \( G \) is a \( (2,3,2) \)-sparse \( \mathbb A \)-graph, 
    \( G \) is a quadrilateral face of \( Q \) with \( v_1 \neq v_3 \) 
    and \( G_{v_1,v_3,Q} \) is not \( (2,3,2) \)-sparse. Then 
    \( v_2 \neq v_4 \) and
    at least one of the following statements is true.
    \begin{enumerate}[(1)]
        \item There is some \( (2,3,2) \)-tight subgraph \( B \) of \( G \) such that \( \{v_1,v_3\} \subset V(B) \),  and 
            \( |\{v_2,v_4\} \cap V(B)| = 1  \).
        \item There is some balanced subgraph \( B \) of \( G \) such that
            \( B \) contains at least one edge, 
            \( f(B) = 4 \), \( \{v_1,v_3\} \subset V(B) \), 
            \( \{v_2,v_4\} \cap V(B) = \emptyset  \) and 
            so that \( B\cup \partial Q \) is a balanced subgraph of \( G \). 
        \item There is some subgraph \( B \) of \( G \) such that 
            \( f(B) = 3 \), \( \{v_1,v_3\} \subset V(B) \) and 
            \( \{v_2,v_4\} \cap V(B) = \emptyset  \) and so that \( B\cup \partial Q \) is an unbalanced subgraph of \( G \).
    \end{enumerate}
    \label{lem_blocker}
\end{lemma}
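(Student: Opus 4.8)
The plan is to mimic the blocker argument for triangular contractions (Lemma~\ref{lem_23ltriangleblocker}), accounting for the fact that a quadrilateral contraction both adds a topological diagonal \( \delta \) and deletes two edges of \( \partial Q \). First I would dispose of the claim that \( v_2\neq v_4 \): if \( v_2=v_4 \) then \( Q \) is a degenerate quadrilateral, so by Lemma~\ref{lem_232degen} it is isomorphic to the \( \mathbb A \)-graph of Figure~\ref{fig_degenquads}(a) and, by Corollary~\ref{cor_repeatcut}, the repeated vertex is a cutvertex; since \( \partial Q \) then consists of two pairs of parallel edges split by this cutvertex, one checks directly that \( G_{v_1,v_3,Q} \) is still \( (2,3,2) \)-sparse, contrary to hypothesis. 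Hence \( v_2\neq v_4 \) and, with \( v_1\neq v_3 \), the closed walk \( \partial Q \) has four distinct vertices and four distinct edges.

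For the main statement, write \( G'=G_{v_1,v_3,Q} \), let \( z \) be the vertex of \( G' \) obtained by contracting \( \delta \), and let \( e',e'' \) be the two edges of \( \partial Q \) surviving in \( G' \), so that \( e' \) joins \( z \) to \( v_2 \) and \( e'' \) joins \( z \) to \( v_4 \). Since \( G' \) is not \( (2,3,2) \)-sparse, there is a subgraph \( A\subseteq G' \) violating the \( (2,3,2) \)-sparsity count, i.e. either \( f(A)\le 1 \) or \( A \) is balanced, has an edge and \( f(A)\le 2 \); I would choose \( A \) maximal with respect to inclusion. As in Lemma~\ref{lem_23ltriangleblocker} one first shows \( z\in V(A) \): otherwise \( A \) is isomorphic to a subgraph of \( G \) of the same \( f \)-value and balance type, contradicting \( (2,3,2) \)-sparsity of \( G \). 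Moreover, maximality forces that if \( v_2\in V(A) \) then \( e'\in E(A) \) (else \( A+e' \), whose ends both lie in \( V(A) \), would be a strictly larger violating subgraph), and similarly for \( v_4 \) and \( e'' \).

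Next, lift \( A \) to \( B\subseteq G \) by splitting \( z \) back into \( v_1 \) and \( v_3 \) and reattaching each lifted edge to its original endpoint, adding no diagonal. Then \( \{v_1,v_3\}\subseteq V(B) \), \( v_i\in V(B)\iff v_i\in V(A) \) for \( i=2,4 \) (using that the four vertices of \( \partial Q \) are distinct), and \( f(B)=f(A)+2 \). A short topological check shows that the vertex split turning \( A \) into \( B+\delta \) gives a bijection of faces, so \( B+\delta \) has the same balance type as \( A \); in particular, if \( A \) is balanced then so is \( B \), being a subgraph of the balanced graph \( B+\delta \). Combining \( f(B)=f(A)+2 \) with the lower bounds from \( (2,3,2) \)-sparsity of \( G \) (namely \( f(B)\ge 2 \), and \( f(B)\ge 3 \) if \( B \) is balanced with an edge) restricts the pair \( (f(B),\text{balance of }B) \) to a short list. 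The remaining work is a case analysis on \( |\{v_2,v_4\}\cap V(B)| \) and the balance of \( B \), in which a subgraph already of the required form is reported directly, and otherwise one adjusts \( B \) by adjoining or deleting edges of \( \partial Q \) (each such move shifting \( f \) by \( \pm 1 \), and, since \( \partial Q \) bounds the cellular face \( Q \), never creating an end-separating cycle). I expect the three reported outcomes to be: a \( (2,3,2) \)-tight subgraph containing exactly one of \( v_2,v_4 \) — obtained, when \( B \) is tight but contains neither, by adjoining a consecutive pair \( e_i,e_{i+1} \) of \( \partial Q \) through \( v_2 \) — which is (1); a balanced subgraph with \( f=4 \) avoiding \( v_2,v_4 \), with \( B\cup\partial Q \) balanced (as \( Q \) is a disk lying in a single face of \( B \) meeting \( \partial B \) only in \( v_1,v_3 \)), which is (2); and a subgraph with \( f=3 \) avoiding \( v_2,v_4 \) with \( B\cup\partial Q \) unbalanced, which is (3).

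The hard part will be the bookkeeping: tracking the balance type of a subgraph under the vertex split \( z\mapsto\{v_1,v_3\} \), under removal of \( \delta \), and under the \( \partial Q \)-adjustments, and above all verifying that the case analysis is exhaustive — in particular that every value of \( (f(B),\text{balance of }B) \) allowed by the bounds above, together with each value of \( |\{v_2,v_4\}\cap V(B)| \), can be reduced to one of (1)--(3). The availability of the surviving edges \( e',e'' \) of \( \partial Q \), forced by the maximality of \( A \), is exactly what makes the reductions in the \( |\{v_2,v_4\}\cap V(B)|=1 \) situations go through, and is the reason the conclusion is stratified by \( \{v_2,v_4\}\cap V(B) \).
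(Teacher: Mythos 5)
Your main construction --- lift a violating subgraph \( A \) of the contraction to a subgraph \( B \) of \( G \) containing \( v_1,v_3 \), relate \( f(B) \) to \( f(A) \) and the balance of \( B\cup\partial Q \) to that of \( A \), then run a case analysis on \( |\{v_2,v_4\}\cap V(B)| \) and balance, repairing the non-conforming cases by adjoining \( v_2 \) together with \( e_1,e_2 \) (or the missing boundary edge at \( v_2 \)) --- is essentially the paper's proof. The paper takes \( A \) and \( B \) to be induced subgraphs, so its identity reads \( f(B)=f(A)+2-n \) with \( n=|\{v_2,v_4\}\cap V(B)| \) rather than your \( f(B)=f(A)+2 \) for the non-induced lift; your maximality observation (that \( v_2\in V(A) \) forces the surviving boundary edge at \( v_2 \) into \( A \)) makes the two bookkeepings interchangeable, and the resulting case list and repairs coincide with the paper's.

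The one genuine gap is the claim \( v_2\neq v_4 \). You assert that when \( v_2=v_4 \) ``one checks directly that \( G_{v_1,v_3,Q} \) is still \( (2,3,2) \)-sparse'', but this is not a local check: sparsity of the contracted graph is a statement about all of its subgraphs, and a violating subgraph could a priori involve parts of \( G \) far from \( Q \), so nothing about the local picture of two unbalanced digon pairs at the cutvertex settles it. The paper instead proves \( v_2\neq v_4 \) inside the same framework, using the violating subgraph \( A \) whose existence is the standing hypothesis: with \( v_2=v_4 \), either \( v_2\in V(B) \), in which case the induced lift satisfies \( f(B)=f(A) \) and \( B\supseteq\partial Q \) is unbalanced, forcing \( A \) to be unbalanced (since \( A \) is balanced iff \( B\cup\partial Q=B \) is) and hence \( f(B)=f(A)\leq 1<2 \), contradicting sparsity of \( G \); or \( v_2\notin V(B) \), in which case the cutvertex \( v_2 \) (Corollary \ref{cor_repeatcut}) separates \( v_1 \) from \( v_3 \), so \( B=B'\sqcup B'' \) with \( v_1\in B' \), \( v_3\in B'' \), and \( f(A)=f(B')+f(B'')-2\geq 2 \) (and \( \geq 3 \) when \( A \) is balanced, since the component containing an edge is then balanced with an edge), contradicting the choice of \( A \). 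You should replace the ``direct check'' by an argument of this kind; the rest of your outline then goes through as you describe.
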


\begin{proof}
    Let \( z \) be the vertex of \( G_{v_1,v_3,Q} \)  that corresponds
    to \( v_1,v_3 \). Let \( A \) be a subgraph of \( G_{v_1,v_3,Q} \)
    that violates the \( (2,3,2) \)-sparsity count. Clearly
    the subgraph of \( G_{v_1,v_3,Q} \) induced by \( V(A) \) 
    also violates the \( (2,3,2) \)-sparsity count so we 
    assume that \( A \) is in fact an induced subgraph of 
    \( G_{v_1,v_3,Q }\).
 
    Let \( B \) be the induced subgraph of \( G \) on the vertex set 
    \( V(A) - \{z\} \cup \{v_1,v_3\} \). 
    Note that \( B \) contains an edge since \( A \) does. 

    Now suppose, seeking a contradiction, that \( v_2 = v_4 \). 
    By Corollary \ref{cor_repeatcut},
    \( v_2 \) is a cutvertex for \( G \). 
    Also if \( v_2 \in V(B) \) then \( f(B) = f(A) \) 
    and both \( A \) and \( B \) are unbalanced 
    which contradicts the sparsity of \( G \), so \(  v_2 \not\in 
    V(B)\). In particular \( v_1 \) and \( v_3 \) are in 
    different components of \( B \). Say \( v_1 \in B' \) 
    and \( v_3 \in B''\) where \( B =B' \cup B'' \) and 
    \( B' \cap B'' = \emptyset \). Now  \(f(A) = f(B') +f(B'') -2  \)
    and \( A \) is balanced
    if and only if  \( B' \) and \( B'' \) are both 
    balanced. Since one of \( B',B'' \) contains an edge, 
    it follows easily 
    that \( f(A) \geq 3 \) if \( A \) is balanced and 
    \( f(A) \geq 2 \) if \( A \) is unbalanced, contradicting 
    the choice of \( A \). Thus we have shown that \( v_2 \neq v_4 \).

    Since both \( A \) and \( B \) are induced subgraphs, we have
    \begin{equation}
        f(B) = f(A) +2 - n
        \label{eqn_count1}
    \end{equation}
    where \( n = |\{v_2,v_4\} \cap V(B)| \).
    Also we observe that \( A \) is balanced if and only 
    if \( B \cup \partial Q \) is balanced. 

    Now suppose that \( A \) is balanced. Then \( B \) is 
    also balanced and so \( f(B) \geq 3 \).  Together 
    with
    (\ref{eqn_count1}) this yields \( 3 \leq f(B) \leq f(A) +2 - 
    n\). If \( f(B) = 4 \) then
    \( f(A) = 2 \) and \( n = 0 \) and 
    (2) is true. If \( f(B) = 3 \)  and \( f(A) = 2 \)
    then \( n = 1\) and (1) is true. 
    If \( f(B) = 3 \)  and \( f(A) = 1 \)
    then \( n = 0\). Now replace \( B \) by 
    \( B\cup {v_2}\cup\{e_1,e_2\} \), which is also balanced
    since \( A \) 
    is balanced, and (1) is true.

    If \( A \) is unbalanced then 
    \( B \cup \partial Q \) is also unbalanced and \( f(A) \leq 1 \).
    Using (\ref{eqn_count1}) we see that either \( f(A) =n=0 \) or 
    \( f(A) = 1 \) and \( n =0 \) or \( 1 \).
    If \( f(A) = 1 \) and \( n=1 \) then \( f(B) = 2 \) 
    and (1) is true.
    If \( f(A) = 0 \) and \( n=0 \) then \( f(B) =2 \). Now replace 
    \( B \) by \( B\cup {v_2}\cup\{e_1,e_2\} \) 
    and again (1) is true.
    Finally if \( f(A) =1 \) and \( n=0 \), then \( f(B) = 3 \) and 
    (3) is true.
\end{proof}
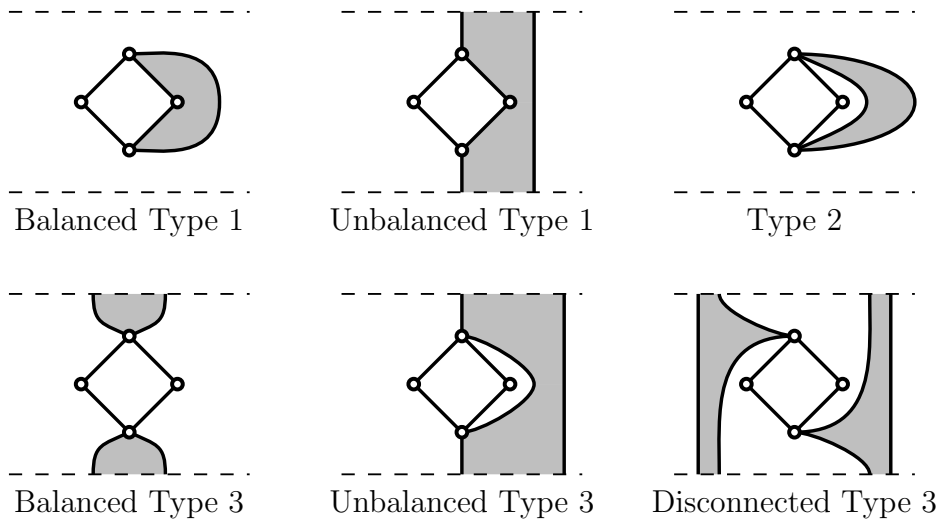
\begin{figure}[htp]
    \centering

    \begin{tabular}{cccc}

\begin{tikzpicture}[x=0.8cm,y=0.8cm]
    \clip (-0.5,-1) rectangle (4.5,3.5);
    \draw[cylinder] (0,0) -- (4,0) (0,3) -- (4,3);
    \coordinate (v4) at (1.2,1.5);
    \coordinate (v1) at (2,2.3);
    \coordinate (v2) at (2.8,1.5);
    \coordinate (v3) at (2,0.7);
    \filldraw[graphfill] (v1) .. controls +(0.5,0) and +(0,1) .. (3.5,1.5) -- (v2) -- (v1) --cycle;
    \filldraw[graphfill] (3.5,1.5) .. controls +(0,-1) and +(0.5,0) .. (v3) -- (v2) -- (3.5,1.5) --cycle;

    \draw[edge] (v1) -- (v2) -- (v3) -- (v4) -- (v1);
    \draw[edge] (v1) .. controls +(0.5,0) and +(0,1) .. (3.5,1.5) .. controls +(0,-1) and +(0.5,0) .. (v3);
    \node[vertex] at (v1){};
    \node[vertex] at (v2){};
    \node[vertex] at (v3){};
    \node[vertex] at (v4){};
    \node (label) at (2,-0.5){ Balanced Type 1};
\end{tikzpicture}

&

\begin{tikzpicture}[x=0.8cm,y=0.8cm]
    \clip (-0.5,-1) rectangle (4.5,3.5);
    \coordinate (v4) at (1.2,1.5);
    \coordinate (v1) at (2,2.3);
    \coordinate (v2) at (2.8,1.5);
    \coordinate (v3) at (2,0.7);
    \fill[graphfill] (v1) -- (2,3) -- (3.2,3) -- (3.2,1.5) -- (v2)-- (v1) -- cycle;
    \fill[graphfill] (v3) -- (2,0) -- (3.2,0) -- (3.2,1.5) -- (v2)-- (v3) -- cycle;
    \draw[edge] (v1) -- (2,3) (3.2,3) -- (3.2,0) (v3) -- (2,0);

    \draw[cylinder] (0,0) -- (4,0) (0,3) -- (4,3);
    \draw[edge] (v1) -- (v2) -- (v3) -- (v4) -- (v1);
    \node[vertex] at (v1){};
    \node[vertex] at (v2){};
    \node[vertex] at (v3){};
    \node[vertex] at (v4){};
    \node (label) at (2,-0.5){Unbalanced Type 1};
    
\end{tikzpicture}

&

\begin{tikzpicture}[x=0.8cm,y=0.8cm]
    \clip (-0.5,-1) rectangle (4.5,3.5);
    \coordinate (v4) at (1.2,1.5);
    \coordinate (v1) at (2,2.3);
    \coordinate (v2) at (2.8,1.5);
    \coordinate (v3) at (2,0.7);
    \filldraw[graphfill] (v1) .. controls +(1,0) and +(0,0.5) .. (4,1.5) -- (3.2,1.5) .. controls +(0,0.3) and +(0.7,-0.3) .. (v1) -- cycle;
    \filldraw[graphfill] (v3) .. controls +(1,0) and +(0,-0.5) .. (4,1.5) -- (3.2,1.5) .. controls +(0,-0.3) and +(0.7,0.3) .. (v3) -- cycle;

    \draw[edge] (v1) .. controls +(1,0) and +(0,0.5) .. (4,1.5) .. controls +(0,-0.5) and +(1,0) .. (v3);
    \draw[edge] (v1) .. controls +(0.7,-0.3) and +(0,0.3) .. (3.2,1.5) .. controls +(0,-0.3) and +(0.7,0.3) .. (v3);

    \draw[cylinder] (0,0) -- (4,0) (0,3) -- (4,3);
    \draw[edge] (v1) -- (v2) -- (v3) -- (v4) -- (v1);
    \node[vertex] at (v1){};
    \node[vertex] at (v2){};
    \node[vertex] at (v3){};
    \node[vertex] at (v4){};
    \node (label) at (2,-0.5){Type 2};
\end{tikzpicture}

\\

\begin{tikzpicture}[x=0.8cm,y=0.8cm]
    \clip (-0.5,-1) rectangle (4.5,3.5);
    \coordinate (v4) at (1.2,1.5);
    \coordinate (v1) at (2,2.3);
    \coordinate (v2) at (2.8,1.5);
    \coordinate (v3) at (2,0.7);
    \fill[graphfill] (v1) .. controls +(155:0.5) and +(-90:0.5) .. (1.4,3) -- (2.6,3) .. controls +(-90:0.5) and +(25:0.5) .. (v1);
    \fill[graphfill] (v3) .. controls +(205:0.5) and +(90:0.5) .. (1.4,0) -- (2.6,0) .. controls +(90:0.5) and +(-25:0.5) .. (v3);

    \draw[edge] (v1) .. controls +(155:0.5) and +(-90:0.5) .. (1.4,3) (2.6,3) .. controls +(-90:0.5) and +(25:0.5) .. (v1);
    \draw[edge] (v3) .. controls +(-155:0.5) and +(90:0.5) .. (1.4,0) (2.6,0) .. controls +(90:0.5) and +(-25:0.5) .. (v3);

    \draw[cylinder] (0,0) -- (4,0) (0,3) -- (4,3);
    \draw[edge] (v1) -- (v2) -- (v3) -- (v4) -- (v1);
    \node[vertex] at (v1){};
    \node[vertex] at (v2){};
    \node[vertex] at (v3){};
    \node[vertex] at (v4){};
    \node (label) at (2,-0.5){Balanced Type 3};
    
\end{tikzpicture}

&

\begin{tikzpicture}[x=0.8cm,y=0.8cm]
    \clip (-0.5,-1) rectangle (4.5,3.5);
    \coordinate (v4) at (1.2,1.5);
    \coordinate (v1) at (2,2.3);
    \coordinate (v2) at (2.8,1.5);
    \coordinate (v3) at (2,0.7);
    \fill[graphfill] (2,0) -- (v3) .. controls +(0:0.3) and +(-90:0.3) .. (3.2,1.5) -- (3.7,1.5) -- (3.7,0) -- cycle;
    \fill[graphfill] (2,3) -- (v1) .. controls +(0:0.3) and +(90:0.3) .. (3.2,1.5) -- (3.7,1.5) -- (3.7,3) -- cycle;
    \draw[graphfill] (3.2,1.5) -- (3.7,1.5);

    \draw[edge] (v1) -- (2,3) (2,0) -- (v3) .. controls +(0:0.3) and +(-90:0.3) .. (3.2,1.5) .. controls +(90:0.3) and +(0:0.3) .. (v1) (3.7,3) -- (3.7,0);  
    \draw[cylinder] (0,0) -- (4,0) (0,3) -- (4,3);
    \draw[edge] (v1) -- (v2) -- (v3) -- (v4) -- (v1);
    \node[vertex] at (v1){};
    \node[vertex] at (v2){};
    \node[vertex] at (v3){};
    \node[vertex] at (v4){};
    \node (label) at (2,-0.5){Unbalanced Type 3};
    
\end{tikzpicture}

&
\begin{tikzpicture}[x=0.8cm,y=0.8cm]
    \clip (-0.5,-1) rectangle (4.5,3.5);
    \coordinate (v4) at (1.2,1.5);
    \coordinate (v1) at (2,2.3);
    \coordinate (v2) at (2.8,1.5);
    \coordinate (v3) at (2,0.7);
    \fill[graphfill] (v1) .. controls +(180:1.4) and +(90:1) .. (0.75,0) -- (0.4,0) -- (0.4,3) -- (0.75,3) .. controls +(-90:0.3) and +(180:0.3) .. (v1);
    \fill[graphfill] (v3) .. controls +(0:1.4) and +(-90:1) .. (3.25,3) -- (3.6,3) -- (3.6,0) -- (3.25,0) .. controls +(90:0.3) and +(0:0.3) .. (v3);

    \draw[edge] (v1) .. controls +(180:1.4) and +(90:1) .. (0.75,0) (0.4,0) -- (0.4,3) (0.75,3) .. controls +(-90:0.3) and +(180:0.3) .. (v1);
    \draw[edge] (v3) .. controls +(0:1.4) and +(-90:1) .. (3.25,3) (3.6,3) -- (3.6,0) (3.25,0) .. controls +(90:0.3) and +(0:0.3) .. (v3);

    \draw[cylinder] (0,0) -- (4,0) (0,3) -- (4,3);
    \draw[edge] (v1) -- (v2) -- (v3) -- (v4) -- (v1);
    \node[vertex] at (v1){};
    \node[vertex] at (v2){};
    \node[vertex] at (v3){};
    \node[vertex] at (v4){};
    \node (label) at (2,-0.5){Disconnected Type 3};
    
\end{tikzpicture}
    \end{tabular}
    \caption{The topology of blockers for a quadrilateral contraction. The shaded region in each diagram stands for a subgraph of \( G \) that is a blocker for the contraction \( G_{v_1,v_3,Q} \) where \( v_1 \) and \( v_3 \) are the top and bottom vertices of the quadrilateral. 
    Note that a balanced type 3 blocker cannot arise in the context of \( (2,3,1) \)-sparsity. On the other hand, a disconnected type 3 blocker cannot arise in the context of \( (2,3,2) \)-sparsity.}
    \label{fig_quadblockertypes}
\end{figure}

We refer to the graph \( B \) whose existence is asserted by Lemma
\ref{lem_blocker} as type 1/2/3 blocker according to whichever case
of the lemma applies. Note that for a given blocker \( B \) 
exactly one of (1)-(3) is true. See Figure \ref{fig_quadblockertypes}
for some schematic diagrams indicating the topological embedding of
various types of blockers. Note that these diagrams and those in Figures 
\ref{fig_quad_prop_232} and \ref{fig_231case12} are meant only as 
aids to the topological intuition of the reader. 
We do not rely on the faithfulness 
of any of these diagrams for the proofs in this section.
We collect some observations about the blockers in the following lemmas.

\begin{lemma}
    In all cases of Lemma \ref{lem_blocker} the blocker is connected.
    \label{lem_quadblockersconnected}
\end{lemma}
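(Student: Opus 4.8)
The plan is to treat the three cases of Lemma~\ref{lem_blocker} essentially separately, although cases~(2) and~(3) will be dispatched by the same short counting argument, so in practice there are really only two things to do.

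In case~(1) there is nothing to prove: the blocker $B$ is $(2,3,2)$-tight, so Lemma~\ref{lem_tightconn} immediately yields that $B$ is connected.

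For cases~(2) and~(3) I would start from the elementary observation that, since $G$ is $(2,3,2)$-sparse, every non-empty subgraph $H$ of $G$ satisfies $f(H)\geq 2$; in particular each connected component $C$ of the blocker $B$ has $f(C)\geq 2$. Hence if $B$ had $r\geq 2$ connected components $C_1,\dots,C_r$, then $f(B)=\sum_{i=1}^{r} f(C_i)\geq 2r\geq 4$. In case~(3) this already contradicts $f(B)=3$, so $B$ must be connected. In case~(2) we have $f(B)=4$, so all the inequalities would have to be equalities: $r=2$ and $f(C_1)=f(C_2)=2$. But $B$ is balanced in case~(2), hence so is each $C_i$, and a balanced subgraph with at least one edge has $f\geq 3$ by $(2,3,2)$-sparsity; therefore each $C_i$ would have to be a single isolated vertex and $B$ would have no edges, contradicting the hypothesis of case~(2) that $B$ contains at least one edge. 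So $B$ is connected in case~(2) as well.

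I do not expect any genuine obstacle here. The only point that needs a moment's care is the boundary case $f(B)=4$ of~(2), where one has to invoke both the balancedness of $B$ and the ``$B$ contains an edge'' clause of Lemma~\ref{lem_blocker} in order to rule out the configuration of two isolated vertices; in the other cases the inequality $f(B)\geq 2r$ already settles matters.
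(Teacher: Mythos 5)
Your argument is correct and is exactly the elaboration of the paper's one-line proof, which simply cites the additivity formula (\ref{eqn_incexcl}) together with the $(2,3,2)$-sparsity of $G$: case (1) is connectedness of tight subgraphs, and cases (2) and (3) follow from $f(B)\geq 2r$ for $r$ components plus, in the boundary case $f(B)=4$, the balanced count forcing each component to be an isolated vertex. No gaps.
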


\begin{proof}
    This follows easily using Equation (\ref{eqn_incexcl})
    and the \( (2,3,2) \)-sparsity of \( G \).
\end{proof}


\begin{lemma}
    Suppose that \( B \) is a blocker of type 2 or 3 for \( G_{v_1,v_3,Q} \).
    Then \( V(B) \) separates \( v_2 \) and \( v_4 \) in \( G \).
    \label{lem_blockersep}
\end{lemma}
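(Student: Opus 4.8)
The plan is to build a Jordan curve out of a path in \( B \) together with the diagonal of \( Q \), and then to show that \( v_2 \) and \( v_4 \) fall on opposite sides of that curve.

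First I would record the facts about \( B \) that I need: by Lemma~\ref{lem_quadblockersconnected} it is connected, and by the definition of a type 2 or type 3 blocker we have \( \{v_1,v_3\}\subseteq V(B) \) and \( \{v_2,v_4\}\cap V(B)=\emptyset \), while \( v_1\neq v_3 \) by hypothesis. Hence I can choose a simple path \( \pi \) in \( B \) from \( v_1 \) to \( v_3 \). Let \( \delta \) be the diagonal of \( Q \) joining \( v_1 \) and \( v_3 \); since its relative interior lies in the open face \( Q \), which is disjoint from \( |G|\supseteq|\pi| \), the sets \( |\pi| \) and \( \delta \) meet only at \( v_1 \) and \( v_3 \), so \( \gamma:=|\pi|\cup\delta \) is a simple closed curve in \( \Sigma\cong\mathbb{A} \). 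Regarding \( \mathbb{A} \) as \( \mathbb{S} \) with two points deleted, the Jordan curve theorem then gives that \( \mathbb{A}\setminus\gamma \) has exactly two path components \( R_1,R_2 \), each of which has \( \gamma \) as its frontier.

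Next I would locate \( v_2 \) and \( v_4 \). The arc \( \delta \) divides the open disk \( Q \) into two open sub-disks \( Q',Q'' \), where (say) \( \overline{Q'} \) contains \( e_1,e_2,v_2 \) and \( \overline{Q''} \) contains \( e_3,e_4,v_4 \). Both \( Q' \) and \( Q'' \) are connected and disjoint from \( \gamma \) (being disjoint from \( |\pi| \)), so each lies in a single \( R_i \); and by examining a small disk neighbourhood of an interior point of \( \delta \) — which \( \gamma \) meets only along \( \delta \), and near which both \( R_1 \) and \( R_2 \) must have points, since \( \gamma \) is their common frontier — I would conclude that \( Q' \) and \( Q'' \) lie in different components, say \( Q'\subseteq R_1 \) and \( Q''\subseteq R_2 \). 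Since \( v_2\notin V(\pi) \) and \( v_2\notin\delta \), we have \( v_2\notin\gamma \); together with \( v_2\in\overline{Q'}\subseteq R_1\cup\gamma \) this forces \( v_2\in R_1 \), and likewise \( v_4\in R_2 \).

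Finally I would translate this into the required separation statement. Any path in \( \mathbb{A} \) from \( v_2 \) to \( v_4 \) must meet \( \gamma=|\pi|\cup\delta \). Let \( \rho \) be such a path lying in \( |G| \). If \( \rho \) meets \( \delta \), then — as the relative interior of \( \delta \) avoids \( |G| \) — it meets \( \{v_1,v_3\}\subseteq V(B) \). If \( \rho \) meets \( |\pi| \) at an interior point of an edge \( e\in E(\pi)\subseteq E(B) \), then, using that \( |G| \) is a \( 1 \)-complex and that the endpoints of \( e \) lie in \( V(B) \) and hence differ from \( v_2,v_4 \), the path \( \rho \) has to pass through an endpoint of \( e \); and otherwise \( \rho \) meets \( |\pi| \) at a vertex. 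In every case \( \rho \) passes through \( V(B) \). Since \( v_2,v_4\notin V(B) \) and \( v_2,v_4 \) lie in a common path component of \( |G| \) (being joined by \( e_2 \) and \( e_3 \) through \( v_3 \)), this is exactly the statement that \( V(B) \) separates \( v_2 \) and \( v_4 \) in \( G \). I expect the only delicate point to be the middle step, namely justifying rigorously that \( Q' \) and \( Q'' \) land in distinct complementary regions of \( \gamma \); this rests on the fact that a Jordan curve is the common boundary of both its complementary regions together with a purely local picture near an interior point of \( \delta \), while the graph-theoretic bookkeeping at the end is routine but worth spelling out.
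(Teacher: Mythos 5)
Your proposal is correct and follows essentially the same route as the paper: the paper also forms a cycle $C$ in $B\cup\delta$ through the diagonal $\delta$ (using the connectedness of $B$ from Lemma~\ref{lem_quadblockersconnected}) and invokes the genus-zero property of $\mathbb A$ to conclude that $|C|$ separates $v_2$ from $v_4$. You simply spell out in more detail the Jordan-curve argument that the paper leaves implicit, in particular why $v_2$ and $v_4$ land in distinct complementary regions and why a path in $|G|$ meeting $|C|$ must pass through a vertex of $B$.
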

\begin{proof}
    Consider the surface graph \( G\cup \delta \) where \( \delta \) is a new edge 
    embedded as a diagonal of \( Q \) joining \( v_1 \) and \( v_3 \). 
    Since \( B \) is connected by Lemma \ref{lem_quadblockersconnected},
    we can find a cycle \( C \) in \( B \cup \delta \) that contains the edge \( \delta \).
    Since \( \mathbb A \) has genus zero 
    it follows that \( |C| \) is a loop that separates
    \( v_2 \) from \( v_4 \) in the surface. In particular 
    any path in \( G \) from 
    \( v_2 \) to \( v_4 \) must pass through some 
    vertex of \( C \).
    But \( V(C) \subset V(B) \).
\end{proof}


\begin{proposition}
    \label{prop_quad1}
    Suppose that \( G \) is a \( (2,3,2) \)-sparse 
    \( \mathbb A \)-graph and that \( Q \) is a quadrilateral face of \( G \) 
    with boundary vertices \( v_1,v_2,v_3,v_4 \) such that 
    \( v_1 \neq v_3 \). If 
    \( G_{v_1,v_3,Q} \) is not \( (2,3,2) \)-sparse then 
    \( G_{v_2,v_4,Q} \) is \( (2,3,2) \)-sparse.
\end{proposition}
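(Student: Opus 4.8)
The plan is to argue by contradiction. Suppose that $G_{v_2,v_4,Q}$ is also not $(2,3,2)$-sparse. Since $G_{v_1,v_3,Q}$ is not $(2,3,2)$-sparse, Lemma~\ref{lem_blocker} gives $v_2\neq v_4$ together with a blocker $B$ of type~1, 2 or 3 for the contraction $G_{v_1,v_3,Q}$. As $v_1\neq v_3$ by hypothesis and $v_2\neq v_4$, Lemma~\ref{lem_blocker} applies equally to the diagonal $v_2v_4$ and yields a blocker $B'$ of type~1, 2 or 3 for $G_{v_2,v_4,Q}$. I would then show that the coexistence of $B$ and $B'$ inside the $(2,3,2)$-sparse $\mathbb A$-graph $G$ is impossible.

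First I would record the structural features of the two blockers that get used throughout. By Lemma~\ref{lem_quadblockersconnected} both $B$ and $B'$ are connected. A type~1 blocker $B$ contains $\{v_1,v_3\}$ and exactly one of $v_2,v_4$, and since it is an induced subgraph of $G$ it therefore contains exactly two consecutive edges of $\partial Q$ and no others; a type~2 or type~3 blocker $B$ contains $v_1,v_3$ but neither $v_2$ nor $v_4$, hence no edge of $\partial Q$ at all, and in this case $V(B)$ separates $v_2$ from $v_4$ in $G$ by Lemma~\ref{lem_blockersep}. The symmetric statements hold for $B'$, with the roles of $\{v_1,v_3\}$ and $\{v_2,v_4\}$ interchanged. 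Combining the separation statements (or, in the case where both blockers have type~1, using the explicit common boundary edge of $B$ and $B'$) shows that $B\cap B'$ is non-empty.

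The main computation then uses the inclusion--exclusion identity (\ref{eqn_incexcl}) applied to $B$ and $B'$. Using that $f(B),f(B')\in\{2,3,4\}$ according to type, and that $f(B\cap B')\geq 2$ (with $f(B\cap B')\geq 3$ whenever $B\cap B'$ is balanced and has an edge, and strictly larger when $B\cap B'$ is disconnected), one bounds $f(B\cup B')$ from above. One then passes to $B\cup B'\cup\partial Q$: this graph contains all four vertices of $Q$ but, by the edge count in the previous paragraph, is always strictly missing some of the edges $e_1,e_2,e_3,e_4$, so adjoining them strictly decreases $f$. Together with the upper bound on $f(B\cup B')$ this forces $B\cup B'\cup\partial Q$ to be either an unbalanced subgraph of $G$ with $f<2$ or a balanced subgraph of $G$ with $f<3$ --- either of which contradicts the $(2,3,2)$-sparsity of $G$. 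Deciding which of these two alternatives occurs amounts to deciding whether $B\cup B'\cup\partial Q$ is balanced, which I would control using Lemma~\ref{lem_svk}, Lemma~\ref{lem_232tightunion}, and the defining properties that $B\cup\partial Q$ is balanced when $B$ has type~2 and unbalanced when $B$ has type~3 (and likewise for $B'$).

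The hard part will be the configurations in which $B\cap B'$ contains no edge --- in particular when $V(B)\cap V(B')$ is a single vertex $w$ --- because there the crude bound $f(B\cap B')\geq 2$ is not sharp enough to close the argument directly. In that situation $w$ is necessarily a cut vertex of both $B$ and $B'$, splitting each into two balanced subgraphs of $f$-value exactly $3$, and the remaining work is to analyse which cycles through $w$ and through the edges $e_1,\dots,e_4$ can separate the ends of $\mathbb A$ in $B\cup B'\cup\partial Q$; resolving this may require first replacing $B$ (or $B'$) by a blocker that is maximal with respect to inclusion, in the spirit of Lemma~\ref{lem_maxblocker23l}. Carrying this out, alongside the otherwise routine enumeration of the remaining combinations of the three blocker types, is where the bulk of the effort goes.
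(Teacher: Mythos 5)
Your proposal follows essentially the same route as the paper's proof: assume both contractions fail, take a blocker for each diagonal, apply inclusion--exclusion in the form $f(B\cup B'\cup\partial Q)=f(B)+f(B')-f(B\cap B')-d$ with $d$ the number of missing boundary edges, and run a case analysis over the blocker types, with the genuinely hard configurations being exactly the ones you flag (where $B\cap B'$ is a single vertex, forcing a cutvertex of a Laman blocker or a splitting into balanced pieces). The only difference worth noting is that the paper does not need maximal blockers to close those cases --- it uses the connectedness of a blocker (Lemma~\ref{lem_quadblockersconnected}) to build a separating cycle from a path in one blocker concatenated with a diagonal of $Q$, and then exploits that this cycle is balanced to contradict the unbalancedness of the pieces of the other blocker.
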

\begin{proof}
    Note that we  proved that \( v_2 \neq v_4 \) in Lemma \ref{lem_blocker}, so \( G_{v_2,v_4,Q} \) is well defined.
    Now suppose that \( G_{v_2,v_4,Q} \) is not \( (2,3,2) \)-sparse. 
    Then by 
    Lemma \ref{lem_blocker} there are blockers \( B_1 \), respectively 
    \( B_2 \), for the contractions \(G_{v_1,v_3,Q} \), 
    respectively 
    \( G_{v_2,v_4,Q} \). 
    Observe that \( V(\partial Q) 
    \subset V(B_1 \cup B_2) \), 
    Therefore \begin{eqnarray}
        f(B_1 \cup B_2 \cup \partial Q) &=& f(B_1 \cup B_2) - d \notag \\
        &=& f(B_1) +f(B_2) -f(B_1 \cap B_2) -d
        \label{eqn_def} 
    \end{eqnarray} 
    where
    \( d \) is the number of edges of \( \partial Q\) 
    that are not in \( B_1 \cup B_2\).
    In fact \( d = 1 \), \( 2 \) or \( 4 \) 
    depending in an obvious way on the types of \( B_1  \)
    and \( B_2 \). 
    Now there are six cases to consider depending 
    on the types of the respective blockers. We will derive a 
    contradiction in each of these.
    In the following list ``Case \( (X, Y) \)'' means that \( B_1 \) is
    a type \(X\) blocker and \( B_2 \) is a type \( Y \) blocker.
    Note that, by Lemmas \ref{lem_quadblockersconnected} and 
    \ref{lem_blockersep}, \( B_1 \cap B_2 \)
    is nonempty in all cases.

    Case \( (1,1) \) :
    In this case \( d = 1 \). First 
    observe that 
    \( B_1 \cup B_2 \cup \partial Q \) is balanced if and only if 
    \( B_1 \cup B_2 \) is balanced. 
    Now \( B_1 \cap B_2 \)
    contains an edge of \( \partial Q \), so 
    by Lemma \ref{lem_232tightunion} \( B_1
    \cup B_2\) is \( (2,3,2) \)-tight. 
    Thus (\ref{eqn_def}) yields the required contradiction.

    Case \( (1,2) \): Without loss of generality suppose 
    \( v_2 \in B_1 \). In this case \( d = 2 \) so  
    using (\ref{eqn_def}) we have 
    \begin{equation}
        f(B_1 \cup B_2 \cup \partial Q) 
        = f(B_1) +2 - f(B_1 \cap B_2) \label{eqn_case12}
    \end{equation}
    Now since \( B_1 \) is tight we have \( f(B_1) 
    \leq 3\). It follows from (\ref{eqn_case12}) that \( f(B_1\cap
    B_2) \leq 3\) and so \( B_1 \cap B_2 \) must be connected.
    Now \( B_1 \cap \partial Q \) is connected and \( 
    (B_1 \cap B_2) \cap (B_1 \cap \partial Q) \) is non-empty: it
    contains  \( v_2 \).
    Thus \( B_1 \cap (B_2 \cup \partial Q) = (B_1 \cap B_2) \cup (B_1 
    \cap \partial Q)\) is also connected. Since \( B_2 \cup \partial Q \)
    is balanced it follows from Lemma \ref{lem_svk} that 
    \( B_1 \cup B_2 \cup \partial Q \) is balanced if and 
    only if \( B_1 \) is balanced. In particular \( f(B_1\cup B_2 \cup 
    \partial Q) \geq f(B_1)\) since \( B_1 \) is \( (2,3,2) \)-tight.
    It follows 
    from (\ref{eqn_case12}) that \( f(B_1 \cap B_2) 
    \leq 2\). 
    But since \( B_1 \cap B_2 \) is balanced, we conclude that
    \( B_1 \cap B_2 = \{v_2\}\).
    In particular, 
    by Lemma \ref{lem_blockersep}, \( v_2 \) is a
    cutvertex for \( B_1 \). 
    Now if \( B_1 \) is balanced it is a Laman
    graph and so it cannot have a cutvertex,
    so \( B_1 \) must be unbalanced.
    Since \( f(B_1) = 2 \), it follows from 
    Equation (\ref{eqn_incexcl}) 
    that \( B_1 = B' \cup B'' \) where \(f(B') =f(B'') =2 \) 
    and \( B' \cap B'' = \{v_2\} \). Also \( B' \) and 
    \( B'' \) each contain one edge of \( \partial Q \), 
    so they are both unbalanced.

    Now \( B_2 \) is connected by Lemma \ref{lem_quadblockersconnected},
    so by concatenating a path in \( B_2 \) joining \( v_2 \) 
    and \( v_4 \) with the diagonal of \( Q \) we form a 
    cycle, \( C \), that separates (in \( \mathbb A \)) any 
    point in \( B' - \{v_2\} \) from any point in \( B''-\{v_2\} \):
    see Figure \ref{fig_quad_prop_232} for an illustration.
    Now \( C \) is balanced since \( B_2 \cup \partial Q \) is balanced.
    It follows that at least one of \( B' \) or \( B'' \) 
    is balanced, contradicting our earlier deduction.
%
%
%

    Case \( (1,3) \): In this case \( d = 2 \) so (\ref{eqn_def})
    yields \( f(B_1 \cup B_2 \cup \partial Q)
    =  f(B_1) - f(B_1 \cap B_2) +1\). 
    Now \( B_1 \cap B_2 \) is non-empty, so \( f(B_1 \cap B_2)
    \geq 2\). Therefore \( f(B_1) \geq 3 \). 
    But since \( B_1 \) is \( (2,3,2) \)-tight it 
    follows that \( B_1 \) must be balanced, 
    and in fact \( f(B_1 \cap B_2) = 2 \). Therefore
    \( B_1 \cap B_2 \) must be a single vertex, which 
    by Lemma \ref{lem_blockersep} is a cutvertex for 
    \( B_1 \). However, we have shown that \( B_1 \)
    is balanced and so is a Laman graph, which cannot 
    have a cutvertex.

    Case \( (2,2) \): In this case, \( d = 4 \) and using 
    (\ref{eqn_def}) we have \( f(B_1 \cup B_2
    \cup \partial Q) = 4 - f(B_1 \cap B_2)\) so 
    \( f(B_1 \cap B_2) = 2 \). 
    Since \( B_1 \cap B_2 \) is balanced, it follows that
    \( B_1 \cap B_2 \) is a single vertex, say \( w \),
    and that \( f(B_1 \cup B_2 \cup \partial Q ) = 2 \).
    Now by 
    Lemma \ref{lem_blockersep}, \( w \)
    is a cutvertex for both \( B_1 \) and \( B_2 \).
    So \( B_1 = B' \cup B'' \) where \( B' \cap B'' = \{w\} \),
    \( v_1 \in B' \), \( v_3 \in B'' \) and 
    both \( B' \) and \( B'' \) are Laman graphs and thus connected.
    Since \( B_2 \cup \partial Q \) is balanced it has a face \( U \)
    that contains both ends of \( \mathbb A \).
    It is clear that one of \( B',B'' \), without loss of 
    generality say \( B' \), is disjoint 
    from \( U \). Therefore \( B_2 \cup \partial Q \cup B' \) is balanced. 
    Now \( B_1 \cup B_2 \cup \partial Q 
    = (B_1 \cup \partial Q) \cup (B_2 \cup \partial Q \cup B')\). 
    But \( B_1 \cup \partial Q \) is balanced, we have 
    just seen that \( B_2 \cup \partial Q \cup B' \) is balanced,
    and \( (B_1 \cup \partial Q) \cap (B_2 \cup \partial Q \cup B')
    = \partial Q \cup B'\) which is connected. By Lemma 
    \ref{lem_svk}
    \( B_1 \cup B_2 \cup \partial Q \) is balanced, contradicting 
    our earlier deduction that \( f(B_1 \cup B_2 \cup \partial Q)
    =2\).

    Cases \( (2,3) \) and \( (3,3) \): In these cases \( d = 4 \) and 
    (\ref{eqn_def}) yields \( f(B_1 \cup B_2 \cup \partial Q ) 
    \leq  3- f(B_1\cap B_2) \). Since \( B_1\cap B_2 \) is non-empty 
    by Lemma \ref{lem_blockersep}, we have \( f(B_1 \cap B_2) 
    \geq 2 \) yielding the desired contradiction.
\end{proof}
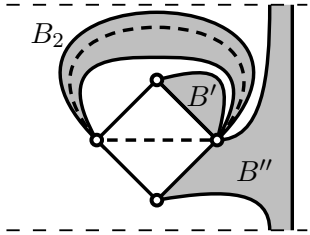
\begin{figure} [htp]
    \centering

\begin{tikzpicture}[x=1.0cm,y=1.0cm]
    \clip (-0.5,-0.5) rectangle (4.5,3.5);
    \coordinate (v4) at (1.2,1.2);
    \coordinate (v1) at (2,2.0);
    \coordinate (v2) at (2.8,1.2);
    \coordinate (v3) at (2,0.4);
    \fill[graphfill] (v1) .. controls +(15:0.9) and +(75:0.9) .. (v2) --cycle;
    \fill[graphfill](v2) .. controls +(45:1.5) and +(0:1) .. (2,2.9) .. controls +(180:1) and + (135:1.5) .. (v4) .. controls +(120:1.3) and +(180:0.5) .. (2,2.3) .. controls +(0:0.5) and +(60:1.3) .. (v2);
    \fill[graphfill] (v3) .. controls +(10:1) and +(90:0.5) .. (3.5,0) -- (3.8,0) -- (3.8,3) -- (3.5,3) .. controls +(-90:1) and +(0:0.6) .. (v2);

    \draw[edge] (v1) .. controls +(15:0.9) and +(75:0.9) .. (v2) --cycle;
    \draw[edge](v2) .. controls +(45:1.5) and +(0:1) .. (2,2.9) .. controls +(180:1) and + (135:1.5) .. (v4) .. controls +(120:1.3) and +(180:0.5) .. (2,2.3) .. controls +(0:0.5) and +(60:1.3) .. (v2);
    \draw[edge] (v3) .. controls +(10:1) and +(90:0.5) .. (3.5,0) (3.8,0) -- (3.8,3) (3.5,3) .. controls +(-90:1) and +(0:0.6) .. (v2);
    \draw[edge, dash pattern = on 4pt off 3pt] (v2) ..controls +(50:1.2) and +(0:1) .. (2,2.7) .. controls +(180:1) and +(130:1.2) .. (v4) -- (v2);

    \draw[cylinder] (0,0) -- (4,0) (0,3) -- (4,3);
    \draw[edge] (v1) -- (v2) -- (v3) -- (v4) -- (v1);
    \node[vertex] at (v1){};
    \node[vertex] at (v2){};
    \node[vertex] at (v3){};
    \node[vertex] at (v4){};
    \node at (2.6,1.8){\small $B'$};
    \node at (3.3,0.8){\small $B''$};
    \node at (0.55,2.6){\small $B_2$};
    
\end{tikzpicture}

\caption{Case \( (1,2) \) from the proof of Proposition \ref{prop_quad1}. The dotted loop is \( C \).} 
    \label{fig_quad_prop_232}
\end{figure}

It is worth noting that analogues of Proposition 
\ref{prop_quad1} fail for other similar classes of 
graphs. For example there are many examples of 
\( (2,2) \)-tight torus graphs with quadrilateral faces
for which both contractions yield graphs that 
are not \( (2,2) \)-sparse.
See \cite{CKPS} for details of this.

\subsubsection{\( l=1 \)}

In this 
subsection \( G \) will be a \( (2,3,1) \)-sparse \( \mathbb A \)-graph.
The general pattern of the arguments is similar to the 
\( (2,3,2) \)-sparse case. However, there are significant differences
in the details of the statements and proofs,
mostly due to the fact that if \( H \) is a balanced  \( (2,3,1) \)-tight 
subgraph of \( G \) then the induced subgraph \( G(H) \) need not 
be \( (2,3,1) \)-tight. This complicates some of the discussion since we 
cannot assume that a blocker is an induced subgraph and so it cannot 
be characterised by its set of vertices.
See Figure \ref{fig_231examples} for some examples.

\begin{figure}[htp]
    \centering

\begin{tabular}{ccc}
\begin{tikzpicture}
    \clip (-0.2,-0.2) rectangle (3.2,2.2);
    \draw[cylinder] (0,0) -- (3,0) (0,2) -- (3,2);
    \coordinate [vertex] (v1) at (1.5,2);
    \coordinate [vertex] (v2) at (2,1);
    \coordinate [vertex] (v3) at (1.5,0);
    \coordinate [vertex] (v4) at (1,1);
    \draw[edge] (v1) -- (v2) -- (v3) -- (v4) -- (v1) (v2) -- (v4); 
\end{tikzpicture}
&
\begin{tikzpicture}
    \clip (-0.2,-0.2) rectangle (3.2,2.2);
    \draw[cylinder] (0,0) -- (3,0) (0,2) -- (3,2);
    \coordinate [vertex] (v1) at (1.5,2);
    \coordinate [vertex] (v2) at (2,1.2);
    \coordinate [vertex] (v3) at (1.5,0);
    \coordinate [vertex] (v4) at (1,1.2);
    \coordinate [vertex] (v5) at (1.5,0.8);
    \draw[edge] (v2) -- (v3) -- (v4) -- (v1) -- (v2) -- (v5) -- (v4) (v5) -- (v3); 
\end{tikzpicture}
&
\begin{tikzpicture}
    \clip (-0.2,-0.2) rectangle (3.2,2.2);
    \draw[cylinder] (0,0) -- (3,0) (0,2) -- (3,2);
    \coordinate [vertex] (v1) at (1,1);
    \coordinate [vertex] (v2) at (1.5,1.5);
    \coordinate [vertex] (v3) at (2,1);
    \coordinate [vertex] (v4) at (1.5,0.5);
    \draw[edge] (v1) -- (v2) -- (v3) -- (v4) -- (v1); 
    \draw[edge] (v1) -- (1,2) (v1) -- (1,0);
    \draw[edge] (v3) -- (2,2) (v3) -- (2,0);
\end{tikzpicture}
\end{tabular}

\caption{All three of these \( \mathbb A \)-graphs 
    are \( (2,3,1) \)-sparse. The graph
    on the left has a blocker for a triangle contraction that is 
    not an induced subgraph. Likewise one of the contractions
    of the quadrilateral face of the middle graph has a blocker 
    that is not induced. The quadrilateral face in the right
    hand graph has a blocker for one of its contractions that is
not connected.}

    \label{fig_231examples}
    \label{fig_disconnblocker}
\end{figure}
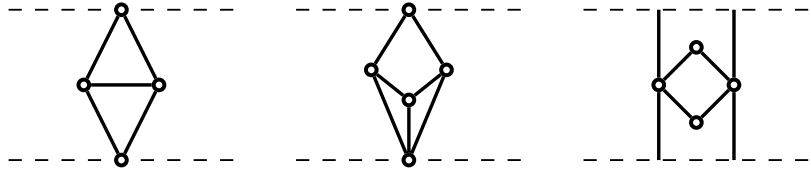

Also note that there are some additional degeneracies possible 
in the boundary of a quadrilateral face of a \( (2,3,1) \)-sparse
\( \mathbb A \)-graph. 

\begin{lemma}
    Suppose that \( Q \) is a degenerate quadrilateral face of 
    a \( (2,3,1) \)-sparse \( \mathbb A \)-graph \( G \). 
    Then \( \partial Q \) is isomorphic to one of the 
    three \( \mathbb A \)-graphs 
    shown in Figure \ref{fig_degenquads}.
    \label{lem_quaddegen231}
\end{lemma}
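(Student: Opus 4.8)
The plan is to prove this by a case analysis on the closed boundary walk $v_1, e_1, v_2, e_2, v_3, e_3, v_4, e_4, v_1$ of $Q$, in the same spirit as the proof of Lemma~\ref{lem_232degen}, but now allowing loops and bigons, which $(2,3,1)$-sparsity tolerates (unlike $(2,3,2)$-sparsity). The first step is to record the restrictions that $(2,3,1)$-sparsity places on $\partial Q$ and its subgraphs: no vertex carries two loops (a pair of loops at a vertex spans a subgraph with $f=0$), every loop of $G$ is non-contractible, and every $2$-cycle of $G$ on two distinct vertices is non-contractible (in each of the last two cases a contractible such subgraph would be balanced with an edge and $f\le 2<3$). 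I would also note the arithmetic fact that $f(\partial Q)\ge 1$ together with the inclusion--exclusion identity (\ref{eqn_incexcl}) can be used repeatedly to exclude over-degenerate configurations, since any offending subgraph of $\partial Q$ would inherit $f\le 0$.

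The second step is to pin down two topological facts about the cellular face $Q$ of the orientable surface $\mathbb A$: (i) if an edge appears twice in the boundary walk of $Q$ then it is traversed in opposite directions; and (ii) the complement in $\mathbb A$ of a subgraph with at most two distinct edges has no cellular component --- a single arc, a single loop, a path of two edges, a (necessarily non-contractible) bigon, or a loop together with one pendant edge each leave only punctured-disc-type faces. From (ii), $\partial Q$ must contain at least three distinct edges, so the length-$4$ walk $W$ repeats at most one edge, and such an edge appears exactly twice.

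The third step is the case split. Suppose first that $W$ repeats an edge $e=e_i=e_j$. If $e_i,e_j$ are consecutive on $W$, then (i) forces $e$ to be a pendant edge poking into $Q$, and the face incident to that pendant edge is non-cellular by (ii), a contradiction. Hence $e_i,e_j$ are opposite, say $e_1=e_3$; then (i) gives $v_3=v_2$ and $v_4=v_1$, so $e_2$ and $e_4$ are (non-contractible) loops at $v_2$ and $v_1$ respectively, we must have $v_1\ne v_2$ (otherwise three edges are loops at one vertex), and no further coincidence survives sparsity, so $\partial Q$ is the graph of Figure~\ref{fig_degenquads}(b); one checks that cutting the annulus bounded by the two non-contractible loops along $e_1$ yields the disc $Q$, so this case genuinely occurs. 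Suppose now that $W$ repeats no edge but repeats a vertex, so all four edges are distinct. If exactly one of the coincidences $v_i=v_{i+1}$ (indices mod $4$) holds, the corresponding edge is a loop and the remaining three edges form a triangle, giving Figure~\ref{fig_degenquads}(c); if exactly one ``diagonal'' coincidence $v_1=v_3$ or $v_2=v_4$ holds, $\partial Q$ is a pair of bigons sharing a vertex, giving Figure~\ref{fig_degenquads}(a). Finally, any two or more coincidences among the $v_i$ force either two loops at a common vertex or at least three parallel edges between two vertices, hence a subgraph of $\partial Q$ with $f\le 0$, contradicting $(2,3,1)$-sparsity. This exhausts the degenerate possibilities, so $\partial Q$ is isomorphic to one of the three $\mathbb A$-graphs in Figure~\ref{fig_degenquads}.

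I expect the main obstacle to be the bookkeeping in the third step: one must make sure that every combination of vertex and edge coincidences is either realised by one of the three pictured graphs or genuinely excluded, and the cleanest way to do this is to argue through the value of $f(\partial Q)$ and (\ref{eqn_incexcl}) rather than by trusting schematic pictures. A secondary technical point is the careful use of the orientability fact (i) together with the non-cellularity statement (ii), which is precisely what rules out the ``spur'' configurations and forces the boundary walk to use exactly four edges with at most one repetition.
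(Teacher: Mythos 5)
Your overall strategy is sound and in fact considerably more explicit than the paper's own proof, which simply splits on whether \( \partial Q \) contains a loop: if not, the argument of Lemma~\ref{lem_232degen} goes through verbatim (the balanced count \( f\geq 3 \) is the same for \( l=1 \) and \( l=2 \)) and yields Figure~\ref{fig_degenquads}(a); if so, the loop is non-contractible and the remaining possibilities are (b) and (c). Your decomposition by repeated edges versus repeated vertices covers the same ground, and your facts (i) (opposite traversals of a doubly-occurring edge on an orientable surface) and the ``at least three distinct edges'' consequence of (ii) are correct and correctly used to force case (b) when an edge recurs non-consecutively.

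There are, however, two places where the justification as written does not hold up. First, in the consecutive repeated-edge case you invoke (ii) for a contradiction, but \( \partial Q \) there consists of the spur \( e \) together with \( e_3,e_4 \), i.e.\ three distinct edges, which is outside the stated scope of (ii). The face you need to show non-cellular is a component of the complement of a non-contractible \( 2 \)-cycle plus a pendant edge, which your list does not cover; moreover the general implication ``\( Q \) cellular \( \Rightarrow \) the face of the subgraph \( \{e_3,e_4\} \) containing \( Q \) is cellular'' is false, since that larger face may absorb other pieces of \( G \). The clean fix is the one used in Lemma~\ref{lem_232degen}: retracting the spur shows that the closed walk \( v_1,e_3,v_4,e_4,v_1 \) bounds the cellular region \( Q \), so \( G(\{e_3,e_4\}) \) is balanced with \( f\leq 2 \), violating sparsity. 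Second, your final exclusion step claims that two or more vertex coincidences force ``two loops at a common vertex or at least three parallel edges between two vertices, hence a subgraph with \( f\leq 0 \)''. Three parallel edges on two vertices have \( f=2\cdot 2-3=1 \), which is \( (2,3,1) \)-sparse when the three arcs are pairwise non-homotopic, so that implication is arithmetically wrong; and the configuration \( v_1=v_2 \), \( v_3=v_4 \), \( v_1\neq v_3 \) produces a loop at each of two vertices plus a double edge, which is neither of your two named obstructions. Both offending configurations do satisfy \( f(\partial Q)=0 \) directly (four edges on at most two vertices), so the conclusion survives, but you should replace the stated mechanism with the direct count \( f(\partial Q)=2|V(\partial Q)|-4\leq 0 \) whenever \( |V(\partial Q)|\leq 2 \) and all four edges are distinct.
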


\begin{proof}
    If \( \partial Q \) has no loop edges then as in the proof of 
    Lemma \ref{lem_232degen} we can show that \( \partial
    Q\) is isomorphic to Figure \ref{fig_degenquads}(a). 
    On the other hand, if \( \partial Q \) has a loop edge then 
    such an edge must span an unbalanced subgraph and it is easy 
    to see then that \( \partial Q \) must be isomorphic to 
    (b) or (c) in Figure \ref{fig_degenquads}.
\end{proof}

As before we will assume in the discussion below that 
the boundary walk of a quadrilateral \( Q \) is 
\( v_1,e_1,v_2,e_2,v_3,e_3,v_4,e_4,v_1 \).
We note with respect to the examples in Figure \ref{fig_degenquads}
that 
(b) and (c) satisfy \( v_1 \neq v_3 \) and \( v_2 \neq v_4 \).
In particular both \( G_{v_1,v_3,Q} \) and \( G_{v_2,v_4,Q} \)
are defined in those cases. Note that even in these degenerate
cases we still delete one vertex and two edges in the construction 
of the contracted graph.
Let \( \delta \) be a Jordan arc joining \( v_1 \) and \( v_3 \)
whose interior lies in \( Q \). We can think of \( \delta \) as 
an edge that can be added to subgraphs of \( G \). 
\begin{lemma}
    Suppose that \( G \) is a \( (2,3,1) \)-sparse \( \mathbb A \)-graph,
    \( Q \) is a quadrilateral face of \( G \) with \( v_1 \neq v_3 \)
    and \( G_{v_1,v_3,Q} \) is not \( (2,3,1) \)-sparse. Then 
    at least one of the following statements is true.
    \begin{enumerate}[(1)]
        \item There is some \( (2,3,1) \)-tight subgraph \( B \) 
            of \( G \) such that \( E(B) \cap E(\partial Q) 
            =\{e_1,e_2\} \) or \( E(B) \cap E(\partial Q) 
            =\{e_3,e_4\}\).

        \item There is some subgraph \( B \) of \( G \) 
            such that \( B \) contains at least one edge, 
            \( f(B) \leq 4 \), \( \{v_1,v_3\} \subset V(B) \), 
            \( E(B)\cap E(\partial Q) = \emptyset  \) and 
            so that \( B\cup \delta \) is  balanced.
        \item There is some subgraph \( B \) of \( G \)
            such that \( f(B) \leq 2 \), \( \{v_1,v_3\} \subset V(B) \)
            and \( E(B) \cap E(\partial Q) = \emptyset \).
    \end{enumerate}
    \label{lem_blocker231}
\end{lemma}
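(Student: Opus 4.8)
The plan is to mirror the structure of the proof of Lemma \ref{lem_blocker} as closely as possible, adapting it to the fact that in the \( (2,3,1) \)-setting a balanced tight subgraph need not be induced. First I would let \( z \) be the vertex of \( G_{v_1,v_3,Q} \) arising from the identification of \( v_1 \) and \( v_3 \), and let \( e' \in \{e_1,e_2\} \) and \( e'' \in \{e_3,e_4\} \) be the two deleted edges. Let \( A \) be a subgraph of \( G_{v_1,v_3,Q} \) violating the \( (2,3,1) \)-sparsity count; since we cannot freely pass to induced subgraphs here, I would instead take \( A \) to be \emph{edge-minimal} among such violators, which is the standard substitute. Clearly \( z \in V(A) \) and neither \( e' \) nor \( e'' \) lies in \( E(A) \) (otherwise \( G \) itself would contain a violating subgraph). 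Now define \( B \) to be the subgraph of \( G \) obtained from \( A \) by replacing the vertex \( z \) with \( v_1 \) and \( v_3 \), keeping the same edge set (each edge of \( G_{v_1,v_3,Q} \) corresponds to an edge of \( G \)), and \emph{not} adding \( v_2 \) or \( v_4 \) or any of \( e_1,\dots,e_4 \). Then \( E(B)\cap E(\partial Q)=\emptyset \), \( B \) has at least one edge, and \( f(B) = f(A)+1 \).

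Next I would carry out the case analysis on whether \( A \) is balanced. If \( A \) is balanced, then since a balanced violator must satisfy \( f(A) \le 2 \), we get \( f(B) \le 3 \). Here \( B \) together with the diagonal \( \delta \) is balanced precisely when \( A \) is balanced (adding \( \delta \) to \( B \) in \( G\cup\delta \) corresponds to re-identifying \( v_1 \) and \( v_3 \), recovering \( A \)). If \( v_1 \) and \( v_3 \) lie in the same component of \( B \), then \( B \cup \delta \) contains a cycle through \( \delta \); I then need to check that \( f(B \cup \delta) \le 2 \), which puts us in case (3) after possibly passing to a connected subgraph. If instead \( f(B) = 3 \) exactly and \( v_1,v_3 \) are in the same component, we land in case (2) with \( B \cup \delta \) balanced and \( f(B) \le 4 \). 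The case \( f(A) = 1 \) needs the same kind of augmentation trick as in Lemma \ref{lem_blocker}: replace \( B \) by \( B \cup \{v_2\} \cup \{e_1,e_2\} \) (which is still balanced since \( A \) was), obtaining a \( (2,3,1) \)-tight subgraph containing \( e_1,e_2 \) but not \( e_3,e_4 \), giving case (1). If \( A \) is unbalanced, then \( f(A) \le 0 \), so \( f(B) \le 1 \); after the analogous augmentation one obtains a subgraph landing in case (1) or, when no augmentation is possible, in case (3).

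The subtle point — and the step I expect to be the main obstacle — is the bookkeeping around the fact that \( B \) need not be induced and that the edge set \( E(B) \cap E(\partial Q) \) can be nonempty in ways that do not occur in the \( (2,3,2) \)-case. In particular I must verify carefully that when exactly one of \( \{e_1, e_2\} \) or one of \( \{e_3, e_4\} \) lies in \( E(A) \) (equivalently, when \( v_2 \) or \( v_4 \) was already a vertex of \( A \)), the resulting \( B \) still falls into case (1): this requires tracking \( f \) through the identification and the deletion of the single extra edge, using that \( A \) is edge-minimal to ensure the tight subgraph \( B \) meets \( \partial Q \) in exactly \( \{e_1,e_2\} \) or exactly \( \{e_3,e_4\} \). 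I would also need to invoke Lemma \ref{lem_quaddegen231} to handle the degenerate quadrilaterals of Figure \ref{fig_degenquads}(b),(c), where a loop edge of \( \partial Q \) spans an unbalanced subgraph; in those cases one checks directly that the relevant contraction behaves as in the non-degenerate argument, or is already sparse. Modulo this casework, which is routine but delicate, the argument is a direct translation of Lemma \ref{lem_blocker}.
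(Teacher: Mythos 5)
There is a genuine gap, and it sits exactly where conclusion (1) of the lemma comes from. Early on you assert that neither of the two surviving contracted edges \(e',e''\) lies in \(E(A)\), ``otherwise \(G\) itself would contain a violating subgraph.'' That argument only rules out \emph{both} of them lying in \(E(A)\): if both are present, pulling back (adding \(v_1,v_3\) and all four boundary edges) changes \(f\) by \(2-2=0\) and reproduces a violator in \(G\). But if exactly one of them, say \(e'\), lies in \(E(A)\), the pullback that adds \(v_1,v_3\) and the pair \(\{e_1,e_2\}\) changes \(f\) by \(+1\), so it produces a \((2,3,1)\)-\emph{tight} subgraph of \(G\) meeting \(\partial Q\) in exactly \(\{e_1,e_2\}\) --- which is precisely case (1) and is not a contradiction. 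Your construction of \(B\) (``keeping the same edge set,'' adding none of \(e_1,\dots,e_4\)) does not handle this situation: either \(B\) inherits the single surviving boundary edge, breaking your claim \(E(B)\cap E(\partial Q)=\emptyset\), or you must delete it and the count changes. The paper's proof handles it by defining \(E(B)=E(A)\setminus\{e',e''\}\cup F\) where \(F\) restores the \emph{full} pair \(\{e_1,e_2\}\) (resp.\ \(\{e_3,e_4\}\)) whenever \(e'\) (resp.\ \(e''\)) is in \(E(A)\), and then \(f(B)=f(A)+2-n/2\) with \(n=|E(B)\cap E(\partial Q)|\); the case \(n=2\) gives conclusion (1) directly. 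Your closing paragraph acknowledges this case as ``the main obstacle'' but contradicts your earlier assertion that it cannot occur, and the argument is never actually carried out; also, \(v_2\in V(A)\) is not equivalent to \(e'\in E(A)\).

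Two smaller points. First, there is an off-by-one: adding one vertex and no edges gives \(f(B)=f(A)+2\), not \(f(A)+1\); with the correct count the balanced case gives \(f(B)\le 4\) (matching case (2)) and the unbalanced case gives \(f(B)\le 2\) (matching case (3)), so the conclusions survive but your intermediate bounds are wrong. Second, the digressions about components of \(B\), cycles through \(\delta\), the augmentation \(B\cup\{v_2\}\cup\{e_1,e_2\}\) in the balanced \(n=0\) case, and the degenerate quadrilaterals are all unnecessary here: cases (2) and (3) as stated require no connectivity hypotheses, the paper's argument is uniform in the degenerate cases, and the \(f(A)=1\) balanced subcase is already covered by case (2) without any augmentation. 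The paper's choice of \(A\) (unbalanced if possible, induced when unbalanced) rather than edge-minimal is what feeds into the later structural lemmas; your edge-minimality is never actually used.
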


\begin{proof}
    Let \( z \) be the vertex of \( G_{v_1,v_3,Q} \)
    that corresponds to \( v_1,v_3 \). 
    Let \( e' \in E(G_{v_1,v_3,Q}) \) be the edge 
    corresponding to \( \{e_1,e_2\} \) and 
    let \( e'' \in E(G_{v_1,v_3,Q}) \) be the edge 
    corresponding to \( \{e_3,e_4\} \).
    Let \( A \)
    be a subgraph of \( G_{v_1,v_3,Q} \) that 
    violates the \( (2,3,1) \)-sparsity count and 
    choose \( A \) to be unbalanced if possible.
    Clearly \( z \in V(A) \) and 
    \( |E(A) \cap \{e',e''\}| \leq 1 \), otherwise
    \( G \) would also have a subgraph that violates
    the \( (2,3,1) \)-sparsity count.
    As pointed out above, if \( A \) is balanced
    we cannot assume that 
    it is an induced subgraph. On the other hand, if 
    \( A \) is unbalanced then we can and do assume that 
    it is induced.
    Let \( B \) be the 
    subgraph defined as follows. Let 
    \( V(B) = V(A) - \{z\} \cup \{v_1,v_3\} \) 
    and \( E(B) = E(A)- \{e',e''\} \cup F\)
    where \( F \subset E(\partial Q)\) contains
    \( \{e_1,e_2\} \), respectively \( \{e_3,e_4\} \), 
    if and only if \( e' \in E(A) \), respectively
    \( e'' \in E(A) \). 
    By construction 
    \( E(B) \cap E(\partial Q) \) is one of 
    the sets \( \emptyset \), \( \{e_1,e_2\} \)
    or \( \{e_3,e_4\} \) and
    \begin{equation}
        f(B) = f(A) +2 - n/2
        \label{eqn_count2}
    \end{equation}
    where \( n = |E(B) \cap E(\partial Q)| \).
    Observe that if \( B \cup \delta \) is balanced
    if and only if \( A \) is balanced. 

    Now suppose that \( n=2 \). In this case \( B \) is 
    balanced if and only if \( A \) is balanced and 
    \( f(B) = f(A) +1 \). Since \( A \) violated the 
    \( (2,3,1) \)-sparsity count, it follows that 
    \( B \) is \( (2,3,1) \)-tight and (1) is true.

    If \( n=0 \) then \( f(B) = f(A)+2 \). Now if \( A \) is 
    unbalanced then \( f(A) \leq 0 \) and (3) is true. 
    On the other hand if \( A \) is balanced, then
    as observed above \( B \cup \delta \) is balanced. 
    Moreover
    \( A \) has at least one edge and \( f(A) \leq 2 \)
    so (2) is true.
\end{proof}

We call the subgraph \( B \) whose existence is asserted 
by Lemma \ref{lem_blocker231} a blocker for the 
contraction \( G_{v_1,v_3,Q} \).
We call \( B \) a type 1/2/3 blocker according to which 
case of Lemma \ref{lem_blocker231} applies.
Again the diagrams in Figure \ref{fig_quadblockertypes}
serve as guides for the intuition regarding the 
topology of the various types of blocker.

\begin{lemma}
    \label{lem_231blockerconn}
    If \( B \) is a type 1 or type 2 blocker for \( G_{v_1,v_3,Q} \)
    then \( B \) is connected. If \( B \) is a type 3 blocker
    then either \( B \) is connected or it has precisely 
    two components both of which are unbalanced and
    \( (2,3,1) \)-tight.
\end{lemma}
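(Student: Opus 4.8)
The plan is to reduce the whole statement to additivity of $f$ over connected components together with the $(2,3,1)$-sparsity of $G$. First I would record two elementary facts. If a subgraph $B$ of $G$ has connected components $B_1,\dots,B_k$, then applying (\ref{eqn_incexcl}) repeatedly (with $f(\emptyset)=0$) gives $f(B)=\sum_{i=1}^k f(B_i)$. Also, for any nonempty subgraph $H$ of $G$ we have $f(H)\geq 1$; if $H$ has no edge then $f(H)=2|V(H)|\geq 2$; and if $H$ is balanced and has an edge then $f(H)\geq 3$. In particular, if $f(H)=1$ then $H$ has an edge and is unbalanced, so $H$ is $(2,3,1)$-tight.

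The type 1 and type 3 cases are then immediate. If $B$ is a type 1 blocker which is not a single isolated vertex, then $B$ is $(2,3,1)$-tight with an edge, so $f(B)=1$ (if unbalanced) or $f(B)=3$ (if balanced). Were $B$ disconnected with $k\geq 2$ components, then $f(B)\geq k\geq 2$, forcing $f(B)=3$ and $B$ balanced; but then each $B_i$ is balanced and nonempty, so $f(B_i)\geq 2$ and $f(B)\geq 4$, a contradiction. If $B$ is a type 3 blocker, then $f(B)\leq 2$ and $f(B)\geq 1$, so $f(B)\in\{1,2\}$; if $B$ has $k$ components then $k\leq f(B)\leq 2$, and when $k=2$ we get $f(B_1)=f(B_2)=1$, whence each $B_i$ is unbalanced and $(2,3,1)$-tight by the first paragraph. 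This yields exactly the stated alternatives.

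The type 2 case takes a little more work. Here $B$ has an edge, $\{v_1,v_3\}\subseteq V(B)$, $B\cup\delta$ is balanced, and $f(B)\leq 4$. Since $B\cup\delta$ is balanced and has an edge, $f(B\cup\delta)\geq 3$; but $f(B\cup\delta)=f(B)-1$ because $\delta$ joins two vertices already in $B$, so in fact $f(B)=4$ and $B\cup\delta$ is a balanced $(2,3,1)$-tight graph, hence connected by the type 1 argument just given. Suppose $B$ were disconnected. Adding the single edge $\delta$ decreases the number of components by at most one, so since $B\cup\delta$ is connected, $B$ must have exactly two components with $v_1$ and $v_3$ in different ones, say $v_1\in B_1$, $v_3\in B_2$. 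Now $B_1,B_2\subseteq B\cup\delta$ are balanced and nonempty with $f(B_1)+f(B_2)=f(B)=4$; since $B$ has an edge, at least one of them, say $B_1$, has an edge, so $f(B_1)\geq 3$ and $f(B_2)\geq 2$, giving $f(B)\geq 5$, a contradiction. Hence $B$ is connected.

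The argument is essentially bookkeeping; the one point needing care is the type 2 case, where one must first pin down $f(B)=4$ (playing the balanced lower bound $f(B\cup\delta)\geq 3$ against the upper bound $f(B)\leq 4$) before one can exploit connectedness of the now-determined tight graph $B\cup\delta$ to control the component structure of $B$. A further minor subtlety is that in the degenerate quadrilateral cases of Figure \ref{fig_degenquads}(b),(c) one should confirm that $\delta$ and the edges $e_1,\dots,e_4$ still behave as in Lemma \ref{lem_blocker231}; but all the counts above use only $|E(B)\cap E(\partial Q)|$ and the fact that $\delta$ has endpoints $v_1\neq v_3$, so nothing changes.
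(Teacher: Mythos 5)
Your overall strategy is the same as the paper's: additivity of \( f \) over connected components combined with the \( (2,3,1) \)-sparsity counts of \( G \). The preliminary facts in your first paragraph and the type 1 and type 3 cases are correct and match the paper's (very brief) argument.

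In the type 2 case, however, there is a step that does not hold up. You assert \( f(B\cup\delta)\geq 3 \) ``since \( B\cup\delta \) is balanced and has an edge,'' and from this deduce \( f(B)=4 \) and that \( B\cup\delta \) is a balanced \( (2,3,1) \)-tight graph, hence connected. But the balanced sparsity bound \( f(K)\geq 3 \) is a property of subgraphs of \( G \), and \( B\cup\delta \) is not a subgraph of \( G \): the diagonal \( \delta \) is an artificial edge living in the face \( Q \). Lemma \ref{lem_blocker231} only guarantees \( f(B)\leq 4 \) for a type 2 blocker; what sparsity of \( G \) actually gives is \( f(B)\geq 3 \) (since \( B \) itself is balanced with an edge), so \( f(B)\in\{3,4\} \) and nothing rules out \( f(B)=3 \). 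For the same reason, the connectedness of \( B\cup\delta \), which you use to restrict \( B \) to exactly two components, is not justified at the point you invoke it. Fortunately the detour is unnecessary: your own closing count already finishes the case. If \( B \) were disconnected, every component is nonempty and balanced (being contained in the balanced graph \( B\cup\delta \)), hence contributes at least \( 2 \) to \( f(B) \), and at least one component contains an edge and contributes at least \( 3 \); so \( f(B)\geq 5 \), contradicting \( f(B)\leq 4 \) directly, with no need to determine \( f(B) \) exactly or bound the number of components in advance. That direct count is precisely the paper's one-line argument for this case.
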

\begin{proof}
    Using Equation (\ref{eqn_incexcl}) 
    and the \( (2,3,1) \)-sparsity of \( G \) 
    we see that if \( f(B) =1 \) then \( B \) is connected.
    Similarly if \( f(B) \leq 4 \) and \( B \) is balanced 
    and disconnected then \( B \) has no edges.
    Finally if \( f(B)=2 \) and \( B \) is disconnected
    then clearly, again using Equation (\ref{eqn_incexcl}),
    both components are unbalanced and \( (2,3,1) \)-tight.
\end{proof}

See Figure \ref{fig_disconnblocker} for an example of a disconnected 
type 3 blocker.

\begin{lemma}
    \label{lem_231_2neq4}
    Suppose that \( G \) is a \( (2,3,1) \)-sparse 
    \( \mathbb A \)-graph and \( D \) is a quadrilateral 
    face of \( G \) with \( v_1 \neq v_3 \). If 
    \( G_{v_1,v_3,Q} \) is not \( (2,3,1) \)-sparse 
    then \( v_2 \neq v_4 \). 
\end{lemma}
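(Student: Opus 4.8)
The plan is to argue by contradiction, essentially mimicking the corresponding step inside the proof of Lemma \ref{lem_blocker} (the $l=2$ case), but being careful about the extra degeneracies permitted in the $(2,3,1)$ setting. So suppose that $v_1 \neq v_3$, that $G_{v_1,v_3,Q}$ is not $(2,3,1)$-sparse, and, seeking a contradiction, that $v_2 = v_4$. First I would observe that by Lemma \ref{lem_quaddegen231} the boundary $\partial Q$ is isomorphic to one of the three graphs in Figure \ref{fig_degenquads}. Since $v_1 \neq v_3$ but $v_2 = v_4$, the only possibility is Figure \ref{fig_degenquads}(d) — the degenerate triangle with a loop at the repeated vertex $v_2 = v_4$ — or possibly (c). Actually, in (a) all of $v_1,v_2,v_3,v_4$ are distinct, in (b) we have $v_1=v_3$ and $v_2=v_4$, and in (c) $v_1 \neq v_3$ while $v_2 = v_4$; so the relevant case is (c). (I would double-check the labelling of Figure \ref{fig_degenquads} against the boundary-walk convention $v_1,e_1,v_2,e_2,v_3,e_3,v_4,e_4,v_1$ here.) In all of these degenerate cases one of the edges incident with $v_2=v_4$ on $\partial Q$ is a loop, and $\partial Q$ itself therefore already violates $(2,3,1)$-sparsity unless $Q$ has the specific form in Figure \ref{fig_degenquads}(c).

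Next I would invoke Lemma \ref{lem_blocker231}: since $G_{v_1,v_3,Q}$ is not $(2,3,1)$-sparse there is a blocker $B$ for this contraction, of type $1$, $2$, or $3$. The key structural fact I want to exploit is an analogue of Corollary \ref{cor_repeatcut}: because $v_2 = v_4$ is a repeated vertex on the boundary walk of a cellular face, $v_2$ is a cutvertex of $G$ (this follows from the same planarity/Jordan-curve argument as Corollary \ref{cor_repeatcut}, adapted to the $(2,3,1)$-sparse degenerate cases; I would either cite that corollary directly if it applies or prove the small variant). Now the diagonal $\delta$ joining $v_1$ and $v_3$, together with the two "sides" of $Q$ running through $v_2 = v_4$, shows that $v_1$ and $v_3$ lie on opposite sides of any simple closed curve through $v_2$ obtained from $\partial Q \cup \delta$. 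The plan is then to take the blocker $B$, which contains both $v_1$ and $v_3$, and split it at the cutvertex $v_2$: write $B = B' \cup B''$ with $B' \cap B'' = \{v_2\}$, $v_1 \in B'$, $v_3 \in B''$ (using that $v_2 \notin \partial Q$-free edges of $B$, i.e. $E(B) \cap E(\partial Q) = \emptyset$ in cases $2$ and $3$, so $v_2 \in V(B)$ forces $B$ to be disconnected by $v_2$; in case $1$ I would handle the edge set $\{e_1,e_2\}$ or $\{e_3,e_4\}$ of $\partial Q$ separately, noting both of those edges are incident with $v_2$).

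Then I would run the inclusion–exclusion count $f(B) = f(B') + f(B'') - f(\{v_2\}) = f(B') + f(B'') - 2$, combined with the fact that $B$ is balanced iff both $B'$ and $B''$ are balanced. Whichever of the three blocker types $B$ has, the defining inequality on $f(B)$ ($f(B) = 1$ for type 1, $f(B) \leq 4$ and $B \cup \delta$ balanced for type 2, $f(B) \leq 2$ for type 3), together with the $(2,3,1)$-sparsity of $G$ (so $f(B') \geq 1$, $f(B') \geq 3$ if $B'$ balanced, and likewise for $B''$), gives a numerical contradiction — exactly as in the $v_2 = v_4$ paragraph of the proof of Lemma \ref{lem_blocker}, but now bookkeeping the balanced/unbalanced split carefully since the balanced threshold is $3$ and the unbalanced threshold is $1$. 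I would spell out: if $B$ (hence $A$) is balanced, then $B', B''$ are balanced, $f(B) = f(B') + f(B'') - 2 \geq 3 + 3 - 2 = 4$, forcing $B$ to be a type-2 blocker with $f(B) = 4$, $f(B') = f(B'') = 3$, and $B \cup \delta$ balanced — but then I get a contradiction with the edge conditions / with $B' \cup \delta$ or $B'' \cup \delta$ separating the ends of $\mathbb{A}$; if $B$ is unbalanced, $f(B) = f(B') + f(B'') - 2 \geq 1 + 1 - 2 = 0$, and since at least one of $B', B''$ contains an edge (because $B$ does) we actually get $f(B) \geq 1$, with equality only in tightly constrained configurations, again contradicting the type-1 ($f(B)=1$ with $B$ connected, hence not split by a cutvertex) or type-3 bound once the connectivity clause of Lemma \ref{lem_231blockerconn} is brought in.

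The main obstacle I anticipate is the type-2 balanced case: the blocker is not necessarily an induced subgraph, $f(B)$ can be as large as $4$, and I need to rule out the split $B = B' \cup B''$ with $f(B') = f(B'') = 3$ and $B \cup \delta$ balanced. The resolution should come from noting that if $B \cup \delta$ is balanced then the cycle through $\delta$ inside $B$ does not separate the ends of $\mathbb{A}$, whereas the cutvertex $v_2 = v_4$ structure forces $\delta$ together with the two arcs of $\partial Q$ through $v_2$ to enclose one end — a genus-zero separation argument of the same flavour as Lemma \ref{lem_blockersep} and the Case $(1,2)$ argument in the proof of Proposition \ref{prop_quad1}. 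So the real content is organizing these small planar-separation observations cleanly rather than any deep new idea; the rest is the routine $f$-arithmetic.
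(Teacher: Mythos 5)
Your overall strategy (contradiction via the blocker of Lemma \ref{lem_blocker231} plus the cutvertex structure of the degenerate quadrilateral) points in the right direction, but there are two concrete problems.

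First, the case identification is wrong: the paper notes just before the lemma that the degenerate quadrilaterals (b) and (c) of Figure \ref{fig_degenquads} satisfy both $v_1\neq v_3$ \emph{and} $v_2\neq v_4$, so if $v_2=v_4$ and $v_1\neq v_3$ then $\partial Q$ is of type (a): four distinct edges, no loops, three distinct vertices, with the repeated vertex $v_2=v_4$ a cutvertex separating $v_1$ from $v_3$. You correctly anticipate the cutvertex property, but building the argument on type (c) (which has a loop and a different incidence structure) would not go through as written.

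Second, and more seriously, your plan of splitting $B$ at the cutvertex and doing $f$-arithmetic on the pieces cannot by itself produce the contradiction. A type~3 blocker with $f(B)\leq 2$, containing $v_1,v_3$ and no edge of $\partial Q$, is entirely consistent with $(2,3,1)$-sparsity whether it is connected with a cutvertex or consists of two unbalanced tight components (Lemma \ref{lem_231blockerconn} explicitly allows the latter, and Figure \ref{fig_disconnblocker} exhibits one); your inequality $f(B)\geq 1$ contradicts nothing. Similarly, ``$f(B)=1$ with $B$ connected, hence not split by a cutvertex'' is not a valid inference -- connected graphs can have cutvertices -- and for the balanced type~2 case you end up with $f(B)=4$ and defer to an unspecified separation argument. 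The missing idea is to adjoin $\partial Q$ itself to the blocker: since $\{v_1,v_3\}\subset V(B)$ and $|E(\partial Q)|=4$, one gets $f(B\cup\partial Q)=f(B)-2$ when $v_2\notin V(B)$ and $f(B)-4$ when $v_2\in V(B)$, which immediately falls below the sparsity threshold for types 2 and 3 (the remaining type-2, $v_2\notin V(B)$ subcase is killed by noting $B$ is then disconnected, balanced and has an edge, forcing $f(B)\geq 5>4$). For type 1 the paper's contradiction is also different from yours: such a blocker containing $v_2$ has $V(\partial Q)\subset V(B)$ but $E(\partial Q)\not\subset E(B)$, so it is not induced, hence must be balanced, hence is a Laman graph -- and Laman graphs have no cutvertex. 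Without these steps the proof does not close.
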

\begin{proof}
    Let \( B \) 
    be the blocker for the contraction \( G_{v_1,v_3,Q} \)
    and suppose that \( v_2 = v_4 \). 
    Then \( Q \) is 
    degenerate and \( \partial Q \) must be isomorphic
    to Figure \ref{fig_degenquads}(a). In particular 
    \( v_2 \) 
    separates \( v_1 \) from \( v_3 \) in \( G \)
    and \( |E(\partial Q)| = 4 \).
    
    If \( v_2 \not\in V(B) \) 
    then \( B \) is a type 2 or type 3 blocker.
    Moreover \( f(B\cup \partial Q) = f(B)-2\) since 
    \( \{v_1,v_3\} \subset V(B) \) and \( |E(\partial Q)| = 4 \).
    If \( B \) is type 3 then \( f(B \cup \partial Q)
    \leq 2 -2 = 0\) which is forbidden.
    If \( B \) is type 2 then \( f(B) \leq4 \). But 
    since \( v_2 \) separates \( v_1 \) from \( v_3 \) 
    and since \( v_2 \not\in B \) we see that \( B \) is a
    balanced disconnected graph containing at least one edge.
    It follows easily from Equation (\ref{eqn_incexcl})
    that \( f(B) \geq 5 \), a contradiction.

    If \( v_2 \in V(B) \) then \( V(\partial Q) 
    \subset V(B)\). But \( E(\partial Q) \not\subset E(B) \)
    so \( B \) is not an induced subgraph and so must 
    be balanced. Now if \( B\) is type 1 then \( v_2 \)
    is a cutvertex for the Laman graph \( B \) which
    is a contradiction. If \( B \) is type 2 or type 3
    then since \( V(Q) \subset V(B) \), 
    \( f(B\cup \partial Q) = f(B)-4 \leq 0\), again a contradiction. 
\end{proof}

For an edge \( e \in G = (D,\Phi) \) let \( \mathring e \)
be the relative interior of \( |e| \) in \( |D| \).

\begin{lemma}
    Suppose that \( B \) is a type 2 blocker or type 3 blocker
    for \( G_{v_1,v_3,Q} \) and that \( v_1 \) and \( v_3 \)
    lie in the same component of \( B \).
    Then \( V(B) \) separates \( \mathring e_1 \cup \mathring e_2 \)
    from \( \mathring e_3 \cup \mathring e_4 \).
    \label{lem_blockersep231}
\end{lemma}

\begin{proof}
    The proof of Lemma \ref{lem_blockersep} works here, mutatis mutandis.
\end{proof}

\begin{proposition}
    \label{prop_quad2}
    Suppose that \( G \) is a \( (2,3,1) \)-sparse 
    \( \mathbb A \)-graph and that \( Q \) is a quadrilateral face of \( G \) 
    with boundary vertices \( v_1,v_2,v_3,v_4 \) such that 
    \( v_1 \neq v_3 \). 
    If \( G_{v_1,v_3,Q} \) is not \( (2,3,1) \)-sparse then 
    \( G_{v_2,v_4,Q} \) is \( (2,3,1) \)-sparse.
\end{proposition}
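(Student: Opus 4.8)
The plan is to mirror the proof of Proposition~\ref{prop_quad1}, replacing each appeal to a $(2,3,2)$-lemma by its $(2,3,1)$-analogue. First I would record that $v_2\neq v_4$ by Lemma~\ref{lem_231_2neq4}, so that $G_{v_2,v_4,Q}$ is defined, and then argue by contradiction: assuming that neither $G_{v_1,v_3,Q}$ nor $G_{v_2,v_4,Q}$ is $(2,3,1)$-sparse, Lemma~\ref{lem_blocker231} supplies a blocker $B_1$ for $G_{v_1,v_3,Q}$ and a blocker $B_2$ for $G_{v_2,v_4,Q}$, each of type $1$, $2$ or $3$. Since a type~$1$ blocker for $G_{v_1,v_3,Q}$ contains one of the edge pairs $\{e_1,e_2\}$, $\{e_3,e_4\}$ (hence $v_1$ and $v_3$), while a type~$2$ or $3$ blocker contains $\{v_1,v_3\}$ by definition --- and symmetrically for $B_2$ --- we have $V(\partial Q)\subseteq V(B_1)\cup V(B_2)$. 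This gives the inclusion--exclusion identity $f(B_1\cup B_2\cup\partial Q)=f(B_1)+f(B_2)-f(B_1\cap B_2)-d$, where $d$ is the number of edges of $\partial Q$ lying in neither blocker; in the non-degenerate case one has $d=1$ when both blockers have type~$1$, $d=2$ when exactly one does, and $d=4$ otherwise, while for the degenerate $Q$ one instead uses the classification of $\partial Q$ in Lemma~\ref{lem_quaddegen231}.

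I would then run the case analysis on the unordered pair of types of $(B_1,B_2)$, exactly as in Proposition~\ref{prop_quad1}. In the cases involving a type~$1$ blocker one uses Lemma~\ref{lem_231tightunion} to conclude that $B_1\cup B_2$ --- or, where needed, the graph obtained from it by adjoining one of the two triangular halves of $Q$ (as in cases $(1,2)$ and $(1,3)$ of Proposition~\ref{prop_quad1}) --- is $(2,3,1)$-tight, and that the intersection contains an edge; substituting the resulting value of $f$ into the identity above drives $f(B_1\cup B_2\cup\partial Q)$ below $1$, or else exhibits a cut-vertex in a balanced $(2,3,1)$-tight (Laman) subgraph, either of which contradicts the hypotheses, using Lemma~\ref{lem_svk} and the simple observation preceding Proposition~\ref{prop_23ltriangle} to track when the relevant unions remain balanced. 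In the cases where $B_1$ and $B_2$ both have type~$2$ or $3$, I would use that a type~$2$ blocker $B_i$ is balanced (being a subgraph of the balanced graph $B_i\cup\delta$), so $3\le f(B_i)\le4$, while a type~$3$ blocker has $f(B_i)\le2$; since $d=4$ here, the identity already forces $f(B_1\cup B_2\cup\partial Q)\le2$, and a short further argument --- using Lemma~\ref{lem_blockersep231} together with the connectivity statement of Lemma~\ref{lem_231blockerconn} to show $B_1\cap B_2\neq\emptyset$ (so $f(B_1\cap B_2)\ge2$ when it is balanced), or Lemma~\ref{lem_svk} to show the big union is balanced with too small an $f$ --- produces the contradiction, following cases $(2,2)$, $(2,3)$, $(3,3)$ of Proposition~\ref{prop_quad1}.

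The hard part, relative to the $(2,3,2)$ setting, will be the bookkeeping caused by two features that are absent there. First, a balanced $(2,3,1)$-tight subgraph of $G$ need not be induced (Figure~\ref{fig_231examples}), so a blocker is not determined by its vertex set and all the union/intersection estimates must be carried out at the level of edge sets rather than vertex sets. Second, a type~$3$ blocker may be disconnected, with two components that are unbalanced and $(2,3,1)$-tight (Figure~\ref{fig_disconnblocker}); in that situation the separation property of Lemma~\ref{lem_blockersep231} is not directly available, and I would instead apply Lemma~\ref{lem_231tightunion} to the individual components and combine the resulting $f$-estimates, first shrinking a disconnected blocker to one of its components whenever that component already serves as a blocker. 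A minor additional point is that type~$1$ blockers for $G_{v_1,v_3,Q}$ come in two orientations (containing $\{e_1,e_2\}$ versus $\{e_3,e_4\}$), which roughly doubles the number of sub-cases without changing the counting. Finally, I would dispatch the degenerate quadrilaterals of Figure~\ref{fig_degenquads}(b),(c) by direct inspection: there $\partial Q$ already carries a loop edge, which sharply limits which blockers can occur and makes those sub-cases routine.
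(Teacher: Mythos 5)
Your plan is the paper's proof: establish $v_2\neq v_4$, take blockers $B_1,B_2$ from Lemma~\ref{lem_blocker231}, use the identity $f(B_1\cup B_2\cup\partial Q)=f(B_1)+f(B_2)-f(B_1\cap B_2)-d$ with $d\in\{1,2,4\}$ determined by the types, and run the six cases exactly as in Proposition~\ref{prop_quad1}, with Lemmas~\ref{lem_231tightunion}, \ref{lem_231blockerconn}, \ref{lem_blockersep231} and \ref{lem_svk} replacing their $(2,3,2)$ counterparts. The skeleton, the lemmas invoked, and the identification of the two new difficulties (non-induced balanced tight subgraphs, disconnected type~3 blockers) all match the paper.

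Two points in your sketch would not survive being written out as stated. First, the assertion that in the cases involving a type~1 blocker ``the intersection contains an edge'' is only true in case $(1,1)$, where both blockers contain edges of $\partial Q$ and must share one. In cases $(1,2)$ and $(1,3)$ the type~2 or~3 blocker contains no edge of $\partial Q$, and $B_1\cap B_2$ can be a single vertex or even a pair of isolated vertices; the latter possibility is genuinely new relative to Proposition~\ref{prop_quad1} (there $f(B_1\cap B_2)\leq 2$ forces a single vertex, here the bound is $4$). That sub-case is the crux of the whole proposition: one must form a cycle $C$ through the diagonal $v_2v_4$ inside $B_2\cup\delta$, split the balanced Laman graph $B_1$ into two pieces $B'$, $B''$ meeting exactly in the two intersection vertices, compute $\{f(B'),f(B'')\}=\{3,4\}$ to get connectivity of each piece, and propagate balancedness via Lemma~\ref{lem_svk} to contradict the count that forced $B_1\cup B_2\cup\partial Q$ to be unbalanced. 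This is neither an $f$-count contradiction nor a cut-vertex contradiction, and ``substituting the resulting value of $f$ into the identity'' will not produce it. Second, your fallback for a disconnected type~3 blocker --- shrink to a component that ``already serves as a blocker'' --- is vacuous: a type~3 blocker must contain both $v_1$ and $v_3$ (resp.\ $v_2$ and $v_4$), and when it is disconnected these typically lie in different components, so no single component is a blocker. What is actually needed (e.g.\ to get $B_1\cap B_2\neq\emptyset$ in case $(2,3)$, where Lemma~\ref{lem_blockersep231} does not apply to a disconnected $B_2$) is the topological observation that each component of a disconnected type~3 blocker is unbalanced and hence cannot lie inside the disk bounded by the balanced separating cycle coming from the other (connected) blocker, so it must meet that cycle. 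With those two repairs your outline becomes the paper's argument.
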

\begin{proof}
    First note that \( v_2 \neq v_4 \) by Lemma \ref{lem_blocker231}
    so \( G_{v_2,v_4,Q} \) is well defined. On the other hand, 
    it is possible that \( Q \) is a degenerate quadrilateral
    isomorphic to (b) or (c) from Figure \ref{fig_degenquads}.
    Now suppose that \( G_{v_2,v_4,Q} \) is not \( (2,3,1) \)-sparse. 
    Then by 
    Lemma \ref{lem_blocker231} there are blockers \( B_1 \), respectively 
    \( B_2 \), for the contractions \(G_{v_1,v_3,Q} \), 
    respectively 
    \( G_{v_2,v_4,Q} \). 
    Observe that \( V(\partial Q) 
    \subset V(B_1 \cup B_2) \), 
    Therefore \begin{eqnarray}
        f(B_1 \cup B_2 \cup \partial Q) &=& f(B_1 \cup B_2) - d \notag \\
        &=& f(B_1) +f(B_2) -f(B_1 \cap B_2) -d
        \label{eqn_def231} 
    \end{eqnarray} 
    where
    \( d \) is the number of edges of \( \partial Q\) 
    that are not in \( B_1 \cup B_2\) and is determined
    by the types of the blockers.
    Now there are six cases to consider.
    In the following list ``Case \( (X, Y) \)'' means that \( B_1 \) is
    a type \(X\) blocker and \( B_2 \) is a type \( Y \) blocker.

    {Case \( (1,1)\) :} 
    In this case \( d = 1 \). However \( B_1 \cap B_2 \)
    contains an edge of \( \partial Q \) so 
    by Lemma \ref{lem_232tightunion}, \( B_1
    \cup B_2\) is \( (2,3,1) \)-tight. Furthermore, 
    clearly \( B_1 \cup B_2 \cup \partial Q \) is balanced
    if and only if \( B_1 \cup B_2 \) is balanced in this case.
    So (\ref{eqn_def231}) yields the required contradiction.

    {Case \( (1,2) \):} In this case \( d = 2 \) so,  
    using (\ref{eqn_def231}) and \( f(B_2) \leq 4 \), we have 
    \begin{equation}
        f(B_1 \cup B_2 \cup \partial Q) 
        \leq f(B_1) +2 - f(B_1 \cap B_2) \label{eqn_231case12}
    \end{equation}
    Now since \( B_1 \) is tight we have \( f(B_1) 
    \leq 3\). It follows from (\ref{eqn_231case12}) that \( f(B_1\cap
    B_2) \leq 4\) and so \( B_1 \cap B_2 \), which 
    is balanced, must either be connected 
    or a pair of isolated vertices.
    If \( B_1 \cap B_2 \) is connected then 
    it is easy to see that \( B_1 \cap (B_2 \cup \partial Q) \)
    is connected and so \( B_1 \cup B_2 \cup \partial Q \) is 
    balanced if and only if \( B_1  \) is balanced.
    Then we can proceed, 
    mutatis mutandis, as in case \( (1,2) \) of Proposition 
    \ref{prop_quad1}.

    If \( B_1 \cap B_2 \) is not connected then, since it is balanced, 
    \( f(B_1 \cap B_2) \geq 4 \). Now using 
    (\ref{eqn_231case12}), and since \( B_1 \) is 
    \( (2,3,1) \)-tight, we see that \( B_1 \) 
    must be balanced, \( B_1 \cap B_2 = \{u,v\}\)
    consists of two isolated vertices and 
    \( B_1 \cup B_2 \cup \partial Q \) is unbalanced.
    Let \( \delta \) be the diagonal of \( Q \) 
    joining \( v_2 \) and \( v_4 \). 
    As in the proof of Lemma \ref{lem_blockersep} there is
    a cycle \( C \subset B_2\cup \delta\) such that 
    \( \delta \in E(C) \) and \( |C| \) separates
    \( \mathring e_1 \cup \mathring e_4\) from 
    \( \mathring e_2 \cup \mathring e_3 \) in \( \mathbb A \).
    Let \( F',F'' \) be the two faces of \( C \) and 
    let \( B' \), respectively \( B'' \), be the subgraph 
    of \( B_1 \) that is disjoint from \( F' \), respectively \( F'' \). 
    Say \( e_1 \in E(B') \) and \( e_2 \in E(B'') \). 

    Now \( B' \cap B'' \subset C  \), so \( B'\cap B'' \subset
        B_1 \cap B_2 = 
    \{u,v\}\). But \( B_1 \) does not have a cutvertex since it is 
    a Laman graph, so \( B' \cap B'' = \{u,v\} \).
    Now using Equation (\ref{eqn_incexcl}) it follows easily 
    that \( \{f(B'),f(B'')\} = \{3,4\} \). But since  
    \( B' \) and \( B'' \) are both balanced and each contains an edge, 
    it follows that each of \( B' \) and \( B'' \) is connected.

    Now \( C \) is a balanced cycle since \( B_2 \cup \delta \)
    is balanced, so without loss of generality, suppose that 
    \( F' \) is the face of \( C \) that contains both 
    ends of \( \mathbb A \). Then \( C \cup B' \) is balanced
    and it follows easily that \( B_2 \cup \delta \cup B' \) is 
    balanced. 
    See Figure \ref{fig_231case12} for a schematic diagram of this 
    situation.

    Now \( B_1 \cup B_2 \cup \delta = 
    B_1 \cup (B_2 \cup \delta \cup B') \) and \( B_1 \cap 
    (B_2 \cup \delta \cup B') = B'\) which is connected. 
    Since \( B_1 \) and \( B_2 \cup \delta \cup B' \) are 
    both balanced and \( B' \) is connected, 
    it follows from Lemma \ref{lem_svk} that 
    \( B_1\cup B_2 \cup \delta \) is balanced. It follows
    easily that \( B_1 \cup B_2  \cup \partial Q \)
    is balanced, contradicting our earlier deduction.

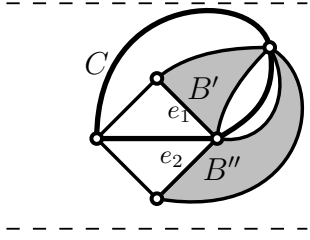
\begin{figure}
        \centering
\begin{tikzpicture}[x=1.0cm,y=1.0cm]
    \clip (-0.5,-1) rectangle (4.5,3.5);
    \coordinate (v4) at (1.2,1.2);
    \coordinate (v1) at (2,2.0);
    \coordinate (v2) at (2.8,1.2);
    \coordinate (v3) at (2,0.4);

    \coordinate (w) at ($(v2)+(60:1.4)$);  
    \draw [edge,line width=2pt] (v4) -- (v2) .. controls +(30:0.5) and +(-80:0.7) ..  (w) .. controls +(100:0.8) and +(90:2) .. (v4);
    \draw[edge,fill=gray!50] (v1) .. controls +(40:0.6) and +(180:0.4) .. (w) .. controls +(-135:0.4) and +(85:0.5) .. (v2) -- (v1);
    \draw[edge,fill=gray!50] (w) .. controls +(-60:0.7) and +(-10:0.7) .. (v2) -- (v3)
    .. controls +(-10:2) and +(-30:1) .. (w);

    (v1) .. controls +(40:0.6) and +(180:0.4) .. (w) .. controls +(-135:0.4) and +(85:0.5) .. (v2) -- (v1);

    \node at ($(v2)+(105:0.7)$){$B'$};
    \node at ($(v2)+(-80:0.4)$){$B''$};
    \node[vertex] at (w) {};

    \draw[cylinder] (0,0) -- (4,0) (0,3) -- (4,3);
    \draw[edge] (v1) -- (v2) node[pos=0.5,label={[xshift=-1mm,yshift=-5mm]:\footnotesize $e_1$}]{} -- (v3) node[pos=0.5,label={[xshift=-2mm,yshift=-3mm]:\footnotesize $e_2$}]{} -- (v4) -- (v1);
    \node[vertex] at (v1){};
    \node[vertex] at (v2){};
    \node[vertex] at (v3){};
    \node[vertex] at (v4){};
    \node at ($(v4) + (90:1)$) {$C$};
    
\end{tikzpicture}

\caption{Case \( (1,2) \) in the proof of Proposition \ref{prop_quad2}. The cycle \( C \) is constructed from a diagonal of \( Q \) and a path in \( B_2 \). The subgraphs \( B' \), respectively \( B'' \) are the parts of \( B_1 \) that lie outside, respectively inside, \( C \). }
        \label{fig_231case12}
    \end{figure}
%
%
%

    Case \( (1,3) \): 
    In this case \( d=2 \) so (\ref{eqn_def231}) 
    yields 
    \begin{equation}
        f(B_1\cup B_2 \cup \partial Q) \leq f(B_1) -f(B_1 \cap B_2)
        \label{eqn_231case13}
    \end{equation}
    Now \( B_1 \) is \( (2,3,1) \)-tight and \( B_1 \cap B_2 \)
    is not empty by Lemma \ref{lem_blockersep231}, so it 
    follows that \( B_1 \) is balanced and \( f(B_1 \cap B_2)
    \leq 2\). Since \( B_1 \cap B_2 \) is balanced, we conclude
    that \( B_1 \cap B_2 \) is a single vertex, which 
    without loss of generality we assume to be \( v_2 \).
    If \( v_2 \) and \( v_4 \) are in the same component of 
    \( B_2 \) then by Lemma \ref{lem_blockersep231}, 
    \( v_2 \) is a cutvertex for \( B_1 \) which 
    contradicts the fact that  \( B_1 \) is a Laman graph.
    On the other hand if \( v_2 \) and \( v_4 \) are in 
    different components of \( B_2 \) then \( B_2 \cup \partial Q\)
    must be embedded as shown in Figure \ref{fig_quadblockertypes}.
    Since 
    \( B_1 \) is balanced it is clear that \( v_2 \) 
    must separate \( v_1 \) and \( v_3 \) in \( B_1 \)
    again contradicting the fact that \( B_1 \) is Laman.

    Case \( (2,2) \): In this case, \( d = 4 \) and using 
    (\ref{eqn_def231}) we have \( f(B_1 \cup B_2
    \cup \partial Q) = 4 - f(B_1 \cap B_2)\) so 
    \( f(B_1 \cap B_2) \leq 3 \). But \( B_1 \cap B_2 \) 
    is balanced so \( f(B_1 \cap B_2) \in \{2,3\} \) and 
    \( B_1 \cup B_2 \cup \partial Q \) is unbalanced.

    Let \( \delta \) be the diagonal of \( Q \) joining 
    \( v_1 \) and \( v_3 \). By assumption \( B_1 \cup \delta \)
    is balanced and \( B_2 \) is also balanced.
    Now \( (B_1 \cup \delta) \cap B_2 = B_1 \cap B_2 \) 
    which must be  connected since \( f(B_1 \cap B_2) \leq 3 \). 
    So by Lemma 
    \ref{lem_svk}, \( B_1\cup B_2 \cup \delta \) is balanced. 
    Let \( F \) be the face of \( B_1 \cup B_2 \cup \delta \)
    that contains the ends of \( \mathbb A \).
    
    By Lemma \ref{lem_blockersep231}
    there is a cycle \( C \subset B_1 \cup \delta \) that 
    separates \( \mathring e_1 \cup \mathring e_2 \) 
    from \( \mathring e_3 \cup \mathring e_4 \). Now 
    \( C \) is balanced since \( B_1 \cup \delta \) is balanced, 
    so without loss of generality we can assume that 
    \( \mathring e_1 \cup \mathring e_2 \) lies in a face 
    of \( C \) that does not contain any of the 
    ends of \( \mathbb A \). Since \( C \subset B_1\cup B_2 
    \cup \delta\)  it follows that
    \(  \mathring e_1 \cup \mathring e_2 \cap F = \emptyset\).
    Therefore 
    \( (B_1 \cup B_2 \cup \delta) \cup \{e_1,e_2\} \) 
    is balanced. In particular \( B_1 \cup B_2 
    \cup \{e_1,e_2\}\) is balanced. Now by a similar 
    argument (using the other diagonal of \( Q \)) 
    we show that \( B_1 \cup B_2 \cup \{e_2,e_3\}
    \) is balanced. It follows, since \( B_1\cup B_2 \cup e_2 \)
    is connected, that 
    \( B_1 \cup B_2 \cup \{e_1,e_3,e_3\} \) is balanced 
    and then easily that \( B_1 \cup B_2 \cup \partial Q \) 
    is balanced, contradicting our earlier deduction.

    Case \( (2,3) \): Since \( d=4 \) in this case, 
    Equation (\ref{eqn_def231}) yields 
    \( f(B_1\cup B_2 \cup \partial Q) \leq 2 - f(B_1\cap B_2) \).
    Now since \( B_1 \cap B_2 \) is non-empty and balanced we have 
    the required contradiction.

    Case \( (3,3) \): Again \( d = 4 \) and 
    (\ref{eqn_def231}) yields \( 
    f(B_1 \cup B_2 \cup \partial Q)\leq  - f(B_1\cap B_2) \leq 0\). 
\end{proof}

\subsection{Proof of Theorems \ref{thm_cylinder_inductive232} and 
\ref{thm_cylinder_inductive231}}
First observe that if \( |V(G)| \leq 2 \) then 
\( G \) is isomorphic to \( K \) if it is balanced 
and \( L \) in the case \( l = 2 \), or \( M \) in the 
case \( l=1 \), if it is unbalanced.
Suppose that \( G \) has at least three vertices. 
If \( G \) is balanced then by 
Lemma \ref{lem_23leulercount} it has a triangular 
face. Now by Proposition \ref{prop_23ltriangle},
\( G_{e,T} \) is \( (2,3,l) \)-tight for some triangular
face \( T \) and \( e \in \partial T \). Moreover, it 
is clear that \( G_{e,T} \) is also balanced. The conclusion
follows by induction.

On the other hand, if \( G \) is unbalanced then by 
Lemma \ref{lem_23leulercount}, either it has 
a triangular face or a quadrilateral face. Now 
by Propositions \ref{prop_23ltriangle} and 
either Proposition \ref{prop_quad1} or Proposition \ref{prop_quad2} 
there is some contraction 
of \( G \) that is also \( (2,3,l) \)-tight. Again the 
required conclusion follows by induction.

\section{Completing sparse surface graphs to tight graphs}
\label{sec_completion}

Finally we consider the problem of adding 
edges to a sparse surface
graph to make it a tight surface graph.
We begin with the case of a $(2,3)$-sparse $\Sigma$-graph, 
where $\Sigma$ 
is a connected surface.

\begin{proposition} \label{prop_r2complete} Let $\Sigma$ be a 
    connected surface and let $G$ be a $(2,3)$-sparse $\Sigma$-graph. 
    Then there exists a $(2,3)$-tight $\Sigma$-graph $G'$ 
    such that \( G \) is a spanning 
    subgraph of \( G' \).
\end{proposition}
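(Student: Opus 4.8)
The plan is to build $G'$ by repeatedly adding edges to $G$, keeping the graph $(2,3)$-sparse and embedded in $\Sigma$, until $f$ drops to $3$; a $(2,3)$-sparse $\Sigma$-graph with $f=3$ is $(2,3)$-tight, and since we only add edges, $G$ remains a spanning subgraph of the result. Note that $(2,3)$-tightness presupposes finiteness, so we may assume $G$ is finite; if $|V(G)|=1$ then $G$ is already $(2,3)$-tight, so assume $|V(G)|\ge 2$.

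\emph{Reduction to $G$ connected.} Suppose $G$ is disconnected. I would add an edge joining vertices of two distinct components. Such an edge preserves $(2,3)$-sparsity: if a subgraph using the new edge together with some old edges violated the count, then splitting its old edges according to the components of $G$ and applying (\ref{eqn_incexcl}) to the pieces gives a contradiction; equivalently, this is immediate since the $(2,3)$-sparse edge sets form the independent sets of the generic $2$-rigidity matroid \cite{WW}. The edge can be drawn in $\Sigma$ because $\Sigma$ is connected: consider the bipartite graph whose vertices are the components of $G$ and the faces of $G$, with an edge whenever a component meets the boundary of a face; if this bipartite graph were disconnected, the union of the components and faces in one part and the union of those in the other would be two disjoint nonempty open subsets covering $\Sigma$ (using finiteness of $G$ to see that the union of a component with its incident faces is open), a contradiction. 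Hence some face $F$ has two distinct components of $G$ on its boundary, and we may join a vertex of each by an embedded arc through $F$. Iterating, we may assume $G$ is connected; since every later edge addition preserves connectedness, the graph stays connected throughout.

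\emph{The main claim.} I claim that if $H$ is a finite connected $(2,3)$-sparse $\Sigma$-graph with $f(H)\ge 4$, then there are distinct non-adjacent vertices $u,v$ lying on a common face $F$ of $H$ such that, after drawing $uv$ through $F$, the graph $H+uv$ is still $(2,3)$-sparse. Granting this, we add such an edge, which decreases $f$ by exactly $1$ while keeping the graph connected, $(2,3)$-sparse and embedded in $\Sigma$; after $f(G)-3$ steps we reach the desired $(2,3)$-tight $\Sigma$-graph $G'$ with $G$ as a spanning subgraph.

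\emph{Towards the main claim, and the main obstacle.} Since $|E(H)|=2|V(H)|-f(H)\le 2|V(H)|-4$ is strictly smaller than $2|V(H)|-3$, the rank of the generic $2$-rigidity matroid, $E(H)$ is not a spanning set of that matroid, so there is a non-adjacent pair $\{x,y\}$ contained in no $(2,3)$-tight subgraph of $H$; for any such \emph{free} pair, $H+xy$ is $(2,3)$-sparse. The only possible obstruction to using $\{x,y\}$ directly is topological: $x$ and $y$ might not lie on a common face. The crux of the argument — and the step I expect to be the main obstacle — is to show that a free pair which is \emph{also} cofacial always exists. One natural approach is to choose a free pair $\{u,v\}$ minimising the distance $d_H(u,v)$, take a shortest $u$--$v$ path, and analyse it using the rotation system of $H$ at the interior vertices of the path together with the union/intersection behaviour of $(2,3)$-tight subgraphs and Theorem~\ref{thm_strongFJW}: either $uv$ can be routed through a face incident with two consecutive path-edges, or one extracts a strictly closer free pair, or an edge already present in $H$, contradicting the choice of $\{u,v\}$. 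This step is a sparsity-constrained, surface-valued analogue of Diestel's theorem that a maximal plane graph is maximally planar (\cite{MR3822066}, Chapter~4) and of Lemma~9 of \cite{FJW}, and it is where essentially all the work lies; once it is in place the proposition follows by the iteration described above.
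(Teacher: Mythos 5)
There is a genuine gap: your ``main claim'' --- that a connected $(2,3)$-sparse $\Sigma$-graph $H$ with $f(H)\ge 4$ admits a \emph{cofacial} free pair --- is exactly the statement you would need to prove, and you do not prove it. You correctly observe that a free (non-adjacent, in no common $(2,3)$-tight subgraph) pair exists by matroid rank considerations, but the passage from ``some free pair exists'' to ``some free pair lies on a common face'' is only sketched as a trichotomy (``either $uv$ can be routed through a face \dots, or one extracts a strictly closer free pair, or an edge already present in $H$'') with no argument that these are the only outcomes or that each one can actually be forced. Since you yourself identify this as ``where essentially all the work lies,'' the proposal as written does not constitute a proof.

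The paper's proof avoids this difficulty entirely by choosing the new edge so that cofaciality is automatic, rather than first choosing a combinatorially good pair and then worrying about where it sits on the surface. Take a maximal $(2,3)$-tight subgraph $B$ of the (connected) graph $G$ with $E(B)\neq E(G)$. By connectedness there is a vertex $u\in V(B)$ incident both to an edge $e=uv\in E(B)$ and to an edge $f=uw\notin E(B)$, and one may choose $e,f$ to be \emph{consecutive} in the boundary walk of some face $F$. One then adds a Jordan arc $\delta$ through $F$ joining $v$ and $w$, forming a triangle $u,e,v,\delta,w,f,u$; the pair $\{v,w\}$ is cofacial by construction. Sparsity of $G\cup\delta$ is then a short maximality argument: a tight subgraph $C\ni v,w$ blocking $\delta$ would satisfy $B\cap C=\{v\}$ and $f(B\cup C)=4$ by (\ref{eqn_incexcl}), whence $B\cup C\cup\{f\}$ is tight and properly contains $B$, a contradiction. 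I would encourage you to restructure your argument along these lines (grow a maximal tight subgraph one edge at a time at a ``frontier corner'') rather than trying to complete the distance-minimisation analysis, which, even if it can be made to work, is considerably more delicate on a general surface where rotation systems and face degeneracies intervene. Your reduction to the connected case is fine and essentially matches the paper's (which treats it as obvious).
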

\begin{proof} 
    It suffices to show that if $|E(G)|<2|V(G)|-3$ 
    then we can add an edge $e$ within some
    face of $G$ so that $G\cup \{e\}$ is $(2,3)$-sparse. If $G$ is 
    disconnected, then we can clearly add such an edge since \( \Sigma \)
    is connected, 
    so we may assume that $G$ is connected.

    Let $B$ be a maximal $(2,3)$-tight subgraph of $G$ and 
    suppose that $E(B) \neq E(G)$.
    Since $G$ and \( B \) are both connected
    there exists a vertex $u\in V(B)$ that is 
    incident to an edge  \( e \in E(B) \) and also incident to 
    an edge \( f \in E(G) - E(B) \). 
    Clearly we can choose 
    \( e \) and \( f \) so that they are successive edges in the 
    boundary walk of some face \( F \) of \( G \). 
    Suppose that \( V(e) = \{u,v\} \) and 
    \( V(f) = \{u,w\} \).
    Now let \( \delta \) be a Jordan arc in \( \Sigma \)
    whose relative interior is contained in \( F \) and such that
    \( \kappa = u,e,v,\delta,w,f,u \) is the  
    boundary walk 
    of a triangular region properly contained within \( F \).
    We think of \( \delta \) as a new edge and claim that \( G\cup 
    \delta\) is \( (2,3) \)-sparse. 

    Suppose not. Then there must be a \( (2,3) \)-tight subgraph \( C \)
    of \( G \) containing \( \{v,w\} \). Since \( B \) is a maximal 
    \( (2,3) \)-tight subgraph \( G \), it follows that \( B\cup C \) is 
    not \( (2,3) \)-tight. Using (\ref{eqn_incexcl}) it follows
    that \( B \cap 
    C = \{v\}\) and \( f(B \cup C) = 4 \). But then 
    \( f \not\in B \cup C \) and \( B \cup C \cup f \) is \( (2,3) \)-tight
    contradicting the maximality of \( B \). 
\end{proof}

Now we prove Proposition \ref{prop_232complete}.
The case \( l=2 \) is quite similar to 
Proposition \ref{prop_r2complete}. On the other hand, the arguments
for the case \( l=1  \) are a little more delicate since
balanced \( (2,3,1) \)-tight subgraphs need not be induced.

\begin{proof}[Proof of Proposition \ref{prop_232complete}]
    Let \( B \) be a \( (2,3,l) \)-tight subgraph of \( G \) that is maximal with respect to 
    inclusion among all \( (2,3,l) \)-tight  subgraphs of \( G \).
    Construct \( u,v,w,e,f,\delta,\kappa \) exactly as described in 
    the proof of 
    Proposition \ref{prop_r2complete} (bearing 
        in mind that, a priori,  \( u,v,w \)
    need not be pairwise distinct).
    Suppose that \( G \cup \delta \) is not \( (2,3,l) \)-sparse.
    Then there must be some \( (2,3,l) \)-tight subgraph
    \( C \) of \( G \) such that
    \( \{v,w\} \subset V(C) \)
    and such that \( C \cup \delta \) is balanced if and 
    only if \( C \) is balanced.

    Suppose that \( l =2 \). 
    Then \( w \not\in B \) since \( B \) is an induced graph. 
    So \( C \not\subset B \) and since \( B  \) is maximal
    it follows that \( B \cup C \) is not \( (2,3,2) \)-tight.
    By Lemma \ref{lem_232tightunion}. \( B \cap C =\{v\} \).
    Now \( u \neq v \) since loop edges are forbidden in \( G \)
    so \( f \not\in C \). Thus \( f(B\cup C \cup f) 
    = f(B) +f(C) - 3 \). If \( C \) is unbalanced then \( 
    f(B \cup C\cup f) = f(B) -1 \leq 2\) and so \( B \cup
    C \cup f\) is \( (2,3,2) \)-tight, contradicting the 
    maximality of \( B \). On the other hand, if 
    \( C \) is balanced then
    \( C \cup \delta \) is balanced and so 
    \( C \cup \{e,f,\delta,u\}  \) is balanced 
    since \( \kappa \) is a boundary walk of 
    a cellular face. 
    Therefore \( C \cup  \{e,f,u\} \)
    is balanced and \( (2,3,2) \)-tight. It follows 
    from Lemma \ref{lem_232tightunion} that \( 
    B\cup C \cup f\) is \( (2,3,2) \)-tight, again 
    contradicting the maximality of \( B \).

    Now suppose that \( l =1 \). As previously observed,
    balanced \( (2,3,1) \)-tight subgraphs need not be induced. 
    However unbalanced \( (2,3,1) \)-tight subgraphs necessarily 
    are induced.
    Suppose, seeking a contradiction, that \( f \in E(C) \). 
    Then \( u, v \in V(C) \). Now  if \( C \) is balanced
    then \( C \cup \delta \) is balanced and since \( f \in E(C) \)
    and \( \kappa \) bounds a triangle, it follows that 
    \( C \cup \delta \cup e \) is balanced if \( C \) is balanced. 
    But \( C \) is 
    \( (2,3,1) \)-tight and so \( e \in E(C) \) if \( f \in C \). 
    However, by Lemma \ref{lem_231tightunion} it would follow, in that
    case, that \( B \cup C \) is \( (2,3,1) \)-tight contradicting 
    the maximality of \( B \) (since \( f \not\in B \)). 
    Thus we have shown that \( f \not\in E(C) \). 

    Now suppose, seeking a contradiction, that \( B \) is unbalanced.
    Then \( w \not\in B \) and so \( B\cup C \) is not 
    \( (2,3,1) \)-tight. By Lemma \ref{lem_231tightunion}
    it follows that \( B \cap C = \{v\} \) and that 
    \( C \) is balanced. 
    Since \( e,f, \not\in E(C) \) it follows that \( C \cup \{u,e,f\} \)
    is a \( (2,3,1) \)-tight subgraph of \( G \)
    (which is unbalanced if  and only if \( u = v \)). 
    By Lemma \ref{lem_231tightunion}, \(B \cup (C\cup \{u,e,f\}) = B \cup C \cup f\)
    is \( (2,3,1) \)-tight, contradicting the maximality of 
    \( B \). 

    Thus we can assume that \( B \) is balanced. Now
    suppose, seeking a contradiction, 
    that \( C \subset B \). Then \( C \) is 
    balanced and so \( C \cup \delta \) is also balanced.
    Thus \( B \cup \delta = B \cup (C \cup \delta) \) 
    is also balanced, using Lemma \ref{lem_svk}.
    Since \( \kappa \) bounds a triangle, it 
    follows that \( B \cup \delta \cup f \) is balanced.
    But then, since \( B \) is tight, we have \( f \in 
    E(B)\), contradicting our earlier deduction.

    Thus \( C \not\subset B \) and so 
    \( B \cup C \) is not 
    \( (2,3,1) \)-tight. It follows from Lemma
    \ref{lem_231tightunion} that \( B\cap C \)
    has no edges and at most two vertices. If 
    \( |V(B\cap C)| = 2 \) then \( B \cup C \cup f\)
    is unbalanced and \( (2,3,1) \)-tight, contradicting 
    the maximality of \( B \). 
    If \( |V(B \cap C)| = 1 \) then 
    \begin{center}
        \begin{tabular}{rcl}
            \( B\cup C \) is balanced & \( \Leftrightarrow \) & \( C \) is balanced \\
            & \(\Leftrightarrow\) & \( C \cup \delta \) is balanced \\
            & \(\Leftrightarrow\) & \( B \cup C \cup \delta \) is balanced \\
            & \(\Leftrightarrow\) & \( B \cup C \cup \{\delta,f\} \) is balanced \\
            & \(\Rightarrow\) & \( B \cup C \cup f\) is balanced 

        \end{tabular}
    \end{center}
    Again we conclude that  \( B \cup C \cup f \) is 
    \( (2,3,1) \)-tight,
    contradicting the maximality of \( B \).
\end{proof}


\end{document}